\definecolor{gray}{gray}{0}
\numberwithin{equation}{chapter}
\theoremstyle{plain}
\newtheorem{theorem}{Theorem}[chapter]
\newtheorem{lemma}[theorem]{Lemma}
\newtheorem{proposition}[theorem]{Proposition}
\theoremstyle{definition}
\newtheorem{Problem}[theorem]{Problem}
\theoremstyle{remark}
\newtheorem{remark}[theorem]{Remark}
\newtheorem{example}[theorem]{Example}
\newtheorem{problem}[theorem]{Problem}
\newtheoremstyle{plainfoot}%
  {\item[\hskip\labelsep \theorem@headerfont ##1\ ##2\,\footnotemark\theorem@separator]}%
  {\item[\hskip\labelsep \theorem@headerfont ##1\ ##2\ (##3)\, \footnotemark\theorem@separator]}
\theoremstyle{plainfoot}
\newtheorem{theorem-foot}[theorem]{Theorem}
\newtheorem{lemma-foot}[theorem]{Lemma}
\newtheorem{proposition-foot}[theorem]{Proposition}
\newtheorem{corollary-foot}[theorem]{Corollary}
\newtheorem{conjecture-foot}[theorem]{Conjecture}
\newtheorem{condition-foot}[theorem]{Condition}
\theoremstyle{plainfoot}
\newtheorem{definition-foot}[theorem]{Definition}
\newtheorem{Problem-foot}[theorem]{Problem}
\theoremstyle{plainfoot}
\newtheorem{remark-foot}[theorem]{Remark}         
\newtheorem{example-foot}[theorem]{Example}
\newtheorem{problem-foot}[theorem]{Problem}
\DeclareMathAlphabet{\mathpzc}{OT1}{pzc}{m}{it}
\newcommand{\cA}{\mathcal{A}}
 \newcommand{\cL}{\mathcal{L}}
 \newcommand{\cN}{\mathcal{N}}
 \newcommand{\cT}{\mathcal{T}}
 \newcommand{\cX}{\mathcal{X}}
 \newcommand{\cZ}{\mathcal{Z}}
 \newcommand{\sC}{\mathscr{C}}
 \newcommand{\sL}{\mathscr{L}}
 \newcommand{\sS}{\mathscr{S}}
 \newcommand{\scl}{\mathsf{scl}}
\newcommand{\D}{{\mathsf{D}}}
\newcommand{\n}{{\mathsf{n}}}
\newcommand{\g}{{\mathsf{g}}}
\newcommand{\MW}{{\mathsf{MW}}}
\newcommand{\N}{{\mathsf{N}}}
\newcommand{\F}{{\mathsf{F}}}
\newcommand{\J}{{\mathsf{J}}}
\newcommand{\A}{{\mathsf{A}}}
\newcommand{\W}{{{\mathsf{W}}}}
\newcommand{\y}{{\mathsf{y}}}
\newcommand{\const}{{\mathsf{const}}}
\newcommand{\dist}{{{\mathsf{dist}}}}
\newcommand{\eff}{{{\mathsf{eff}}}}
\newcommand{\ess}{{{\mathsf{ess}}}}
\newcommand{\new}{{{\mathsf{new}}}}
\newcommand{\bC}{{\mathbb{C}}}
\newcommand{\bH}{{\mathbb{H}}}
\newcommand{\bK}{{\mathbb{K}}}
\newcommand{\bR}{{\mathbb{R}}}
\newcommand{\bS}{{\mathbb{S}}}
\newcommand{\bZ}{{\mathbb{Z}}}
\newcommand{\fW}{{\mathfrak{W}}}
\newcommand{\fD}{{\mathfrak{D}}}
\newcommand{\fZ}{{\mathfrak{Z}}}
\newcommand{\fz}{{\mathfrak{z}}}
\def\1{\boldsymbol {|}}
\newcommand{\blangle}{{\boldsymbol{\langle}}}
\newcommand{\brangle}{{\boldsymbol{\rangle}}}
\newcommand{\3}{{|\!|\!|}}
\newcommand{\Hess}{\operatorname{Hess}}
\newcommand{\Ker}{\operatorname{Ker}}
\newcommand{\mes}{\operatorname{mes}}
\newcommand{\rank}{\operatorname{rank}}
\newcommand{\Spec}{\operatorname{Spec}}
\newcommand{\Specess}{\operatorname{Spec_\ess}}
\newcommand{\supp}{\operatorname{supp}}
\newcommand{\Tr}{\operatorname{Tr}}
\newenvironment{claim}[1][{\textup{(\theequation)}}]{\refstepcounter{equation}\vglue10pt
\begin{trivlist}
\item[{\hskip\labelsep#1}]}{\vglue10pt\end{trivlist}}
\newenvironment{claim*}[1][{}]{\vglue10pt
\begin{trivlist}
\item[{\hskip\labelsep#1}]}{\vglue10pt\end{trivlist}}
\newenvironment{phantomequation}[1][]{\refstepcounter{equation}}{}
\newcounter{note}
\DeclareTextCommand{\textbeta}{PU}{\83\262}
\DeclareTextCommand{\textmu}{PU}{\80\265}
\DeclareTextCommand{\texttau}{PU}{\83\304}
\DeclareTextCommand{\textlesssim}{PU}{\9042\162}
\DeclareTextCommand{\textgtrsim}{PU}{\9042\163}
\DeclareTextCommand{\textpartial}{PU}{\9042\002}
\DeclareTextCommand{\texttwosuperior}{PU}{\80\262}
\DeclareTextCommand{\textGamma}{PU}{\83\223}
\DeclareTextCommand{\textxinferior}{PU}{\9040\223}
\DeclareTextCommand{\textiinferior}{PU}{\9035\142}
\DeclareTextCommand{\textjinferior}{PU}{\9054\174}
\DeclareTextCommand{\textge}{PU}{\9042\145}
\DeclareTextCommand{\textle}{PU}{\9042\144}
\DeclareTextCommand{\texthat}{PD1}{\136}
\begin{document}
\title{Spectral Asymptotics for Magnetic Schr\"odinger Operator with the Strong Magnetic Field}
\author{Victor Ivrii}

\maketitle
{\abstract%
In this article we obtain eigenvalue asymptotics for $2\D$ and $3\D$-Schr\"{o}dinger, Schr\"{o}\-dinger-Pauli  and Dirac operators in the situations in which the role of the magnetic field is important.  We have seen in Chapters~\ref{book_new-sect-13}  and~\ref{book_new-sect-17} of \cite{futurebook} that these operators are essentially different and there is a significant difference between $2\D$ and $3\D$-operators.
\endabstract}

\setcounter{tocdepth}{0}

\tableofcontents*
\setlength{\marginparwidth}{3.5cm}

\chapter{$2\D$-case. Introduction}
\label{sect-23-1}

In this article we obtain eigenvalue asymptotics for $2\D$-Schr\"{o}dinger, Schr\"{o}\-dinger-Pauli  and Dirac operators in the situations in which the role of the magnetic field is important.  We have seen in Chapters~\ref{book_new-sect-13} and~\ref{book_new-sect-17} of \cite{futurebook} that these operators are essentially different and they also differ significantly from the corresponding $3\D$-operators.

While we are trying to emulate results of Chapter~\ref{book_new-sect-11} of \cite{futurebook}, we find ourselves now in the very different situation. Indeed, for operators we study the remainder estimates in the local spectral asymptotics under non-degeneracy assumptions are better than for similar operators with the magnetic field. However, as we seen in Chapters~\ref{book_new-sect-14} and~15 of \cite{futurebook} these remainder estimates deteriorate if the magnetic field degenerates or there is a boundary. This significantly limits our ability to consider the cases when magnetic field asymptotically degenerates at the point of singularity (finite or infinite) along some directions, or domains with the boundary.

We start from Section~\ref{sect-23-2} in which we consider the case when the spectral parameter is fixed ($\tau=\const$) and study asymptotics with respect to $\mu,h$ exactly like in Section~\ref{book_new-sect-11-1} of \cite{futurebook} we considered asymptotics with respect to $h$. However, since now we have two parameters, we need to consider an interplay between them: while always $h\to +0$, we cover
$\mu\to +0$, $\mu $ remains disjoint from $0$ and $\infty$ and $\mu\to \infty$, which in turn splits into subcases $\mu h\to 0$, $\mu h$  remains disjoint from $0$ and $\infty$ and $\mu h\to \infty$.

In Section~\ref{sect-23-3} we consider asymptotics with $\mu=h=1$ and with the spectral parameter $\tau$ tending to $+\infty$ for the Schr\"odinger and Schr\"odinger-Pauli operators and to $\pm\infty$ for the Dirac operator. We consider bounded domains with the singularity at some point and external domains with the singularity at infinity. In the latter case the specifics of the $2\D$ magnetic Schr\"odinger and Schr\"odinger-Pauli operators manifest itself in the better remainder estimate (and in the larger principal part for the Schr\"odinger-Pauli operator) than in the non-magnetic case. Furthermore, in contrast to the non-magnetic case there are  non-trivial results for the Dirac operator. This happens only in even dimensions.

In Section~\ref{sect-23-4} we consider asymptotics with the singularity at infinity and $\mu=h=1$ and with $\tau$ tending to $+0$ for the Schr\"odinger and Schr\"odinger-Pauli operators and to $\pm (M-0)$ for the Dirac operator.
Again the specifics of the $2\D$ magnetic Schr\"odinger and Schr\"odinger-Pauli operators manifest itself in the better remainder estimate (and in the larger principal part for the Schr\"odinger-Pauli operator) than in non-magnetic case.

It  includes the most interesting case (see Subsection~\ref{sect-23-4-1}) when magnetic field is either constant or stabilizes fast at infinity and potential decays at infinity. As we know if magnetic field and potential were constant then the operator would have purely point spectrum of infinite multiplicity and each eigenvalue (Landau level) would be disjoint from the rest. Now we have a sequences of eigenvalues tending to the Landau level either from below, or from above, or from both sides and we are interested in their asymptotics.
In contrast to the rest of the section we consider multidimensional case as well. In contrast to Section~\ref{book_new-sect-11-6} of \cite{futurebook} there are non-trivial results for fast decaying potentials as well.

In Section~\ref{sect-23-5} we consider asymptotics with respect to
$\mu, h, \tau$, like in Section~\ref{book_new-sect-11-7} of \cite{futurebook} again with significant differences mentioned above.

Finally, in Appendix~\ref{sect-23-A-3} the self-adjointness of the $2\D$-Dirac operator with a very singular magnetic field is proven.

The Schr\"{o}dinger operator theory is more extensive than the Dirac operator theory: there are many more meaningful problems and questions for the Schr\"{o}dinger operator than for the Dirac operator.  Also, the
$2\D$-theory is more extensive and provides more accurate remainder estimates than the $3\D$-theory.  These circumstances are not due to the technical difficulties but are instead due to the fact that the spectrum of the Schr\"{o}dinger operator is discrete more often than the spectrum of the Dirac operator in dimension $d=2$ the spectrum is
discrete more often than in dimension $d=3$.%

\chapter{$2\D$-case. Asymptotics with fixed spectral parameter}
\label{sect-23-2}

In this section we consider asymptotics with a fixed spectral parameter for $2$-dimensional magnetic Schr\"odinger, Schr\"odinger-Pauli and Dirac operators and discuss some of the generalizations\footnote{\label{foot-23-1} Mainly to higher dimensions with full-rank magnetic field.}.

As in Chapters \ref{book_new-sect-9} of \cite{futurebook} we will introduce a semiclassical zone and a singular zone, where $\rho\gamma \ge h$ and $\rho\gamma\le h$ respectively. In the semiclassical zone we apply asymptotics of Chapters~\ref{book_new-sect-13}--\ref{book_new-sect-22}  (but mainly of \ref{book_new-sect-13} and \ref{book_new-sect-19} of \cite{futurebook}--in the multidimensional case). In the singular zone we need to apply estimates for a number of eigenvalues; usually it would be sufficient to use non-magnetic estimate\footnote{\label{foot-23-2} With $V$ modified accordingly; for example, for the Schr\"odinger and Schr\"odinger-Pauli operators $V_-$ is replaced by
$C\bigl((1-\epsilon) V - C_\epsilon \mu^2|\vec{V}|^2\bigl)_-$.}
for number of eigenvalues which trivially follows from standard one but if needed one can use more delicate estimates.

\section{Schr\"odinger operator}
\label{sect-23-2-1}

\subsection{Estimates of the spectrum}
\label{sect-23-2-1-1}

Consider first the Schr\"{o}dinger operator (\ref{book_new-13-1-1}) of \cite{futurebook})
\begin{equation}
A=\sum_{j,k} P_jg^{jk}P_k+V,\qquad \text{with\ \ } P_j=hD_j-\mu V_j
\label{23-2-1}
\end{equation}
where $g^{jk},V_j,V$ satisfy (\ref{book_new-13-1-2}) and (\ref{book_new-13-1-4}) of \cite{futurebook}) i.e.
\begin{equation}
\epsilon |\xi |^2\le \sum_{j,k} g^{jk}\xi _j\xi _k\le c|\xi |^2 \qquad
\forall \xi \in\bR^d.
\label{23-2-2}
\end{equation}
Without any loss of the generality we can fix $\tau =0$ and then in the important function $V_\eff F_\eff ^{-1}$ the parameters $\mu $ and $h$ enter as factors. Thus, we treat the operator (\ref{book_new-13-1-1}) of \cite{futurebook}) assuming that it is self-adjoint.

We make assumptions similar to those of Chapter~\ref{book_new-sect-9} of \cite{futurebook}
\begin{phantomequation}\label{23-2-3}\end{phantomequation}
\begin{multline}
|D^\alpha g^{jk}|\le c\gamma ^{-|\alpha |},\quad
|D^\alpha F_{jk}|\le c\rho_1 \gamma ^{-|\alpha |},\quad
|D^\alpha V|\le c\rho ^2\gamma ^{-|\alpha |}
\tag*{$\textup{(\ref*{23-2-3})}_{1-3}$}\label{23-2-3-*}
\end{multline}
where scaling function $\gamma(x)$ and weight functions $\rho(x),\rho_1(x)$ satisfy the standard assumptions $\textup{(\ref{book_new-9-1-6})}_{1,2}$ of \cite{futurebook}. Then
\begin{phantomequation}\label{23-2-4}\end{phantomequation}
\begin{equation}
\mu_\eff= \mu\rho_1\gamma\rho^{-1}, \qquad  h_\eff = h\rho^{-1}\gamma^{-1}.
\tag*{$\textup{(\ref*{23-2-4})}_{1,2}$}\label{23-2-4-*}
\end{equation}

Let us introduce a \emph{semiclassical zone\/}
\begin{align}
&X'=\{ x\colon  \rho \gamma \ge h\}
\label{23-2-5}\\
\shortintertext{and a \emph{singular zone\/}}
&X''=\{ x\colon  \rho \gamma \le 2h\}.
\label{23-2-6}
\end{align}

Further, let us introduce two other overlapping zones
\begin{align}
&X'_1=\{x\in X_\scl\colon \mu \rho _1 \le 2c\rho \gamma ^{-1}\}
\label{23-2-7}\\
\shortintertext{and}
&X'_2=\{x\in X'\colon \mu \rho _1\ge c\rho \gamma ^{-1}\}
\label{23-2-8}
\end{align}
where the magnetic field $\mu_\eff=\mu\rho_1\rho^{-1}\gamma$  is \emph{normal\/} ($\mu_\eff \le 2c$) and where it is \emph{strong\/} ($\mu_\eff \ge c$) respectively. We assume that
\begin{equation}
|F|\ge \epsilon \rho _1\qquad \text{in \ \ } X'_2
\label{23-2-9}
\end{equation}
where $F_{jk}$ and $F$ are the tensor and scalar intensities of the magnetic field.

Moreover, let us assume that
\begin{phantomequation}\label{23-2-10}\end{phantomequation}
\begin{gather}
B\bigl(x,\gamma (x)\bigr)\subset X\qquad \forall x\in X'_{2-},
\tag*{$\textup{(\ref*{23-2-10})}_1$}\label{23-2-10-1}\\
\shortintertext{and}
u|_{\partial X\cap B\bigl(x,\gamma (x)\bigr)}=0\qquad
\forall x\in X'\quad \forall u\in \fD(A)
\tag*{$\textup{(\ref*{23-2-10})}_2$}\label{23-2-10-2}
\end{gather}
where
\begin{align}
&X'_{2-}=\{x\in X'_2\colon  V+\mu h F \ge \epsilon \rho^2\}
\label{23-2-11}\\
\shortintertext{and}
&X'_{2+}=\{x\in X'\colon  V+\mu h F \le 2\epsilon \rho^2\}.
\label{23-2-12}
\end{align}

Finally, let the standard boundary regularity condition  be fulfilled:
\begin{claim}\label{23-2-13}
For every $y\in X$,
$\partial X\cap B(y,\gamma (y))=\{x_k=\phi _k(x_{\hat{k}})\}$ with
\begin{equation*}
|D ^\alpha \phi _k|\le c\gamma ^{-|\alpha |}
\end{equation*}
and $k=k(y)$.
\end{claim}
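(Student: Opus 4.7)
The plan is to treat this as a quantitative boundary parameterization lemma: given that $\partial X$ is smooth and compatible with the scaling function $\gamma$ in the usual sense of $\textup{(\ref{book_new-9-1-6})}_{1,2}$ of \cite{futurebook}, derive the graph representation with the stated $\gamma$-adapted derivative bounds by reducing to a uniform $C^\infty$ estimate on blow-up charts.

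First I would fix $y\in X$ and note that if $B(y,\gamma(y))\cap\partial X=\emptyset$ there is nothing to prove. Otherwise, pick $y_0\in \partial X\cap B(y,\gamma(y))$ with its outward unit normal $\nu(y_0)$, and define $k=k(y)$ to be any index with $|\nu_k(y_0)|\ge d^{-1/2}$; such $k$ exists because $\nu$ is a unit vector. Let $F$ denote a smooth defining function for $\partial X$ (so $F=0$, $\nabla F\ne 0$ on $\partial X$); by the choice of $k$, $|\partial_k F(y_0)|$ is bounded below by a definite constant times $|\nabla F(y_0)|$. The implicit function theorem then produces, in some neighbourhood, the graph representation $x_k=\phi_k(x_{\hat k})$ with $\partial_j \phi_k = -\partial_j F/\partial_k F$.

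Next I would promote this local graph to all of $B(y,\gamma(y))$ with the $\gamma$-adapted bounds by rescaling. Introduce $z=(x-y)/\gamma(y)$ and $\tilde F(z)=F(y+\gamma(y)z)$. The slow variation of $\gamma$ built into $\textup{(\ref{book_new-9-1-6})}_{1,2}$ ensures $\gamma(x)\asymp\gamma(y)$ in $B(y,\gamma(y))$, and the standing assumption that $\partial X$ is a $\gamma$-admissible boundary amounts, in the blow-up, to $|D_z^\alpha \tilde F|\le c_\alpha$ uniformly in $y$ together with a uniform lower bound on $|\partial_{z_k}\tilde F|$ on the rescaled boundary $\{\tilde F=0\}\cap B(0,1)$. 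The classical implicit function theorem with quantitative control thus yields a uniformly $C^\infty$ graph $z_k=\tilde\phi_k(z_{\hat k})$ with $|D^\alpha \tilde\phi_k|\le C_\alpha$. Unwinding $\phi_k(x_{\hat k})=y_k+\gamma(y)\tilde\phi_k((x_{\hat k}-y_{\hat k})/\gamma(y))$ produces exactly the scaling $|D^\alpha \phi_k|\le c\gamma^{-|\alpha|}$.

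The derivative bound itself is mechanical: differentiating $\tilde F(z_{\hat k},\tilde\phi_k(z_{\hat k}))\equiv 0$ and applying Fa\`a di Bruno expresses $D^\alpha\tilde\phi_k$ as a polynomial in $(D^\beta \tilde F)$ divided by a power of $\partial_{z_k}\tilde F$, all of which are uniformly bounded in the rescaled picture. The substantive obstacle is not this algebra but Step~3, i.e.\ securing the uniform smoothness of $\tilde F$ and the uniform transversality $|\partial_{z_k}\tilde F|\gtrsim 1$ throughout the blow-up ball. This is precisely the compatibility between $\gamma$ and $\partial X$ that must be encoded in the scaling-function hypothesis: $\gamma(y)$ cannot exceed a fixed fraction of the local radius of curvature of $\partial X$, nor a fixed fraction of $\mathrm{dist}(y,\partial X)^{-1}\cdot\mathrm{dist}(y,\partial X)$ at boundary-distant points. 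Once this compatibility is folded into the standing assumptions on $(X,\gamma)$, the rescaled boundary is automatically a $C^\infty$ hypersurface in $B(0,1)$ with uniform bounds, and the claim follows.
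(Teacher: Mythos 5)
There is nothing to prove here: in the paper, \textup{(\ref{23-2-13})} is not a lemma but a \emph{hypothesis}. The text introduces it with ``let the standard boundary regularity condition be fulfilled,'' and immediately afterwards only remarks where the condition is actually needed ($y\in X'_1\cup X'_{2-}$, thanks to \ref{23-2-10-1}). It is the condition that \emph{defines} what a $\gamma$-admissible boundary means in this setting, on the same footing as \ref{23-2-3-*} or \textup{(\ref{23-2-9})}; the paper never derives it from anything else, and in the subsequent examples it is verified (trivially) or imposed for the specific domains considered.

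Your argument has a genuine gap precisely at the point you flag yourself as ``Step 3'': the uniform smoothness of the rescaled defining function and the uniform transversality $|\partial_{z_k}\tilde F|\gtrsim 1$ on the unit blow-up ball do not follow from the standing assumptions. The conditions $\textup{(\ref{book_new-9-1-6})}_{1,2}$ constrain only the scaling and weight functions $\gamma,\rho,\rho_1$, not the geometry of $\partial X$; for a general domain with smooth boundary and an admissible $\gamma$ the claim can simply fail (boundary oscillating or curving at scales small compared with $\gamma(y)$, corners, several sheets of $\partial X$ entering one ball $B(y,\gamma(y))$ so that no single graph representation exists). When you then say the needed compatibility ``must be encoded in the scaling-function hypothesis'' and ``folded into the standing assumptions on $(X,\gamma)$,'' you are assuming a statement equivalent to \textup{(\ref{23-2-13})} itself, so the proof is circular. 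The quantitative implicit-function/Fa\`a di Bruno bookkeeping in your last step is fine as far as it goes, but it only shows that the condition is \emph{self-consistent} under rescaling, not that it holds; the correct posture is to treat \textup{(\ref{23-2-13})} as an assumption on the pair $(X,\gamma)$ and, in concrete examples (e.g.\ $\gamma=\epsilon|x|$ with $0$ an inner singular point, or exterior domains with $\gamma=\epsilon\langle x\rangle$), check it directly.
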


Due to \ref{23-2-10-1} we need this condition only as $y\in X'_1 \cup X'_{2-}$. Finally, in $X'_{2-}$ let one of the following non-degeneracy conditions
\begin{equation}
|V+(2n+1)\mu hF| + |\nabla VF^{-1}|\gamma \ge  \epsilon \rho ^2
\qquad \forall n \in \bZ^+,
\label{23-2-14}
\end{equation}
\begin{multline}
|V+(2n+1)\mu hF| + |\nabla VF^{-1}|\gamma \le  \epsilon \rho ^2 \epsilon \implies \\[3pt]
\det \Hess (VF^{-1})\ge \epsilon \rho ^4\rho _1^{-2}\gamma ^{-4}
\qquad \forall n \in \bZ^+,
\label{23-2-15}
\end{multline}
\begin{multline}
|V+(2n+1)\mu hF| + |\nabla VF^{-1}|\gamma \le  \epsilon \rho ^2 \epsilon \implies \\[3pt]
|\det \Hess (VF^{-1})|\ge \epsilon \rho ^4\rho _1^{-2}\gamma ^{-4}
\qquad \forall n \in \bZ^+,
\label{23-2-16}
\end{multline}
be fulfilled.%

Recall that according to Chapter~\ref{book_new-sect-13} of \cite{futurebook} the contribution of the partition element $\psi\in \sC_0^K(B(y,\frac{1}{2}\gamma(y))$ to the principal part of asymptotics is
\begin{equation}
\cN^-(\mu, h)= \cN^{\MW\,-}(\mu, h)\coloneqq   h^{-2}\int \cN^\MW (x,\mu h)\psi(x)\,dx
\label{23-2-17}
\end{equation}
with $\cN^\MW (x,\mu h)$ given by \textup{(\ref{book_new-13-1-9}) of \cite{futurebook})} with $d=2$.

On the other hand, its contribution to the remainder  does not exceed
$Ch^{-1} \rho\gamma$ if $\mu\rho_1 \gamma\le c\rho $ \underline{and}
it does not exceed $C\mu^{-1}h^{-1} \rho^2\rho_1^{-1}$ if
$\mu\rho_1 \gamma\ge c \rho $ but   $\mu h\rho_1\le \rho^2$, $y\in X'_+$
and assumption (\ref{23-2-15}) is fulfilled\footnote{\label{foot-23-3} It does not exceed the same expression with an extra logarithmic factor under assumption (\ref{23-2-16}) but logarithmic factor could be skipped if we add corrections at the points with negative $\det\Hess(VF^{-1})$.}, \underline{and}
it does not exceed $ C(\mu ^{-s}\rho_1^{-s}\gamma^{-2s})$ if
$C\mu\rho_1 \gamma\ge \rho $, $\mu h\rho_1\le \rho^2$, $y\in X'_{2-}$.

Then we get estimate of $\N^- $ from below by the magnetic Weyl approximation $\cN^-(\mu, h)$ minus corresponding remainder, and also from above  by magnetic Weyl approximation plus corresponding remainder, provided $X=X'$ (so, there is no singular zone $X''=\emptyset$):
\begin{multline}
h^{-d} \int _{X'} \cN^\MW (x,\mu h)\,dx - CR_1  \le \N^- (0) \le\\
h^{-d} \int _{X'} \cN^\MW (x,\mu h)\,dx + CR_1 +C'R_2
\label{23-2-18}
\end{multline}
with
\begin{gather}
R_1= \mu^{-1}h^{1-d} \int_{X'_+} \rho^d\rho_1^{-1}\gamma^{-2}\,dx , \label{23-2-19}\\
R_2= \mu h^{s-d}\int_{X'_-} \rho_1 \rho^{d-s-1}\gamma^{1-s}\,dx
\label{23-2-20}
\shortintertext{provided}
\mu \rho_1 \gamma \ge \rho
\label{23-2-21}
\end{gather}
where the latter condition could be assumed without any loss of the generality, $C'$ depens also on $s$ and $\epsilon$.

We leave to the reader the following not very challenging set of problems:

\begin{Problem}\label{Problem-23-2-1}
\begin{enumerate}[label=(\roman*), wide, labelindent=0pt]
\item\label{Problem-23-2-1-i}
Consider the case $X''\ne \emptyset$ and prove the estimate from above with an extra term $C_1R_0$ like in estimate (\ref{book_new-9-1-29})  (Theorem~\ref{book_new-thm-9-1-7} of \cite{futurebook}) with $R_0$ defined in the same way as in Section~\ref{book_new-sect-9-1} of \cite{futurebook}.

\item\label{Problem-23-2-1-ii}
Consider multidimensional case; then we need to impose more sophisticated non-degeneracy assumptions (see Chapter~\ref{book_new-sect-19} of \cite{futurebook}).

\item\label{Problem-23-2-1-iii}
Incorporate results of Chapters~\ref{book_new-sect-14}, 15, \ref{book_new-sect-18}, \ref{book_new-sect-19} (in non-smooth settings), \ref{book_new-sect-21} and \ref{book_new-sect-22} of \cite{futurebook}.

\item\label{Problem-23-2-1-iv}
Using arguments and methods of Chapter~\ref{book_new-sect-10}  and results of Chapter~\ref{book_new-sect-17} of \cite{futurebook} consider Dirac operator (in the full-rank case).
\end{enumerate}
\end{Problem}

\subsection{Basic results}
\label{sect-23-2-1-2}

In what follows $h\to +0$ and the semiclassical zone $X'$ expands to $X$ while $\mu$ is either bounded (then we can assume that the zone of the strong magnetic field $X'_2$ is fixed) or tends to $\infty$ (then $X'_2$ expands to $X$). We assume that all conditions of the previous subsection are fulfilled with $\mu=h=1$ but we will assume them fulfilled in the corresponding zones.

The other important question is whether  $\mu h\to 0$, remains bounded and disjoint from $0$ or tends to $\infty$.

Finally, we should consider the singular zone $X''$.  In order to avoid this task we assume initially that
\begin{equation}
\rho _1\gamma ^2+\rho \gamma \ge \epsilon.
\label{23-2-22}
\end{equation}

Then we obtain the following assertion from the arguments of the previous Subsection~\ref{sect-23-2-1-1}.1:

\begin{theorem}\label{thm-23-2-2}
Let $d=2$ and let the Schr\"{o}dinger operator $A$ satisfy conditions \textup{(\ref{23-2-15})}, \textup{(\ref{23-2-2})}, \ref{23-2-3-*}, \textup{(\ref{23-2-9})}  in the corresponding regions where
$\rho ,\rho _1,\gamma $ satisfy $\textup{(\ref{book_new-9-1-6})}_{1-4}$ of \cite{futurebook}, $\textup{(\ref{23-2-10})}_{1,2}$, \textup{(\ref{23-2-22})}.

Let $\rho _1\ge \epsilon \rho \gamma ^{-1}$ and
\begin{gather}
\rho ^2\rho _1^{-1}\gamma^{-2}\in
\sL^1\bigl(X\cap \{V+tF\le \epsilon \rho ^2\}\bigr),
\label{23-2-23}\\
\rho _1^{-s}\gamma ^{-2s-d}\in \sL^1(X)
\label{23-2-24}
\end{gather}
with some $s\ge 0$ and $t\ge 0$.  Then for $h\to+0$, $1\le \mu $ such that
$\mu h\le t$ the ``standard'' asymptotics
\begin{gather}
\N^-(\mu ,h)=\cN ^- (\mu ,h)+O(\mu ^{-1}h^{-1})
\label{23-2-25}
\shortintertext{holds with}
\cN^- (\mu ,h)\coloneqq   h^{-d}\int \cN^\MW (x,\mu h)\,dx.
\label{23-2-26}
\end{gather}
\end{theorem}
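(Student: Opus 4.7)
\medskip\noindent\textbf{Proof proposal.} The plan is to assemble the global asymptotics from the local estimates outlined in Subsection~\ref{sect-23-2-1-1}, via a $\gamma$-admissible partition of unity, and then to verify that both remainder integrals $R_1$ and $R_2$ are absorbed into $O(\mu^{-1}h^{-1})$ under the integrability hypotheses (\ref{23-2-23})--(\ref{23-2-24}).

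First I would cover $X$ by a $\gamma$-admissible partition $\{\psi_y\}$ with each $\psi_y\in\sC_0^K\bigl(B(y,\tfrac12\gamma(y))\bigr)$. Since $\rho_1\ge\epsilon\rho\gamma^{-1}$ is assumed globally, every element sits in the strong-field zone $X'_2$, so the weak-field regime $X'_1$ plays no role. Assumption (\ref{23-2-22}), combined with the strong-field lower bound, lets one bypass the singular zone $X''$: either $\rho\gamma\ge\epsilon$ puts $x$ directly into $X'$, or $\rho_1\gamma^2\ge\epsilon$ makes the effective magnetic parameter $\mu_\eff=\mu\rho_1\gamma\rho^{-1}$ so large that the magnetic Weyl formula already absorbs the local contribution. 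Consequently the two-sided bound (\ref{23-2-18}) applies with $X''=\emptyset$, and the principal term is exactly $\cN^-(\mu,h)$ as defined in (\ref{23-2-26}).

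Next, on each $\supp\psi_y\subset X'_2$ I apply the local two-term asymptotics of Chapter~\ref{book_new-sect-13} of \cite{futurebook}, after rescaling to the effective parameters $\mu_\eff,h_\eff$ from $\textup{(\ref{23-2-4})}_{1,2}$. The non-degeneracy hypothesis (\ref{23-2-15}) supplies the sharp local remainder $C\mu^{-1}h^{-1}\rho^2\rho_1^{-1}\gamma^{-2}$ on partition elements intersecting the counting zone $X'_{2+}$, while the standard propagation/quasi-ellipticity argument supplies the rapidly-decaying bound $C(\mu\rho_1\gamma^2)^{-s}$ on elements contained in the forbidden zone $X'_{2-}$. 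Summing over the partition with density $\gamma^{-d}$ reproduces
\begin{equation*}
R_1\ \le\ C\mu^{-1}h^{-1}\int_{X'_{2+}}\rho^{2}\rho_1^{-1}\gamma^{-2}\,dx,
\qquad
R_2\ \le\ C\mu^{-s}\int_{X'_{2-}}\rho_1^{-s}\gamma^{-2s-d}\,dx.
\end{equation*}
The first integral is finite by (\ref{23-2-23}) applied with $t=\mu h$ (permitted because $\mu h\le t$), so $R_1=O(\mu^{-1}h^{-1})$; the second is finite by (\ref{23-2-24}) for the chosen $s$, whence $R_2=O(\mu^{-s})$, which is $\ll\mu^{-1}h^{-1}$ for $s$ large enough. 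Together with (\ref{23-2-18}) this yields (\ref{23-2-25}).

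The step I expect to be the main obstacle is the verification of the sharp local remainder of order $\mu^{-1}h^{-1}\rho^2\rho_1^{-1}\gamma^{-2}$ on those partition elements which straddle one of the Landau hypersurfaces $\{V+(2n+1)\mu hF=0\}$ with $n\in\bZ^+$. There one must subdivide further at the scale dictated by the Hessian lower bound in (\ref{23-2-15}), apply the non-degenerate local asymptotics on each sub-strip, and sum over $n$; this summability is precisely what converts the Landau spacings $(2n+1)\mu h$ into the $\rho_1^{-1}$ factor in $R_1$ and delivers the $\mu^{-1}h^{-1}$ gain over the classical $h^{-1}$ remainder. Once this geometric bookkeeping is in place, the rest of the argument is routine assembly.
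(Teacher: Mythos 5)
Your proposal follows essentially the same route as the paper: the paper's entire proof of Theorem~\ref{thm-23-2-2} is the single remark that it follows ``from the arguments of the previous Subsection~\ref{sect-23-2-1-1}'', i.e.\ exactly the assembly you describe --- estimate (\ref{23-2-18}) with the partition-wise local remainders $C\mu^{-1}h^{-1}\rho^2\rho_1^{-1}$ (non-degenerate strong-field zone, via (\ref{23-2-15})) and $C\mu^{-s}\rho_1^{-s}\gamma^{-2s}$ (forbidden zone), summed against the density $\gamma^{-d}$ and controlled by (\ref{23-2-23})--(\ref{23-2-24}), with (\ref{23-2-22}) eliminating the singular zone. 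Your write-up is consistent with the paper's $R_1$, $R_2$ and correctly locates the only genuinely nontrivial input in the cited sharp local asymptotics near the Landau levels, so there is nothing to object to.
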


\subsection{Power singularities}
\label{sect-23-2-1-3}

\begin{example}\label{example-23-2-3}
\begin{enumerate}[label=(\roman*), wide, labelindent=0pt]
\item\label{example-23-2-3-i}
Let $0$ be an \emph{inner singular point\/}\footnote{\label{foot-23-4} I.e. $0\in \bar{X}$ is an isolated point of $\bR^2\setminus X$.}  and let conditions of Theorem~\ref{thm-23-2-2} be fulfilled  with  $\gamma=\epsilon  |x|$,
$\rho =|x|^m$, $\rho _1=|x|^{m_1}$ and let
$m_1<\min (m-1, 2m)$\,\footnote{\label{foot-23-5} One can construct such potential $(V_1,V_2)$ easily; f.e. $V_1=-x_2|x|^{m_1}$, $V_2=x_1|x|^{m_1}$ for $m_1\ne 2$; for $m_1=-2$ one needs to multiply $V_1,V_2$ by $\log |x|$.}\footnote{\label{foot-23-6} To have the non-degeneracy condition (\ref{23-2-15})  fulfilled in the vicinity of $0$ we assume that
\begin{equation}
|\nabla VF^{-1}|\ge C\rho^2\rho_1^{-1}\gamma^{-1} \qquad \text{as\ \ } |x|\le \epsilon;
\label{23-2-27}
\end{equation}
in Statement~\ref{example-23-2-3-ii} one should replace $|x|\le \epsilon$ by $|x|\ge c$.}.

Let $V+\mu h F\ge \epsilon \rho^2$ on
$(\partial X\setminus 0)\cup \{x\colon |x|\ge c\}$.

Then conditions (\ref{23-2-23}), (\ref{23-2-24}) are fulfilled automatically and asymptotics (\ref{23-2-25})--(\ref{23-2-26}) holds for $\mu$ disjoint from $0$ and $h\to +0$.

Further,
\begin{equation}
\cN ^- (\mu ,h)=\left\{\begin{aligned}
&O(h^{-2})  &&m>-1,\\[2pt]
&O\bigl(h^{-2}(|\log (\mu h)| +1)\bigr) &&m=-1,\\[2pt]
&O\bigl(h^{-2}(\mu h)^{2(m+1)/(2m-m_1)}\bigr) &&m<-1.
\end{aligned}\right.
\label{23-2-28}
\end{equation}
Furthermore, one can replace ``$=O$''with ``$\asymp$" if \underline{either} $m>-1$, $\mu h\le t$ and
\begin{gather}
\{X \setminus 0, V\le -tF-\epsilon \}\ne \emptyset
\label{23-2-29}\\
\shortintertext{\underline{or} $m\le -1$ and}
V\le -\epsilon \rho ^2\qquad
\text{in \ \ } \Gamma \cap\{|x|\le \epsilon \} \subset X
\label{23-2-30}
\end{gather}
where $\Gamma $ is an open non-empty sector (cone) with vertex at $0$, and
$\mu h\le t$ with a small enough constant $t>0$.

\item\label{example-23-2-3-ii}
Let infinity be an \emph{inner singular point\/}\footnote{\label{foot-23-7} I.e. $\bR^2\setminus X$ is  compact.}  and let conditions of Theorem~\ref{thm-23-2-2} be fulfilled  with $\gamma=\epsilon  \langle x\rangle$,
$\rho \langle x\rangle^m$, $\rho _1=\langle x\rangle^{m_1}$ and
let $m_1>\max (m-1,2m)$\,\footref{foot-23-5}\footref{foot-23-6}. Let $V+\mu h F\ge \epsilon \rho^2$ on $\partial X$.

Then conditions (\ref{23-2-23}) and (\ref{23-2-24}) are fulfilled automatically and asymptotics (\ref{23-2-25})--(\ref{23-2-26}) holds for $\mu$ disjoint from $0$ and $h\to +0$.

Further,
\begin{equation}
\cN ^- (\mu ,h)=\left\{\begin{aligned}
&O(h^{-2})  &&m<-1,\\[2pt]
&O\bigl(h^{-2}(|\log (\mu h)| +1)\bigr) &&m=-1,\\[2pt]
&O\bigl(h^{-2}(\mu h)^{2(m+1)/(2m-m_1) }\bigr) &&m>-1.
\end{aligned}\right.
\tag*{$\textup{(\ref*{23-2-28})}^\#$}\label{23-2-28-*}
\end{equation}
Furthermore, one can replace ``$=O$'' by ``$\asymp$" if \underline{either} $m<-1$, $\mu h\le t$ and \textup{(\ref{23-2-29})} is fulfilled \underline{or} $m\le -1$,
\begin{equation}
V\le -\epsilon \rho ^2\qquad \text{in \ \ } \Gamma \cap\{|x|\ge c \} \subset X
\tag*{$\textup{(\ref*{23-2-30})}^\#$}\label{23-2-30-*}
\end{equation}
where $\Gamma $ is an open non-empty sector (cone) with vertex at $0$, and
$\mu h\le t$ with a small enough constant $t>0$.

\item\label{example-23-2-3-iii}
One can easily see that for $m>-1$ in \ref{example-23-2-3-i}, $m<-1$ in \ref{example-23-2-3-ii}
\begin{equation}
\cN ^- (\mu ,h)=\cN ^{\W\,-} (\mu ,h)+
O\bigl(h^{-2}(\mu h)^{(2m+2)/(2m-m_1) }\bigr)
\label{23-2-31}
\end{equation}
under condition (\ref{23-2-15}) where $\cN^{\W\,-} $ is the standard Weyl expression\footnote{\label{foot-23-8} Rather than the magnetic Weyl expression $\cN^{\MW\,-}$.}.  Therefore for $\mu \le h^p$ with $p=(2-m_1)/(4m-m_1+2) $ the asymptotics remain true with $\cN ^-$ replaced by $\cN ^{\W\,-}$.

On the other hand, under condition (\ref{23-2-30}) or \ref{23-2-30-*} (respectively)
$\cN ^{\W\,-} -\cN ^{\MW\,-} \ge \epsilon h^{-2}(\mu h)^{(2m+2)/(2m-m_1)}$.  Thus, for $\mu >h^p$ one cannot replace $\cN ^-$ by $\cN ^{\W\,-}$ and preserve the remainder estimates.  Note that $p$ is not necessarily negative in our conditions!  Therefore, due to the singularity, the magnetic field can be essential even for a fixed $\mu $. In what follows we leave this type of analysis to the reader.
\end{enumerate}
\end{example}

\begin{example}\label{example-23-2-4}
\begin{enumerate}[label=(\roman*), wide, labelindent=0pt]
\item\label{example-23-2-4-i}
Assume now that $m_1\ge \min (m-1,2m)$, $m_1\ne 2m$\,\footnote{\label{foot-23-9} Otherwise we could not satisfy(\ref{23-2-27}).},  while all other assumptions of Example~\ref{example-23-2-3}\ref{example-23-2-3-i} are fulfilled. Since we want to have a finite $\cN^-$ and (\ref{23-2-30}) to be fulfilled we need to assume that $m>-1$. Then $\cN\asymp h^{-2}$. Let us calculate the remainder estimate.

\begin{enumerate}[label=(\alph*), wide, labelindent=0pt]
\item\label{example-23-2-4-ia}
Assume first that $m-1\le m_1 < 2m$; then $\int \rho_1^{-1}\rho^2 \,dx<\infty$ and then contribution of the semiclassical zone
$X'=\{x\colon  |x|\ge r_0= h^{1/(m+1)}\}$ to the remainder is $O(\mu^{-1}h^{-1})$ and we need to estimate the contribution of the singular zone
$X''=\{x\colon  |x|\le r_0\}$. Without any loss of the generality we can assume that
$|\vec{V}|\le C\rho_1\gamma$\,\footnote{\label{foot-23-10} One can prove it easily taking $V_1=-\partial_2 \phi$, $V_2=\partial_1\phi$ with $\phi$ solving $\Delta \phi = F$.}. Then LCR\footnote{\label{foot-23-11} Sure, LCR does not hold for $d=2$ but we can use more complicated Rozenblioum's estimate exactly like in Section~\ref{book_new-sect-11-1} of \cite{futurebook}.} implies that the contribution of $X''$ to the asymptotics does not exceed
$Ch^{-2}\int _{X''} (\rho^2 + \mu^2 \rho_1^2\gamma^2)\,dx\asymp C$.
\item\label{example-23-2-4-ib}
Let now $m_1> 2m$; then we need to consider zones
\begin{align*}
&X'_2=\{x\colon |x|\ge r_1=\mu^{-1/(m_1+1-m)}\}\\
\shortintertext{where $\mu_\eff \ge 1$,}
&X'_1=\{x\colon  r_0\le |x|\le r_1\}
\end{align*}
where $\mu_\eff \le 1$ and $h_\eff\le 1$ and $X''$. Contributions of $X'_2$, $X'_1$ to the remainder do not exceed
$K_1 \coloneqq   C\mu^{-1}h^{-1}\int_{X'_2} \rho_1^{-1}\rho^2\,dx$ and
$K_1\coloneqq   Ch^{-1}\int_{X'_1}\rho \gamma^{-1}\,dx$ respectively.

Obviously,
$K_1\asymp K_2 \asymp  Ch^{-1}r_1^{m+1}\asymp Ch^{-1}\mu^{-(m+1)/(m_1+m-1)}$ for $m_1>2m$.

Finally, contribution of $X''$ to the asymptotics is $O(1)$.
\end{enumerate}

Thus as $h\to +0$, $\mu$ is disjoint from $0$ and $\mu h$ is disjoint from infinity, we have asymptotics
\begin{equation}
\N^- (\mu,h)= \cN^- (\mu,h)+ \left\{\begin{aligned}
& O(h^{-1}\mu^{-1}) && m_1<2m,\\
& O(h^{-1}\mu^{-(m+1)/(m_1+1-m)}) && m_1 >2m.
\end{aligned}\right.
\label{23-2-32}
\end{equation}

\item\label{example-23-2-4-ii}
Assume now that $m_1\le \max (m-1,2m)$  while all other assumptions of Example~\ref{example-23-2-3}\ref{example-23-2-3-ii} are fulfilled and $m<-1$. Again, considering cases
\begin{enumerate}[label=(\alph*), wide, labelindent=0pt]
\item\label{example-23-2-4-iia}
$2m < m_1 < m-1$ and
\item\label{example-23-2-4-iib}
$m_1< 2m$
\end{enumerate}
we arrive to the asymptotics
\begin{equation}
\N^- (\mu,h) = \cN^- (\mu,h) + \left\{\begin{aligned}
& O(h^{-1}\mu^{-1} ) && m_1>2m,\\
& O(h^{-1}\mu^{-(m+1)/(m_1+1-m)}) && m_1 <2m.
\end{aligned}\right.
\tag*{$\textup{(\ref*{23-2-32})}^\#$}\label{23-2-32-*}
\end{equation}
\end{enumerate}
\end{example}

Consider now fast increasing $\mu $ so that $\mu h\to \infty$. We will get non-trivial results only when domain defined by $\mu_\eff h_\eff \le C_0$ shrinks but remains non-empty which happens only in the frameworks of subcases (b) of Example~\ref{example-23-2-4}.

\begin{example}\label{example-23-2-5}
\begin{enumerate}[label=(\roman*), wide, labelindent=0pt]
\item\label{example-23-2-5-i}
In the framework of Example~\ref{example-23-2-4}\ref{example-23-2-4-i} with $m_1>2m$  consider  $\mu h\to \infty$. Then the  allowed domain is
\begin{equation}
\{x\colon  |x|\lesssim r_2 = (\mu h)^{-1/(m_1-2m)}\}
\label{23-2-33}
\end{equation}
and we have $r_0\le r_1\le r_2$ if $\mu \lesssim h^{-(m_1+1-m)/(m+1)}$ while for
$\mu \gtrsim h^{-(m_1+1-m)/(m+1)}$ inequalities go in the opposite direction.

Therefore as $h\to +0$, $ch^{-1}\le \mu \le h^{-(m_1+1-m)/(m+1)}$ asymptotics (\ref{23-2-32}) holds and one can see easily that
$\cN^- (\mu,h)\asymp h^{-2}r_2^{2m+2}$:
\begin{equation}
\cN^- (\mu,h) \asymp \mu ^{-2(m+1)/(m_1-2m)}h^{-2(m_1+ 1-m)/(m_1-2m)}.
\label{23-2-34}
\end{equation}

\item\label{example-23-2-5-ii}
Similarly, in the framework of Example~\ref{example-23-2-4}\ref{example-23-2-4-ii} with $m_1<2m$  asymptotics \ref{23-2-32-*} and (\ref{23-2-34}) hold as $h\to +0$,
$ch^{-1}\le \mu \le h^{-(m_1+1-m)/(m+1)}$.
\end{enumerate}
\end{example}

Let us consider  $\mu \to \mu_0$ where $\mu_0\ge 0$ is fixed.

\begin{example}\label{example-23-2-6}
\begin{enumerate}[label=(\roman*), wide, labelindent=0pt]
\item\label{example-23-2-6-i}
Let us consider singularity at $0$. In this case we are in the framework of Section~\ref{book_new-sect-11-1} of \cite{futurebook} provided $m>-1$, $m_1>-2$. So we need to consider the case when either $m\le -1$ or $m_1\le -2$ \underline{and} $m_1\ne 2m$.
\begin{enumerate}[label=(\alph*), wide, labelindent=0pt]
\item\label{example-23-2-6-ia}
The contribution of the semiclassical zone to the remainder is $O(h^{-1})$ only if $m > -1$.

\item\label{example-23-2-6-ib}
Assume now that $m\le -1$; then we need to assume that $m_1<2m$ and we have a normal magnetic field zone $X'_1=\{x\colon  |x|\ge r_1=\mu^{-1/(m_1-m+1)}\}$,
strong magnetic field zone
$X'_2=\{x\colon  r_0=(\mu h)^{-1/(m_1-2m)}\le |x|\le r_1\}$, and forbidden zone
$X'_3=\{x\colon |x|\le r_1\}$. One can check that $r_0<r_1$. Then the contribution to the remainder of $X'_1$ and $X'_2$ are both
$O(h^{-1}r_1^{m+1})= O(h^{-1}\mu^{-(m+1)/(m_1-m+1)})$ as $m<-1$ while for $m=-1$ contribution of $X'_1$ is $O(h^{-1}(|\log \mu|+1))$ and of $X'_2$ is $O(h^{-1})$. Contribution of $X'_3$ is smaller. So we get a remainder estimate
\begin{equation}
\N^- (\mu,h)=\cN^- (\mu,h) + \left\{\begin{aligned}
&O(h^{-1}), && m>-1,\\[2pt]
&O(h^{-1}(|\log \mu|+1)) && m=-1,\\[2pt]
&O(h^{-1}\mu^{-(m+1)/(m_1-m+1)})  && m<-1.
\end{aligned}\right.
\label{23-2-35}
\end{equation}
Meanwhile, one can prove easily that
\begin{multline}
\cN^- (\mu, h) \asymp \left\{\begin{aligned}
&h^{-2} &&m>-1,\\[2pt]
&h^{-2}(|\log \mu h|+1) && m=-1,\\[2pt]
&h^{-2(m_1-m+1)/(m_1-2m)}\mu ^{-2(m+1)/(m_1-2m)}  && m<-1.
\end{aligned}\right.
\label{23-2-36}
\end{multline}
under assumption (\ref{23-2-30}).
\end{enumerate}

\item\label{example-23-2-6-ii}
Similarly, consider the singularity at infinity. Then
\begin{enumerate}[label=(\alph*), wide, labelindent=0pt]
\item\label{example-23-2-6-iia}
Let $m<-1$; then the remainder estimate is $O(h^{-1})$.

\item\label{example-23-2-6-iib}
Let $m\ge -1$, $m_1> 2m$. Then
\begin{multline}
\N^- (\mu,h) =\cN^- (\mu,h) + \left\{\begin{aligned}
&O(h^{-1}) &&m<-1,\\[2pt]
&O(h^{-1}(|\log \mu|+1)) && m=-1,\\[2pt]
&O(h^{-1}\mu^{-(m+1)/(m_1-m+1)})  && m>-1.
\end{aligned}\right.
\tag*{$\textup{(\ref*{23-2-35})}^\#$}\label{23-2-35-*}
\end{multline}
and
\begin{multline}
\cN^- (\mu,h)\asymp\!  \left\{\begin{aligned}
&h^{-2} &&M<-1,\\[2pt]
&h^{-2}(|\log \mu h|+1) && m=-1,\\[2pt]
&h^{-2(m_1-m+1)/(m_1-2m)}\mu^{-2(m+1)/(m_1-2m)}  && m>-1.
\end{aligned}\right.
\tag*{$\textup{(\ref*{23-2-36})}^\#$}\label{23-2-36-*}
\end{multline}
under assumption \ref{23-2-30-*}.
\end{enumerate}
\end{enumerate}
\end{example}

\subsection{Improved remainder estimates}
\label{sect-23-2-1-4}
Let us improve remainder estimates (\ref{23-2-32}), \ref{23-2-32-*}, (\ref{23-2-35}), \ref{23-2-35-*} under certain non-periodicity-type assumptions.

\begin{example}\label{example-23-2-7}
\begin{enumerate}[label=(\roman*), wide, labelindent=0pt]
\item\label{example-23-2-7-i}
In the framework of Example~\ref{example-23-2-6}\ref{example-23-2-6-i} with $m>-1$ the contribution to the remainder of the zone
$\{x\colon  |x|\le \varepsilon\}$ does not exceed $\sigma h^{-1}$ with $\sigma=\sigma(\varepsilon)\to 0$ as $\varepsilon\to +0$. Then the standard arguments imply that under the standard non-periodicity assumption for Hamiltonian billiards\footnote{\label{foot-23-12} On the energy level $0$.} with the Hamiltonian
\begin{gather}
a (x,\xi,\mu_0)  =
\sum _{j,k} g^{jk}(\xi_j-\mu_0V_j)(\xi_k -\mu_0V_k)+V(x)
\label{23-2-37}\\
\intertext{the improved asymptotics}
\N^- (\mu,h) =\cN^- (\mu,h) + \kappa_1 h^{-1} +o(h^{-1})
\label{23-2-38}
\end{gather}
holds as $h\to +0$, $\mu\to \mu_0$ where $\kappa_1h^{-1}$ is the contribution of $\partial X$ calculated as $\mu=\mu_0$. However, in the general case we cannot replace  $\cN^{\MW\,-}$ by $\cN^{\W\,-}$ even if $\mu_0=0$.

\item\label{example-23-2-7-ii}
Similarly in the framework of Example~\ref{example-23-2-6}\ref{example-23-2-6-ii} with $m<-1$  under the standard non-periodicity assumption for Hamiltonian billiards\footref{foot-23-12}  with the Hamiltonian (\ref{23-2-37})  asymptotics (\ref{23-2-38}) holds as $h\to +0$, $\mu\to \mu_0$.

\item\label{example-23-2-7-iii}
In the framework of Example~\ref{example-23-2-6}\ref{example-23-2-6-i} with $m<-1$ the contributions to the remainder of the zones
$\{x\colon  |x|\le \varepsilon r _1\}$ and $\{x\colon  |x|\ge \varepsilon^{-1} r _1\}$ do not exceed $\sigma h^{-1} r_1^{m+1}$ with $\sigma=\sigma(\varepsilon)\to 0$ as $\varepsilon\to +0$. After scaling $x\mapsto xr_1^{-1}$ etc the magnetic field in the zone  $\{x\colon  \varepsilon r _1 \le |x|\le \varepsilon ^{-1}r _1\}$ becomes disjoint from $0$ and $\infty$.

Assume that  $g^{jk},V_j, V$ stabilize to positively homogeneous of degrees $0, m_1+1, 2m$ functions $g^{jk0}, V_j^0$, $V^0$ as $x\to 0$:
\begin{phantomequation}\label{23-2-39}\end{phantomequation}
\begin{align}
&D ^\alpha (g^{jk}-g^{jk0})=o\bigl(|x|^{-|\alpha |}\bigr),
\tag*{$\textup{(\ref*{23-2-39})}_1$}\label{23-2-39-1}\\
&D ^\alpha (V_j-V_j^0)=o\bigl(|x|^{m_1+1-|\alpha |}\bigr),
\tag*{$\textup{(\ref*{23-2-39})}_2$}\label{23-2-39-2}\\
&D ^\alpha (V-V^0)=o\bigl(|x|^{2m-|\sigma |}\bigr)
&&\forall \alpha : |\alpha |\le 1.
\tag*{$\textup{(\ref*{23-2-39})}_3$}\label{23-2-39-3}
\end{align}
Then the standard arguments imply that under the standard non-periodicity assumption for Hamiltonian trajectories\footref{foot-23-12}\footnote{\label{foot-23-13} In $T^*(\bR^2\setminus 0)$.} with the Hamiltonian
\begin{gather}
a^0(x,\eta )=\sum_{j,k} g^{jk0}(\eta _j-V_j^0)(\eta _k-V_k^0)+V_0
\label{23-2-40}\\
\intertext{the improved asymptotics}
\N^-(\mu ,h)=\cN (\mu ,h)+
o\bigl(h^{-1}\mu ^{-(m+1)/(m_1+1-m)}\bigr)
\label{23-2-41}
\end{gather}
holds as $h\to +0$, $\mu\to 0$.

\item\label{example-23-2-7-iv}
Similarly in the framework of Example~\ref{example-23-2-6}\ref{example-23-2-6-ii} with $m>-1$ let stabilization conditions $\textup{(\ref{23-2-39})}^\#_{1-3}$ be fulfilled. Then under the standard non-periodicity assumption for Hamiltonian trajectories\footref{foot-23-12}\footref{foot-23-13}  with the Hamiltonian (\ref{23-2-40})  asymptotics (\ref{23-2-41}) holds as $h\to +0$, $\mu\to 0$.

\item\label{example-23-2-7-v}
In the framework of Example~\ref{example-23-2-6}\ref{example-23-2-6-i} with $m=-1$ the contributions to the remainder of the zone $\{x: |x|\le r_1\}$ does not exceed $Ch^{-1}$, while the contributions to the remainder of the zones
$\{x: r_1\le |x|\le r_1\mu^{-\delta}\}$ and  $\{x: |x|\ge \mu^{\delta}\}$
do not exceed $C\delta h^{-1}|\log \mu|$ respectively. So, only zone
$\{x: r_1\mu^{-\delta}\le |x|\le \mu^{\delta}\}$ should be treated. After rescaling magnetic field in this zone is small.

Let stabilization conditions $\textup{(\ref*{23-2-39})}_{1,3}$ be fulfilled.
Then the standard arguments imply that under the standard non-periodicity assumption for Hamiltonian trajectories\footref{foot-23-12}\footref{foot-23-13}  with the Hamiltonian
\begin{gather}
a^0(x,\eta )=\sum_{j,k} g^{jk0}\eta _j\eta _k+V_0
\label{23-2-42}\\
\intertext{the improved asymptotics}
\N^-(\mu ,h)=\cN (\mu ,h)+ o(h^{-1}|\log \mu|)
\label{23-2-43}
\end{gather}
holds as $h\to +0$, $\mu\to 0$.

\item\label{example-23-2-7-vi}
Similarly in the framework of Example~\ref{example-23-2-6}\ref{example-23-2-6-ii} with $m=-1$ let stabilization conditions $\textup{(\ref{23-2-39})}^\#_{1,3}$ be fulfilled. Then  under the standard non-periodicity assumption for Hamiltonian trajectories\footref{foot-23-12}\footref{foot-23-13}  with the Hamiltonian (\ref{23-2-42})  asymptotics (\ref{23-2-43}) holds as $h\to +0$, $\mu\to 0$.
\end{enumerate}
\end{example}

Consider now case of $\mu \to \infty$; we would like to improve estimates
(\ref{23-2-32}) for $m_1 \ge 2m$ and \ref{23-2-32-*} for $m_1\le 2m$. Recall that these estimates hold provided $\mu \lesssim h^{-(m_1+1-m)/(m+1)}$.

\begin{example}\label{example-23-2-8}
\begin{enumerate}[label=(\roman*), wide, labelindent=0pt]
\item\label{example-23-2-8-i}
In the framework of Example~\ref{example-23-2-4}\ref{example-23-2-4-i} with $m_1>2m$ let stabilization conditions $\textup{(\ref{23-2-39})}_{1-3}$ be fulfilled. Then using arguments of Example~\ref{example-23-2-7}\ref{example-23-2-7-iii} one can prove easily that under the the standard non-periodicity assumption for Hamiltonian trajectories\footref{foot-23-12}\footref{foot-23-13}  with the Hamiltonian (\ref{23-2-40}) asymptotics (\ref{23-2-41}) holds as $h\to +0$, $\mu\to\infty$  provided  $\mu =o( h^{-(m_1+1-m)/(m+1)})$.

\item\label{example-23-2-8-ii}
Similarly in the framework of Example~\ref{example-23-2-4}\ref{example-23-2-4-ii} with $m_1<2m$ let stabilization conditions $\textup{(\ref{23-2-39})}^\#_{1-3}$ be fulfilled. Then under the the standard non-periodicity assumption for Hamiltonian trajectories\footref{foot-23-12}\footref{foot-23-13}  with the Hamiltonian (\ref{23-2-40}) asymptotics (\ref{23-2-41}) holds as $h\to +0$, $\mu\to\infty$  provided  $\mu =o( h^{-(m_1+1-m)/(m+1)})$.
\end{enumerate}
\end{example}

\subsection{Degenerations}
\label{sect-23-2-1-5}

Consider magnetic field with degeneration.

\begin{example}\label{example-23-2-9}
\begin{enumerate}[label=(\roman*), wide, labelindent=0pt]
\item\label{example-23-2-9-i}
Let $0$ be an inner singular point of $X$ and all assumptions of Example~\ref{example-23-2-3}~\ref{example-23-2-3-i} be fulfilled, except (\ref{23-2-9}) $F\asymp \rho_1$ which is replaced now by
\begin{equation}
|F|+|\nabla F|\gamma \asymp \rho_1.
\label{23-2-44}
\end{equation}
let $\Sigma = \{x\colon F_{12}=0\}$ be the manifold of degeneration,
$\cZ=\{x\colon  |F|\le \epsilon \rho_1\}$ be the vicinity of the degeneration.
Then we need to refer to Chapter~\ref{book_new-sect-14} of \cite{futurebook}. To have  $\mu_\eff \gg 1$,  and also $\mu_\eff \ge C h_\eff^{-2}$ near singularity with as before
$\mu_\eff= \mu |x|^{m_1+1-m}$, $h_\eff = h|x|^{-m-1}$ we assume that
$m_1< 3m +1$ if $m\le -1$. Assume that $m_1< \min (m-1, 3m+1)$.

We preserve the non-degeneracy assumption (\ref{23-2-27}) in $X\setminus \cZ$ and replace it by\footnote{\label{foot-23-14} We need it mainly to avoid correction terms.}
\begin{equation}
|\nabla_\Sigma W |  \asymp \rho^2\rho_1^{-\frac{2}{3}}\gamma^{-\frac{1}{3}}
\label{23-2-45}
\end{equation}
where $W:|W|\lesssim \rho^2\rho_1^{-\frac{2}{3}}\gamma^{\frac{2}{3}}$ is introduced according to Chapter~\ref{book_new-sect-14} of \cite{futurebook}. Then in addition to $m_1\ne 2m$ we assume also that $m_1\ne 3m+1$.

Recall that for the Schr\"odinger operator instead of $C\mu_\eff^{-1}h_\eff^{-1}$ the local remainder estimate now is $C\mu_\eff^{-\frac{1}{2}}h_\eff^{-1}\asymp
C\mu^{-\frac{1}{2}}h^{-1}\gamma ^{\frac{1}{2}(-m_1+3m+1)}$ and summation with respect to partition of unity returns $O(\mu^{-\frac{1}{2}}h^{-1})$ as
$h\to +0$, $\mu$ is disjoint from $0$ and $\mu^2 h$ is disjoint from infinity.

One can prove easily that $\cN^-(\mu,h)\asymp h^{-2}$ as $\mu \lesssim h^{-1}$ and $\cN^-(\mu,h)\asymp \mu^{-1} h^{-3}$ as $h^{-1}\lesssim \mu \lesssim h^{-2}$ exactly like in the regular case.

\item\label{example-23-2-9-ii}
Similar results hold in the case when infinity is an inner singular point,
$m_1> \max (m-1,\,3m+1)$.
\end{enumerate}
\end{example}

The following problems are not very challenging but interesting:

\begin{Problem}\label{Problem-23-2-10}
\begin{enumerate}[label=(\roman*), wide, labelindent=0pt]
\item\label{Problem-23-2-10-i}
Consider the case of $0$ being an inner singular point,  $m>-1$, $m_1\ge m-1$. The threshold value is $m_1=3m+1$. Assume that $h\to +0$.

\begin{enumerate}[label=(\alph*), wide, labelindent=0pt]
\item\label{Problem-23-2-10-ia}
Consider cases $1\lesssim \mu \lesssim h^{-1}$ and
$h^{-1}\lesssim \mu\lesssim h^{-2}$  (cf. Example~\ref{example-23-2-4}). Prove that in the case $m-1\le m_1<3m+1$ the remainder estimate is $O(-\mu^{\frac{1}{2}}h^{-1})$. Derive the remainder estimate in the case $m_1>3m+1$. Calculate the magnitude of $\cN^-$.

\item\label{Problem-23-2-10-ib}
Consider case $m_1>3m+1$ and $\mu \gtrsim h^{-2}$ (cf. Example~\ref{example-23-2-5}). Derive the remainder estimate. Calculate the magnitude of $\cN^-$.

\item\label{Problem-23-2-10-ic}
Consider $\mu \to 0$ (cf. Example~\ref{example-23-2-6} and possibly Example~\ref{example-23-2-7}).  Derive the remainder estimate. Calculate the magnitude of $\cN^-$.
\end{enumerate}

\item\label{Problem-23-2-10-ii}
Similarly consider the case of $0$ being an inner singular point,  $m>-1$, $m_1\le m-1$.
\end{enumerate}
\end{Problem}

\begin{example}\label{example-23-2-11}
Stronger asymptotic degenerations are not easily accessible. F.e. even if Chapter~\ref{book_new-sect-14} of \cite{futurebook} provides us with tools to consider
$F_{12}= x_1^n |x|^{m_1-n}$  with $n=2\ge $, it does not provide us with a tool to deal with the ``small perturbations'' like
$F_{12}= x_1^n |x|^{m_1-n}+  b x_1^{n-2} |x|^{m_2-n+2}$ with $m_2> m_1$
($m_2< m_1$), if we consider vicinity of $0$ (infinity respectively).

We want to recover whatever remainder estimates are possible.
\begin{enumerate}[label=(\roman*), wide, labelindent=0pt]
\item\label{example-23-2-11-i}
Case $n=2$ and $b>0$ should be the easiest as then the perturbation helps:
the first rescaling is the standard $x\mapsto 2(x-y) r^{-1}$, $r=|y|$ in $B(y,\frac{1}{2}r)$, transforming  it to $B(0,1)$, and as before
$h\mapsto h_\eff=hr^{-m-1}$, $\mu\mapsto \mu_\eff=\mu r^{m_1+1}$ and perturbing field has the strength $\varepsilon \mu$ with $\varepsilon= r^{m_2-m_1}$.

Then the second rescaling $x\to (x-y)\gamma^{-1}$ with $\gamma = |F|^{1/2}$ as long as  $|F|\ge \bar{\gamma}= C_0\max(\mu^{-1/3},\, \varepsilon^{\frac{1}{2}})$ and $\bar{\gamma}$ otherwise. Therefore the contribution of $B(y,\gamma(y))$ to the remainder does not exceed $C\mu^{-1}h^{-1}\gamma^{-2} $ and the summation over partition results in $C\mu^{-1}h^{-1}\bar{\gamma}^{-3} $, i.e.
$Ch^{-1}\min (1,\, \mu^{-1}\varepsilon^{-\frac{3}{2}})$.

We leave to the reader to plug $h_\eff, \mu_\eff$ instead of $\mu, h$ and take a summation over the primary partition.

\item\label{example-23-2-11-ii}
Cases $n=3$ and $b>0$ and $n=2,3$ and $b<0$ are harder but for $m_2$ close enough to $m_1$ we refer to Chapter~\ref{book_new-sect-14} of \cite{futurebook} after the second rescaling.

Then the second rescaling is the same $x\to (x-y)\gamma^{-1}$ with
$\gamma = |F|^{1/n}$ as long as
$|F|\ge \bar{\gamma}= C_0\max(\mu^{-1/(n+1)},\, \varepsilon^{\frac{1}{2}})$  and $\bar{\gamma}$ otherwise. Repeating the above arguments we conclude that the contribution of this zone is $Ch^{-1}$ for $\varepsilon \le \mu^{-2/(n+1)}$ and
$Ch^{-1}\mu^{-1}\varepsilon^{-(n+1)/2}$ for $\varepsilon \le \mu^{-2/(n+1)}$.

In the former case we are done, in the latter  we need to explore zone
$|F|\le \varepsilon^{n/2}$. Rescaling we get $\mu' = \mu \varepsilon^{(n+1)/2}$ and $h'= h\varepsilon^{-1/2}$. Then we can refer to Chapter~\ref{book_new-sect-14} of \cite{futurebook} rather than Chapter~\ref{book_new-sect-13} of \cite{futurebook} and the contribution of $B(y,\varepsilon^{1/2})$ to the remainder does not exceed $C\mu^{-1/2}h^{-1}\varepsilon^{-(n-1)/4}$ and summation over this zone results in $C\mu^{-1/2}h^{-1}\varepsilon^{-(n+1)/4}$, which is greater than the contribution of the previous zone.

Again, we leave to the reader to plug $h_\eff, \mu_\eff$ instead of $\mu, h$ and take a summation over the primary partition.
\end{enumerate}
\end{example}

\subsection{Power singularities. II}
\label{sect-23-2-1-6}

Let us modify  our arguments for the case $\rho_3<1$. Namely, in addition to \ref{23-2-3-*} we assume that
\begin{phantomequation}\label{23-2-46}\end{phantomequation}
\begin{align}
 &|D^\alpha g^{jk}|\le c\rho_2\gamma ^{-|\alpha |},\qquad
|D^\alpha F_{jk}|\le c\rho_2\rho _1 \gamma ^{-|\alpha |},
\tag*{$\textup{(\ref*{23-2-46})}_{1,2}$}\label{23-2-46-1}\\
&|D^\alpha {\frac{V}{F}}|\le
c\rho_3\rho ^2\rho _1^{-1}\gamma ^{-|\alpha |}\qquad \qquad
\forall \alpha :1\le |\alpha |\le K
\tag*{$\textup{(\ref*{23-2-46})}_3$}\label{23-2-46-3}
\end{align}
with $\rho_3\le \rho_2\le 1$ in the
corresponding regions where $\rho ,\rho _1,\gamma , \rho_3 $ are scaling functions.

Recall that in Chapter~\ref{book_new-sect-13} of \cite{futurebook} operator was reduced to the canonical form with the term, considered to be negligible, of magnitude $\rho_2\mu_\eff^{-2N}$. In this case impose non-degeneracy assumptions
\begin{phantomequation}\label{23-2-47}\end{phantomequation}
\begin{multline}
\bigl|V+(2n+1)\mu hF\bigr|\le \epsilon _0\rho_3\rho ^2,\quad
n\in \bZ^+ \implies\\
|\nabla v^*|\ge \epsilon _0\rho_3\rho ^2\rho _1^{-1}\gamma ^{-1}
\tag*{$\textup{(\ref*{23-2-47})}^*$}\label{23-2-47-*}
\end{multline}
and
\begin{equation}
\rho_3 \ge C_0\rho_2 (\mu \rho _1 \gamma \rho^{-1})^{-N}
\label{23-2-48}
\end{equation}
where $v^*$ is what this reduction transforms  $VF^{-1}$ to and (\ref{23-2-48})  means that ``negligible'' terms do not spoil \ref{23-2-47-*}.

Then according to Chapter~\ref{book_new-sect-13} of \cite{futurebook} the contribution of $B(x,\gamma)$ to the Tauberian remainder does not exceed
\begin{align}
&C\bigl(\rho_3 \mu_\eff ^{-1}h_\eff^{-1}+ 1\bigr)
\label{23-2-49}\\
\intertext{while the approximation error does not exceed}
&C\rho_2 \rho_3^{-1}\mu_\eff^{1-2N} h_\eff^{-1}
\underbracket{\bigl(\rho_3 \mu_\eff ^{-1}h_\eff^{-1}+ 1\bigr)}
\min \bigl((\mu_\eff h_\eff \rho_3^{-1})^s ,\, 1\bigr)
\label{23-2-50}
\end{align}
with arbitrarily large $s$ and therefore selected factor could be skipped.

\begin{example}\label{example-23-2-12}
\begin{enumerate}[label=(\roman*), wide, labelindent=0pt]
\item\label{example-23-2-12-i}
Let $0$ be an inner singular point\footref{foot-23-4}  and let assumptions $\textup{(\ref{23-2-46})}_{1-3}$  be fulfilled  with  $\gamma=\epsilon  |x|$,
$\rho =|x|^m\bigl(\bigl|\ln |x|\bigr|+1\bigr)^\alpha $,
$\rho_1 =|x|^{2m}\bigl(\bigl|\ln |x|\bigr|+1\bigr)^\beta $,
$\rho_2=1$ and $\rho_3 =\bigl(\bigl|\ln |x|\bigr|+1\bigr)^{-1}$.

Assume that
\begin{claim}\label{23-2-51}
\underline{Either} $m<-1$ and $\beta>2\alpha$ \underline{or} $m=-1$ and $\beta > \max (\alpha,2\alpha)$.
\end{claim}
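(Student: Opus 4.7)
Since (\ref{23-2-51}) is stated as a hypothesis preparing Example~\ref{example-23-2-12}\ref{example-23-2-12-i}, the natural conclusion to aim for, in parallel with Example~\ref{example-23-2-3}\ref{example-23-2-3-i}, is a remainder estimate of the form $\N^-(\mu,h) = \cN^-(\mu,h) + O(R)$ with an explicit $R$, together with a matching lower bound $\cN^- \asymp h^{-2}(\cdots)$. My plan has three steps.

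The first step is to set up the scaling. On a ball $B(y,\gamma(y))$ with $|y|=r$ one has $\mu_\eff \asymp \mu r^{m+1}(|\log r|+1)^{\beta-\alpha}$ and $h_\eff \asymp h r^{-m-1}(|\log r|+1)^{-\alpha}$. I would check that either clause of (\ref{23-2-51}) forces $\mu_\eff \to \infty$ as $r \to 0$: for $m<-1$ the polynomial factor diverges, while for $m=-1$ the condition $\beta>\alpha$ (implicit in $\beta>\max(\alpha,2\alpha)$) makes the logarithm do the work. This places every ball close to the singularity in the strong-field zone $X'_2$ and legitimizes the canonical-form reduction of Chapter~\ref{book_new-sect-13} of \cite{futurebook} with $\rho_2=1$ and $\rho_3=(|\log|x||+1)^{-1}$.

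The second step is to verify that the ``negligible'' tail of the canonical form really is negligible in the sense of (\ref{23-2-48}), i.e.\ $\rho_3 \gtrsim (\mu_\eff)^{-N}$. For $m<-1$ this is automatic for $N$ large; for $m=-1$ it again reduces to $\beta>\alpha$. Next, the non-degeneracy \ref{23-2-47-*} follows because the reduction perturbs $V/F$ by a term whose gradient is $O(\rho_3\rho^2\rho_1^{-1}\gamma^{-1})$, a factor $\rho_3<1$ smaller than the lower bound on $\nabla(V/F)$ provided by (\ref{23-2-27}). The stronger clause $\beta>2\alpha$ is the sharp integrability threshold: summing the Tauberian remainder (\ref{23-2-49}) over the dyadic partition in $|x|$ yields roughly $\int \rho^2\rho_1^{-1}\gamma^{-2}\,dx \asymp \int r^{-1}(|\log r|+1)^{2\alpha-\beta}\,dr$, which converges at $0$ precisely when $\beta>2\alpha$ in the $m<-1$ regime, with a parallel logarithmic count governing the boundary case $m=-1$.

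The third step is to assemble the global estimate by summing (\ref{23-2-49}) and (\ref{23-2-50}) over the dyadic partition and then handling the singular zone $X''=\{\rho\gamma\le 2h\}$ separately via a Rozenblioum-type bound, as in Example~\ref{example-23-2-4}\ref{example-23-2-4-i}. The main technical obstacle I expect is the bookkeeping of the logarithmic factors: once $\mu_\eff$, $h_\eff$, and $\rho_3$ are substituted into (\ref{23-2-50}), one has to check that $\rho_2\rho_3^{-1}\mu_\eff^{1-2N}h_\eff^{-1}$ sums to an acceptable error for some large $N$; the polynomial rates mimic Example~\ref{example-23-2-3}\ref{example-23-2-3-i}, but the logarithmic modulation is new and the thresholds in (\ref{23-2-51}) are chosen precisely so that both the non-degeneracy and the integrability counts pass simultaneously.
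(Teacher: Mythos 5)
First, a point of logic: (\ref{23-2-51}) is not a provable assertion but a standing hypothesis of Example~\ref{example-23-2-12}\ref{example-23-2-12-i}; the paper offers no proof of it, and you correctly read it that way and pivoted to reconstructing the derivation it enables. Your three-step outline --- rescaling to $\mu_\eff=\mu r^{m+1}(|\log r|+1)^{\beta-\alpha}$, $h_\eff=hr^{-m-1}(|\log r|+1)^{-\alpha}$, verification of (\ref{23-2-48}) and the non-degeneracy, then summation of (\ref{23-2-49})--(\ref{23-2-50}) over the partition --- is exactly the paper's route.

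There is, however, a computational slip in your second step that matters for where the threshold $\beta>2\alpha$ comes from. You attribute it to the convergence of $\int\rho^2\rho_1^{-1}\gamma^{-2}\,dx\asymp\int r^{-1}(|\log r|+1)^{2\alpha-\beta}\,dr$; after $t=|\log r|$ this is $\int^{\infty}t^{2\alpha-\beta}\,dt$, which converges only for $\beta>2\alpha+1$, not $\beta>2\alpha$. The threshold $\beta>2\alpha$ in fact governs the $\rho_3$-weighted Tauberian remainder: by (\ref{23-2-49}) the summand per unit area is $C\mu^{-1}h^{-1}\rho_3\rho^2\rho_1^{-1}\gamma^{-2}\asymp\mu^{-1}h^{-1}r^{-2}(|\log r|+1)^{2\alpha-\beta-1}$, whose integral converges precisely for $\beta>2\alpha$, and the ``$+1$'' term of (\ref{23-2-49}) summed over $\{\mu_\eff h_\eff\lesssim1\}$ contributes $(\mu h)^{-1/(\beta-2\alpha)}$, again finite only for $\beta>2\alpha$. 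Relatedly, your plan to make the approximation error (\ref{23-2-50}) acceptable ``for some large $N$'' overshoots what this example delivers: the paper works with $N=1$ and arrives at the three-case remainder (\ref{23-2-52}), which is $O((\mu h)^{-1})$ only for $\beta>2\alpha+2$ and degrades to $(\mu h)^{-2/(\beta-2\alpha)}$ for $2\alpha<\beta<2\alpha+2$; recovering $O(\mu^{-1}h^{-1})$ for all admissible $\beta$ requires the extra spectral/lacunary-strip argument and $N=2$, which the paper defers to (\ref{23-2-54})--(\ref{23-2-57}) and Examples~\ref{example-23-2-13}--\ref{example-23-2-14}. So the skeleton is right, but the bookkeeping as written would not reproduce (\ref{23-2-52}).
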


Then (\ref{23-2-48}) is fulfilled with $N=1$ and we can replace \ref{23-2-47-*} by
\begin{multline}
\bigl|V+(2n+1)\mu hF\bigr|\le \epsilon _0\rho_3\rho ^2,\quad
n\in \bZ^+ \implies\\
|\nabla VF^{-1}|\ge \epsilon _0\rho_3\rho ^2\rho _1^{-1}\gamma ^{-1}
\tag{\ref*{23-2-47}}\label{23-2-47-**}
\end{multline}
Then (\ref{23-2-49}) becomes
$C\bigl(\mu^{-1}h^{-1} |\log r|^{2\alpha -\beta -1}+1\bigr)$ and for $s=2$ the summation over zone
$\{\mu_\eff h_\eff \lesssim 1\}=\{\mu h |\log r|^{\beta-2\alpha}\lesssim 1\}$ results in $C\bigl( (\mu h)^{-1} + (\mu h)^{-1/(\beta-2\alpha)}\bigr)$.

Meanwhile, (\ref{23-2-50}) with $N=1$  becomes
$C(\mu h)^s |\log r|^{s(\beta -2\alpha +1)+2}$ in the zone
$\{\mu h |\log r|^{\beta-2\alpha+1}\le 1\}$ and the summation over this zone results in $C(\mu h)^{-3/(\beta -2\alpha+1)}$.

On the other hand, (\ref{23-2-50}) with $N=1$  becomes
$C\mu^{-1}h^{-1}|\log r|^{1-\beta+2\alpha}$  in the zone
$\{\mu h |\log r|^{\beta-2\alpha+1}\ge 1,\,
 \mu h |\log r|^{\beta-2\alpha}\le 1\}$
and the summation over this zone results in $O(\mu^{-1}h^{-1})$
as $\beta > 2\alpha +2$, $C(\mu h)^{-2/(\beta-2\alpha)}$ as
$\beta <2\alpha +2$ and 	$C\mu^{-1}h^{-1}|\log (\mu h)|$ as $\beta=2\alpha+2$. Therefore, we conclude that the remainder is $O(R)$ with
\begin{equation}
R\coloneqq   \left\{\begin{aligned}
&(\mu h)^{-1} && \beta>2\alpha+2,\\
&(\mu h)^{-1}|\log (\mu h)|  &&\beta=2\alpha+2,\\
&(\mu h)^{-2/(\beta-2\alpha)} && 2\alpha <\beta <2\alpha+2.
\end{aligned}\right.
\label{23-2-52}
\end{equation}
In particular, for $\beta \ge 2\alpha +2$ asymptotics (\ref{23-2-25})--(\ref{23-2-26}) hold for $h\to+0$, $\mu $ disjoint from $0$ and $\mu h$ disjoint from infinity\footnote{\label{foot-23-15} Provided $V+\mu h F\ge \epsilon \rho^2$ on $\partial X$.}.

One can see easily that under condition (\ref{23-2-30}) for $\mu h\le t$ with small enough $t>0$
\begin{phantomequation}\label{23-2-53}\end{phantomequation}
\begin{align}
&\cN ^-(\mu ,h)\asymp h^{-2}\bigl|\log (\mu h) \bigr|
&&\text{for\ \ } m=-1,\alpha =- \frac{1}{2} ,
\tag*{$\textup{(\ref*{23-2-53})}_1$}\label{23-2-53-1}\\
&\cN ^-(\mu ,h)\asymp
h^{-2}(\mu h)^{(2\alpha +1)/(\beta -2\alpha )}
&&\text{for\ \ } m=-1,\alpha >-\frac{1}{2}
\tag*{$\textup{(\ref*{23-2-53})}_2$}\label{23-2-53-2}
\end{align}
and
\begin{multline}
\epsilon h^{-2}
\exp\bigl(\epsilon (\mu h)^{1/(2\alpha -\beta )}\bigr)
\le \cN ^-(\mu ,h)\le \\
C _1h^{-2}\exp\bigl(C_1(\mu h)^{1/(2\alpha -\beta )}\bigr)
\tag*{$\textup{(\ref*{23-2-53})}_3$}\label{23-2-53-3}
\end{multline}
for $m<-1$ where $C_1>\epsilon >0$ are constants.

On the other hand, for $m=-1$, $\alpha <-{\frac{1}{2}}$ the equivalence
$\cN ^-(\mu ,h)\asymp h^{-2}$ holds.

\item\label{example-23-2-12-ii}
Let infinity be an inner singular point\footref{foot-23-7} and let  $\textup{(\ref{23-2-46})}_{1-3}$  be fulfilled  with
$\gamma=\epsilon  \langle x\rangle $,
$\rho =\langle x\rangle ^m\bigl(\log\langle x\rangle +1\bigr) ^\alpha $,
${\rho _1= \langle x\rangle ^{2m}\bigl(\log\langle x\rangle +1\bigr)^\beta }$,
$\rho_2=1$, $\rho_3 =\bigl(\log\langle x\rangle +1\bigr) ^{-1}$.
Assume that

\medskip\noindent
\hypertarget{23-2-51-*}$\textup{(\ref*{23-2-51})}^\#$
\underline{Either} $m>-1$ and $\beta>2\alpha$ \underline{or} $m=-1$ and $\beta > \max (\alpha,2\alpha)$.
\medskip

Then all the statements of \ref{example-23-2-12-i} remain true with the obvious modification: condition (\ref{23-2-30}) should be replaced by \ref{23-2-30-*} and estimates \ref{23-2-53-3} hold for $m>-1$.
\end{enumerate}
\end{example}

Let us investigate further the case of $\beta \le 2\alpha +2$. Assume the following non-degeneracy assumption
\begin{equation}
-\langle x,\nabla \rangle VF ^{-1}\ge \epsilon |\log |x||^{2\alpha-\beta-1}
\qquad \text{for\ \ }|x|\le \epsilon.
\label{23-2-54}
\end{equation}
In fact, we need to check it for $V^*=V+\mu^{-2}\omega_1$, but this condition for $V$ and $V^*$ are equivalent as long as $\mu_\eff ^{-2}\ge C_0\rho_3$. This condition is fulfilled automatically for all $r$ provided \underline{either} $m<-1$ \underline{or} $\beta> \alpha +\frac{1}{2}$.

\medskip
Consider first the semiclassical error. Recall that the contribution of the partition element does not exceed (\ref{23-2-49}) and only summation over zone where $\rho_3 \lesssim \mu_\eff h_\eff\lesssim 1$  could bring an error, exceeding $C\mu^{-1}h^{-1}$.

Assume first that $m<-1$. Then in this zone $(\mu h)^{-1/(2m+2)}|\log \mu h|^a \lesssim r \lesssim (\mu h)^{-1/(2m+2)}|\log \mu h|^b$ and then its contribution is $C|\log (\mu h)|=O(\mu^{-1}h^{-1})$.

Consider the case $m=-1$.   In this case the problematic zone is
$\cZ\coloneqq   \{(\mu h)^{-1/(\beta -2\alpha +1)}\lesssim |\log r|\lesssim
(\mu h)^{-1/(\beta -2\alpha )}\}$. Observe that
$\int _\cZ\gamma^{-2}\,dx\asymp (\mu h)^{-1/(\beta -2\alpha )}$ which is $O(\mu^{-1}h^{-1})$ provided $\beta \ge 2\alpha+1$.

We can improve these arguments in the following way. Let us observe that the zone $\{|\log r|\le C_0(\mu h)^{-1/(\beta -2\alpha)} \}$ consists of
the \emph{spectral strips\/}
\begin{equation}
\Pi _n=\{x\colon  \bigl|v^*+(2n+1)\mu h\bigr|\le C_1|\log r|^{-1}\}
\label{23-2-55}
\end{equation}
with $v^*\coloneqq   (V+\omega_1 \mu^{-2})F^{-1}$ with
$n=0,\ldots,n_0= C_1(\mu h)^{-1/(\beta -2\alpha +1)}$, separated by the \emph{lacunary strips\/}.

Condition (\ref{23-2-54}) provides that for fixed $n$ and $x|x| ^{-1}$ (thus, only $|x|$ varies) on $\Pi _n$ the inequality
$\bigl| \mathfrak{d}\log |x|\bigr|\le C_1$ holds where $\mathfrak{d} f$ means the oscillation of $f$ (on $\Pi _n$) and the constant $C_1$ does not depend on $n,x|x| ^{-1},\mu ,h$.

Therefore one can easily see that the contribution of each strip $\Pi _n$ to the semiclassical remainder  does not exceed $C$ and their total contribution does not exceed  $Cn_0=O(\mu ^{-1}h^{-1})$.

Meanwhile, contribution of each lacunary strip does not exceed the contribution of the adjacent spectral strip, multiplied by
$C\rho_3^{-1}\times (\mu_\eff^{-1} h_\eff)^s$ and one can see easily that their total contribution is $O(\mu^{-1}h^{-1})$. Therefore

\begin{claim}\label{23-2-56}
If in the framework of Example~\ref{example-23-2-12}\ref{example-23-2-12-i} \underline{either} $m<-1$ \underline{or} $m=-1, \beta>\alpha+\frac{1}{2}$ then the semiclassical error is $O( \mu^{-1}h^{-1})$.
\end{claim}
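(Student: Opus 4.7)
The plan is to sharpen the Tauberian bound \textup{(\ref{23-2-49})} by isolating the only zone in which the naive partition-of-unity sum can exceed $C\mu^{-1}h^{-1}$, namely $\cZ\Def\{x\colon \rho_3\lesssim \mu_\eff h_\eff\lesssim 1\}$. On its complement, the bound $C(\rho_3\mu_\eff^{-1}h_\eff^{-1}+1)$ from \textup{(\ref{23-2-49})}, together with the standard bookkeeping, already yields $O(\mu^{-1}h^{-1})$, so everything reduces to controlling $\cZ$. Plugging the scales of Example~\ref{example-23-2-12}\ref{example-23-2-12-i} into $\mu_\eff h_\eff=\mu h\,\rho_1\rho^{-2}$, the $|x|$-dependence cancels (since $m_1=2m$) and $\mu_\eff h_\eff\asymp \mu h\bigl(\bigl|\log|x|\bigr|+1\bigr)^{\beta-2\alpha}$.

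For $m<-1$, $\cZ$ is a thin logarithmic shell around the critical radius and integration of $\gamma^{-2}$ over it gives $O(|\log(\mu h)|)=O(\mu^{-1}h^{-1})$ (since $\mu h\lesssim 1$), as sketched just before the claim. For $m=-1$, the zone is $(\mu h)^{-1/(\beta-2\alpha+1)}\lesssim \bigl|\log|x|\bigr|\lesssim (\mu h)^{-1/(\beta-2\alpha)}$, whose crude volume $\int_\cZ\gamma^{-2}\,dx\asymp (\mu h)^{-1/(\beta-2\alpha)}$ is only $O(\mu^{-1}h^{-1})$ when $\beta\ge 2\alpha+1$; hence for the full range $\beta>\alpha+\frac{1}{2}$ a finer decomposition is needed.

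To supply it, I partition $\cZ$ into the spectral strips $\Pi_n$ of \textup{(\ref{23-2-55})}, with $n=0,\ldots,n_0\asymp (\mu h)^{-1/(\beta-2\alpha+1)}$, together with the intervening lacunary strips. The hypothesis $\beta>\alpha+\frac{1}{2}$ (resp.\ $m<-1$) is exactly what the author has noted makes the non-degeneracy \textup{(\ref{23-2-54})} automatic for $V$, and the gap $\mu_\eff^{-2}\ge C_0\rho_3$ transfers it unchanged to the canonical variable $v^*=(V+\omega_1\mu^{-2})F^{-1}$. Applying \textup{(\ref{23-2-54})} along rays (with $x/|x|$ fixed) bounds the oscillation $\bigl|\mathfrak{d}\log|x|\bigr|$ on each $\Pi_n$ by a constant $C_1$ independent of $n,\mu,h$, so each spectral strip contributes $O(1)$ to the Tauberian remainder and their total is $O(n_0)=O(\mu^{-1}h^{-1})$ (using $\beta-2\alpha+1\ge 1$).

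The lacunary strips are then absorbed by the standard microlocal gain: since $\mu_\eff h_\eff\le \rho_3$ there, each lacunary contribution is bounded by that of the adjacent spectral strip times $C\rho_3^{-1}(\mu_\eff h_\eff\rho_3^{-1})^s$ for arbitrarily large $s$, summing to $O(\mu^{-1}h^{-1})$. The main obstacle I expect is the spectral-strip bookkeeping in the subrange $2\alpha<\beta\le 2\alpha+1$ of the case $m=-1$: one must verify that \textup{(\ref{23-2-54})} really does pass from $V$ to $v^*$ with constants uniform in $\mu,h$, and that the oscillation estimate $\bigl|\mathfrak{d}\log|x|\bigr|\le C_1$ holds with a constant independent of $n$ across the whole range $n=0,\ldots,n_0$; once these uniformities are in place, the summation over $n$ and the lacunary gluing are routine.
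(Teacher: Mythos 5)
Your proposal follows the paper's proof essentially step for step: reduce to the zone $\cZ=\{\rho_3\lesssim\mu_\eff h_\eff\lesssim 1\}$, use the direct volume count where it suffices, and in the hard range decompose $\cZ$ into the spectral strips $\Pi_n$ of (\ref{23-2-55}) and the intervening lacunary strips, with (\ref{23-2-54}) (transferred to $v^*$ via $\mu_\eff^{-2}\lesssim C_0\rho_3$) bounding the oscillation of $\log|x|$ on each $\Pi_n$, so that each strip contributes $O(1)$, the total is $O(n_0)=O(\mu^{-1}h^{-1})$, and the lacunary strips are absorbed by the ellipticity gain. One caveat: since $\mu_\eff h_\eff=\mu h\bigl(|\log|x||+1\bigr)^{\beta-2\alpha}$ does not depend on $m$, the crude volume $\int_\cZ\gamma^{-2}\,dx\asymp(\mu h)^{-1/(\beta-2\alpha)}$ is the same for $m<-1$ as for $m=-1$, so the direct estimate you invoke for $m<-1$ (following the paper's wording) actually only settles $\beta\ge 2\alpha+1$; for $2\alpha<\beta<2\alpha+1$ with $m<-1$ you should simply run your spectral-strip argument there as well, which goes through verbatim because the transfer condition on $\mu_\eff^{-2}$ versus $\rho_3$ is automatic when $m<-1$.
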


The same arguments work for the approximation error (\ref{23-2-50}) with $N=1$: the contribution of the spectral strip gains factor $\rho_3$ and the contribution of the lacunary strip is $0$. Then the total approximation error does not exceed ``improved'' (\ref{23-2-52}):
\begin{equation}
R\coloneqq   \left\{\begin{aligned}
& (\mu h)^{-1} && \beta>2\alpha+1,\\
& (\mu h)^{-1}|\log (\mu h)| &&\beta=2\alpha+1,\\
&(\mu h)^{-1/(\beta-2\alpha)} && 2\alpha <\beta <2\alpha+1.
\end{aligned}\right.
\label{23-2-57}
\end{equation}

Then we arrive to

\begin{example}\label{example-23-2-13}
\begin{enumerate}[label=(\roman*), wide, labelindent=0pt]
\item\label{example-23-2-13-i}
Let all the assumptions of Example~\ref{example-23-2-12}\ref{example-23-2-12-i}
be fulfilled, in particular \textup{(\ref{23-2-51})}.  Moreover, let us assume that (\ref{23-2-54}) is fulfilled and \underline{either} $m<-1$ \underline{or} $\beta>\alpha+\frac{1}{2}$.
Then for $h\to+0$, $\mu$ disjoint from $0$ and $\mu h$ disjoint from infinity the asymptotics
\begin{equation}
\N ^-(\mu ,h)=\cN ^{-}(\mu ,h)+O(R)
\label{23-2-58}
\end{equation}
holds with $R$ defined by (\ref{23-2-57}).

\item\label{example-23-2-13-ii}
Let all the assumptions of Example~\ref{example-23-2-12}\ref{example-23-2-12-ii} be fulfilled.  Moreover, let us assume that condition
\begin{equation}
\langle x,\nabla \rangle VF ^{-1}\ge  \epsilon |\log x|^{2\alpha-\beta-1}
\qquad \text{for\ \ }|x|\ge c.
\tag*{$\textup{(\ref{23-2-54})}^\#$}\label{23-2-54-*}
\end{equation}
is fulfilled and  \underline{either} $m>-1$ \underline{or}
$\beta\ge \alpha+\frac{1}{2}$. Then for $h\to+0$, $\mu$ disjoint from $0$ and $\mu h$ disjoint from infinity the asymptotics (\ref{23-2-58}) holds. with $R$ defined by (\ref{23-2-57}).
\end{enumerate}
\end{example}

If we want to improve the remainder estimate in the case $\beta\le 2\alpha+1$ we should  take $N=2$, which would make our formulae more complicated but still ``computable''.  Then expression (\ref{23-2-50}) with $N=2$ acquires (in comparison with the same expression with $N=1$) the factor
$\mu^{-2}r^{-2(m+1)}|\log r|^{-2(\beta-\alpha)}$ and becomes
\begin{equation}
\mu^{-3}r^{-2(m+1)}|\log r|^{-3\beta +4\alpha +1}.
\label{23-2-59}
\end{equation}
Further, we can get gain a factor $|\log r|^{-1}$ dues to above arguments concerning  spectral and lacunary zones. Then after the summation we get $O(\mu^{-3}h^{-1})$  if \underline{either} $m<-1$ \underline{or} $m=-1$, $3\beta> 4\alpha+1$. However if $m=-1$ this is the case due to $\beta>2\alpha$ and $\beta\ge \alpha+\frac{1}{2}$. Then we arrive to

\begin{example}\label{example-23-2-14}
\begin{enumerate}[label=(\roman*), wide, labelindent=0pt]
\item\label{example-23-2-14-i}
Let all the assumptions of Example~\ref{example-23-2-12}\ref{example-23-2-12-i}
be fulfilled, in particular \textup{(\ref{23-2-51})}.  Moreover, let us assume that (\ref{23-2-54}) is fulfilled and \underline{either} $m<-1$ \underline{or} $\beta> \alpha+\frac{1}{2}$.

Then for $h\to+0$, $\mu$ disjoint from $0$ and $\mu h$ disjoint from infinity the asymptotics
\begin{gather}
\N ^-(\mu ,h)=\cN ^{-\prime}(\mu ,h)+O(\mu ^{-1}h^{-1})
\label{23-2-60}\\
\shortintertext{holds where}
\cN ^{-\prime}(\mu ,h)=\int h^{-2}\cN^{\MW\,\prime}(x,\mu h)\,dx
\label{23-2-61}
\end{gather}
and $\cN^{\MW\,\prime}$ is defined by \textup{(\ref{book_new-13-4-133}) of \cite{futurebook})} with $\psi =1$.

\item\label{example-23-2-14-ii}
Let all the assumptions of Example~\ref{example-23-2-12}\ref{example-23-2-12-ii} be fulfilled.  Moreover, let us assume that condition \ref{23-2-54-*}
is fulfilled and  \underline{either} $m>-1$ \underline{or}
$\beta> \alpha+\frac{1}{2}$. Then for $h\to+0$, $\mu$ disjoint from $0$ and $\mu h$ disjoint from infinity the asymptotics \ref{23-2-60}--\ref{23-2-61} holds.
\end{enumerate}
\end{example}

\begin{remark}\label{rem-23-2-15}
Asymptotics (\ref{23-2-60})--(\ref{23-2-61}) still holds if $m=-1$ and
$2\alpha <\beta\le \alpha+\frac{1}{2}$ (and therefore $\alpha<\frac{1}{2}$) under assumption  (\ref{23-2-62}) below.

Indeed, recall that we need $\beta>\alpha+\frac{1}{2}$ only to ensure that inequality (\ref{23-2-54}) for $V$ implies the same inequality for $V^*$. This conclusion  should be checked as $|\log r|\le C_0(\mu h)^{-1/(\beta-2\alpha)}$ only. Observe that this conclusion still holds in the case under consideration if $\mu^{-2}(\mu h) ^{(2\beta-2\alpha-1)/(\beta-2\alpha)}\le \epsilon$, i.e.
\begin{equation}
\mu\ge Ch^{(2\beta -2\alpha -1)/(1-2\alpha)}.
\label{23-2-62}
\end{equation}
Then the semiclassical error does not exceed $C\mu^{-1}h^{-1}$ and an approximation error does not exceed
$C\mu^{-3}h^{-1}(\mu h)^{-(4\alpha -3\beta+1)/(\beta-2\alpha)}$ (multiplied by $|\log (\mu h)|$ if $4\alpha -3\beta+1=0$). One can prove easily that it is less than $C\mu^{-1}h^{-1}$ under assumption (\ref{23-2-62}).
\end{remark}

We leave to the reader

\begin{Problem}\label{Problem-23-2-16}
\begin{enumerate}[label=(\roman*), wide, labelindent=0pt]
\item\label{Problem-23-2-16-i}
Consider  cases of $\mu \to \mu_0>0$ and $\mu \to \mu_0=0$ like in Example~\ref{example-23-2-6}.

\item\label{Problem-23-2-16-ii}
Consider cases of $m>-1$ and singularity at $0$, $m<-1$ and singularity at infinity, and $m=-1$, $\alpha <0$ and singularity either at $0$ or at infinity. In these three cases assumption $\beta >2\alpha$ is not necessary; therefore, one needs to consider also the case $\beta<2\alpha$.

\item\label{Problem-23-2-16-iii}
Furthermore, in the framework of \ref{Problem-23-2-16-ii} and $\beta<2\alpha$ consider
the case of $\mu h\to \infty$ like in Example~\ref{example-23-2-5}.
\end{enumerate}
\end{Problem}

\subsection{Exponential singularities}
\label{sect-23-2-1-7}

Consider now singularities of the exponential type. The following example follows immediately from Theorem~\ref{thm-23-2-2}:

\begin{example}\label{example-23-2-17}
\begin{enumerate}[label=(\roman*), wide, labelindent=0pt]
\item\label{example-23-2-17-i}
Let $0$ be an inner singular point\footref{foot-23-4} and let conditions of Theorem~\ref{thm-23-2-2} be fulfilled  with
$\gamma=\epsilon  |x|^{1-\beta}$, $\rho =\exp (a|x|^\alpha )$,
$\rho _1=\exp (b|x| ^\beta )$ with \underline{either} $0>\alpha >\beta $, $a>0$, $b>0$ \underline{or} $0>\alpha =\beta $, $b>2a>0$.

Let $V+\mu h F\ge \epsilon \rho^2$ on
$(\partial X\setminus 0)\cup \{x\colon |x|\ge c\}$. Then conditions (\ref{23-2-23}) and (\ref{23-2-24}) are fulfilled automatically and for $\mu \ge 1$, $h\to+0$ asymptotics (\ref{23-2-25})--(\ref{23-2-26}) hold.

Moreover, under condition (\ref{23-2-30}) for $\alpha =\beta $, $\mu h <t$ with a small enough constant $t>0$
\begin{equation}
\cN ^- (\mu ,h)\asymp
h^{-2}(\mu h)^{-2a/(b-2a) }|\log \mu h|^{{\frac{2}{\alpha }}-1};
\label{23-2-63}
\end{equation}
otherwise the left-hand expression is ``$O$'' only.

\item\label{example-23-2-17-ii}
Let infinity be an inner singular point\footref{foot-23-7} and let conditions of Theorem~\ref{thm-23-2-2} be fulfilled  with
$\gamma=\epsilon  \langle x\rangle ^{1-\beta }$,
$\rho =\exp (a\langle x\rangle ^\alpha )$,
$\rho_1 =\exp (b\langle x\rangle ^\beta )$ where either $0<\alpha <\beta$,
$a>0$, $b>0$ or $0<\alpha=\beta $, $b>2a>0$.

Let $V+\mu h F\ge \epsilon \rho^2$ on $\partial X$. Then conditions (\ref{23-2-23})--(\ref{23-2-24}) are fulfilled automatically and asymptotics (\ref{23-2-25})--(\ref{23-2-26}) hold.

Moreover, under condition \ref{23-2-30-*} for
$\alpha =\beta $, $\mu h <t$ with a small enough constant $t>0$
(\ref{23-2-45}) holds; otherwise the left-hand expression is ``$O$'' only.
\end{enumerate}
\end{example}

We leave to the reader the following

\begin{problem}\label{problem-23-2-18}
Calculate magnitude of $\cN^-(\mu,h)$ as $\alpha<\beta$ in Example~\ref{example-23-2-17}.
\end{problem}

\begin{example}\label{example-23-2-19}
\begin{enumerate}[label=(\roman*), wide, labelindent=0pt]
\item\label{example-23-2-19-i}
Let $0$ be an inner singular point\footref{foot-23-4}. Assume that conditions of Theorem~\ref{thm-23-2-2} are fulfilled  with
$\gamma=\epsilon  |x|^{1-\beta }$, $\rho =|x|^m\exp (|x| ^\beta )$,
$\rho_1 =|x|^{m_1}\exp (2|x| ^\beta )$, $\rho _2=1$, $\rho_3=|x|^{-\beta }$ and let $\beta <0$, $m_1<2m$.

Let $V+\mu h F\ge \epsilon \rho^2$ on
$(\partial X\setminus 0)\cup \{x\colon |x|\ge c\}$. Then conditions (\ref{23-2-24}), (\ref{23-2-48}) are fulfilled automatically.

Moreover,  condition
\begin{equation}
\int \rho_3^{-1}\rho_1^{-1}\rho^2\,dx<\infty
\label{23-2-64}
\end{equation}
is fulfilled provided $2m+3\beta>m_1$; then asymptotics (\ref{23-2-25})--(\ref{23-2-26}) holds.  Moreover,  under condition (\ref{23-2-30}) while we  cannot calculate magnitude of $\cN^-(\mu,h)$ itself, we conclude that
\begin{equation}
 \log(\cN ^-(\mu ,h))\asymp (\mu h)^{\beta /(2m-m_1)}).
\label{23-2-65}
\end{equation}
\vskip-5pt
\item\label{example-23-2-19-ii}
Let infinity be an inner singular point\footref{foot-23-7}. Assume that conditions of Theorem~\ref{thm-23-2-2} are fulfilled  with $\gamma =\epsilon _0 \langle x \rangle ^{1-\beta }$,
$\rho =\langle x\rangle ^m\exp (\langle x\rangle ^\beta )$,
$\rho_1 =\langle x\rangle ^{m_1}\exp (2\langle x\rangle ^\beta) $,
$\rho _2=1$, $\rho_3=\langle x\rangle ^{-\beta }$ where
$\beta >0$, $m_1>2m$.

Let $V+\mu h F\ge \epsilon \rho^2$ on $\partial X$. Then condition (\ref{23-2-64}) is fulfilled provided $2m+3\beta<m_1$ ; then asymptotics (\ref{23-2-25})--(\ref{23-2-26}) holds.  Moreover,  under condition \ref{23-2-30-*} estimates (\ref{23-2-65}) hold.
\end{enumerate}
\end{example}

Consider case of $2m+3\beta \le m_1$, $2m+3\beta \ge m_1$ in the frameworks of Statements~\ref{example-23-2-19-i} and~\ref{example-23-2-19-ii} respectively. One can get some remainder estimates albeit less precise. We want to improve them using the same technique as in Subsubsection~\emph{\ref{sect-23-2-1-6}.6.~\nameref{sect-23-2-1-6}}. However now things are a bit simpler. First of all, condition $\mu_\eff^{-2}\le\rho_3$ is fulfilled automatically.

\medskip
Contribution of the zone  $\cZ_1=\{r\ge  (\mu h)^{1/(-\beta-m_1+2m)}\}$ where
$\rho_3 \ge \mu _\eff h_\eff$ to the semiclassical error does not exceed
$(\mu h)^{-1}\int_{\cZ_1}  \rho_3 \rho_1^{-1}\rho^2\gamma^{-2}\, dx$. Contribution of the zone
$\cZ_2=\{(\mu h)^{1/(-m_1+2m)}\le r\le  (\mu h)^{1/(-\beta-m_1+2m)}\}$
where $\rho_3 \le \mu _\eff h_\eff \lesssim 1$ to the semiclassical error does not exceed  $\int_{\cZ_2}   \rho_3\gamma^{-2}\, dx$ due to the same ``spectal and lacunary strips'' arguments. Then the semiclassical error is does not exceed $C(\mu h)^{-1}$ provided $m_1<2m+\beta$.

\medskip
Further, contributions of these zones $\cZ_j$ to the approximation error with $N=1$ do not exceed
$C\mu^{-1}h^{-1}\int_{\cZ_j}\rho_1^{-1}\rho^2 \gamma^{-2}\, dx$ and the approximation error is $O(\mu^{-1}h^{-1})$ provided $m_1<2m+2\beta$.

On the other hand, one can see easily that the approximation error with $N=2$ is $O(\mu^{-1}h^{-1})$ for sure. We arrive to the following

\begin{example}\label{example-23-2-20}
\begin{enumerate}[label=(\roman*), wide, labelindent=0pt]
\item\label{example-23-2-20-i}
Let all the assumptions of Example~\ref{example-23-2-19}\ref{example-23-2-19-i}
be fulfilled.  Moreover, let us assume that condition
\begin{equation}
-\langle x,\nabla\rangle VF^{-1}\ge \epsilon |x|^{2m-m_1}\qquad
\text{for\ \ }|x|\le \epsilon.
\label{23-2-66}
\end{equation}
is fulfilled.

Then for $h\to+0$, $\mu$ disjoint from $0$ and $\mu h$ disjoint from infinity the asymptotics (\ref{23-2-25})--(\ref{23-2-26}) holds provided $m_1<2m+2\beta$ and  asymptotics (\ref{23-2-60})--(\ref{23-2-61}) holds with
$\cN^{\MW\,\prime}$  defined by \textup{(\ref{book_new-13-4-133}) of \cite{futurebook})} with $\psi =1$ provided $m_1<2m+\beta$.

\item\label{example-23-2-20-ii}
Let all the assumptions of Example~\ref{example-23-2-12}\ref{example-23-2-12-ii} be fulfilled.  Moreover, .  Moreover, let us assume that condition
\begin{equation}
\langle x,\nabla\rangle VF^{-1}\ge \epsilon |x|^{2m-m_1}\qquad
\text{for\ \ }|x|\ge \epsilon.
\tag*{$\textup{(\ref*{23-2-66})}^\#$}\label{23-2-66-*}
\end{equation}
is fulfilled.

Then for $h\to+0$, $\mu$ disjoint from $0$ and $\mu h$ disjoint from infinity the asymptotics (\ref{23-2-25})--(\ref{23-2-26}) holds provided $m_1>2m+2\beta$ and  asymptotics (\ref{23-2-60})--(\ref{23-2-61}) holds with
$\cN^{\MW\,\prime}$  defined by \textup{(\ref{book_new-13-4-133}) of \cite{futurebook})} with $\psi =1$
provided $m_1>2m+\beta$.
\end{enumerate}
\end{example}

\begin{example}\label{example-23-2-21}
\begin{enumerate}[label=(\roman*), wide, labelindent=0pt]
\item\label{example-23-2-21-i}
In the framework of Example~\ref{example-23-2-17}\ref{example-23-2-17-i}
assume that (\ref{23-2-66}) is fulfilled. Moreover, let
\begin{multline}
|D^\sigma g^{jk}|\le c|x|^{-|\sigma |},\qquad
|D ^\sigma \log F|\le c|x|^{\beta -|\sigma |},\\[3pt]
|D^\sigma V/F|\le c|x|^{2m-m_1-|\sigma |}\qquad
\forall \sigma :1\le |\sigma |\le 2.
\label{23-2-67}
\end{multline}
Then for $h\to +0$, $1\le \mu =O(h^{-1})$ asymptotics
\textup{(\ref{23-2-25})}--\textup{(\ref{23-2-26})} holds provided $m_1<2m+\beta$.%

Indeed,  one can easily see that under condition (\ref{23-2-67})
\begin{equation}
|\omega _1|\le C_1F^{-1}|x| ^{\beta +4m-2m_1-2}=\omega^* _1
\label{23-2-68}
\end{equation}
while the general theory yields only that
\begin{equation*}
|\omega _1|\le C_1F^{-1}|x| ^{2\beta +4m-2m_1-2}.
\end{equation*}
Then the error $|\cN ^-(\mu ,h)-\cN ^{-\prime}(\mu ,h)|$ does not exceed
\begin{multline*}
C\mu ^{-1}h^{-1}\int_{\{\rho _1\mu h\le \rho_3\rho ^2\}}
\rho _1^2\rho ^{-2}\rho_3^{-1}\omega _1^*\,dx+\\
Ch^{-2}\int_{\{\rho_3\rho ^2\le \rho _1\mu h
\le C_0\rho ^2\}}\rho _1(\omega _1^*\mu ^{-2}+\chi _\Pi )\,dx
\end{multline*}
where $\chi _\Pi $ is the characteristic function of the set
\begin{equation*}
\Pi =
\bigcup _{n\in \bZ^+}
\bigl\{x\colon \bigl|v+(2n+1)\mu h\bigr|\le C_1\mu ^{-2}\omega _1^*\bigr\}.
\end{equation*}
Applying estimate (\ref{23-2-68}) and condition (\ref{23-2-66}) one can prove that this error is $O(\mu ^{-1}h^{-1})$.

\item\label{example-23-2-21-ii}
Similarly, in the framework of Example~\ref{example-23-2-17}\ref{example-23-2-17-ii}
assume that \ref{23-2-66-*}  is fulfilled. Moreover, let
\begin{multline}
|D ^\sigma g^{jk}|\le c\langle x\rangle ^{-|\sigma |},
\qquad |D ^\sigma \log F|\le c\langle x\rangle ^{\beta -|\sigma |}\\[3pt]
|D^\sigma V/F|\le c\langle x\rangle^{2m-m_1-|\sigma |}\qquad
\forall \sigma :1\le |\sigma |\le 2.
\tag*{$\textup{(\ref*{23-2-67})}^\#$}\label{23-2-67-*}
\end{multline}
Then for $h\to +0$, $1\le \mu =O(h^{-1})$ asymptotics
\textup{(\ref{23-2-25})}--\textup{(\ref{23-2-26})} holds provided
$m_1>2m+\beta$.
\end{enumerate}
\end{example}

We leave to the reader:

\begin{Problem}\label{Problem-23-2-22}
Consider  cases of $\mu \to \mu_0>0$ and $\mu \to \mu_0=0$ like in Example~\ref{example-23-2-6}.
\end{Problem}

\section{Schr\"odinger-Pauli operators}
\label{sect-23-2-2}

Consider now {Schr\"odinger-Pauli operators, either genuine (\ref{book_new-0-34}) or generalized (\ref{book_new-13-5-3}) of \cite{futurebook}). The principal difference is that now  $F$ does not ``tame'' singularities of $V$, on the contrary, it needs to be ``tamed'' by itself. As a result there are fewer examples than for the Schr\"odinger. Also we do not have a restriction $\mu_\eff h_\eff =O(1)$ which we had in the most of the previous Subsection~\ref{sect-23-2-1}. Then we need to add $1$ and $\mu_\eff h_\eff^{-1}=\mu h^{-1}\rho_1\gamma^2$ to the contributions of this element to the remainder estimate and $\cN^-(\mu, h)$ respectively.

Because of this, here we do not consider an abstract theorem like Theorem~\ref{thm-23-2-2}, but go directly to the examples.

\begin{example-foot}\footnotetext{\label{foot-23-16} Cf. Example~\ref{example-23-2-3}.}\label{example-23-2-23}
\begin{enumerate}[label=(\roman*), wide, labelindent=0pt]
\item\label{example-23-2-23-i}
Let $0$ be an inner singular point\footref{foot-23-4}  and $\gamma=\epsilon  |x|$, $\rho =|x|^m$, $\rho _1=|x|^{m_1}$ and let $ m> -1$, $2m\ne m_1>-2$. Let our usual non-degeneracy assumptions be fulfilled. Then, in comparison with the theory of the previous Subsection~\ref{sect-23-2-1}, we need to consider also the  the zone $\mu _\eff h_\eff \gtrsim 1$.

Its contribution to the remainder does not exceed
$C\int \gamma^{-2}\,dx$ while its contribution to $\cN^-(\mu, h)$ does not exceed $C\mu h^{-1}\int \rho_1\,dx$. Indeed, contributions of each $\gamma$-element do not exceed $C$ and $C\mu_\eff h_\eff^{-1}\asymp C \mu h^{-1}\rho_1\gamma^2$.

\begin{enumerate}[label=(\alph*), wide, labelindent=0pt]
\item\label{example-23-2-23-ia}
Let $m_1>2m$; then this zone is disjoint from $0$ and this expression is $O(1)$. Furthermore, contribution of the zone $\{r\le \mu^{-1/(m_1+1-m)}\}$ where $\mu_\eff\lesssim 1$ to the remainder does not exceed
$Ch^{-1}\int \rho \gamma^{-1}\,dx$, taken over this zone, and it is $\asymp h^{-1}\mu^{-(m+1)/(m_1+1-m)}$ and we arrive to (\ref{23-2-70})--(\ref{23-2-71}) below; cf. (\ref{23-2-32}).

\item\label{example-23-2-23-ib}
Consider now the case $m_1<2m$. In this case zone where
$\mu_\eff h_\eff\gtrsim 1$ is not disjoint from $0$ and we need to consider also a singular zone $\{r\le \bar{r}\coloneqq   (\mu^{-1}h)^{-1/(m_1-2m)}\}$ where
$\mu_\eff ^{-1}h_\eff \gtrsim 1$.%

However due to the variational estimates as before
\begin{claim}\label{23-2-69}
For the disk $\{x\colon  |x|\le 2\bar{r}\}$ with the Dirichlet boundary conditions
\begin{equation*}
\N^-(\mu,h)\le
C h^{-2}\bar{r}^{2m+2}+ C (\mu h^{-1}) ^2 \bar{r}^{2(m_1+2)}
\end{equation*}
\end{claim}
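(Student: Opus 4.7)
The plan is to prove \textup{(\ref{23-2-69})} as a Rozenblum-type variational bound for the number of negative eigenvalues of the Schr\"odinger-Pauli operator on the disk $B(0,2\bar{r})$ with Dirichlet boundary conditions.

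First I would eliminate the magnetic potential from the kinetic form. Applying the elementary operator inequality $\pm(AB+BA)\le \epsilon A^2 + \epsilon^{-1}B^2$ to the cross-term in the expansion of $(hD_j-\mu V_j)g^{jk}(hD_k-\mu V_k)$ gives
\begin{equation*}
A \ge (1-\epsilon)\sum_{j,k} hD_j\, g^{jk}\, hD_k - C_\epsilon \mu^2 |\vec V|^2 + V \pm \mu h F,
\end{equation*}
where the $\pm$ reflects the two spinor components (or the two signs in the generalized Schr\"odinger-Pauli operator \textup{(\ref{book_new-13-5-3}) of \cite{futurebook}}). By the min-max principle,
\begin{equation*}
\N^-(\mu,h) \le \N^-\Bigl(\, (1-\epsilon)\sum_{j,k} hD_j g^{jk} hD_k - W\Bigr)
\end{equation*}
on $B(0,2\bar{r})$ with Dirichlet boundary conditions, where $W = V_- + \mu h F_- + C_\epsilon \mu^2|\vec V|^2$.

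Next I apply the $2$D Rozenblum-type estimate for a bounded Dirichlet domain (analogous to Example~\ref{example-23-2-4}\ref{example-23-2-4-ia} and Section~\ref{book_new-sect-11-1} of \cite{futurebook}), obtaining
\begin{equation*}
\N^-\Bigl(\,\sum_{j,k} hD_j g^{jk} hD_k - W\Bigr) \le C h^{-2}\int_{B(0,2\bar{r})} W\, dx.
\end{equation*}
Using the size estimates $V_-\lesssim |x|^{2m}$, $|\vec V|^2\lesssim |x|^{2m_1+2}$, and $F_-\lesssim |x|^{m_1}$, the three contributions integrate (in the $2$D polar measure) to $\lesssim \bar{r}^{2m+2}$, $\lesssim \mu^2 \bar{r}^{2m_1+4}$, and $\lesssim \mu h\bar{r}^{m_1+2}$, respectively. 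The cross term $\mu h^{-1}\bar{r}^{m_1+2}$ arising from $F_-$ is absorbed into the other two by AM-GM, yielding the stated bound.

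The main obstacle is the Rozenblum step: in dimension $d=2$ the naive Lieb-Cwikel-Rozenblum inequality fails, so one must invoke Rozenblum's sharper bound, exploiting the boundedness of the domain and the Dirichlet condition to control the logarithmic singularity of the Green function of $-h^2\Delta_D$. Fortunately this is precisely the technique already used in Example~\ref{example-23-2-4}\ref{example-23-2-4-ia}, so the mechanics are in place; the overall structure of the bound is then the heuristic $d=2$ LCR inequality $h^{-2}\int W\,dx$ applied to the effective potential $W$.
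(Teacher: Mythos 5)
Your proposal is correct and follows exactly the route the paper intends: the paper itself only invokes ``variational estimates as before'' and leaves \textup{(\ref{23-2-69})} to the reader, with the ingredients you use spelled out in footnote~\footref{foot-23-2} (the effective potential $C\bigl((1-\epsilon)V-C_\epsilon\mu^2|\vec V|^2\bigr)_-$ augmented by $\mu hF$ for the Pauli operator), footnote~\footref{foot-23-10} (the gauge with $|\vec V|\le C\rho_1\gamma$), and footnote~\footref{foot-23-11} together with Example~\ref{example-23-2-4}\ref{example-23-2-4-ia} (Rozenblum's estimate in place of LCR for $d=2$). The integration of the three contributions and the absorption of the $\mu h^{-1}\bar r^{m_1+2}$ term match the stated bound, so nothing essential is missing.
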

which is $O(1)$ due to the choice of $\bar{r}$.
\end{enumerate}

We leave to the reader to prove (\ref{23-2-69}) and \hyperlink{23-2-69-*}{$\textup{(\ref*{23-2-69})}^\#$} below and to justify the final result using methods of Section~\ref{book_new-sect-10-1} of \cite{futurebook}: %
\begin{multline}
\N^- (\mu,h)= \\
\cN^- (\mu,h)+ \left\{\begin{aligned}
& O(h^{-1}\mu^{-1}+1) && m_1<2m,\\
& O(h^{-1}\mu^{-(m+1)/(m_1+1-m)})+1 && m_1 >2m.
\end{aligned}\right.
\label{23-2-70}
\end{multline}
with
\begin{equation}
\cN^- (\mu,h)\asymp h^{-2}+ \mu h^{-1}.
\label{23-2-71}
\end{equation}

\item\label{example-23-2-23-ii}
Let infinity be an inner singular point\footref{foot-23-7} and
$\gamma=\epsilon  \langle x \rangle$, $\rho =\langle x \rangle^m$,
$\rho _1=\langle x \rangle^{m_1}$ and let $ m< -1$, $2m\ne m_1<-2$. Let Let our usual non-degeneracy assumptions be fulfilled. Then again we need to consider cases (a) $m_1<2m$ and (b) $m_1>2m$ and in the latter case we need to consider contribution of zone $\{r\le \bar{r}\coloneqq   (\mu^{-1}h)^{-1/(m_1-2m)}\}$.

However due to variational estimates

\medskip\noindent
\hypertarget{23-2-69-*}$\textup{(\ref*{23-2-69})}^\#$
For the zone $\{x\colon  |x|\ge \frac{1}{2}\bar{r}\}$ with the Dirichlet boundary conditions
\begin{equation*}
\N^-(\mu, h) \le
C h^{-2}\bar{r}^{2m+2}+ C (\mu h^{-1}) ^2 \bar{r}^{2(m_1+2)}
\end{equation*}
which is again $O(1)$ due to the choice of $\bar{r}$.
\medskip
Then
\begin{multline}
\N^- (\mu,h)= \\
\cN^- (\mu,h)+ \left\{\begin{aligned}
& O(h^{-1}\mu^{-1}+1) && m_1>2m,\\
& O(h^{-1}\mu^{-(m+1)/(m_1+1-m)})+1 && m_1 <2m.
\end{aligned}\right.
\tag*{$\textup{(\ref*{23-2-70})}^\#$}\label{23-2-70-*}
\end{multline}
and (\ref{23-2-71}) hold.
\end{enumerate}
\end{example-foot}

We leave to the reader the following problems:

\begin{problem}\label{problem-23-2-24}
\begin{enumerate}[label=(\roman*), wide, labelindent=0pt]
\item\label{problem-23-2-24-i}
As $0$ is an inner point consider both Schr\"odinger and Schr\"odinger-Pauli operators as
\begin{enumerate}[label=(\alph*), wide, labelindent=0pt]
\item\label{problem-23-2-24-ia}
$\gamma=|x|$, $\rho= |x|^{m}$, $m>-1$ and $\rho_1=|x|^{-2}|\log |x||^\beta$\,\footnote{\label{foot-23-17} One should take $\beta<-1$ for the Schr\"odinger-Pauli operator.}.
\item\label{problem-23-2-24-ib}
$\gamma=|x|$, $\rho= |x|^{-1}|\log |x||^\alpha$, $\alpha<-\frac{1}{2}$\,\footnote{\label{foot-23-18} One needs to consider cases $\alpha<-1$, $\alpha=-1$, $-1<\alpha <-\frac{1}{2}$ separately.}  and $\rho_1=|x|^{m_1}|$, $m_1>-2$.
\end{enumerate}
\item\label{problem-23-2-24-ii}
As infinity is an inner point consider both Schr\"odinger and Schr\"odinger Pauli operators as
\begin{enumerate}[label=(\alph*), wide, labelindent=0pt]
\item\label{problem-23-2-24-iia}
$\gamma=|x|$, $\rho= |x|^{m}$, $m<-1$ and $\rho_1=|x|^{-2}|\log |x||^\beta$\,\footref{foot-23-17}.
\item\label{problem-23-2-24-iib}
$\gamma=|x|$, $\rho= |x|^{-1}|\log |x||^\alpha$, $\alpha<-\frac{1}{2}$\,\footref{foot-23-18}  and $\rho_1=|x|^{m_1}$, $m_1>-2$.
\end{enumerate}
\end{enumerate}

For the Schr\"odinger operator in cases (a) non-trivial results could be also obtained even as $\mu h\to +\infty$.
\end{problem}

\begin{problem}\label{problem-23-2-25}
Let either $0$ or infinity be an inner singular point.

Using the same arguments and combining them with the arguments of Subsubsection~\emph{\ref{sect-23-2-1-6}.6. \nameref{sect-23-2-1-6}\/} consider both Schr\"odinger-Pauli and Schr\"odinger operators with $\gamma=|x|$, $\rho=|x|^{-1}|\log |x||^\alpha$,
$\rho_1=|x|^{-2}|\log |x||^\beta$\,\footref{foot-23-17}.

Take into account whether  $\beta >2\alpha $ or $\beta< 2\alpha$.
For the Schr\"odinger operator in case $\beta<2\alpha$  non-trivial results could be obtained even as $\mu h\to +\infty$.
\end{problem}

\section{Dirac operator}
\label{sect-23-2-3}

\subsection{Preliminaries}
\label{sect-23-2-3-1}
Let us now consider the generalized magnetic Dirac operator (\ref{book_new-17-1-1}) of \cite{futurebook})
\begin{equation}
A=
\frac{1}{2}\sum_{l,j} \upsigma _l\bigl(\omega ^{jl}P_j+P_j\omega ^{jl}\bigr)
+\upsigma _0M+I\cdot V, \qquad P_j=hD_j-\mu V_j
\label{23-2-72}
\end{equation}
where $\upsigma_0,\upsigma_1,\upsigma_2$ are $2\times 2$-matrices.

We are interested in $\N(\tau_1,\tau_2)$, the number of eigenvalues in $(\tau_1,\tau_2)$\,\footnote{\label{foot-23-19} Assuming that this interval does not contain essential spectrum; otherwise $\N(\tau_1,\tau_2)\coloneqq   \infty$.  It is more convenient for us to exclude both ends of the segment.} with $\tau_1<\tau_2$, fixed in this subsection.

The theory of the Dirac operator is more complicated than the
theory of the Schr\"{o}dinger operator because it is different in the
cases when $V\pm M$\,\footnote{\label{foot-23-20} If $V\pm M \in [\tau_1+\epsilon,\tau_2-\epsilon]$ for both signs $\pm$, then infinity is not a singular point.} and $F$ tend to $0$ and (or) $\infty $ and thus singularities at $0$ and at infinity should be treated differently.

Furthermore, the theory of the magnetic Dirac operator is even more complicated. Indeed, the ``pointwise Landau levels" are
$V\pm \bigl(M^2+2j\mu hF\bigr)^{\frac{1}{2}}$ with $j=0,1,2,\ldots$ but one of those is, in fact, excepted: with the same sign as $\varsigma F_{12}$ and $j=0$. Recall that $\varsigma=\pm 1$ is defined by (\ref{book_new-17-1-14}) of \cite{futurebook})
\begin{equation}
\upsigma _0\upsigma _1 \upsigma _2=\varsigma i.
\label{23-2-73}
\end{equation}
Without any loss of generality one can assume that
\begin{equation}
\varsigma=1,\qquad  F_{12} >0.
\label{23-2-74}
\end{equation}
Then the Landau levels (at the point $x$) are
$V+\bigl(M^2+2j\mu hF\bigr)^{\frac{1}{2}}$ with $j=1,2,3,\ldots $ and
$V-\bigl(M ^2+2j\mu hF\bigr)^{\frac{1}{2}}$ with $j=0,1,2,\ldots $. But then the negative $V$ could be ``tamed'' by a larger $F$. This allows us to get  meaningful results in the situations impossible for non-magnetic Dirac operator: $V$ as singular at $0$ as $|x|^{m}$ with $m\le -1$ or $V+M$  as singular at infinity as $|x|^{2m}$ with $m\ge -1$ (as $M>0$) provided it is negative  there. Furthermore, if $F\to \infty$  as $|x|\to\infty$, we can get meaningful results even if $M=0$.

On the other hand, we often \emph{should prove\/} that the Dirac operator is
essentially self-adjoint while the Schr\"{o}dinger operator is
\emph{obviously\/} semibounded and therefore essentially self-adjoint.
We do this in Appendix~\ref{sect-23-A-3}.

Therefore, we treat the operator given by (\ref{23-2-72}) under the following assumptions\begin{phantomequation}\label{23-2-75}\end{phantomequation}
\begin{align}
&|D ^\alpha \omega ^{jk}|\le c\gamma ^{-|\alpha |},\qquad
|D ^\alpha F|\le c\rho_1 \gamma ^{-|\alpha |},
\tag*{$\textup{(\ref*{23-2-75})}_{1-2}$}\label{23-2-75-1}\\
&|D ^\alpha V|\le
c\min\bigl(\rho , \frac{1}{M} \rho ^2\bigr)\gamma ^{-|\alpha |}
\qquad (\alpha\ne 0)\quad \forall \alpha :|\alpha |\le K,
\tag*{$\textup{(\ref*{23-2-75})}_{3}$}\label{23-2-75-3}
\end{align}
\vskip-25pt
\begin{align}
&(V-\tau _2-M)_+\le c\min\bigl(\rho ,{\frac{1}{M}}\rho ^2\bigr),
\tag*{$\textup{(\ref*{23-2-75})}_{4+}$}\label{23-2-75-4+}\\
&(V-\tau _1+M)_-\le c\min\bigl(\rho , \frac{1}{M} \rho ^2\bigr)
\tag*{$\textup{(\ref*{23-2-75})}_{4-}$}\label{23-2-75-4-}
\end{align}
and also \ref{23-2-3-*}, (\ref{23-2-2}) for
$g^{jk}=\sum_{l,r}\omega ^{jl}\omega ^{kr}\updelta _{lr}$
and (\ref{23-2-9}) (with $F>0$). In what follows $\textup{(\ref{23-2-75})}_4$  means the pair of conditions $\textup{(\ref{23-2-75})}_{4\pm }$.

Moreover, let condition (\ref{23-2-13}) be fulfilled and
\begin{gather}
\bar{X}''\cap\partial X=\emptyset,
\label{23-2-76}\\
|V_j|\le c\rho ,\quad |D_j\omega ^{kl}|\le c\rho \qquad \text{in \ \ } X''.
\label{23-2-77}
\end{gather}

Finally, we assume that
\begin{claim}\label{23-2-78}
\underline{Either} $\partial X=\emptyset$
\underline{or} $\mu =O(1)$ and $\partial X\cap X'_2=\emptyset$ (in
what follows).
\end{claim}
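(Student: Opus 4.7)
Item (\ref{23-2-78}) is a standing hypothesis imposed on the rest of Section~\ref{sect-23-2-3} rather than an assertion requiring proof; the only meaningful ``proof proposal'' is to explain why the two listed alternatives exhaust the configurations compatible with the apparatus assembled so far, and what the cost of lifting the hypothesis would be. I will sketch that justification.

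The first step is to trace the role of the strong-field zone $X'_2$ in the presence of a boundary. In $X'_2$ the local model operator has effective magnetic field $\mu_\eff \gtrsim 1$, so its interior spectrum is organized by Landau levels and the principal part of the asymptotics is the magnetic Weyl expression (\ref{23-2-17}). Near $\partial X$ the corresponding classical trajectories become skipping/sliding orbits whose contribution to the spectrum is \emph{not} captured by (\ref{23-2-17}) and whose remainder estimates deteriorate, exactly as flagged in the Introduction of Chapter~\ref{sect-23-1}. The dichotomy in (\ref{23-2-78}) rules out precisely this regime: either $\partial X=\emptyset$ and there are no boundary trajectories at all, or $\mu = O(1)$ together with $\partial X\cap X'_2=\emptyset$, which by (\ref{23-2-7})--(\ref{23-2-8}) confines all boundary-adjacent analysis to the zone $X'_1$ of bounded effective magnetic field, where the parametrices of Chapters~\ref{book_new-sect-13}, \ref{book_new-sect-17} of \cite{futurebook} apply with only routine modifications.

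The second step is compatibility with the other standing assumptions already displayed. Condition \ref{23-2-10-2} and the boundary regularity (\ref{23-2-13}) presume that $\partial X\cap X''$ is harmless, which is reinforced by (\ref{23-2-76}); condition (\ref{23-2-77}) quantifies that the magnetic potential is tame in $X''$. Hypothesis (\ref{23-2-78}) extends the same kind of control to $X'_2$, so that all three of $X''$, $X'_2$ and $\partial X$ are mutually separated. This separation is what allows the subsequent analysis to treat interior (magnetic) and boundary (essentially non-magnetic) contributions independently, and is also what makes the essential self-adjointness proof in Appendix~\ref{sect-23-A-3} insensitive to boundary conditions in the strong-field region.

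The main obstacle---and the reason the hypothesis is imposed rather than derived---is that removing it would require a Chapter~14/15-type boundary theory for the magnetic Dirac operator at large $\mu$, with sliding/skipping quantization and the attendant boundary corrections to $\cN^{\MW\,-}$. Such an extension lies outside the scope of Section~\ref{sect-23-2-3}, so (\ref{23-2-78}) is adopted as the natural cutoff beyond which the tools developed in the present paper cease to produce the ``standard'' asymptotics sought in what follows.
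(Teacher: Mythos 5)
You are right that \textup{(\ref{23-2-78})} is introduced in the paper with ``Finally, we assume that'' and carries no proof whatsoever — it is a standing hypothesis for the Dirac-operator analysis, not an assertion to be established. Your identification of its role (excluding the boundary/strong-field interaction for which no adequate theory is developed here) matches the paper's own framing in the Introduction and in Subsection~\ref{sect-23-2-3-1}, so there is nothing further to check.
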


\subsection{Asymptotics. I}
\label{sect-23-2-3-2}
\begin{example-foot}\label{example-23-2-26}\footnotetext{\label{foot-23-21} Cf. Example~\ref{example-23-2-3}.} Let condition (\ref{23-2-74})  be fulfilled.

\begin{enumerate}[label=(\roman*),wide, labelindent=0pt]
\item\label{example-23-2-26-i}
Let $0$ be an inner singular point and let all the above conditions,   be fulfilled with $\gamma =\epsilon _0 |x|$, $\rho =|x|^m$,
$\rho _1=|x|^{m_1}$, $m_1<\min(m-1,\,2m)$.

Further, let
\begin{equation}
-V\ge \epsilon \min(\rho,\,\frac{\rho^2 }{M}) \qquad \forall x:|x|\le \epsilon
\label{23-2-79}
\end{equation}
and let non-degeneracy condition
\begin{multline}
\bigl|V+\varsigma \bigl(M^2+2j\mu hF\bigr)^{\frac{1}{2}}-\tau_\iota\bigr|\le
\epsilon \rho \implies\\
\begin{aligned}
&|\nabla v_\iota|\ge \epsilon \rho _1\rho ^2\gamma ^{-1}\qquad \text{or \ \ } \quad
\det\Hess v_\iota\ge \epsilon \rho _1^{-2}\rho ^4\gamma ^{-4}\qquad \text{or \ \ } \\
&|\nabla V|\ge \epsilon \rho \gamma ^{-1}\qquad \text{and\ \ }
M\le \epsilon '\rho \quad \forall (j,\varsigma )\ne (0,1)
\end{aligned}
\label{23-2-80}
\end{multline}
be fulfilled with a small enough constant
$\epsilon '=\varepsilon (c,\epsilon,\epsilon _0)>0$; this assumption is a part of condition \textup{(\ref{23-2-80})})\,\footnote{\label{foot-23-22} Recall that $v_\iota=F ^{-1}\bigl((V-\tau _\iota )^2-M^2\bigr)$
and the conditions should be fulfilled for both $\iota =1,2$.}.

\begin{enumerate}[label=(\alph*),wide, labelindent=0pt]
\item\label{example-23-2-26-ia}
Let $m<0$, $\tau_1<\tau_2$. Then for $h\to+0$, $1\le \mu =O(h^{-1})$ asymptotics (\ref{23-2-25})--(\ref{23-2-26}) holds with $\cN$ defined by $\textup{(\ref{book_new-17-1-12})}_2$ of \cite{futurebook} with $d=2$. Moreover,
\begin{equation}
\cN  (\tau_1,\tau_2, \mu ,h)\asymp\left\{\begin{aligned}
&h^{-2}  &&m\ge  -1,\\[2pt]
&h^{-2}(\mu h)^{2(m+1)/(2m-m_1)}  &&m< -1.
\end{aligned}\right.
\label{23-2-81}
\end{equation}

\item\label{example-23-2-26-ib}
Let $m>0$, $M>0$, $\tau_1\in (-M,M)$, $\tau_2=M$. Then for $h\to+0$, $1\le \mu =O(h^{-1})$ asymptotics (\ref{23-2-25})--(\ref{23-2-26}) holds with $\cN$ defined by $\textup{(\ref{book_new-17-1-12})}_2$ of \cite{futurebook} with $d=2$. Moreover, equivalence (\ref{23-2-28}) holds.
\end{enumerate}

\item\label{example-23-2-26-ii}
Let infinity be an inner singular point  and let all the above conditions  be
fulfilled with $\gamma =\epsilon _0 \langle x\rangle $,
$\rho =\langle x\rangle ^m$, $\rho _1=\langle x\rangle ^{m_1}$,
$m_1>\max (m-1,\,2m)$.

Further, let
\begin{equation}
-V\ge \epsilon \min(\rho,\,\frac{\rho^2 }{M}) \qquad \forall x:|x|\ge c
\tag*{$\textup{(\ref*{23-2-79})}^\#$}\label{23-2-79-*}
\end{equation}
and let  non-degeneracy condition (\ref{23-2-80}) be fulfilled.

\begin{enumerate}[label=(\alph*),wide, labelindent=0pt]
\item\label{example-23-2-26-iia}
Let $m>0$, $\tau_1<\tau_2$. Then for $h\to+0$, $1\le \mu =O(h^{-1})$ asymptotics (\ref{23-2-25})--(\ref{23-2-26}) holds with $\cN$ defined by $\textup{(\ref{book_new-17-1-12})}_2$ of \cite{futurebook} with $d=2$. Moreover,
\begin{equation}
\cN ^- (\tau_1,\tau_2, \mu ,h)\asymp h^{-2}(\mu h)^{2(m+1)/(2m-m_1) }. \tag*{$\textup{(\ref*{23-2-81})}^\#$}\label{23-2-81-*}
\end{equation}

\item\label{example-23-2-26-iib}
Let $m<0$, $M>0$, $\tau_1\in (-M,M)$, $\tau_2=M$. Then for $h\to+0$, $1\le \mu =O(h^{-1})$ asymptotics (\ref{23-2-25})--(\ref{23-2-26}) holds with $\cN$ defined by $\textup{(\ref{book_new-17-1-12})}_2$ of \cite{futurebook} with $d=2$. Moreover, equivalence \ref{23-2-28-*} holds.
\end{enumerate}
\end{enumerate}
\end{example-foot}

\begin{example-foot}\label{example-23-2-27}\footnotetext{\label{foot-23-23} Cf. Examples~\ref{example-23-2-4} and~\ref{example-23-2-5}.} Let condition (\ref{23-2-74})  be fulfilled.

\begin{enumerate}[label=(\roman*),wide, labelindent=0pt]
\item\label{example-23-2-27-i}
Let $0$ be an inner singular point and let all the above conditions,   be fulfilled with $\gamma =\epsilon _0 |x|$, $\rho =|x|^m$,
$\rho _1=|x|^{m_1}$, $m_1>2m$, $m>-1$.

Further, let assumption (\ref{23-2-79}) and non-degeneracy condition
(\ref{23-2-80}) be fulfilled. Let \underline{either}
\begin{enumerate}[label=(\alph*)]
\item\label{example-23-2-27-ia}
$m<0$, $\tau_1<\tau_2$ \underline{or}
\item\label{example-23-2-27-ib}
$m>0$, $M>0$, $\tau_1\in (-M,M)$, $\tau_2=M$.
\end{enumerate}
Then for $h\to+0$, $1\le \mu $ asymptotics (\ref{23-2-32}), (\ref{23-2-26}) holds with $\cN$ defined by $\textup{(\ref{book_new-17-1-12})}_2$ of \cite{futurebook} with $d=2$.

Moreover, $\cN  (\tau_1,\tau_2, \mu ,h)\asymp h^{-2}$ as $\mu h\lesssim 1$ and (\ref{23-2-34}) holds as $\mu h\gtrsim 1$.

\item\label{example-23-2-27-ii}
Let infinity be an inner singular point  and let all the above conditions  be
fulfilled with $\gamma =\epsilon _0 \langle x\rangle $,
$\rho =\langle x\rangle ^m$, $\rho _1=\langle x\rangle ^{m_1}$,
$m_1<2m$, $m<-1$.

Further, let assumption \ref{23-2-79-*} and non-degeneracy condition
(\ref{23-2-80}) be fulfilled. Let $M>0$, $\tau_1\in (-M,M)$, $\tau_2=M$.
Then for $h\to+0$, $1\le \mu $ asymptotics \ref{23-2-32-*}, (\ref{23-2-26}) holds with $\cN$ defined by $\textup{(\ref{book_new-17-1-12})}_2$ of \cite{futurebook} with $d=2$.

Moreover, $\cN  (\tau_1,\tau_2, \mu ,h)\asymp h^{-2}$ as $\mu h\lesssim 1$ and (\ref{23-2-34}) holds as $\mu h\gtrsim 1$.
\end{enumerate}
\end{example-foot}

We leave  to the reader the following problems:

\begin{Problem}\label{Problem-23-2-28}
Modify for the Dirac operator under assumption (\ref{23-2-79}) or \ref{23-2-79-*} (as $0$ or infinity is an inner singular point, respectively) in the frameworks of
\begin{enumerate}[label=(\alph*),wide, labelindent=0pt]
\item\label{Problem-23-2-28-a}
 Examples~\ref{example-23-2-6},  \ref{example-23-2-7}, \ref{example-23-2-8} , \ref{example-23-2-9}, \ref{example-23-2-11} (power singularities) and Problem~\ref{Problem-23-2-10},
\item\label{Problem-23-2-28-b}
Examples~\ref{example-23-2-12}, \ref{example-23-2-13},
\ref{example-23-2-14}  and Problem~\ref{Problem-23-2-16}  (power-logarithmic singularities),
\item\label{Problem-23-2-28-bc}
Examples~\ref{example-23-2-17}, \ref{example-23-2-19}, \ref{example-23-2-20}, \ref{example-23-2-21}  and
Problems~\ref{problem-23-2-18}, \ref{Problem-23-2-22} (exponential singularities).
\end{enumerate}
\end{Problem}

\subsection{Asymptotics. II}
\label{sect-23-2-3-3}

While under assumptions (\ref{23-2-79}), \ref{23-2-79-*} the Dirac operator behaves as the Schr\"odinger operator, under the same assumptions albeit with an opposite sign the the Dirac operator behaves as the Schr\"odinger-Pauli operator.

\begin{example-foot}\label{example-23-2-29}\footnotetext{\label{foot-23-24} Cf. Example~\ref{example-23-2-23}.} Let condition (\ref{23-2-74})  be fulfilled.

\begin{enumerate}[label=(\roman*),wide, labelindent=0pt]
\item\label{example-23-2-29-i}
Let $0$ be an inner singular point and let all the above conditions,   be fulfilled with $\gamma =\epsilon _0 |x|$, $\rho =|x|^m$,
$\rho _1=|x|^{m_1}$, $m_1>2m$, $m>-1$.

Further, let assumption
\begin{equation}
V\ge \epsilon \min(\rho,\,\frac{\rho^2 }{M}) \qquad \forall x:|x|\le \epsilon
\label{23-2-82}
\end{equation}
and non-degeneracy condition (\ref{23-2-80}) be fulfilled. \begin{enumerate}[label=(\alph*)]
\item\label{example-23-2-29-ia}
$m<0$, $\tau_1<\tau_2$ \underline{or}
\item\label{example-23-2-29-ib}
$m>0$, $M>0$, $\tau_1=-M$, $\tau_2\in (-M,M)$.
\end{enumerate}

Then for $h\to+0$, $1\le \mu $ asymptotics (\ref{23-2-32}), (\ref{23-2-26}) holds with $\cN$ defined by $\textup{(\ref{book_new-17-1-12})}_2$ of \cite{futurebook} with $d=2$.

Moreover, $\cN  (\tau_1,\tau_2, \mu ,h)\asymp \mu h^{-1}+h^{-2}$.

\item\label{example-23-2-29-ii}
Let infinity be an inner singular point  and let all the above conditions  be
fulfilled with $\gamma =\epsilon _0 \langle x\rangle $,
$\rho =\langle x\rangle ^m$, $\rho _1=\langle x\rangle ^{m_1}$,
$m_1<2m$, $m<-1$.

Further, let assumption
\begin{equation}
V\ge \epsilon \min(\rho,\,\frac{\rho^2 }{M}) \qquad \forall x:|x|\ge c
\tag*{$\textup{(\ref*{23-2-82})}^\#$}\label{23-2-82-*}
\end{equation}
and non-degeneracy condition
(\ref{23-2-80}) be fulfilled. Let $M>0$, $\tau_1=-M$, $\tau_2\in (-M,M)$.
Then for $h\to+0$, $1\le \mu $ asymptotics \ref{23-2-32-*}, (\ref{23-2-26}) holds with $\cN$ defined by $\textup{(\ref{book_new-17-1-12})}_2$ of \cite{futurebook} with $d=2$.

Moreover, $\cN  (\tau_1,\tau_2, \mu ,h)\asymp \mu h^{-1}+h^{-2}$.
\end{enumerate}
\end{example-foot}

\begin{Problem}\label{Problem-23-2-30}
\begin{enumerate}[label=(\alph*),wide, labelindent=0pt]
\item\label{Problem-23-2-30-a}
Modify Problem~\ref{23-2-27} under assumptions (\ref{23-2-82}) or \ref{23-2-82-*} (if $0$ or infinity is an inner singular point, respectively).

\item\label{Problem-23-2-30-b}
Consider degenerations like in Subsubsection~\emph{\ref{sect-23-2-1-5}.5. \nameref{sect-23-2-1-5}\/}.
\end{enumerate}
\end{Problem}

Finally, we leave to the reader

\begin{Problem}\label{Problem-23-2-31}
Generalize results of this section to the even-dimensional full-rank case. In particular, consider power singularities\footnote{\label{foot-23-25} Note that for $m_1\ne -2$ one can construct $V_j$ positively homogeneous of degrees $m_1+1$ such that $F_{jk}$ is non-degenerate. F.e. one can take $V_{2j-1}=\frac{1}{2}x_{2j}|x|^{m_1}$, $V_{2j}=-\frac{1}{2}x_{2j-1}|x|^{m_1}$, $j=1,2,\ldots,d/2$, in which case $f_1=|(m_1+2)/2| \cdot |x|^{m_1}$, $f_2=\ldots=f_{d/2}=|x|^{m_1}$. See for details Appendix~\ref{sect-24-A-3} of \cite{futurebook}.}.
\end{Problem}

\chapter{$2\D$-case. Asymptotics of large eigenvalues}
\label{sect-23-3}

In this section we consider the case when $\mu=h=1$ are fixed and we consider the asymptotics of the eigenvalues, tending to $+ \infty$ and for Dirac operator also to $-\infty$.

\index{Schrodinger operator@Schr\"{o}dinger operator!magnetic with singularities!asymptotics of large eigenvalues}%
\index{operator!magnetic Schrodinger with singularities@magnetic Schr\"{o}dinger with singularities!asymptotics of large eigenvalues}%
\index{large eigenvalue asymptotics!magnetic Schrodinger operator with singularities@magnetic Schr\"{o}dinger with singularities}%
\index{asymptotics!large eigenvalue!magnetic Schrodinger operator with singularities@magnetic Schr\"{o}dinger with singularities}

Here we consider the case of the spectral parameter tending to $+\infty$ (and for the Dirac operator we consider $\tau\to -\infty$ as well).

\section{Singularities at the point}%
\label{sect-23-3-1}

We consider series of example with singularities at the point.

\subsection{Schr\"odinger operator}%
\label{sect-23-3-1-1}

\begin{example}\label{example-23-3-1}
\begin{enumerate}[label=(\roman*), wide, labelindent=0pt]
\item\label{example-23-3-1-i}
Let $X$ be a compact domain and conditions \ref{23-2-3-*} be fulfilled with  $\gamma=\epsilon |x|$, $\rho=|x|^m$, $\rho_1=|x|^{m_1}$ with $m_1<2m$. Let
\begin{phantomequation}\label{23-3-1}\end{phantomequation}
\begin{equation}
|F|\ge \epsilon_0 \rho_1,\qquad |\nabla F|\ge \epsilon_0 \rho_1\gamma^{-1}\qquad
\text{for\ \ } |x|\le \epsilon.
\tag*{$\textup{(\ref*{23-3-1})}_{1,2}$}\label{23-3-1-*}
\end{equation}
Then for the Schr\"odinger operator as $\tau\to +\infty$
\begin{equation}
\N^-(\tau)= \cN^-(\tau)+O(\tau^{(d-1)/2})
\label{23-3-2}
\end{equation}
while $\cN^-(\tau)\asymp \tau^{d/2}$.

Indeed, we need to consider only case $m_1\le -2$ (otherwise it is covered by Section~\ref{book_new-sect-11-2} of \cite{futurebook}). Assume for simplicity, that $V=0$ (modification in the general case is trivial). Then we need to consider zones
$\cX_1=\{x\colon |x|\ge \tau^{1/2(m_1+1)}\}$ where
$\mu_\eff= |x|^{m_1+1}\tau^{-1/2}\le 1$ and
$\cX_2=\{x\colon |x|\le \tau^{1/2(m_1+1)}\}$ where $\mu_\eff\ge 1$. Meanwhile $h_\eff= \tau^{-1/2}|x|^{-1}$.

Contribution to the remainder of the $\gamma$-element from $\cX_1$ does not exceed  $Ch_\eff^{1-d} =C \tau^{(d-1)/2}\gamma^{d-1}$ while contribution to the remainder of the $\gamma$-element from $\cX_2$ does not exceed
$C\mu_\eff^{-1}h_\eff^{1-d}=C\tau^{d/2} \gamma^{d-2-m_1}\le C \tau^{(d-1)/2}\gamma^{d-1}$ and the rest is easy.

\item\label{example-23-3-1-ii}
Under proper assumptions the same proof is valid in the full-rank even-dimensional case.

\item\label{example-23-3-1-iii}
The similar proof is valid for $d=3$ and under proper assumptions it is valid in the maximal-rank odd-dimensional case (while contribution to the remainder of $\gamma$-element from $\cX_2$ is $O(\tau^{(d-1)/2}\gamma^{d-1})$).

\item\label{example-23-3-1-iv}
On the other hand, without assumption $\textup{(\ref*{23-3-1})}_2$ contribution to the remainder $\gamma$-element from $\cX_2$ is  $O(\tau^{(d-2)/2}\gamma^{d+m_1})$ but we need to consider only $\gamma \gtrsim \tau^{1/m_1}$ (otherwise $\mu_\eff h_\eff\ge C_0$) and we arrive to the remainder estimates $O(\tau^{(d-1)/2})$ as $m_1\ge -2d$ and $O(\tau^{d/2+d/m_1})$ as $m_1\le -2d$.

Definitely these arguments are far from optimal for $d=3$ or in the maximal-rank odd-dimensional case; we leave this case to the Sections~\ref{sect-24-1}--\ref{sect-24-6}.
\end{enumerate}
\end{example}

Let us note that the case $m<-1$, $m_1\ge m-1$ is covered by Chapter~\ref{book_new-sect-11} of \cite{futurebook}; we need to assume that
\begin{equation}
V\ge \epsilon_0 \rho^2 \qquad \text{as\ \ } |x|\le \epsilon.
\label{23-3-3}
\end{equation}

\begin{example}\label{example-23-3-2}
\begin{enumerate}[label=(\roman*), wide, labelindent=0pt]
\item\label{example-23-3-2-i}
Let $X$ be a compact domain and conditions \ref{23-2-3-*}, $\textup{(\ref*{23-3-1})}_{1}$ be fulfilled with  $\gamma=\epsilon |x|$, $\rho=|x|^m$, $\rho_1=|x|^{m_1}$ and with $m<-1$, $2m\le m_1 <m-1$. Then we need to assume that
\begin{equation}
V+F\ge \epsilon_0 \rho^2\qquad \text{as\ \ } |x|\le \epsilon
\tag*{$\textup{(\ref*{23-3-3})}'$}\label{23-3-3-'}
\end{equation}
which for $m_1>2m$ is equivalent to (\ref{23-3-3}).

We need also to have some non-degeneracy assumption. Assume that\footnote{\label{foot-23-26} One can check easily that this condition or a similar condition in the multidimensional case is fulfilled provided $F$ and $V$ stabilize as $|x|\to 0$ to $V^0$ and $F^0$, positively homogeneous of degrees $2m$ and $m_1$ respectively.}
\begin{multline}
\tau \ge V+F,\qquad |\nabla (\tau - V)F^{-1}|\gamma \le \epsilon_0 \tau F^{-1}\\
\implies |\nabla ^2 (\tau - V)F^{-1}|\gamma^2 \ge \epsilon_0 \tau F^{-1}\qquad
\text{as\ \ } |x|\le \epsilon.
\label{23-3-4}
\end{multline}
Then asymptotics (\ref{23-3-2}) holds while $\cN^-(\tau)\asymp \tau^{d/2}$.

Indeed, it follows from Chapter~\ref{book_new-sect-13} of \cite{futurebook} (see condition (\ref{book_new-13-3-54})) that in this case the contribution to the remainder of the $\gamma$-element from $\cX_2$ is also $O(h_\eff^{-d+1})$.

\item\label{example-23-3-2-ii}
Under proper assumptions the same proof is valid in the maximal-rank multidimensional case.

\item\label{example-23-3-2-iii}
Without assumption (\ref{23-3-4}) we can apply arguments of Example~\ref{example-23-3-1}\ref{example-23-3-1-iv}, however cutting off as $|x|\le \tau^{1/(2m)}$ and we arrive to the remainder estimate $O(\tau^{(d-1)/2})$ as $d+m_1\ge m$ and
$O(\tau^{d(1/2+1/(2m)) +(m_1-2m)/(2m)})$ as $d+m_1\le m$.

Again, these arguments are far from optimal for $d=3$ or in the maximal-rank odd-dimensional case.; we leave this case to the Sections~\ref{sect-24-1}--\ref{sect-24-6}.
\end{enumerate}
\end{example}

\subsection{Schr\"odinger-Pauli operator}%
\label{sect-23-3-1-2}

Next, consider Schr\"odinger-Pauli operators. We will need to impose (\ref{23-3-3}) and the related non-degeneracy assumption
\begin{equation}
|\nabla V|\ge \epsilon_0 \rho^2\gamma^{-1}\qquad\text{for\ \ } |x|\le \epsilon.
\label{23-3-5}
\end{equation}

\begin{example}\label{example-23-3-3}
\begin{enumerate}[label=(\roman*), wide, labelindent=0pt]
\item\label{example-23-3-3-i}
Let $X$ be a compact domain, $d=2$. Let conditions \ref{23-2-3-*}, \ref{23-3-1-*}, (\ref{23-3-3}) and (\ref{23-3-5}) be fulfilled with
$\gamma=\epsilon $, $\rho=|x|^m$, $\rho_1=|x|^{m_1}$.

Let  \underline{either} $m_1<\min(2m,\,-2)$ \underline{or} $2m\le m_1<m-1$ and condition (\ref{23-3-4}) be fulfilled.

Then for the Schr\"odinger-Pauli  operator as  $\tau\to +\infty$ asymptotics (\ref{23-3-2}) holds while
\begin{equation}
\cN^-(\tau)\asymp \tau^{(m_1+2)/(2m)}+\tau.
\label{23-3-6}
\end{equation}

Indeed, if $m_1 \ge 2m$ then no modification to the arguments of Examples~\ref{example-23-3-1} and~\ref{example-23-3-2} is needed; if $m_1<2m$ we also need to consider the zone
$\{\tau^{1/(2m)}\lesssim |x|\lesssim \tau^{1/m_1}\}$. The contribution of the corresponding partition element to the principal part of the asymptotics is
$O(r^{m_1}\gamma^2)$ ($r=|x|\asymp \gamma$) while its contribution to the remainder is $O(\tau^{1/2}\gamma + 1)$.

\item\label{example-23-3-3-ii}
One can generalize this example to the even-dimensional full-rank case; then \begin{align}
&\cN^-(\tau)\asymp  \tau^{(m_1+2)d/(4m)}+\tau^{d/2}
\label{23-3-7}\\
\intertext{and the remainder is $O(R)$ with}
&R=\tau^{(m_1+2)(d-2)/(4m)}+\tau^{(d-1)/2}.
\label{23-3-8}
\end{align}
\end{enumerate}
\end{example}

\subsection{Dirac operator}%
\label{sect-23-3-1-3}

Finally, consider Dirac operator. We want to consider either $\N(0,\tau)$  with $\tau\to +\infty$ and  $\N(\tau,0)$ as $\tau\to -\infty$.%

\begin{example}\label{example-23-3-4}
Let $X$ be a compact domain, $d=2$ and conditions \ref{23-2-75-1}, \ref{23-3-1-*} and
\begin{equation}
|D^\alpha V|\le c\rho \gamma^{-|\alpha|}
\label{23-3-9}
\end{equation}
be fulfilled with $\gamma=\epsilon|x|$, $\rho=|x|^m$, $\rho_1=|x|^{m_1}$ and $m_1<\min (2m,-2)$. Let assumption (\ref{23-2-74}) be fulfilled as $|x|\le \epsilon$.
\begin{enumerate}[label=(\roman*), wide, labelindent=0pt]
\item\label{example-23-3-4-i}
Let  $V< M$ in the vicinity of $0$. Then for the Dirac operator asymptotics
\begin{equation}
\N (0,\tau) =\cN(0,\tau) +O(\tau )
\label{23-3-10}
\end{equation}
holds as $\tau\to +\infty$ and $\cN (0,\tau)\asymp \tau^2$.

Indeed, assumption $V<M$ guarantees that $V-(M^2+2j F)^{\frac{1}{2}}$ with $j=0,1,\ldots$ do not contribute.

\item\label{example-23-3-4-ii}
Let $V^2< M^2+F$ in the vicinity of $0$ and
\begin{phantomequation}\label{23-3-11}\end{phantomequation}
\begin{equation}
V\le -\epsilon_0\rho,\qquad |\nabla V|\ge \epsilon_0\rho\gamma^{-1}
\qquad \text{as\ \ } |x|\le \epsilon.
\tag*{$\textup{(\ref*{23-3-11})}_2$}\label{23-3-11-*}
\end{equation}
Then for the Dirac operator asymptotics
\begin{gather}
\N (\tau,0) =\cN(\tau,0) +O(|\tau|^{\frac{1}{2}})
\label{23-3-12}\\
\shortintertext{and}
\cN (\tau,0)\asymp \tau^2 +|\tau|^{(m_1+2)/m}
\label{23-3-13}
\end{gather}
holds as $\tau\to -\infty$.

Indeed, assumption $V^2< M^2+F$ guarantees that $V+(M^2+2j F)^{\frac{1}{2}}$ with $j=1,\ldots$ do not contribute.
\end{enumerate}
\end{example}

We leave to the reader

\begin{problem}\label{problem-23-3-5}
\begin{enumerate}[label=(\roman*), wide, labelindent=0pt]
\item\label{problem-23-3-5-i}
Extend results of Example~\ref{example-23-3-4}\ref{example-23-3-4-i} to the case $2m \le m_1<m-1$.
\item\label{problem-23-3-5-ii}
Expand results of Example~\ref{example-23-3-4}\ref{example-23-3-4-ii} to the case $m_1=2m<-2$.

\medskip
In both Statements~\ref{problem-23-3-5-i} and \ref{problem-23-3-5-ii} one needs to formulate the analogue of the non-degeneracy assumption (\ref{23-3-4}).

\item\label{problem-23-3-5-iii}
Consider the full-rank even-dimensional case.
\end{enumerate}
\end{problem}

\subsection{Miscellaneous singularities}%
\label{sect-23-3-1-4}

Consider now miscellaneous singularities in the point, restricting ourselves to $d=2$:

\begin{example}\label{example-23-3-6}
Let $X$ be a compact domain, $d=2$ and conditions \ref{23-2-3-*}, \ref{23-3-1-*} and (\ref{23-3-3}) be fulfilled with
$\gamma=\epsilon |x|$, $\rho=|\log |x||^\sigma$, $\rho_1=|x|^{m_1}$, $\sigma>0$, $m_1<-2$.

Then for the Schr\"odinger-Pauli operator asymptotics (\ref{23-3-2}) holds and \begin{equation}
\log (\cN^-(\tau) )\asymp \tau^{-(m_1+2)/2\sigma}.
\label{23-3-14}
\end{equation}
\end{example}

\begin{example}\label{example-23-3-7}
Let $X$ be a compact domain, $d=2$ and conditions \ref{23-2-3-*}, \ref{23-3-1-*}  be fulfilled with
$\gamma=\epsilon |x|^{1-\beta}$,  $\rho_1=\exp (b|x|^\beta)$, $\beta<0$, $b>0$ and with $\rho= \exp (a|x|^\alpha)$ where \underline{either} $\beta<\alpha<0$ \underline{or} $\beta=\alpha$ and $b>2a$.

\begin{enumerate}[label=(\roman*), wide, labelindent=0pt]
\item\label{example-23-3-7-i}
Then for Schr\"odinger operator asymptotics (\ref{23-3-2}) holds  and $\cN^-(\tau)\asymp \tau$ as $\tau\to +\infty$.

Indeed, as $\beta>-1$ it is easy, and as $\beta\le -1$ one can apply the same arguments as in Example~\ref{example-23-3-23} below (in which case Remark~\ref{rem-23-3-11}\ref{rem-23-3-11-iii} does not apply.

\item\label{example-23-3-7-ii}
Let also conditions $\textup{(\ref{23-2-3})}_{1,3}$,  (\ref{23-3-3}) and (\ref{23-3-5}) be fulfilled with $\gamma=\epsilon  |x|^{1-\alpha}$. Then for the Schr\"odinger-Pauli operator asymptotics (\ref{23-3-2}) holds with
\begin{align}
&\cN^-(\tau)\asymp \tau^{b/2a} |\log \tau|^{(2-\beta)/\beta}
&&\text{as\ \ } \alpha=\beta,
\label{23-3-15}\\
&\log (\cN^-(\tau))\asymp |\log \tau|^{\beta/\alpha}
&&\text{as\ \ }  \beta<\alpha<0.
\label{23-3-16}
\end{align}
\end{enumerate}
Indeed, in this case the forbidden zone is $\{x\colon |x|\le r^*\}$ with $r_*=\epsilon |\log \tau|^{1/\alpha}$ and contribution of the zone $\{x\colon |x|\ge r_*\}$ to the ``extra'' remainder does not exceed $\int r^{-1-2\beta}r\,dr$, taken over this zone, which is $r_*^{-2\beta}\asymp |\log \tau |^{2\beta /\alpha}$.
\end{example}

\begin{example}\label{example-23-3-8}
and conditions \ref{23-2-3-*}, \ref{23-3-1-*}  be fulfilled with
$\gamma=\epsilon |x|^{1-\beta}$,  $\rho_1=\exp (b|x|^\beta)$, $\beta<0$, $b>0$ and with $\rho=|x|^{2m}$.

\begin{enumerate}[label=(\roman*), wide, labelindent=0pt]
\item\label{example-23-3-8-i}
Then  Schr\"odinger operator is covered by the previous Example~\ref{example-23-3-7}..

\item\label{example-23-3-8-ii}
Let conditions $\textup{(\ref{23-2-3})}_{1,3}$, (\ref{23-3-3}), (\ref{23-2-23})
be fulfilled with $\gamma=\epsilon |x|$, $\rho=|x|^m$, $m<0$.

Then for the Schr\"odinger-Pauli operator the following asymptotics holds:
\begin{gather}
\N^-(\tau)= \cN^-(\tau)+ O(\tau^{1/2}+ \tau^{\beta/(2m)})
\label{23-3-17}\\
\shortintertext{holds with}
\log (\cN^-(\tau))\asymp \tau^{\beta/(2m)}.
\label{23-3-18}
\end{gather}
Indeed, in this case $r_*=\tau^{1/(2m)}$ (cf. Example~\ref{example-23-3-7}\ref{example-23-3-7-i}).
\end{enumerate}
Indeed, the contributions of the zone where $\mu_\eff h_\eff \le C$ to the main term of the asymptotics and to the remainder estimates are $\int_\cX  \rho_1 r\,dr$ and $\int_\cZ \gamma^{-2}r\,dr $ where $\cX=\{x\colon: V(x)\le \tau\}$ and  $\cZ $ is a $\gamma$-vicinity of $\partial X$; so we get (\ref{23-3-18}) (we cannot get magnitude of $\cN^-(\tau)$ itself precisely) and $\bar{r}$ with $\bar{r}=\tau^{1/(2m)}$.
\end{example}

The following problem seems to be very challenging:

\begin{Problem}\label{Problem-23-3-9}
Using the fact that singularities propagate along the drift lines, and the length of the drift line is $\asymp \bar{r}$ rather than
$\asymp \bar{\gamma}=\bar{r}^{1-\beta}$ prove that the contribution of $\cZ$ to the remainder is in fact $O(1)$ and thus improve the remainder estimate (\ref{23-3-17}) to $O(\tau^{1/2})$.
\end{Problem}

\begin{problem}\label{problem-23-3-10}
Extend results of Examples~\ref{example-23-3-6},~\ref{example-23-3-7} and~\ref{example-23-3-8} to the Dirac operator.
\end{problem}

\begin{remark}\label{rem-23-3-11}
\begin{enumerate}[label=(\roman*), wide, labelindent=0pt]
\item\label{rem-23-3-11-i}
Observe that the contribution to the remainder of the zone
$\{x\colon |x|\le \varepsilon\}$ does not exceed $\varepsilon^\sigma\tau^{(d-1)/2}$ with $\sigma>0$ in the frameworks of Example~\ref{example-23-3-1}\ref{example-23-3-1-i},
Example~\ref{example-23-3-1}\ref{example-23-3-1-iv} with $m_1>-2d$,
Example~\ref{example-23-3-2}\ref{example-23-3-2-i},
Example~\ref{example-23-3-2}\ref{example-23-3-2-iii} with $d+m_1>m$ and
Example~\ref{example-23-3-3}.

The same is correct in Example~\ref{example-23-3-6} and Examples~\ref{example-23-3-7} and~\ref{example-23-3-8} with $0>\beta>-1$.

Therefore, in these cases under the standard non-periodicity condition to the geodesic flow with reflections from $\partial X$ the asymptotics
\begin{equation}
\N (\tau )=\cN (\tau )+
\varkappa _1\tau ^{\frac{1}{2}}+ o\bigl(\tau ^{\frac{1}{2}}\bigr)
\label{23-3-19}
\end{equation}
holds with the standard coefficient $\varkappa _1$.

\item\label{rem-23-3-11-ii}
The similar statement (with $\tau$ replaced by $\tau^2$) is  true in the framework of Example~\ref{example-23-3-4}.

\item\label{rem-23-3-11-iii}
Since we used local estimates $O(h_\eff^{-1})$ rather than
$O(\mu_\eff^{-1}h_\eff^{-1})$ as $\mu_\eff\ge 1$ (the latter gave us no advantage) we do not need $0$ to be an inner singular point; the same results hold for $0\in \partial X$ under Dirichlet or Neumann boundary condition.
\end{enumerate}
\end{remark}

\begin{problem}\label{problem-23-3-12}
In the frameworks of Examples~\ref{example-23-3-1}\ref{example-23-3-1-i},  \ref{example-23-3-1}\ref{example-23-3-1-iv},
\ref{example-23-3-2}\ref{example-23-3-2-i},
\ref{example-23-3-2}\ref{example-23-3-2-iii} and  \ref{example-23-3-3}
estimate $|\cN^-(\tau)-\kappa_0 \tau^{d/2}|$.

Furthermore, in the frameworks of Examples~\ref{example-23-3-4}~\ref{example-23-3-4-i},  \ref{example-23-3-4-ii} estimate $|\cN(0,\tau)-\kappa_0 \tau^{2}|$ and
$|\cN(\tau,0)-\kappa_0 \tau^{2}|$ respectively.
\end{problem}

Finally, consider the case when the singularity is located on the curve.

\begin{example}\label{example-23-3-13}
Let $X$ be a compact domain, $d=2$ and conditions \ref{23-2-3-*}, \ref{23-3-1-*}  be fulfilled with
$\gamma=\epsilon \delta(x)$,  $\rho_1=\delta(x)^m$,
$\rho _1=\delta (x)^{m_1}$ with $m_1<\min (2m,\, -2)$  where
$\delta (x)=\dist (x,L)$, $m<  0$, $L$ is \underline{either} a closed curve ($q=1$) \underline{or} a closed set of Minkowski dimension $q<1$.

\begin{enumerate}[label=(\roman*), wide, labelindent=0pt]
\item\label{example-23-3-13-i}
Then for the Schr\"odinger operator asymptotics (\ref{23-3-2}) holds for
$\tau\to +\infty$ and $\cN^-(\tau)\asymp \tau$.

Indeed, using the same arguments as before we can get a remainder estimate $O(\tau^{1/2})$ if $q<1$ and $O(\tau^{\frac{1}{2}}|\log \tau|)$ if $q=1$ but in the latter case we can get rid off logarithm using standard propagation arguments.

\item\label{example-23-3-13-ii}
Let also conditions (\ref{23-3-3}) and (\ref{23-3-5}) be fulfilled with $m>-1$. Then for the Schr\"odinger-Pauli operator asymptotics holds
\begin{align}
&\N^- (\tau)=\cN^-(\tau)+ O(\tau^{1/2}+\tau^{(q-2)/(2m)})
\label{23-3-20}\\
\shortintertext{holds while}
&\cN^-(\tau)\asymp \tau+\tau^{(m_1+q)/(2m)}.
\label{23-3-21}
\end{align}
\end{enumerate}
Indeed, in this case a forbidden zone is $\{x\colon \delta(x)\le \delta_*= \epsilon\tau^{1/(2m)}\}$ and contributions of the zone $\{x\colon \delta(x)\ge \delta_*\}$ to the main term of the asymptotics and the remainder are $\asymp \delta_*^{m_1+q}$ and $\delta_*^{q-2}$ respectively.
\end{example}

\begin{problem}\label{problem-23-3-14}
\begin{enumerate}[label=(\roman*), wide, labelindent=0pt]
\item\label{problem-23-3-14-i}
Explore, if we can using propagation arguments 
improve remainder estimate (\ref{23-3-20}).

\item\label{problem-23-3-14-ii}
Extend results of Example~\ref{example-23-3-13} to different types of the singularities along $L$ and/or Dirac operator.
\end{enumerate}
\end{problem}

\section{Singularities at infinity}
\label{sect-23-3-2}

Let us consider \emph{unbounded domains:\/}

\subsection{Power singularities: Schr\"odinger operator}
\label{sect-23-3-2-1}
Let us  start from the power singularities.

\begin{example-foot}\footnotetext{\label{foot-23-27} Cf. Example~\ref{example-23-3-1}.}\label{example-23-3-15}
\begin{enumerate}[label=(\roman*), wide, labelindent=0pt]
\item\label{example-23-3-15-i}
Let $X$ be a connected exterior domain\footnote{\label{foot-23-28} I.e. a domain with compact complement $\complement X$ in $\bR^d$.}%
\index{exterior domain}%
\index{domain!exterior} with $\sC^K$ boundary, $d=2$. Let  conditions \textup{(\ref{23-2-2})} and  \ref{23-2-3-*} be fulfilled with
$\gamma =\epsilon _0 \langle x\rangle $, $\rho =\langle x\rangle ^m$,
$\rho _1=\langle x\rangle ^{m_1}$, $m_1>2m$. Further, let
\begin{equation}
|F|\ge \epsilon_0 \rho_1,\qquad |\nabla F|\ge \epsilon_0 \rho_1\gamma^{-1}\qquad
\text{for\ \ } |x|\ge c.
\tag*{$\textup{(\ref*{23-3-1})}^\#_{1,2}$}\label{23-3-1-**}
\end{equation}
Then for the Schr\"odinger operator the following asymptotics holds:
\begin{gather}
\N^-(\tau)=\cN^- (\tau)+ O(\tau^{(m_1+2)/2(m_1+1)})
\label{23-3-22}
\shortintertext{with}
\cN^- (\tau)\asymp \tau^{(m_1+2)/m_1}.
\label{23-3-23}
\end{gather}
Indeed, there will be zone $X'_1=\{|x|\le \tau^{1/2(m_1+1)}\}$ with
$\mu_\eff =|x|^{m_1+1}\tau^{-1/2}\lesssim 1$ and a zone
$X'_2=\{|x|\ge \tau^{1/2(m_1+1)}\}$ with $\mu_\eff \ge 1$. Contribution of the partition element in $X'_1$ to the remainder is $O(h_\eff^{-1})=O(\tau^{1/2}\gamma)$ and the total contribution of $X'_1$ is
$O(\tau^{(m_1+2)/2(m_1+1)})$. On the other hand, contribution of the partition element in $X'_2$ to the remainder is
$O(\mu_\eff^{-1} h_\eff^{-1})=O(\tau\gamma^{-m_1})$ and the total contribution of this zone is $O(\tau^{(m_1+2)/2(m_1+1)})$ again.

\item\label{example-23-3-15-ii}
Under proper assumptions the similar asymptotics holds in the full-rank even-dimensional case:
\begin{gather}
\N^-(\tau)=\cN^- (\tau)+ O(\tau^{(d-1)(m_1+2)/2(m_1+1)})
\label{23-3-24}
\shortintertext{with}
\cN^- (\tau)\asymp \tau^{d(m_1+2)/(2m_1)}.
\label{23-3-25}
\end{gather}
\end{enumerate}
\end{example-foot}

\begin{example-foot}\footnotetext{\label{foot-23-29} Cf. Example~\ref{example-23-3-2}.}\label{example-23-3-16}
\begin{enumerate}[label=(\roman*), wide, labelindent=0pt]
\item\label{example-23-3-16-i}
Let $X$ be a connected exterior domain\footref{foot-23-28} with $\sC^K$ boundary. Let  conditions  \textup{(\ref{23-2-2})},  \ref{23-2-3-*} and $\textup{(\ref{23-3-1})}^\#_{1}$ be fulfilled with $\gamma =\epsilon _0 \langle x\rangle $,
$\rho =\langle x\rangle ^m$, $\rho _1=\langle x\rangle ^{m_1}$, $m>0$,
$m-1< m_1\le 2m$.

Then we need to assume that
\begin{gather}
V+F\ge \epsilon_0 \rho^2\qquad \text{as\ \ } |x|\ge c
\tag*{$\textup{(\ref*{23-3-3})}^{\#\prime}$}\label{23-3-3-*'}\\
\intertext{which for $m_1<2m$ is equivalent to}
V \ge \epsilon_0 \rho^2\qquad \text{as\ \ } |x|\ge c
\tag*{$\textup{(\ref*{23-3-3})}^{\#}$}\label{23-3-3-*}
\end{gather}
We need also to have some non-degeneracy assumption. Assume that (cf. (\ref{23-2-16}))\footnote{\label{foot-23-30} One can see easily, that if $F$, $V$ stabilize at infinity to functions $F^0$, $V^0$, positively homogeneous of degrees $m_1\ge 0$, $2m$ respectively, then even stronger non-degeneracy assumption (cf. (\ref{23-2-14})) holds:
\begin{equation}
\tau \ge V+F\implies  |\nabla (\tau - V)F^{-1}|\gamma \ge \epsilon_0 \tau \qquad
\text{as\ \ } |x|\ge c.
\label{23-3-26}\end{equation}
On the other hand, for $m_1<0$ condition (\ref{23-3-26}) is fulfilled if $w(\theta)= F^{0\,-2m}V^{0\,m_1}$ has only nodegenerate critical points on $\bS^1$.}
\begin{multline}
\tau \ge V+F,\qquad |\nabla (\tau - V)F^{-1}|\gamma \le \epsilon_0 \tau F^{-1}\\
\implies |\det \Hess (\tau - V)F^{-1}| \gamma^2 \ge \epsilon_0 \tau F^{-1}\qquad
\text{as\ \ } |x|\ge c.
\label{23-3-27}\end{multline}
Then for the Schr\"odinger operator asymptotics
\begin{gather}
\N^-(\tau)=\cN^-(\tau) + O(R),
\label{23-3-28}\\
\shortintertext{holds with}
R=\left\{\begin{aligned}
&\tau^{(m_1+2)/2(m_1+1)} &&m_1>0,\\
&\tau|\log \tau|^2 &&m_1=0,\\
&\tau^{1-m_1/(2m)}|\log \tau| && m-1<m_1<0,
\end{aligned}\right.
\label{23-3-29}\\
\shortintertext{and}
\cN^- (\tau)\asymp \tau^{(m+1)/m}.
\label{23-3-30}
\end{gather}

Indeed, the contribution of $\gamma$-element in $X'_2$
(see Example~\ref{example-23-3-15}) to the remainder does not exceed $C\mu_\eff^{-1}h_\eff^{-1}|\log \mu_\eff|= C\tau r^{-m_1}|\log rr_*^{-1}|$,
$r_*=\tau^{1/(2(m_1+1)}$. Then summation with respect to partition returns $R$; we need to take into account that $\{|x|\ge C\tau^{1/(2m)}\}$ is a forbidden zone. Meanwhile, contribution of $X'_1$ does not exceed $C\tau^{(d-1)/2}r_*^{d-1}$.

\item\label{example-23-3-16-ii}
As $m_1<0$ we can get rid off the logarithmic factor in the remainder estimate, if we define $\cN^-(\tau)$ by the corrected magnetic Weyl formula; similarly, for $m_1=0$ we can then replace $|\log \tau|^2$ by $|\log \tau|$.

Also for $m_1=0$ we can  $|\log \tau|^2$ by $|\log \tau|$ under assumption
\begin{multline}
\tau \ge V+F,\qquad |\nabla (\tau - V)F^{-1}|\gamma \le \epsilon_0 \tau F^{-1}\\
\implies \det \Hess (\tau - V)F^{-1}  \gamma^2 \ge \epsilon_0 \tau F^{-1}\qquad
\text{as\ \ } |x|\ge c.
\label{23-3-31}
\end{multline}

\item\label{example-23-3-16-iii}
Under proper assumptions the similar asymptotics holds in the full-rank even-dimensional case:
\begin{gather}
R=\left\{\begin{aligned}
&\tau^{(d-1)(m_1+2)/2(m_1+1)} &&m_1>d-2,\\
&\tau^{d/2}|\log \tau|^2 &&m_1=d-2,\\
&\tau^{d/2+(d-2-m_1)/(2m)} |\log \tau|&& m-1<m_1<d-2,
\end{aligned}\right.
\label{23-3-32}
\shortintertext{with}
\cN^- (\tau)\asymp \tau^{d(m_1+2)/(2m_1)}.
\label{23-3-33}
\end{gather}
Moreover, we can get rid off one logarithmic factor under assumption similar to (\ref{23-3-31}).
\end{enumerate}
\end{example-foot}

Let is improve (\ref{23-3-29}) using arguments associated with long-range dynamics:

\begin{example}\label{example-23-3-17}
Let $m_1>0$ in the framework of Examples~\ref{example-23-3-15}\ref{example-23-3-15-i} or~\ref{example-23-3-16}\ref{example-23-3-16-i}.  Moreover, let the stabilization conditions
\begin{phantomequation}\label{23-3-34}\end{phantomequation}
\begin{align}
&D^\sigma (g^{jk}-g^{jk0})=o(|x|^{-|\sigma |}),
\tag*{$\textup{(\ref*{23-3-34})}_1$}\label{23-3-34-1}\\
&D ^\sigma (V_j-V^0_j)=o(|x|^{m_1+1-|\sigma |})\qquad
\forall \sigma :|\sigma |\le 1
\tag*{$\textup{(\ref*{23-3-34})}_2$}\label{23-3-34-2}
\end{align}
be fulfilled for $|x|\to \infty $ with the positively homogeneous
functions $g^{jk0},V^0_j\in \sC^K(\bR^2\setminus 0)$ of degrees $0$
and $m_1+1$ respectively.  Let the standard non-periodicity condition be fulfilled for the Hamiltonian
\begin{equation}
H^0=|x|\Bigl( \sum_{j,k} g^{jk0}(\xi _j-V^0_j)(\xi _k-V^0_k)-1\Bigr)
\label{23-3-35}
\end{equation}
on the energy level  $0$.  Then for the Schr\"odinger operator as
$\tau \to +\infty $ the following asymptotics  holds:
\begin{gather}
\N^- (\tau)=\cN^- (\tau )+
o\bigl(\tau ^{{\frac{1}{2}}(m_1+2)/(m_1+1) }\bigr).
\label{23-3-36}
\end{gather}

Indeed, let us observe that for $m_1>0$ the main contribution to the remainder estimate (\ref{23-3-29}) is given by the zone
$\{\varepsilon\le|x|\tau^{-1/2(m_1+1)}\le\varepsilon^{-1}\}$
with an arbitrarily small constant $\varepsilon>0$.  In this zone the magnetic field is normal, $\mu_\eff \lesssim 1$ (for every fixed $\varepsilon>0$) and $V\ll \tau$, $|\nabla V|\ll \tau |x|^{-1}$ Applying the improved Weyl asymptotics here we obtain (\ref{23-3-36}).
\end{example}

\begin{remark}\label{rem-23-3-18}
Similar improvements are possible in the full-rank even-dimensional case.
\end{remark}

\subsection{Power singularities: Schr\"odinger-Pauli operator}
\label{sect-23-3-2-2}

Next, consider Schr\"odinger-Pauli operators. We will need to impose \ref{23-3-3-*} and the related non-degeneracy assumption
\begin{equation}
|\nabla V|\ge \epsilon_0 \rho^2\gamma^{-1}\qquad\text{for\ \ } |x|\ge c.
\tag*{$\textup{(\ref*{23-3-5})}^\#$}\label{23-3-5-*}
\end{equation}

\begin{example-foot}\footnotetext{\label{foot-23-31} Cf. Example~\ref{example-23-3-3}.}\label{example-23-3-19}
Let \ref{23-3-3-*} and \ref{23-3-5-*} be fulfilled. Then for the Schr\"odinger-Pauli operator

\begin{enumerate}[label=(\roman*), wide, labelindent=0pt]
\item\label{example-23-3-19-i}
In the framework of Examples~\ref{example-23-3-15} and \ref{example-23-3-16} asymptotics (\ref{23-3-22}) and (\ref{23-3-29})--(\ref{23-3-29}) hold, respectively.

\item\label{example-23-3-19-ii}
In the framework of Example~\ref{example-23-3-19}\ref{example-23-3-19-i}, asymptotics (\ref{23-3-36}) holds.

\item\label{example-23-3-19-iii}
Further,
\begin{equation}
\cN^- (\tau )\asymp \tau^{(m+1)/m} + \tau ^{(m_1+2)/(2m)}.
\label{23-3-37}
\end{equation}
\item\label{example-23-3-19-iv}
Finally, under proper assumptions one can consider the full-rank even-dimensional case and prove asymptotics with the remainder estimate $O(R)$, with $R\coloneqq   R_1+R_2$ where $R_1$ is the remainder estimate for the Schr\"odinger operator,
\begin{align}
&R_2=\tau^{(m_1+2)(d-2)/(4m)}
\label{23-3-38}\\
\shortintertext{and}
&\cN^-(\tau)\asymp \tau^{d(m+1)/(2m_1)}+\tau^{d(m_1+2)/(4m)}.
\label{23-3-39}
\end{align}
\end{enumerate}
\end{example-foot}

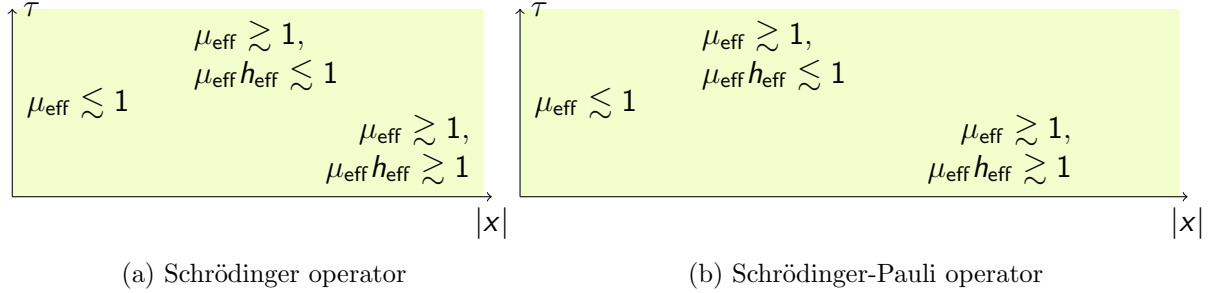
\begin{figure}
\subfloat[Schr\"odinger operator]{%
\begin{tikzpicture}[scale=1.25]
\begin{scope}
\clip(0,2) rectangle (5,4);
\fill[lime!20] (0,2) rectangle (5,4);
\fill[cyan!20, domain=0:5] plot function {1.2*x**2}--(5,2);
\fill[blue!20, domain=0:5] plot function {.45*x**1.5}--(5,4)--(5,2);
\fill[pattern=dots, pattern color=red, domain=0:5] plot function {.45*x**1.5}--(5,4)--(5,2);

\end{scope}
\draw[thin,->] (0,2)--(0,4) node[right]{$\tau$};
\draw[thin,->] (0,2)--(5.1, 2) node[below]{$|x|$};
\node at (.7,3) {$\mu_{\eff}\lesssim 1$};

\node at (2.7,3.5) {\begin{tabular}{ll}$\mu_{\eff}\gtrsim 1$,\\ $\mu_{\eff}h_{\eff}\lesssim 1$\end{tabular}};

\node at (4.1,2.5) {\begin{tabular}{rr}$\mu_{\eff}\gtrsim 1$,\\ $\mu_{\eff}h_{\eff}\gtrsim 1$\end{tabular}};

\end{tikzpicture}}
\subfloat[Schr\"odinger-Pauli operator]{%
\begin{tikzpicture}[scale=1.25]
\begin{scope}
\clip(0,2) rectangle (7,4);
\fill[lime!20] (0,2) rectangle (7,4);
\fill[cyan!20, domain=0:7] plot function {1.2*x**2}--(7,2);
\fill[blue!20, domain=0:7] plot function {.45*x**1.5}--(7,4)--(7,2);
\fill[pattern=dots, pattern color=red] plot function {.45*x}--(7,4)--(7,2);
\end{scope}
\draw[thin,->] (0,2)--(0,4) node[right]{$\tau$};
\draw[thin,->] (0,2)--(7.1, 2) node[below]{$|x|$};
\node at (.7,3) {$\mu_{\eff}\lesssim 1$};

\node at (2.7,3.5) {\begin{tabular}{ll}$\mu_{\eff}\gtrsim 1$,\\ $\mu_{\eff}h_{\eff}\lesssim 1$\end{tabular}};

\node at (5.1,2.5) {\begin{tabular}{rr}$\mu_{\eff}\gtrsim 1$,\\ $\mu_{\eff}h_{\eff}\gtrsim 1$\end{tabular}};

\end{tikzpicture}}

\caption{\label{fig-23-1} $m_1>2m$; dots show the forbidden zone}
\end{figure}

\subsection{Power singularities: Dirac operator}
\label{sect-23-3-2-3}

Finally, consider the Dirac operators. We want to explore either $\N(0,\tau)$  with $\tau\to +\infty$ and  $\N(\tau,0)$ as $\tau\to -\infty$.

\begin{example}\label{example-23-3-20}
Let $X$ be a connected exterior domain with $\sC^K$ boundary, $d=2$. Let  conditions (\ref{23-2-2}), \ref{23-2-75-1}, \ref{23-3-1-**} and (\ref{23-3-9})
be fulfilled with $\gamma =\epsilon _0 \langle x\rangle $,
$\rho =\langle x\rangle ^m$, $\rho _1=\langle x\rangle ^{m_1}$,
$m_1>\max(2m, -2)$. Further,  let assumption (\ref{23-2-74}) be fulfilled as $|x|\le \epsilon$.
\begin{enumerate}[label=(\roman*), wide, labelindent=0pt]
\item\label{example-23-3-20-i}
Let  $V< M$ in the vicinity of infinity. Then as $\tau\to +\infty$ for the Dirac operator asymptotics
\begin{gather}
\N^-(\tau)=\cN^- (\tau)+ O(\tau^{(m_1+2)/(m_1+1)})
\label{23-3-40}
\shortintertext{holds with}
\cN^- (\tau)\asymp \tau^{2(m_1+2)/m_1}.
\label{23-3-41}
\end{gather}

\item\label{example-23-3-20-ii}
Let $V^2< M^2+F$ in the vicinity of infinity and
\begin{equation}
V\le -\epsilon_0\rho,\qquad |\nabla V|\ge \epsilon_0\rho\gamma^{-1}
\qquad \text{as\ \ } |x|\ge c.
\tag*{$\textup{(\ref*{23-3-11})}^\#_2$}\label{23-3-11-**}
\end{equation}
Then as $\tau\to-\infty$ for the Dirac operator asymptotics (\ref{23-3-40}) holds with
\begin{equation}
\cN^- (\tau )\asymp \tau^{2(m+1)/m} + \tau ^{2(m_1+2)/(2m)}.
\label{23-3-42}
\end{equation}
\end{enumerate}
\end{example}

We leave to the reader

\begin{problem}\label{problem-23-3-21}
\begin{enumerate}[label=(\roman*), wide, labelindent=0pt]
\item\label{problem-23-3-21-i}
Using arguments of Example~\ref{example-23-3-16} extend results of Example~\ref{example-23-3-20}\ref{example-23-3-20-i} to the case $2m \ge m_1>m-1$.
\item\label{problem-23-3-21-ii}
Using arguments of Examples~\ref{example-23-3-16} and~\ref{example-23-3-19} expand results of Example~\ref{example-23-3-20}\ref{example-23-3-4-ii} to the case $m_1=2m<-2$.

\medskip
In both Statements formulate the analogue of the non-degeneracy assumption $\textup{(\ref{23-3-4})}^\#$.

\item\label{problem-23-3-21-iii}
Consider the full-rank even-dimensional case.
\end{enumerate}
\end{problem}

\begin{problem}\label{problem-23-3-22}
Extend  to the Dirac operator results of Example~\ref{example-23-3-17}; one still defines Hamiltonian $H^0$  by (\ref{23-3-35}).
\end{problem}

\subsection{Exponential singularities}
\label{sect-23-3-2-4}

Consider now an exponential growth at infinity.

\begin{example}\label{example-23-3-23}
Let $X$ be a connected exterior domain with $\sC^K$ boundary. Let  conditions  \textup{(\ref{23-2-2})},  \ref{23-2-3-*}, \ref{23-3-1-**} be fulfilled with
$\gamma =\epsilon _0 \langle x\rangle^{1-\beta }$,
$\rho =\exp (a \langle x\rangle ^\alpha )$,
$\rho _1=\exp (b \langle x\rangle ^\beta) $, $\beta > 0$ and \underline{either} $\beta>\alpha$ \underline{or} $\beta=\alpha$ and $b>2a>0$.

\begin{enumerate}[label=(\roman*), wide, labelindent=0pt]
\item\label{example-23-3-23-i}
Then for the Schr\"odinger operator the following asymptotics holds:
\begin{gather}
\N^- (\tau )=\cN^-(\tau )+
O\bigl(\tau ^{\frac{1}{2}}|\log \tau|^{1/\beta} \bigr)
\label{23-3-43}\\
\shortintertext{with}
\cN^- (\tau) \asymp \tau|\log\tau|^{2/\beta}.
\label{23-3-44}
\end{gather}

Indeed, using described $\gamma$, consider zone
$\{\bar{r}\le |x| \le C|\log \tau|^{1/\beta}\}$, where
$\mu_\eff= F \gamma \tau^{-1/2}\gtrsim 1$ and $F\le c\tau$; here $\bar{r}$ is defined by $\bar{r}^{1-\beta}\exp(b\bar{r}^\beta)=\tau^{1/2}$. Then contribution of $\gamma$-element to the remainder does not exceed $C\mu_\eff^{-1}h_\eff^{-1}=\tau \exp(- br^\beta)$ with $r=|x|$ and the total contribution of this zone does not exceed $CR$ with
\begin{equation*}
R=\int_{\bar{r}} \tau \exp(- br^\beta) r^{-2+2\beta} r\,dr,
\end{equation*}
which is equal to the integrand, multiplied by $r^{1-\beta}$ and calculated as $r=\bar{r}$:
$R\asymp \tau \exp(- b\bar{r}^\beta) \bar{r}^{\beta} =\tau^{1/2}\bar{r}$ with $\bar{r}\asymp |\tau|^{1/\beta}$.

On the other hand, consider zone $\{|x|\le \bar{r}\}$, where we can redefine $\gamma= \bigl(\tau^{1/2}r^{(1-\beta)\delta } \exp(-br^\beta)\bigr)^{1/(1+\delta)}$ with $\delta>0$; then its contribution to the remainder does not exceed $CR$ with
\begin{equation*}
R=\int^{\bar{r}} \tau^{1/2}
\bigl(\tau^{1/2}r^{(1-\beta)\delta } \exp(-br^\beta)\bigr)^{-1/(1+\delta)}r\,dr,
\end{equation*}
which is also equal to the integrand, multiplied by $r^{1-\beta}$ and calculated as $r=\bar{r}$:
$R\asymp \tau^{1/2} \bigl(\tau^{1/2}\bar{r}^{(1-\beta)\delta } \exp(-b\bar{r}^\beta)\bigr)^{-1/(1+\delta)}\bar{r}^{2-\beta}=\tau^{1/2}\bar{r} $ again.

\item\label{example-23-3-23-ii}
Let also conditions $\textup{(\ref{23-2-3})}_{1,3}$,  \ref{23-3-3-*} and \ref{23-3-5-*} be fulfilled with $\gamma=\epsilon  |x|^{1-\alpha}$. Then for the Schr\"odinger-Pauli operator asymptotics (\ref{23-3-43}) holds with
\begin{align}
&\cN^- (\tau) \asymp \tau^{b/(2a)}|\log\tau|^{(2-\beta)/\beta} &&\beta=\alpha,
\label{23-3-45}\\
&\log (\cN^-(\tau))\asymp |\log \tau|^{\beta/\alpha} && \beta>\alpha.
\label{23-3-46}
\end{align}
Indeed, in this case the forbidden zone is $\{x\colon |x|\ge r_*=c |\log \tau|^{1/\alpha}$ (cf. Example~\ref{example-23-3-7}~\ref{example-23-3-7-ii}).
\end{enumerate}
\end{example}

\begin{example}\label{example-23-3-24}
Let $X$ be a connected exterior domain with $\sC^K$ boundary. Let  conditions  \textup{(\ref{23-2-2})},  \ref{23-2-3-*}, \ref{23-3-1-**} be fulfilled with
$\gamma =\epsilon _0 \langle x\rangle^{1-\beta }$,
$\rho =\exp (a \langle x\rangle ^\alpha )$,
$\rho _1=\exp (b \langle x\rangle ^\beta) $, $\beta > 0$ and $\beta=\alpha$  and $2a>b >a >0$. Let conditions \ref{23-3-3-*} and \ref{23-3-5-*} be also fulfilled.

Then for both Schr\"odinger and Schr\"odinger-Pauli operators
(\ref{23-3-43}) and (\ref{23-3-44}) hold. Indeed, the forbidden zone is the same as in the previous example.
\end{example}

\begin{example}\label{example-23-3-25}
Let $X$ be a connected exterior domain with $\sC^K$ boundary. Let  conditions  \textup{(\ref{23-2-2})},  \ref{23-2-3-*}, \ref{23-3-1-**} be fulfilled with
$\gamma =\epsilon _0 \langle x\rangle^{1-\beta }$, $\rho _1=\exp (b \langle x\rangle ^\beta) $, $\beta > 0$ and  $\rho =\langle x\rangle ^m$, $m>0$.

\begin{enumerate}[label=(\roman*), wide, labelindent=0pt]
\item\label{example-23-3-25-i}
Then the Schr\"odinger operator is covered by Example~\ref{example-23-3-23-i}.

\item\label{example-23-3-25-ii}
Let also conditions $\textup{(\ref{23-2-3})}_{1,3}$,  \ref{23-3-3-*} and \ref{23-3-5-*} be fulfilled with $\gamma=\epsilon  |x|$. Then for the Schr\"odinger-Pauli operator the following asymptotics holds holds:
\begin{align}
&\N^- (\tau) = \cN^-(\tau)+ O(\tau^{1/2}|\log \tau|^{1/\beta}+\tau^{\beta/(2m)})
\label{23-3-47}
\shortintertext{and}
&\log (\cN^-(\tau))\asymp | \tau|^{\beta/(2m)}.
\label{23-3-48}
\end{align}
Indeed, in this case the forbidden zone is $\{x\colon |x|\ge r_*=c | \tau|^{1/(2m)}$ (cf. Example~\ref{example-23-3-8}~\ref{example-23-3-8-ii}).
\end{enumerate}
\end{example}

The following problem seems to be very challenging:

\begin{Problem-foot}\label{Problem-23-3-26}\footnotetext{\label{foot-23-32} Cf. Problem~\ref{Problem-23-3-9}}
Using the fact that singularities propagate along the drift lines, and the length of the drift line is $\asymp \bar{r}$ rather than
$\asymp \bar{\gamma}=\bar{r}^{1-\beta}$ prove that the contribution of $\cZ$ to the remainder is in fact $O(1)$ and thus improve the remainder estimate (\ref{23-3-17}) to $O(\tau^{1/2})$.
\end{Problem-foot}

\begin{problem}\label{problem-23-3-27}
\begin{enumerate}[label=(\roman*), wide, labelindent=0pt]
\item\label{problem-23-3-27-i}
Consider Dirac operator in the same settings as in Example~\ref{example-23-3-23}.

\item\label{problem-23-3-27-ii}
Since for Schr\"odinger, Schr\"odinger-Pauli and Dirac operators the main contribution  to the remainder is delivered by the zone where $\varepsilon \le \mu_\eff \le \varepsilon^{-1}$ (and $V\ll \tau$), while the contribution of the zones of the zones where $\mu_\eff \le \varepsilon $ and
$\mu _\eff \ge \varepsilon^{-1}$ do not exceed
$\sigma(\varepsilon)\tau |\log \tau|^{1/\beta}$  with $\sigma=o(1)$ as $\varepsilon\to 0$, derive remainder estimate $o(\tau |\log \tau|^{1/\beta})$ under non-periodicity condition for the Hamiltonian similar to (\ref{23-3-35}).

\item\label{problem-23-3-27-iii}
Consider Schr\"odinger-Pauli and Dirac operators in the same settings as in Example~\ref{23-3-11} albeit with $V$ of the logarithmic growth at infinity (i.e. with $\rho=|\log |x||^{\alpha}$, $\gamma=\epsilon |x|$).
\end{enumerate}
\end{problem}

More challenging is the following

\begin{problem}\label{problem-23-3-28}
\begin{enumerate}[label=(\roman*), wide, labelindent=0pt]
\item\label{problem-23-3-28-i}
In the frameworks of Examples~\ref{example-23-3-1},~\ref{example-23-3-2},~\ref{example-23-3-3} and~\ref{example-23-3-4} allow degenerations of $F$.
\item\label{problem-23-3-28-ii}
In the frameworks of Examples~\ref{example-23-3-2},~\ref{example-23-3-3} and~\ref{example-23-3-4} allow degenerations of $V$.
\end{enumerate}
\end{problem}

\chapter{$2\D$-case. Asymptotics of small eigenvalues}
\label{sect-23-4}

Now we consider external domains and asymptotics of eigenvalues tending to some finite limit.

\section{Operators stabilizing at infinity}
\label{sect-23-4-1}

We begin with the analysis of the Schr\"{o}dinger operator $A$ defined by (\ref{23-2-1}) under assumption (\ref{23-2-2}) assuming that
\begin{phantomequation}\label{23-4-1}\end{phantomequation}
\begin{equation}
\g \to \g_\infty, \quad \F\to \F_\infty ,\quad V\to 0
\qquad \text{as\ \ } |x|\to \infty.
\tag*{$\textup{(\ref*{23-4-1})}_{1-3}$}\label{23-4-1-*}
\end{equation}
Recall that $\F\coloneqq   (F_{jk})$ with $F_{jk}=\partial _k V_j- \partial_j V_k$,
$\g \coloneqq   (g^{jk})$.

We start from the theorem, describing the essential spectrum of $A$:

\begin{theorem}\label{thm-23-4-1}
\index{Schrodinger operator@Schr\"{o}dinger operator!magnetic!asymptotics of eigenvalues tending to point of essential spectrum}%
\index{operator!magnetic Schrodinger@magnetic Schr\"{o}dinger!asymptotics of eigenvalues tending to point of essential spectrum}%
\index{eigenvalues tending to a point of essential spectrum!asymptotics of!magnetic Schrodinger operator@magnetic Schr\"{o}dinger operator}%
\index{asymptotics!eigenvalues tending to point of essential spectrum!magnetic Schrodinger operator@magnetic Schr\"{o}dinger operator}
Let $X$ be an exterior domain\footnote{\label{foot-23-33} I.e. with a compact complement. If $X\ne \bR^d$, then the appropriate boundary condition are given on $\partial X$ such that operator is self-adjoint. In other words, infinity is an isolated singular point; see \footref{foot-23-7}.}.  with $\sC^K$ boundary. Let the Schr\"{o}dinger operator $A$ satisfy conditions \textup{(\ref{23-2-1})}, \textup{(\ref{23-2-2})}, and \ref{23-4-1-*}. Then
\begin{enumerate}[label=(\roman*), wide, labelindent=0pt]
\item\label{thm-23-4-1-i}
If $\rank \F_\infty =2r =d$ then
\begin{equation}
\Specess (A) = \{\sum_j \fz_j f_{\infty,j}\colon  \fz=(\fz_1,\ldots,\fz_r)\in (2\bZ^+ +1)^r\}
\label{23-4-2}
\end{equation}
where $\pm if_{\infty,j}$ are eigenvalues of $g_\infty\F_\infty$, $f_{\infty,j}>0$, $j=1,\ldots, r$.

\item\label{thm-23-4-1-ii}
If $\rank \F_\infty =2r <d$ then $\Specess (A)=[f^*,\infty)$ with
$f _*=f_{\infty,1}+\ldots+f_{\infty,r}$.
\end{enumerate}
\end{theorem}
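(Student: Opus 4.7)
The strategy is a two-step reduction: identify $\Specess(A)$ with the spectrum of a constant-coefficient model operator at infinity, and then compute the latter by separation of variables. The natural model is
\begin{equation*}
A_\infty \coloneqq \sum_{j,k} (hD_j - \mu V_j^\infty)\, g_\infty^{jk}\,(hD_k - \mu V_k^\infty) \qquad \text{on \ \ } L^2(\bR^d),
\end{equation*}
where $V_j^\infty$ is any linear vector potential with curl $\F_\infty$. The claim is $\Specess(A) = \Spec(A_\infty)$, and the two inclusions are mirror Weyl-sequence and compact-perturbation arguments. For $\Spec(A_\infty)\subseteq \Specess(A)$, take $\lambda \in \Spec(A_\infty)$ with an approximate eigenfunction $u_n\in \sC_0^\infty(\bR^d)$, translate it to a sequence of centers $y_n \to \infty$ chosen inside $X$, and dress it by a local gauge factor $e^{i\phi_n/h}$ so that $\mu V_j(x) - h\partial_j \phi_n(x)$ matches $\mu V_j^\infty(x - y_n)$ up to $o(1)$ on the translated support; by \ref{23-4-1-*} the dressed translates form a singular Weyl sequence for $A - \lambda$. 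The reverse inclusion follows from the fact that in the adjusted gauge the difference $A - A_\infty$ has coefficients tending to zero at infinity, so $(A+i)^{-1} - (A_\infty + i)^{-1}$ is compact and Weyl's theorem applies.

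For part \textbf{(i)}, the assumption $\rank \F_\infty = d = 2r$ lets us choose a linear change of coordinates that simultaneously orthonormalizes $g_\infty$ and brings $\F_\infty$ into standard symplectic block-diagonal form with eigenvalues $\pm i f_{\infty,j}$. In these coordinates $A_\infty$ decouples as an orthogonal sum of $r$ standard two-dimensional Landau Hamiltonians with field strengths $f_{\infty,j}$, and the spectrum of each summand is $\{(2n+1) f_{\infty,j} : n \in \bZ^+\}$ with infinite multiplicity; the spectrum of the sum is exactly (\ref{23-4-2}). For part \textbf{(ii)}, set $K \coloneqq \ker \F_\infty$ (dimension $d - 2r$) and $V_0 \coloneqq K^{\perp_{g_\infty}}$, and choose $V_j^\infty = 0$ for the indices $j$ parametrizing $K$. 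In adapted coordinates $A_\infty$ splits as $B_\infty \otimes I + I \otimes L$, where $B_\infty$ is the full-rank Landau Hamiltonian of part (i) on $V_0$ and $L$ is a flat Laplacian on $K$ with spectrum $[0,\infty)$. The spectrum of this commuting sum is the closure of the Minkowski sum $\Spec(B_\infty) + [0,\infty) = [f_*,\infty)$, since $f_*$ is the smallest Landau sum.

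The main obstacle is the gauge-matching step: only the curls $\F$ and $\F_\infty$ stabilize, not the vector potentials $\vec V$ and $\vec V^\infty$ themselves. Concretely, for each large $y_n$ one must produce a function $\phi_n$ on a ball $B(y_n,r_n)$ with $r_n\to\infty$ such that the one-form $\mu(\vec V - \vec V^\infty(\,\cdot\,-y_n)) - h\,d\phi_n$ has $C^0$-norm $o(1)$. Its exterior derivative $\mu(\F - \F_\infty)$ is indeed small on that ball by \ref{23-4-1-*}, and a quantitative Poincar\'e lemma on $B(y_n,r_n)$ produces a primitive $\phi_n$ with the required estimate, provided $r_n$ grows slowly enough relative to the stabilization rate. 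Once this is in place, all subsequent estimates in both directions of $\Specess(A) = \Spec(A_\infty)$ reduce to routine commutator bookkeeping with the hypotheses \ref{23-4-1-*}.
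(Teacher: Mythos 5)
Your overall architecture --- identify $\Specess(A)$ with $\Spec(A_\infty)$ for a constant-coefficient model at infinity, then compute $\Spec(A_\infty)$ by block-diagonalizing $\g_\infty\F_\infty$ and separating variables --- is exactly the route the paper takes (the paper asserts the identification as immediate and cites its Theorem~13.1.1 for the model spectrum). Your Weyl-sequence argument for $\Spec(A_\infty)\subseteq\Specess(A)$, including the local gauge-matching on balls $B(y_n,r_n)$ via a quantitative Poincar\'e lemma, is sound, and so is the computation of $\Spec(A_\infty)$ in both cases (i) and (ii).

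The gap is in the reverse inclusion. You claim that ``in the adjusted gauge the difference $A-A_\infty$ has coefficients tending to zero at infinity, so $(A+i)^{-1}-(A_\infty+i)^{-1}$ is compact.'' No such global gauge exists in general: the hypotheses only give $\F\to\F_\infty$, and a global primitive of the decaying $2$-form $\mu(\F-\F_\infty)$ need not decay. Already for $d=2$, Stokes' theorem forces, for \emph{every} gauge $\phi$,
\begin{equation*}
\sup_{|x|=R}\bigl|\mu(\vec{V}-\vec{V}^\infty)-h\nabla\phi\bigr|\ \ge\ \frac{\mu}{2\pi R}\Bigl|\int_{|x|\le R}(F_{12}-F_{\infty,12})\,dx\Bigr|,
\end{equation*}
and the right-hand side need not tend to $0$ even when $F_{12}-F_{\infty,12}\to 0$; this is the Aharonov--Bohm flux obstruction, and it is the reason a magnetic perturbation that is small (or even compactly supported) on the level of field strengths is generally \emph{not} a relatively compact perturbation on the level of vector potentials. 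Your local gauges $\phi_n$ live only on the disjoint balls $B(y_n,r_n)$ and do not patch into a single decaying one. The correct mechanism for $\Specess(A)\subseteq\Spec(A_\infty)$ is the one your local gauges already set up: for $\lambda\notin\Spec(A_\infty)$, take a partition of unity subordinate to the cover of a neighbourhood of infinity by the balls $B(y_n,r_n)$ together with one compact set, invert $A_\infty-\lambda$ in each locally gauged chart, and assemble a right parametrix $R$ with $(A-\lambda)R=I+K$, $K$ compact (the commutator and gauge-mismatch errors are controlled exactly by your Poincar\'e-lemma estimates); this yields Fredholmness of $A-\lambda$ and hence $\lambda\notin\Specess(A)$. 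With that replacement the proof is complete.
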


\begin{proof}
Indeed, one can see easily that $\Specess (A)$ coincides with $\Spec(A_\infty)$
where $A_\infty$ is a toy-model operator in $\bR^d$ with $\g=\g_\infty$, $\F=\F_\infty$ and $V=0$. For such operator we calculated spectrum in Theorem~\ref{book_new-thm-13-1-1} of \cite{futurebook}.
\end{proof}

\begin{remark}\label{rem-23-4-2}
\begin{enumerate}[label=(\roman*), wide, labelindent=0pt]
\item\label{rem-23-4-2-i}
Similarly, for Schr\"odinger-Pauli operator $\Specess (A)$ is defined by (\ref{23-4-2}) albeit with $\fz$ running $(2\bZ^+ )^r$ if
$\rank \F_\infty =2r =d$ and $\Specess (A)=[0,\infty)$ if
$\rank \F_\infty =2r <d$.
\item\label{rem-23-4-2-ii}
Further, for the Dirac operator $\Specess (A)$ also coincides with $\Spec(A_\infty)$, calculated in Theorem~\ref{book_new-thm-17-1-2} of \cite{futurebook}.
\end{enumerate}
\end{remark}

In this section we assume that
\begin{equation}
\rank F_\infty=d;
\label{23-4-3}
\end{equation}
very different and a more complicated case of $\rank F_\infty<d$ is left for the next Sections~\ref{sect-24-1}--\ref{sect-24-6}.

According to Theorem~\ref{thm-23-4-1}\ref{thm-23-4-1-i} under assumption (\ref{23-4-3}) the essential spectrum consists of separate points, which are points of the pure point spectrum (of infinite multiplicity) of the limiting operator $A_\infty$. We are interested in the asymptotics of eigenvalues of $A$ tending to some fixed $\tau^*\in \Specess(A)$. Namely, let us introduce
\begin{phantomequation}\label{23-4-4-}\end{phantomequation}
\begin{align}
&\N^-(\eta)= \N (\tau^*-\epsilon, \tau^*- \eta)
\tag*{$\textup{(\ref*{23-4-4-})}_-$}\label{23-4-4--}\\
\shortintertext{and}
&\N^+(\eta)= \N ( \tau^*+ \eta, \tau^*+\epsilon)
\tag*{$\textup{(\ref*{23-4-4-})}_+$}\label{23-4-4-+}
\end{align}
with a small constant $\epsilon >0$ and a small parameter $\eta\to +0$. We also introduce
\begin{equation}
\fW\coloneqq   \{\fz\in (2\bZ^+ +1)^r\colon  \sum_j \fz_j f_{\infty,j} =\tau^*\}.
\label{23-4-5}
\end{equation}
To characterize the rate of the decay at infinity we assume that
\begin{phantomequation}\label{23-4-6}\end{phantomequation}
\begin{multline}
|\nabla^\alpha (\g-\g_\infty)|= o(\rho^2 \gamma^{-|\alpha|}),
\qquad
|\nabla^\alpha (\F-\F_\infty)|= o(\rho^2 \gamma^{-|\alpha|}),\\[3pt]
|\nabla^\alpha V|= O(\rho^2 \gamma^{-|\alpha|})\qquad\text{as\ \ } |x|\to \infty\quad \forall\alpha .
\tag*{$\textup{(\ref*{23-4-6})}_{1-3}$}\label{23-4-6-*}
\end{multline}

\begin{theorem}\label{thm-23-4-3}
Let $X$ be a connected exterior domain with $\sC^K$ boundary. Let the Schr\"{o}dinger operator $A$ satisfy conditions \textup{(\ref{23-2-1})}, \textup{(\ref{23-2-2})} and \ref{23-4-6-*} with scaling functions\footnote{\label{foot-23-34} Recall that this means that
$|\nabla \gamma|\le c $ and $|\nabla \rho \le c\rho\gamma^{-1}$.}
such that $\gamma \to \infty$ and $\rho\to 0$ as $|x|\to \infty$.

Let $\rank \F_\infty=2r=d$. Moreover let
\begin{phantomequation}\label{23-4-7}\end{phantomequation}
\begin{equation}
\mp V \ge -\epsilon \rho^2\implies
|\nabla V|\ge \epsilon_0 \rho^2\gamma^{-1} \qquad \text{as\ \ } |x|\ge c.
\tag*{$\textup{(\ref*{23-4-7})}_\mp$}\label{23-4-7-mp}
\end{equation}
\begin{enumerate}[label=(\roman*), wide, labelindent=0pt]
\item\label{thm-23-4-3-i}

Then%
\begin{gather}
|\N ^\mp (\eta) -\cN^\mp (\eta)|\le
C\int _{ \cZ (\eta)} \gamma^{-2}\,dx
+ C\int \gamma^{-s}\,dx
\label{23-4-8}\\
\shortintertext{where}
\cN ^\mp (\eta) \coloneqq
(2\pi)^{-r} \sum_{\fz \in \fW} \int _{\{ x\colon  \mp V_\fz(x) \ge \eta \}}
f_1f_2\cdots f_r \sqrt{g}\,dx
\label{23-4-9}\\
\intertext{$g=\det{\g}^{-1}$, $\pm i f_j$ are eigenvalues of $\g \F$, $f_j>0$, $j=1,\ldots,r$, and}
V_\fz(x) \coloneqq   V(x) + \sum_j \fz_j (f_j(x)-f_{\infty,j}),
\label{23-4-10}
\end{gather}
$\cZ(\eta)$ is $\epsilon \gamma$-vicinity\footnote{\label{foot-23-35} I.e. $\cZ(\eta)=\bigcup_{x\in \Sigma(\eta)} B(x,\epsilon\gamma(x)))$.}
of $\Sigma(\eta)=\{ x\colon \mp V_\fz(x) =\eta\}$.

\item\label{thm-23-4-3-ii}
Further, under assumption
\begin{phantomequation}\label{23-4-11}\end{phantomequation}
\begin{equation}
\mp V \ge \epsilon_0\rho^2
\tag*{$\textup{(\ref*{23-4-11})}_\mp$}\label{23-4-11-mp}
\end{equation}
$\tau^*\pm 0$ is not a limit point of the discrete spectrum.
\end{enumerate}
\end{theorem}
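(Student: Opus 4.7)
The plan is to reduce the counting problem near the Landau level $\tau^*$ to a Weyl-type count for a collection of effective scalar operators obtained by Landau projection, one per $\fz\in\fW$. Since $\gamma\to\infty$ and $\rho\to 0$ at infinity, the rescaled parameters satisfy $\mu_\eff h_\eff=\rho_1/\rho\to\infty$, so on each $\gamma$-ball we are in the strong-magnetic-field regime of Chapter~\ref{book_new-sect-13} of \cite{futurebook} and the Birkhoff canonical form applies with small error.

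First I would cover $X\cap\{|x|\ge c\}$ by a $\gamma$-admissible partition of unity; the compact complement contributes only $O(1)$ eigenvalues by a standard variational bound and is absorbed into the error. On each $\gamma$-ball, rescale $x\mapsto(x-y)/\gamma(y)$ and bring $A-\tau^*$ into normal form. Microlocally it splits into blocks labelled by $\fz\in(\bZ^+)^r$ with principal symbol
\begin{equation*}
\sum_j\fz_j\bigl(f_j(x)-f_{\infty,j}\bigr)+V(x)+\Bigl[\sum_j\fz_jf_{\infty,j}-\tau^*\Bigr]
\end{equation*}
on the $\fz$-th Landau subspace, modulo $o(\rho^2)$ corrections and $O(\mu_\eff^{-\infty})$ negligible terms. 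For $\fz\notin\fW$ the bracketed constant is bounded away from zero by a fixed amount, so the block has no spectrum near $\tau^*$ and contributes only $O(\gamma^{-s})$ to $\N^\mp(\eta)$; summation produces the tail $\int\gamma^{-s}\,dx$. For $\fz\in\fW$ the symbol reduces to $V_\fz(x)$ as in \textup{(\ref{23-4-10})} plus $o(\rho^2)$; on this Landau subspace, the local count of eigenvalues below $-\eta$ (above $\eta$) equals the Landau degeneracy $(2\pi)^{-r}f_1\cdots f_r\sqrt{g}$ times the volume of $\{\mp V_\fz\ge\eta\}$ on the ball, up to a per-ball error $O(1)$ from balls meeting the level set $\Sigma(\eta)$. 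Summing over the partition recovers $\cN^\mp(\eta)$ with error $\int_{\cZ(\eta)}\gamma^{-2}\,dx$. Non-degeneracy \ref{23-4-7-mp} certifies that this local Weyl remainder is actually realized, and it transfers from $V$ to $V_\fz$ because the correction $\sum_j\fz_j(f_j-f_{\infty,j})$ and its gradient are $o(\rho^2)$ respectively $o(\rho^2\gamma^{-1})$ by \ref{23-4-6-*}.

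For (ii), condition \ref{23-4-11-mp} gives $\pm V_\fz\le-\tfrac{1}{2}\epsilon_0\rho^2$ outside a fixed compact set, so $\{\pm V_\fz\ge\eta\}$ is contained in that compact set for every $\eta>0$. Hence both $\cN^\pm(\eta)$ and the associated level-set integral $\int_{\cZ(\eta)}\gamma^{-2}\,dx$ remain bounded as $\eta\to 0^+$, and by (i) $\N^\pm(\eta)=O(1)$, which is exactly the claim that $\tau^*\pm 0$ is not a limit point of the discrete spectrum.

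The main obstacle will be justifying the Birkhoff reduction uniformly as $|x|\to\infty$. Although $\mu_\eff h_\eff\to\infty$ is favorable for separating the Landau bands, $h_\eff$ itself need not tend to zero, so one cannot invoke smallness of $h_\eff$ alone; one must instead track that the $N$-th order normal-form corrections are genuinely $o(\rho^2)$ (not merely $O(\rho^2)$) under the strict little-$o$ assumptions in \ref{23-4-6-*}, so they can be absorbed into the $V_\fz$ perturbation without violating non-degeneracy. A secondary point is gluing the canonical forms across overlapping partition elements, which is handled by the standard pseudodifferential propagation argument.
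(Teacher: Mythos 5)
Your proposal is correct and follows essentially the same route as the paper: a $\gamma$-admissible partition of unity, rescaling each ball into the strong-field regime $\mu_\eff h_\eff\asymp\rho^{-2}\to\infty$, reduction to the effective potentials $V_\fz$ on the Landau subspaces, with the non-elliptic balls near $\Sigma(\eta)$ contributing $\int_{\cZ(\eta)}\gamma^{-2}\,dx$ and the elliptic zone contributing $\int\gamma^{-s}\,dx$. The only difference is presentational: where you sketch the normal-form mechanism (and, for part (ii), the compactness of $\{\pm V_\fz\ge\eta\}$), the paper simply invokes the ready-made per-ball results of Theorems~\ref{book_new-thm-13-4-32}, \ref{book_new-thm-13-5-6} and~\ref{book_new-thm-19-6-25} of \cite{futurebook}, splitting the elliptic zone further into $\{\rho^2\ge\eta\}$ and $\{\rho^2\le\eta\}$ with the rescaling $h\mapsto h\gamma^{-1}\eta^{-1/2}$ in the latter.
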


\begin{proof}
Indeed, in the zones $\cZ(\eta)$ and
\begin{equation}
\Omega (\eta)\coloneqq   \{x\colon  |\mp V(x)-\eta |\ge \epsilon (\rho^2 +\eta) \},
\label{23-4-12}
\end{equation}
it suffices to make $\gamma$-admissible partition of unity and observe that after rescaling $B(x,\gamma(x))\mapsto B(0,1)$ we have
$\mu\mapsto \mu_\new = \mu \gamma \rho^{-1}$,
$h\mapsto h_\new = h \gamma^{-1}\rho^{-1}$ and therefore
$\mu h \mapsto \mu h /\rho^2$,
$\mu^{-1}h \mapsto \mu^{-1}h\gamma^{-2}$ and before rescaling $\mu =h =1$.

Applying Theorem~\ref{book_new-thm-13-4-32}  for $d=2$ and Theorem~\ref{book_new-thm-19-6-25} of \cite{futurebook}
for $d\ge 4$ we estimate contribution of $\cZ(\eta)$ to the remainder by the first term in the right-hand expression of (\ref{23-4-8}).

Further, applying Theorem~\ref{book_new-thm-13-5-6}  for $d=2$ and similar results of Section~\ref{book_new-sect-19-6} of \cite{futurebook} for $d\ge 4$ case we estimate contribution of
$\Omega (\eta)\cap \{\rho^2 \ge \eta\}$ to the remainder by the second term in the right-hand expression of (\ref{23-4-8}).

In the same way we estimate contribution of
$\Omega(\eta)\cap \{\rho^2 \le \eta\}$ to the remainder by the second term in the right-hand expression of (\ref{23-4-8}) albeit now we use scale
$\mu\mapsto \mu_\new = \mu \gamma _\eta \eta^{-\frac{1}{2}}$,
$h\mapsto h_\new = h \gamma ^{-1}\eta^{-\frac{1}{2}}$.
\end{proof}

We discuss possible generalizations later; right now we want just get two simple corollaries which follow immediately from Theorem~\ref{thm-23-4-3}:

\begin{example}\label{example-23-4-4}
\begin{enumerate}[label=(\roman*), wide, labelindent=0pt]
\item\label{example-23-4-4-i}
In the framework of Theorem~\ref{thm-23-4-3} with $\gamma=\langle x\rangle$,
$\rho = \langle x\rangle^m$, $m<0$
\begin{equation}
|\N ^\mp (\eta) -\cN^\mp (\eta)|\le
C\left\{\begin{aligned}
& |\log \eta| && \text{for\ \ } d=2,\\
& \eta ^{(d-2)/(2m)} && \text{for\ \ } d\ge 4
\end{aligned}\right.
\label{23-4-13}
\end{equation}
with $\cN^\mp (\eta)=O(\eta^{d/(2m)})$. Further, $\cN^\mp (\eta)\asymp\eta^{d/(2m)}$ if condition \ref{23-4-11-mp} is fulfilled in some non-empty cone.

\item\label{example-23-4-4-ii}
Furthermore, if condition \ref{23-4-11-mp} is fulfilled, then  for $d=2$
\begin{equation}
\N ^\mp (\eta) =\cN^\mp (\eta) + O(1).
\ \label{23-4-14}
\end{equation}
\end{enumerate}
\end{example}

\begin{example}\label{example-23-4-5}
\begin{enumerate}[label=(\roman*), wide, labelindent=0pt]
\item\label{example-23-4-5-i}
In the framework of Theorem~\ref{thm-23-4-3} with
$\gamma=\langle x\rangle^{1-\sigma}$,
$\rho \le \exp(-\epsilon\langle x\rangle^\sigma)$, $0<\sigma<1$
\begin{equation}
\N ^\mp (\eta) =\cN^\mp (\eta) +O(|\log \eta|^{2+(d-2)/\sigma})
\label{23-4-15}
\end{equation}
with $\cN^\mp (\eta)=O(|\log \eta|^{d/\sigma})$. Further, $\cN^\mp (\eta)\asymp|\log \eta|^{d/\sigma}$ if condition \ref{23-4-11-mp} is fulfilled in some non-empty cone and $\rho \ge \exp(-c\langle x\rangle^\sigma)$.

\item\label{example-23-4-5-ii}
Furthermore, if condition \ref{23-4-11-mp} is fulfilled then the remainder estimate (\ref{23-4-15}) could be improved to
\begin{equation}
\N ^\mp (\eta) =\cN^\mp (\eta) +O(|\log \eta|^{2+(d-2)/\sigma}).
\label{23-4-16}
\end{equation}
\end{enumerate}
\end{example}

The following problem seems to be very challenging:

\begin{Problem-foot}\label{Problem-23-4-6}\footnotetext{\label{foot-23-36} Cf. Problem~\ref{Problem-23-3-9}.}
Let $d=2$, $\log(\pm V(x))=|x|^\beta \phi (x)$ where
$|\nabla \phi|\le C$. Then the drift line is of the length asymp $r$. Improve the remainder estimate (\ref{23-4-15})  to $O(1)$.
\end{Problem-foot}

\begin{remark}\label{rem-23-4-7}
\begin{enumerate}[label=(\roman*), wide, labelindent=0pt]
\item\label{rem-23-4-7-i}
We need conditions \ref{23-4-6-*} only for $|\alpha|\le 3$ due to Section~\ref{book_new-sect-19-6} of \cite{futurebook} and we need ``$o$" in this condition only for $|\alpha|\le 1$. Further, if $\epsilon_0$ in conditions \ref{23-4-7-mp} and \ref{23-4-11-mp} is fixed we can replace ``$=o(\rho^2\gamma^{-|\alpha|})$" by
``$\le \epsilon_1\rho^2\gamma^{-|\alpha|}$" with $\epsilon_1=\epsilon_1(\epsilon_0)$.

\item\label{rem-23-4-7-ii}
If $\#\fW=1$ we can have ``$O$'' but replace $V$ in \ref{23-4-7-mp} and \ref{23-4-11-mp} by $V_\fz$.
\end{enumerate}
\end{remark}

We leave to the reader the series of the following not challenging but interesting problems:%

\begin{Problem}\label{Problem-23-4-8}
\begin{enumerate}[label=(\roman*), wide, labelindent=0pt]
\item\label{Problem-23-4-8-i}
Consider even faster decaying
$\rho \le \exp(- |x|\gamma^{-1}(|x|))$ with monotone increasing $\gamma(t)$ such that $\gamma '(t)=o(\gamma (t)t^{-1})$ and $\gamma(t)\to \infty $ as
$t\to \infty$ and prove the remainder estimate
\begin{enumerate}[label=(\alph*), wide, labelindent=0pt]
\item\label{Problem-23-4-8-i-a}
$O(t^d \gamma(t)^{-2})$ in the general case and
\item\label{Problem-23-4-8-i-b}
$O(t^{d-1} \gamma(t)^{-1})$ under assumption \ref{23-4-11-mp}
\end{enumerate}
while $\cN^\mp (\eta)= O(t^d)$ in the general case and
$\cN^\mp (\eta)\asymp t^d$ under assumption \ref{23-4-11-mp} fulfilled some non-empty cone $\Gamma$ as $|x|\ge c$. Here $t=t(\eta)$ recovered from $t\gamma(t)^{-1}\asymp|\log \eta|$.

While proof of Theorem~\ref{thm-23-4-3} provides proper estimates of the contributions to the remainder of the zones $\cZ(\eta)$ and
$\Omega^+(\eta)\setminus \cZ(\eta)$ it fails in the zone $\Omega^-(\eta)\setminus \cZ(\eta)$ where
$\Omega^\pm (\eta)\coloneqq   \{x\colon  |V_\fz(x) |\gtrless \eta\}$. However one can use here $\gamma_\eta =\frac{1}{2}(r-r(\eta))$ instead of $\gamma$.

\item\label{Problem-23-4-8-ii}
For example, consider $\gamma(t)= (\log_{(n)} t)^\sigma$, where $\log_{(n)} t$ is $n$-tuple logarithm\footnote{\label{foot-23-37} I.e. $\log_{(1)} t=\log t$ and $\log_{(n)} t=\log (\log_{(n-1)} t)$.} with $\sigma>0$. Then
$t(\eta)= |\log \eta| |\log_{(n+1)}\eta|^\sigma$.

\item\label{Problem-23-4-8-iii}
Consider even $\exp(- c\varepsilon |x|) \le \rho\le \exp(- \varepsilon |x|)$, $\gamma=\varepsilon^{-1}$ with sufficiently small
$\varepsilon\le \varepsilon (c, \epsilon_0,\Gamma)$ and condition \ref{23-4-11-mp} fulfilled in some non-empty cone $\Gamma$. Then
\begin{equation}
\N^\mp (\eta)\asymp \varepsilon ^{-d}|\log \eta|^d.
\label{23-4-17}
\end{equation}
\end{enumerate}
\end{Problem}

\begin{remark}\label{rem-23-4-9}
Asymptotics in the case of $\rho \le \exp (-\epsilon_0|x|)$ or even compactly supported $V$ is out of reach of our methods.

Amazingly such asymptotics (without remainder estimate) were obtained in papers
M.~Melgaard and G.~Rozenblum~\cite{rozenblum:melgaard},
G.~Rozenblum and G.~Tashchiyan~\cite{rozenblioum:tash:1, rozenblioum:tash:2}
G.~Raikov and S.~Warzel~\cite{raikov:warzel} by completely different methods.
\end{remark}

\begin{Problem}\label{Problem-23-4-10}
Consider slowly decreasing potentials with $\gamma\asymp |x|$ and
$\rho = |\log_{(n)} |x||^{-\sigma}$ with $\sigma>0$.

In this case we need to replace assumptions \ref{23-4-6-*} with $|\alpha|\ge 1$ and \ref{23-4-7-mp} by
\begin{multline}
|\nabla^\alpha (\g-\g_\infty)|= o(\varrho\rho^2 \gamma^{-|\alpha|}),
\qquad
|\nabla^\alpha (\F-\F_\infty)|= o(\varrho\rho^2 \gamma^{-|\alpha|}),\\[3pt]
|\nabla^\alpha V|= O(\varrho\rho^2 \gamma^{-|\alpha|})\qquad\text{as\ \ } |x|\to \infty\quad \forall\alpha:|\alpha|\ge 1 .
\tag*{$\textup{(\ref*{23-4-6})}'_{1-3}$}\label{23-4-6-*'}
\end{multline}
and
\begin{equation}
\mp V \ge -\epsilon \rho^2\implies |\nabla V|\ge
\epsilon_0 \varrho \rho^2\gamma^{-1} \qquad \text{as\ \ } |x|\ge c.
\tag*{$\textup{(\ref*{23-4-7})}'_\mp$}\label{23-4-7-mp-'}
\end{equation}
respectively where $\varrho$ is another $\gamma$-admissible scaling function; $\varrho\le 1$.

Here again we apply Theorems~\ref{book_new-thm-13-4-32}  and~\ref{book_new-thm-13-6-6}  for $d=2$ and results of Section~\ref{book_new-sect-19-6} of \cite{futurebook} for $d\ge 4$.
\end{Problem}

The first of the following problems seems to be challenging enough while the second one is rather easy:

\begin{Problem}\label{Problem-23-4-11}
Using results of Chapter~15 of \cite{futurebook} consider $2$-dimensional domains $X$ with are $\gamma$-admissible boundaries, f.e. domains which are conical outside of the ball $B(0,c)$. Neumann boundary conditions would be especially interesting and challenging.
\end{Problem}

\begin{Problem}\label{Problem-23-4-12}
Generalize results of this subsection to genuine Schr\"odinger-Pauli and Dirac operators. While in the former case no modifications is needed (except the Landau levels), in the latter case we need to consider two cases
\begin{enumerate}[label=(\alph*),  labelindent=0pt]
\item\label{Problem-23-4-12-a}
$M^2+2j F_\infty>0$ and potential $V\sim \rho^2$ at infinity.
\item\label{Problem-23-4-12-b}
$M^2+2j F_\infty=0$ and potential $V\sim \rho$ at infinity.
\end{enumerate}
It is so because $M^2+2j F_\infty$ plays a role of the mass.
\end{Problem}

\section{Operators stabilizing at infinity. II}
\label{sect-23-4-2}

Assume now that $\g$ and $\F$ stabilize at infinity to
$\g_\infty=\g_\infty (\theta)$, $\F_\infty=\F_\infty (\theta)$, positively homogeneous of degree $0$, and $V\to 0$. Then one can see easily that the for the Schr\"odinger and Schr\"odinger-Pauli operators essential spectrum of $A$ consists of (possibly overlapping) \emph{spectral bands\/} $\Pi_\fz$
\begin{equation}
\Specess(A)=\bigcup_{\fz\in \fZ} \Pi_\fz,\qquad
\Pi_\fz \coloneqq   \bigl\{\sum_j \fz_j f_{\infty,j}(\theta),\,:
\theta \in [0,2\pi]\bigr\}.
\label{23-4-18}
\end{equation}
with the \emph{spectral gaps\/} between them.

In particular, for the Schr\"odinger operator all spectral bands in the generic case have non-zero width while for the Schr\"odinger-Pauli operator $\Pi_0=\{0\}$. Then under proper assumptions for the eigenvalues tending to $+0$ or $-0$ the results of the previous Subsection~\ref{sect-23-4-1} hold. In this subsection we are interested in the asymptotics of the eigenvalues tending to the border between a spectral gap and a spectral bund of non-zero width.

Further, for $d=2$ in the generic case there could be an infinite number of spectral gaps, but for $d\ge 4$ there is only finite number of them.

Similarly, for the Dirac operator essential spectrum consists of the spectral bunds, one of them consisting of a single point $M$ or $-M$.

\begin{theorem}\label{thm-23-4-13}
Let $S$ be a Schr\"odinger or Schr\"odinger-Pauli operator. Let conditions \ref{23-4-6-*} are fulfilled with $\g_\infty$, $\F_\infty$ positively homogeneous of degree $0$, $\gamma=\epsilon |x|$, $\rho=|x|^m$, $m<0$.
Let $x=r\theta$ with $r=|x|$, and $\theta\in \bS^{d-1}$.

Assume for simplicity\footnote{\label{foot-23-38} Otherwise we will get the sums of asymptotics.} that
\begin{claim}\label{23-4-19}
$\tau^*=\sum_j \fz_j f_{\infty,j}(\theta)$ if and only if $\fz=\bar{\fz}$ and
$\theta=\bar{\theta}$,
\end{claim}
\begin{claim}\label{23-4-20}
In the vicinity of $\bar{\theta}$ \ $f_{\infty,1}(\theta),\ldots , f_{\infty,r}(\theta)$ are disjoint,
\end{claim}
\begin{phantomequation}\label{23-4-21}\end{phantomequation}
\begin{align}
&\pm \sum_j \bar{\fz}_j f_{\infty,j} (\theta)\ge \epsilon |\theta-\bar{\theta}|^{2n}
\tag*{$\textup{(\ref*{23-4-21})}_\pm$}\label{23-4-21-pm}\\
\shortintertext{and}
&|\partial^\alpha \sum_j \bar{\fz}_j f_{\infty,j} (\theta)|\le
c_\alpha |\theta-\bar{\theta}|^{2n-|\alpha|} \qquad \forall\alpha:|\alpha|\le 2n.
\label{23-4-22}
\end{align}

Under assumption $\textup{(\ref*{23-4-21})}_+$ let $\N^-(\eta)\coloneqq  \N(\tau^*-\epsilon, \tau^*-\eta)$ and under assumption $\textup{(\ref*{23-4-21})}_-$ let $\N^+(\eta)\coloneqq  \N(\tau^*+\eta, \tau^*+\epsilon)$.
\begin{enumerate}[label=(\roman*), wide, labelindent=0pt]
\item\label{thm-23-4-13-i}
Let \begin{phantomequation}\label{23-4-23}\end{phantomequation}
\begin{equation}
\mp V(r \bar{\theta}) \ge \epsilon r^{2m},\quad
\mp \partial_r V(r \bar{\theta}) \ge \epsilon r^{2m-1}
\qquad \text{as\ \ }  r\ge c
\tag*{$\textup{(\ref*{23-4-23})}_{\mp,1,2}$}\label{23-4-23-mp}
\end{equation}
and $m+n>0$. Then as $\eta\to +0$
\begin{gather}
\N^\mp (\eta) =\cN^\mp (\eta)+ O(\eta ^{((m+n)(d-3)+n)/(2mn)}
\label{23-4-24}\\
\shortintertext{with}
\cN^\mp (\eta)=   (2\pi )^{-r}\int_{\{x\colon  \mp V_{\bar{\fz}}\ge \eta\}}f_1\cdots f_r\,dx \asymp \eta^{((m+n)(d-1)+n)/(2mn)},
\label{23-4-25}\\[3pt]
V_\fz(x) \coloneqq   V(x) + \sum_j \bar{\fz}_j (f_j(x)-f_{\infty,j}(\bar{\theta})).
\label{23-4-26}
\end{gather}

\item\label{thm-23-4-13-ii}
On the other hand, under assumption $\textup{(\ref*{23-4-23})}_\pm$
$\N^\mp (\eta)=O(1)$.
\end{enumerate}
\end{theorem}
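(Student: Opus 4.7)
The plan is to adapt the scheme of the proof of Theorem~\ref{thm-23-4-3} to the angularly degenerate setting, where the spectral band $\sum_j \bar{\fz}_j f_{\infty,j}(\theta)$ touches $\tau^*$ tangentially of order $2n$ at $\theta=\bar\theta$. First I would introduce a $\gamma$-admissible partition of unity with $\gamma(x)=\epsilon|x|$ on $\{|x|\ge c\}$; after rescaling $B(x,\gamma(x))\mapsto B(0,1)$ the effective parameters become $\mu_\eff\asymp |x|^{1-m}$ and $h_\eff\asymp |x|^{-m-1}$, so $\mu_\eff h_\eff\asymp|x|^{-2m}\to\infty$ as $|x|\to\infty$, placing us firmly in the strong-field regime where the Landau-level structure governs the spectrum. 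On each element the local principal part and the Tauberian remainder are controlled by Theorems~\ref{book_new-thm-13-4-32} and~\ref{book_new-thm-13-5-6} of \cite{futurebook} for $d=2$, and by their analogues in Section~\ref{book_new-sect-19-6} of \cite{futurebook} for $d\ge 4$.

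Second, I would compute $\cN^\mp(\eta)$ from the phase-volume integral over the classically allowed region $\cA(\eta)=\{x\colon \mp V_{\bar{\fz}}(x)\ge \eta\}$. Using $V(r\bar\theta)\asymp \mp r^{2m}$ from \textup{(\ref*{23-4-23})}$_\mp$ together with \textup{(\ref*{23-4-21})}$_\pm$ and \textup{(\ref*{23-4-22})}, the set $\cA(\eta)$ has the form
\begin{equation*}
\bigl\{r\theta\colon r^{2m}-c|\theta-\bar\theta|^{2n}\gtrsim \eta\bigr\},
\end{equation*}
with radial extent $r\lesssim r_\eta\coloneqq \eta^{1/(2m)}$ and angular extent $|\theta-\bar\theta|\lesssim (r^{2m}-\eta)^{1/(2n)}$. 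Hence
\begin{equation*}
\cN^\mp(\eta)\asymp \int^{r_\eta}_{R_0} r^{d-1}\bigl(r^{2m}-\eta\bigr)^{(d-1)/(2n)}\,dr,
\end{equation*}
and the substitution $r^{2m}=\eta(t+1)$ yields
\begin{equation*}
\cN^\mp(\eta)\asymp \eta^{d/(2m)+(d-1)/(2n)}\int_0^\infty(t+1)^{d/(2m)-1}t^{(d-1)/(2n)}\,dt,
\end{equation*}
the integral being finite precisely because $m+n>0$. Rewriting the exponent gives $((m+n)(d-1)+n)/(2mn)$, in agreement with \textup{(\ref*{23-4-25})}.

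Third, for the remainder I would split the complement of the transition zone $\cZ(\eta)$ (the $\gamma$-vicinity of $\Sigma(\eta)=\{V_{\bar\fz}=\mp\eta\}$) into the classically allowed and classically forbidden parts. In the forbidden part the per-element error is $O(\mu_\eff^{-s})$, summable to a negligible contribution; in the allowed part the non-degeneracy conditions translate, via the angular ansatz, into conditions on $\det\Hess(V_{\bar\fz}F^{-1})$, yielding a per-element remainder $O(\mu_\eff^{-1}h_\eff^{-1}\rho^2\rho_1^{-1})=O(|x|^{2m})$, whose summation is again lower order. The dominant contribution then comes from $\cZ(\eta)$. Because of the anisotropic geometry — radial scale $r_\eta$ versus angular scale $r_\eta^{m/n}$ — one needs a finer rescaling inside each critical $\gamma$-element to resolve the degeneracy $|\theta-\bar\theta|^{2n}$, and counting the resulting effective phase-space cells yields the exponent $((m+n)(d-3)+n)/(2mn)$ that appears in \textup{(\ref*{23-4-24})}. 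Part~\ref{thm-23-4-13-ii} is then immediate: under the sign-opposite assumption \textup{(\ref*{23-4-23})}$_\pm$ the allowed region $\cA(\eta)$ is uniformly bounded in $\eta$, so $\N^\mp(\eta)$ is controlled by a trace estimate on a fixed bounded set, giving $\N^\mp(\eta)=O(1)$.

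\textbf{Main obstacle.} The principal technical difficulty is to justify the non-degeneracy hypotheses of the strong-field theorems from Chapter~\ref{book_new-sect-13} (respectively Section~\ref{book_new-sect-19-6}) of~\cite{futurebook} uniformly inside the transition zone $\cZ(\eta)$, where the standard Hessian condition on $V_{\bar\fz}F^{-1}$ degenerates as $\theta\to\bar\theta$ to order $2n-2$. This forces the two-step rescaling — the standard $\gamma=\epsilon|x|$ to reach the strong-field regime, followed by an anisotropic angular rescaling tuned to $|\theta-\bar\theta|^{2n}$ — and one must check that after the second rescaling the effective $\mu_\eff h_\eff$ remains in the range in which those Tauberian theorems are applicable, and that the resulting error summation across $\cZ(\eta)$ produces exactly the exponent $((m+n)(d-3)+n)/(2mn)$ claimed in the theorem rather than a worse one inherited from the isotropic case of Theorem~\ref{thm-23-4-3}.
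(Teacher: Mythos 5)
Your overall strategy coincides with the paper's: reduce to the anisotropic ``horn'' $\cX(\eta)=\{x\colon |x'|\le c\,r^{1+m/n},\ r=x_1\le c\eta^{1/(2m)}\}$ (with $\bar\theta=(1,0,\ldots,0)$, $x=(x_1;x')$), treat its complement as a classically forbidden zone contributing $O(1)$, and use a partition adapted to the transverse scale $r^{1+m/n}$. Your phase--volume computation of $\cN^\mp(\eta)$ is correct and reproduces the exponent in (\ref{23-4-25}), and your argument for part (ii) (the allowed region stays bounded, hence $\N^\mp(\eta)=O(1)$) is the intended one.

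The one substantive omission is exactly the step you defer to the ``main obstacle'': you assert the remainder exponent in (\ref{23-4-24}) instead of deriving it, and that derivation is the entire content of the proof of part (i). It goes as follows. Take partition elements of size $\gamma'(r)=r^{1+m/n}$, matching the transverse width of $\cX(\eta)$; on each such element the rescaled operator satisfies the non-degenerate strong-field hypotheses, so its contribution to the remainder is $O(\gamma'^{\,d-2})$ --- the same per-element bound that produces the term $\int_{\cZ(\eta)}\gamma^{-2}\,dx$ in Theorem~\ref{thm-23-4-3}. At radius $r$ the region $\cX(\eta)$ is covered transversally by $O(1)$ such elements, i.e.\ by $O(1/\gamma')$ elements per unit radial length, so the total remainder is
\begin{equation*}
\int_{\{r\le c\eta^{1/(2m)}\}}\gamma'(r)^{\,d-3}\,dr\asymp
\eta^{((1+m/n)(d-3)+1)/(2m)}=\eta^{((m+n)(d-3)+n)/(2mn)},
\end{equation*}
which is (\ref{23-4-24}); when $m+n>0$ this integral is dominated by $r\sim\eta^{1/(2m)}$ for every $d\ge2$, confirming your guess that the transition zone carries the main contribution. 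Note also that your intermediate bookkeeping in the allowed part (the bound $O(\mu_\eff^{-1}h_\eff^{-1}\rho^2\rho_1^{-1})$) is specific to $d=2$ and should be replaced by the dimension-independent count above.
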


\begin{proof}
Assume that $\bar{\theta}=(1,0,\ldots,0)$, $x=\{x_1;x')=(x_1;x_2,\ldots,x_d)$. Observe that outside of
$\cX(\eta)=\{x\colon |x'|\le c r^{1+m/n},\ 0<x_1=r\le c\eta^{1/(2m)} \}$ is a forbidden zone and one can prove easily that its contribution to the remainder is $O(1)$. On the other hand, contribution to the remainder of $\gamma'(r)$-partition element in $\cX'(\tau)$ is $O(\gamma^{d-2})$ and the total remainder does not exceed
$\int_{\{r\le c\eta^{1/(2m)} \}}\gamma^{\prime\,d-3}(r)\,dr$ which results in (\ref{23-4-24}).
\end{proof}

The following problem is rather easy:

\begin{problem}\label{problem-23-4-14}
Derive similar results for the Dirac operator.
\end{problem}

The following problem looks challenging:

\begin{Problem}\label{Problem-23-4-15}
Investigate what happens if $m+n\le 0$. Our methods provide only $\N^\mp(\eta)=O(\eta^{-1/(2n)})$. Probably methods of Section~\ref{book_new-sect-12-2} of \cite{futurebook} could provide an answer.
\end{Problem}

\section{Case \texorpdfstring{$F\to \infty$ as $|x|\to \infty$}{F\textrightarrow\textinfty\ as  |x|\textrightarrow\textinfty}}
\label{sect-23-4-3}

In this subsection we consider cases of $F\to \infty$ and  $V\to 0$ as $|x|\to \infty$.  In this case the Schr\"odinger operator does not have any essential spectrum at all and thus is not the subject of our analysis, while for the Schr\"odinger-Pauli and Dirac operators essential spectrum consists of just one point: $0$ and $\pm M$ respectively (see Theorem~\ref{book_new-thm-17-1-2} of \cite{futurebook} to find out which; if $d=2$ it is determined by signs of $F_{12}$ and $\varsigma$).
Again due to the specifics of the problem we can consider the multidimensional case without any modifications.

It turns out that for $d=2$ the remainder estimate is as in the previous Subsection~\ref{sect-23-4-1}, while the magnitude of the principal part is larger but it is still given by the same formula).

Let for the Schr\"odinger-Pauli operator $\N^- (\eta)$ be a number of eigenvalues in $(-\epsilon, -\eta)$ and $\N^+ (\eta)$ be a number of eigenvalues in $(\eta, \epsilon)$.

\begin{theorem-foot}\footnotetext{\label{foot-23-39} Cf. Theorem~\ref{thm-23-4-3}.}\label{thm-23-4-16}
\begin{enumerate}[label=(\roman*), wide, labelindent=0pt]
\item\label{thm-23-4-16-i}
Let $X$ be a connected exterior domain with $\sC^K$ boundary. Let  conditions \textup{(\ref{23-2-2})}, \ref{23-2-3-*}  and \ref{23-4-7-mp} be fulfilled with scaling functions $\gamma$, $\rho$ and $\rho_1$, $\rho\to 0$, $\rho_1\to \infty$ and $\rho_1\gamma^2\to \infty$ as $|x|\to \infty$. Assume that
\begin{equation}
|\F ^{-1}|\le c\rho_1^{-1}
\qquad \text{for\ \ }|x|\ge c
\label{23-4-27}
\end{equation}
and
\begin{claim}\label{23-4-28}
For each $j\ne k$ \underline{either} $f_j=f_k$ \underline{or}
$|f_j-f_k|\ge \epsilon \rho_1$  for all $|x|\ge c$.
\end{claim}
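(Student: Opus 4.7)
Interpreting Theorem~\ref{thm-23-4-16} as the natural analog of Theorem~\ref{thm-23-4-3} in the regime where $F$ blows up at infinity, the conclusion to be proved is a two-sided bound of the form $|\N^\mp(\eta) - \cN^\mp(\eta)| \le C\int_{\cZ(\eta)}\gamma^{-2}\,dx + C\int \gamma^{-s}\,dx$ for the Schr\"odinger--Pauli (and, with the obvious modifications, Dirac) operator, where $\cN^\mp$ is given by the magnetic Weyl formula $(2\pi)^{-r}\sum_{\fz}\int_{\{\mp V_\fz \ge \eta\}} f_1\cdots f_r\sqrt{g}\,dx$ including the ground Landau contribution $\fz=0$ (which is why the principal part is larger than in the Schr\"odinger case). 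My plan is to import the architecture of Theorem~\ref{thm-23-4-3} while tracking where the regime change matters.

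After rescaling $B(y,\gamma(y))\mapsto B(0,1)$ the effective parameters are $\mu_\eff = \rho_1\gamma/\rho$ and $h_\eff = 1/(\rho\gamma)$, so $\mu_\eff h_\eff = \rho_1/\rho^2 \to\infty$; thus every sufficiently distant $\gamma$-element is in the strong-field regime of Chapter~\ref{book_new-sect-13} (and \ref{book_new-sect-19} for $d\ge 4$) of \cite{futurebook}. Hypothesis \textup{(\ref{23-4-27})}, $|\F^{-1}|\le c\rho_1^{-1}$, is precisely what makes $\mu_\eff$ uniformly large after rescaling, while the dichotomy \textup{(\ref{23-4-28})} is what allows the microlocal normal-form reduction to be run uniformly on each rescaled ball without small-denominator obstructions: when two eigenvalues of $\g\F$ coincide they are treated as a single degenerate block, and when they are separated the gap $\epsilon\rho_1$ survives rescaling as a gap of order $1$ in the effective operator.

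Then I would carry out the partition--rescale--sum scheme of Theorem~\ref{thm-23-4-3}: decompose $X$ into the transitional zone $\cZ(\eta)$, an $\epsilon\gamma$-neighborhood of $\Sigma(\eta)=\{\mp V_{\bar\fz}=\eta\}$; the elliptic zone $\Omega(\eta)=\{|\mp V_{\bar\fz}-\eta|\ge \epsilon(\rho^2+\eta)\}$; and a bounded remainder handled by the same arguments. On $\cZ(\eta)$, Theorems~\ref{book_new-thm-13-4-32} (for $d=2$) and \ref{book_new-thm-19-6-25} of \cite{futurebook} (for $d\ge 4$) give that each rescaled element contributes $O(1)$ to $|\N^\mp - \cN^\mp|$, and summation yields $\int_{\cZ(\eta)}\gamma^{-2}\,dx$. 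On $\Omega(\eta)$, ellipticity of the local symbol at energy $\mp\eta$ gives $O(h_\eff^s)=O(\gamma^{-s})$ per element via Theorem~\ref{book_new-thm-13-5-6} of \cite{futurebook} and the analogous Section~\ref{book_new-sect-19-6} results, summing to $\int\gamma^{-s}\,dx$; the subzones $\rho^2\ge\eta$ and $\rho^2\le\eta$ of $\Omega(\eta)$ are handled as in Theorem~\ref{thm-23-4-3} by switching the rescaling to $\gamma_\eta$ where needed.

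The main obstacle is the transitional analysis on $\cZ(\eta)$. Unlike the Schr\"odinger case, here the ground Landau level is pinned to $\tau^*$ (it does not recede with $F$), so $\Sigma(\eta)$ can extend on scales far larger than $\gamma$; the drift flow of $V_{\bar\fz}$ along $\Sigma(\eta)$ could in principle produce long trajectories that would spoil the Tauberian bound. This is where hypotheses \textup{(\ref{23-4-7})} and \textup{(\ref{23-4-28})} are indispensable: the former gives $|\nabla V|\ge \epsilon_0\rho^2\gamma^{-1}$ to rule out stationary drift, while the latter prevents resonant transfer between distinct $\fz$-channels during microlocal propagation. Once these two ingredients are installed in the Tauberian step, the contribution of $\cZ(\eta)$ matches the first claimed term; assertion (ii), the non-accumulation at $\tau^*\pm 0$ under \textup{(\ref{23-4-11})}, then follows because $\Sigma(\eta)$ is empty for small $\eta$ outside a compact set, so only finitely many eigenvalues can survive.
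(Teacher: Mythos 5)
The statement you were given, \textup{(\ref{23-4-28})}, is not a result of the paper and has no proof there: it is a numbered standing assumption. The paper's \texttt{claim} environment is used throughout to display hypotheses and conditions (compare \textup{(\ref{23-2-13})}, \textup{(\ref{23-2-51})}, \textup{(\ref{23-2-78})}, \textup{(\ref{24-4-1})}, \textup{(\ref{24-4-2})}), and \textup{(\ref{23-4-28})} sits inside the statement of Theorem~\ref{thm-23-4-16}, in the chain ``Assume that \textup{(\ref{23-4-27})} and \textup{(\ref{23-4-28})} hold\dots{} Then for the Schr\"odinger--Pauli operator\dots''. Your proposal does not prove \textup{(\ref{23-4-28})}; it sketches a proof of Theorem~\ref{thm-23-4-16} itself and explicitly invokes \textup{(\ref{23-4-28})} as one of its hypotheses (``the dichotomy \textup{(\ref{23-4-28})} is what allows the microlocal normal-form reduction to be run uniformly''). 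As a proof of the stated claim it therefore establishes nothing.

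Nor could any argument along these lines succeed, because \textup{(\ref{23-4-28})} is an independent structural condition on the eigenvalues $\pm if_j$ of $\g\F$: it demands that any two branches be either identical or uniformly separated on the scale $\rho_1$ for $|x|\ge c$, which is exactly what makes the block-diagonalization of the operator uniform at infinity. It does not follow from \textup{(\ref{23-2-2})}, \ref{23-2-3-*}, \textup{(\ref{23-4-27})} or the remaining hypotheses of the theorem: one can construct magnetic fields satisfying all of those for which, say, $f_1-f_2$ oscillates between $0$ and a fixed fraction of $\rho_1$ as $|x|\to\infty$, violating the dichotomy. Your reading of what the condition is \emph{for} --- excluding uncontrolled eigenvalue crossings, hence small denominators, in the normal-form step --- is correct, and your outline is a reasonable compressed account of how Theorem~\ref{thm-23-4-16} follows the architecture of Theorem~\ref{thm-23-4-3}; but that material belongs to the theorem, not to \textup{(\ref{23-4-28})}, which is an assumption to be imposed, not proved.
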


Then for the Schr\"odinger-Pauli operator
\begin{gather}
\N^\mp (\eta)=\cN^\mp (\eta)+ O(R)
\label{23-4-29}\\
\shortintertext{where}
\cN ^\mp (\eta) \coloneqq
(2\pi)^{-r} \int _{\{ x\colon  \mp V (x) \ge \eta \}}
f_1f_2\cdots f_r \sqrt{g}\,dx
\label{23-4-30}\\
R=C\int _{ \cZ (\eta)} \rho_1^{r-1} \gamma^{-2}\,dx
+ C\int \rho_1^{r-s} \gamma^{-2s}\,dx
\label{23-4-31}
\end{gather}
holds, $r=d/2$, $\cZ(\eta)$ is $\epsilon \gamma$-vicinity\footref{foot-23-35}
of $\Sigma(\eta)=\{ x\colon \mp V (x) =\eta\}$.

\item\label{thm-23-4-16-ii}
Further, under assumption \ref{23-4-11-mp} $0\pm 0$ is not a limit point of the discrete spectrum.
\end{enumerate}
\end{theorem-foot}

\begin{example-foot}\footnotetext{\label{foot-23-40} Cf. Example~\ref{example-23-4-4}.}\label{example-23-4-17}
\begin{enumerate}[label=(\roman*), wide, labelindent=0pt]
\item\label{example-23-4-17-i}
In the framework of Theorem~\ref{thm-23-4-16} with $\gamma=\langle x\rangle$,
$\rho = \langle x\rangle^m$, $\rho_1 = \langle x\rangle^{m_1}$, $m<0<m_1$ estimate (\ref{23-4-29}) holds with
\begin{equation}
R= \left\{\begin{aligned}
& |\log \eta| &&&& \text{for\ \ } d=2,\\
& \eta ^{(d-2)k/(2m)} \quad &&k=(m_1+2)/2 && \text{for\ \ } d\ge 4
\end{aligned}\right.
\label{23-4-32}
\end{equation}
and with $\cN^\mp (\eta)=O(\eta^{(dk/(2m)})$. Further, $\cN^\mp (\eta)\asymp\eta^{dk/(2m)}$ if condition \ref{23-4-11-mp} is fulfilled in some non-empty cone.

\item\label{example-23-4-17-ii}
Furthermore, if condition \ref{23-4-11-mp} is fulfilled, then even for $d=2$
\ $R=1$.
\end{enumerate}
\end{example-foot}

\begin{example}\label{example-23-4-18}
\begin{enumerate}[label=(\roman*), wide, labelindent=0pt]
\item\label{example-23-4-18-i}
Let  conditions \textup{(\ref{23-2-2})}, $\textup{(\ref{23-2-3})}_{2}$, (\ref{23-4-24}), (\ref{23-4-28}) be fulfilled with
$\gamma=\langle x\rangle^{1-\beta}$, $\rho = \langle x\rangle^m$,
$\rho_1 = \exp( b\langle x\rangle^{\beta})$, $m<0$, $\beta>0$.

Further, let  conditions $\textup{(\ref{23-2-3})}_{1,3}$ and \ref{23-4-7-mp} be fulfilled with $\rho=\langle x\rangle^m$, $\gamma=\epsilon \langle x\rangle$.
Then the following asymptotics holds:
\begin{align}
&\N^\mp (\eta)=\cN^\mp (\eta)+O(\eta ^{(d-2+2\beta)/(2m)})
\label{23-4-33}\\
\shortintertext{and}
&\log (\cN^\pm (\eta))=O(\eta^{\beta/(2m)}).
\label{23-4-34}
\end{align}

\item\label{example-23-4-18-ii}
Let conditions \textup{(\ref{23-2-2})}, \ref{23-2-3-*} and \ref{23-4-7-mp} be fulfilled with $\gamma=\langle x\rangle^{1-\alpha}$,
$\rho =   \exp( a\langle x\rangle^{\alpha})$,
$\rho_1 =\langle x\rangle^{m_1}$, $a<0$, $\alpha>0$, $m_1>0$, $m_1+2(1-\alpha)>0$. Then following asymptotics holds:
\begin{align}
&\N^\mp (\eta)=\cN^\mp(\eta)+O(|\log \eta |^{(d-2+2\alpha)/\alpha})
\label{23-4-35}\\
\shortintertext{and}
&\cN^\pm (\eta)=O(|\log \eta|^{(d+m_1)/\alpha}).
\label{23-4-36}
\end{align}

\item\label{example-23-4-18-iii}
Moreover, if condition \ref{23-4-11-mp} is fulfilled, then the remainder estimate (\ref{23-4-35}) could be improved to
\begin{align}
&\N^\mp (\eta)=\cN^\mp(\eta)+O(|\log \eta |^{(d-2+\alpha)/\alpha})
\label{23-4B-36}
\end{align}

\item\label{example-23-4-18-iv}
Let conditions \textup{(\ref{23-2-2})} and \ref{23-2-3-*}  be fulfilled with $\gamma=\langle x\rangle^{1-\sigma}$,
$\rho =   \exp( a\langle x\rangle^{\alpha})$,
$\rho_1 = \exp( b\langle x\rangle^{\beta})$, $a<0<b$, $\alpha>0$, $\beta>0$, $\sigma=\max(\alpha,\,\beta)$.

Further, let  conditions $\textup{(\ref{23-2-3})}_{1,3}$ and \ref{23-4-7-mp} be fulfilled with $\gamma=\langle x\rangle^{1-\alpha}$,
$\rho =   \exp( a\langle x\rangle^{\alpha})$. Then the following asymptotics holds:
\begin{align}
&\N^\pm(\eta)=\cN^\mp(\eta)+O(|\log \eta |^{(d-2+2\sigma)/\alpha})
\label{23-4-38}\\
\shortintertext{and}
&\log (\cN^\pm (\eta))=O( |\log \eta|^{\beta/\alpha}).
\label{23-4-39}
\end{align}

\item\label{example-23-4-18-v}
Moreover, if condition \ref{23-4-11-mp} is fulfilled, then the remainder estimate (\ref{23-4-38}) could be improved to
\begin{align}
&\N^\mp (\eta)=\cN^\mp(\eta)+O(|\log \eta |^{(d-2+2\sigma-\alpha)/\alpha})
\label{23-4-40}
\end{align}

\item\label{example-23-4-18-vi}
 Furthermore, if condition \ref{23-4-11-mp} is fulfilled in some non-empty cone then there is ``$\asymp \cdot$'' rather than ``$=O(.)$'' in (\ref{23-4-34}), (\ref{23-4-36}) and (\ref{23-4-39}).
 \end{enumerate}
\end{example}

\begin{Problem}\label{Problem-23-4-19}
Again, one can hope to improve estimates (\ref{23-4B-36}) and (\ref{23-4-40}) in the same way as specified in Problem~\ref{Problem-23-4-6}.
\end{Problem}

We leave to the reader

\begin{problem-foot}\footnotetext{\label{foot-23-41} Cf. Problem~\ref{Problem-23-4-12}.} \label{problem-23-4-20}
Consider the Dirac operator. In this case $\N^-(\eta)$ is a number of eigenvalues in
$(\pm M-\epsilon, \pm M-\eta)$ and $\N^+(\eta)$ is a number of eigenvalues in
$(\pm M+\eta, \pm M+\epsilon)$, $0<\eta<\epsilon$ and $\pm M$ is a point of the essential spectrum.

We need to distinguish two cases
\begin{enumerate}[label=(\alph*), labelindent=0pt]
\item\label{problem-23-4-20-a}
$M>0$ and potential $V\sim \rho^2$ at infinity.
\item\label{problem-23-4-20-b}
$M=0$ and potential $V\sim \rho$ at infinity.
\end{enumerate}
\end{problem-foot}

\section{Case \texorpdfstring{$F\to 0$ as $|x|\to \infty$}{F\textrightarrow0\ as  |x|\textrightarrow\textinfty}}
\label{sect-23-4-4}

In this subsection we consider cases of $F\to 0$ and  $V\to 0$ as $|x|\to \infty$. In this case the essential spectra of the Schr\"odinger and Schr\"odinger-Pauli operators are $[0,\infty)$; however, as $V=o(F)$ as $|x|\to \infty$ the Schr\"odinger operator has only a finite number of the negative eigenvalues and thus is not a subject of our analysis while the Schr\"odinger-Pauli operator is.

Further, the Dirac operator has its essential spectrum $(-\infty,-M]\cup[M,\infty)$ and we need to assume that $M>0$.

It turns out that  the remainder estimate is as in Subsection~\ref{sect-23-4-1}, while the magnitude of the principal part is smaller but it is still given by the same formula).

\begin{theorem-foot}\footnotetext{\label{foot-23-42} Cf. Theorems~\ref{thm-23-4-3} and~\ref{thm-23-4-16}.}\label{thm-23-4-21}
\begin{enumerate}[label=(\roman*), wide, labelindent=0pt]
\item\label{thm-23-4-21-i}
Let $X$ be a connected exterior domain with $\sC^K$ boundary. Let  conditions \textup{(\ref{23-2-2})}, \ref{23-2-3-*}, \textup{(\ref{23-2-19})}, \textup{(\ref{23-4-24})} and  \textup{(\ref{23-4-28})}  and \ref{23-4-7-mp} (with sign ``$-$'') be fulfilled with scaling functions $\gamma$, $\rho$ and $\rho_1$, $\rho\to 0$, $\rho_1\to 0$ and $\rho_1\gamma^2\to \infty$, $\rho_1\rho^{-2}\to \infty$ as
$|x|\to \infty$.
Then for the Schr\"odinger-Pauli operator \textup{(\ref{23-4-29})}--\textup{(\ref{23-4-32})} holds\footnote{\label{foot-23-43} With the sign ``$-$''.}.

\item\label{thm-23-4-21-ii}
Further, under assumption \ref{23-4-11-mp} (with sign ``$-$'')  $0- 0$ is not a limit point of the discrete spectrum.
\end{enumerate}
\end{theorem-foot}

\begin{example-foot}\footnotetext{\label{foot-23-44} Cf. Examples~\ref{example-23-4-4} and \ref{example-23-4-17}.}\label{example-23-4-22}
\begin{enumerate}[label=(\roman*), wide, labelindent=0pt]
\item\label{example-23-4-22-i}
Let conditions of Theorem~\ref{thm-23-4-21} be fulfilled with
$\gamma=\langle x\rangle$, $\rho = \langle x\rangle^m$, $\rho_1 = \langle x\rangle^{m_1}$, $m<0$, $\max(2m,\,-2)<m_1<0$.

Then estimate (\ref{23-4-29})\,\footref{foot-23-43} holds with $R$ defined by (\ref{23-4-32}) and  with $\cN^-(\eta)=O(\eta^{dk/(2m)})$. Further,
$\cN^- (\eta)\asymp\eta^{dk/(2m)}$ if condition $\textup{(\ref{23-4-11})}_-$ is fulfilled in some non-empty cone.

\item\label{example-23-4-22-ii}
Furthermore, if condition $\textup{(\ref{23-4-11})}_-$ is fulfilled, then even for $d=2$ \ $R=1$.
\end{enumerate}
\end{example-foot}

\begin{example}\label{example-23-4-23}
\begin{enumerate}[label=(\roman*),  wide, labelindent=0pt]
\item\label{example-23-4-23-i}
Let conditions of Theorem~\ref{thm-23-4-21} be fulfilled with
$\gamma=\epsilon \langle x\rangle$,
$\rho_1=\langle x\rangle^{-2}|\log \langle x\rangle|^\beta$,
$\beta>0$. Let either $\rho=\langle x\rangle^m$  with $m<-1$ or
$\rho=\langle x\rangle^{-1} |\log \langle x\rangle|^\alpha$ with $2\alpha <\beta$. Then the remainder estimate is $O(R)$ with $R$ defined by (\ref{23-4-32}) and
$\cN^-(\eta)= O(S)$ with
\begin{equation}
S=\left\{\begin{aligned}
&|\log \eta|^{\beta+1} &&d=2,\\
&|\eta^{(d-2)/(2m)}|\log \eta|^\beta &&d\ge 4.
\end{aligned}\right.
\label{23-4-41}
\end{equation}
Further,
$\cN^- (\eta)\asymp S$ if condition $\textup{(\ref{23-4-11})}_-$ is fulfilled in some non-empty cone.

\item\label{example-23-4-23-ii}
Furthermore, if condition $\textup{(\ref{23-4-11})}_-$ is fulfilled, then even for $d=2$ \ $R=1$.
\end{enumerate}
\end{example}

\begin{example-foot}\footnotetext{\label{foot-23-45} Cf. Example~\ref{example-23-4-18}.}\label{example-23-4-24}
\begin{enumerate}[label=(\roman*),  wide, labelindent=0pt]
\item\label{example-23-4-24-i}
Let conditions of Theorem~\ref{thm-23-4-21} be fulfilled with
 $\gamma=\langle x\rangle^{1-\alpha}$, $\rho =   \exp( a\langle x\rangle^{\alpha})$,
$\rho_1 =\langle x\rangle^{m_1}$, $a<0$, $\alpha>0$, $m_1<0$, $m_1+2-2\alpha>0$.
Then the remainder estimate is $O(R)$ with $R$ defined by (\ref{23-4-35}) and
(\ref{23-4-36}) holds.

\item\label{example-23-4-24-ii}
Further, if condition \ref{23-4-11-mp} is fulfilled, then the remainder estimate (\ref{23-4-35}) could be improved to (\ref{23-4B-36}).

\item\label{example-23-4-24-iii}
Furthermore, if condition $\textup{(\ref{23-4-11})}_-$ is fulfilled in some non-empty cone then there is $\asymp$ in (\ref{23-4-36}).
\end{enumerate}
\end{example-foot}

\begin{Problem}\label{Problem-23-4-25}
Again, one can hope to improve estimates (\ref{23-4B-36}) and (\ref{23-4-40}) in the same way as specified in Problem~\ref{Problem-23-4-6}.

\end{Problem}

We also leave to the reader

\begin{problem-foot}\footnotetext{\label{foot-23-46} Cf. Problem~\ref{Problem-23-4-12}  and \ref{problem-23-4-20}.} \label{problem-23-4-26}
Consider in this framework the Dirac operator. In this case $\N^-(\eta)$ is a number of eigenvalues in $(\pm M-\epsilon, \pm M-\eta)$ and $\N^+(\eta)$ is a number of eigenvalues in
$(\pm M+\eta, \pm M+\epsilon)$, $0<\eta<\epsilon$ and $\pm M$ is a point of the accumulation of the discrete spectrum.

We need to assume that
$M>0$ and potential $V\sim \rho^2$ at infinity.
\end{problem-foot}

Consider now the case when condition $\rho^2=o(\rho_1)$ as $|x|\to \infty$ is not fulfilled. Then the results will be similar to those of Section~\ref{sect-23-3}.

\begin{example-foot}\footnotetext{\label{foot-23-47} Cf. Example~\ref{example-23-3-16}.}\label{example-23-4-27}
\begin{enumerate}[label=(\roman*), wide, labelindent=0pt]
\item\label{example-23-4-27-i}
Let $X$ be a connected exterior domain\footref{foot-23-28} with $\sC^K$ boundary and $d=2$. Let  conditions  \textup{(\ref{23-2-2})},  \ref{23-2-3-*} and $\textup{(\ref{23-3-1})}^\#_{1}$ be fulfilled with
$\gamma =\epsilon _0 \langle x\rangle $,
$\rho =\langle x\rangle ^m$, $\rho _1=\langle x\rangle ^{m_1}$, $-1<m<0$,
$m-1< m_1\le 2m$.

 Assume that conditions (\ref{23-3-27}) and (\ref{23-3-28}) are fulfilled. Then for the Schr\"odinger operator asymptotics
\begin{gather}
\N^-(\eta)=\cN^-(\eta) + O(\eta^{1-m_1/(2m)})
\label{23-4-42}\\
\intertext{holds as $\eta\to +0$ with}
\cN^- (\eta)=O( \eta^{(m+1)/m}).
\label{23-4-43}
\end{gather}

Indeed, it follows from the arguments of Example~\ref{example-23-3-15}; we need to take into account that $\{|x|\ge C\tau^{1/(2m)}\}$ is a forbidden zone.

\item\label{example-23-4-27-ii}
Similar results hold in the full-rank even-dimensional case:
\begin{gather}
\N^-(\eta)=\cN^-(\eta) + O(\eta^{1-m_1/(2m)+(d-2)(m+1)/(2m)})
\label{23-4B-43}\\
\intertext{holds as $\eta\to +0$ with}
\cN^- (\eta)=O( \eta^{d(m+1)/(2m)}).
\label{23-4B-44}
\end{gather}
\end{enumerate}
\end{example-foot}

We leave to the reader:

\begin{problem}\label{problem-23-4-28}
Consider the case of $\gamma =\epsilon _0 \langle x\rangle $,
$\rho =\langle x\rangle ^{-2}|\log x|^\alpha$,\\
$\rho _1=\langle x\rangle ^{-2}|\log x|^\beta$, $2\alpha \ge \beta>\alpha$.
\end{problem}

We also leave to the reader
\begin{problem-foot}\footnotetext{\label{foot-23-48} Cf. Problems~\ref{Problem-23-4-12}, \ref{problem-23-4-20} and \ref{problem-23-4-26}.} \label{problem-23-4-29}
Consider in this framework the Dirac operator with $M>0$. In this case both points $M-0$ and $-M+0$ could be limits of the discrete spectrum simultaneously.
\end{problem-foot}

\chapter{$2\D$-case. Multiparameter asymptotics}
\label{sect-23-5}

In this section we consider asymptotics with respect to three parameters $\mu$, $h$ and $\tau$; here spectral parameter $\tau$ tends \underline{either} to $\pm \infty$ \underline{or} to the border of the essential spectrum \underline{or} to $-\infty$ (for Schr\"odinger and Schr\"odinger-Pauli operators) \underline{or} to the border of the spectrum. In two last cases presence of  $h\to+0$ is crucial. We consider here only $d=2$ and $h\ll 1$.

\section{Asymptotics of large eigenvalues}
\label{sect-23-5-1}

In this subsection $\tau\to +\infty$ for the Schr\"odinger and Schr\"odinger-Pauli operators and $\tau\to \pm \infty$ for the Dirac operator. We consider  the  Schr\"odinger and Schr\"odinger-Pauli operators, leaving the Dirac operator to the reader.

\begin{example}\label{example-23-5-1}
Assume first that $\psi\in \sC_0^\infty$  and there are no singularities on $\supp(\psi)$. We consider
\begin{equation}
\N_\psi^-(\tau)=\int e(x,x,\tau)\psi(x)\,dx.
\label{23-5-1}
\end{equation}
Then for scaling $A\mapsto \tau^{-1}A$ leads to
$h\mapsto h\tau^{-1/2}$ and $\mu\to \mu\tau^{-1/2}$.

\begin{enumerate}[label=(\roman*), wide, labelindent=0pt]
\item\label{example-23-5-1-i}
If $\mu \lesssim \tau^{1/2}$ then  we can apply the standard theory with the ``normal" magnetic field; we need to assume that $h\ll\tau^{1/2}$ and we need neither condition $d=2$, nor
$F\ge \epsilon_0$, nor $\partial X=\emptyset$; the principal part of the asymptotics has magnitude $h^{-d}\tau^{d/2}$ and the remainder estimate is $O(h^{1-d}\tau^{(d-1)/2})$ which one can even improve to $o(h^{1-d}\tau^{(d-1)/2})$ under proper non-periodicity assumption.

\item\label{example-23-5-1-ii}
Let $\mu \gtrsim \tau^{1/2}$, $\mu h\lesssim \tau$. Then   we can apply the standard theory with the ``strong" magnetic field; we assume that $d=2$,
$\partial X=\emptyset$ and  $F\ge \epsilon_0$. Then the principal part of the asymptotics has magnitude $h^{-2}\tau$ and under non-degeneracy assumptions
\begin{align}
&\nabla F=0\implies \det \Hess F\ge \epsilon
\label{23-5-2}\\
\shortintertext{and}
&\nabla F=0\implies |\det \Hess F|\ge \epsilon
\label{23-5-3}
\end{align}
fulfilled on $\supp(\psi)$ the remainder estimate is
$O(\mu^{-1}h^{-1}\tau )$ and \\ $O(\mu^{-1}h^{-1}\tau (|\log (\mu\tau^{-1/2})|+1)$
respectively\footnote{\label{foot-23-49} In the letter case logarithmic factor could be removed by adding a correction term.}. Without non-degeneracy assumption the remainder estimate is $O(\mu h^{-1})$.

\item\label{example-23-5-1-iii}
If $\mu \gtrsim \tau^{1/2}$, $\mu h \ge c\tau$ than $\N^-(\tau)=0$ for the Schr\"odinger operator; for the Schr\"odinger-Pauli operator  the principal part of the asymptotics has magnitude $\mu h^{-1}$ and under  non-degeneracy assumptions (\ref{23-5-2}) and (\ref{23-5-3}) the remainder estimate is $O(1)$ and $O(\log \mu)$ respectively (or better for $\tau$ belonging to the spectral gap).
\end{enumerate}
\end{example}

\begin{example-foot}\label{example-23-5-2}\footnotetext{\label{foot-23-50} Cf. Example~\ref{example-23-3-15}.}
Let $X$ be a connected exterior domain with $\sC^K$ boundary.  Let conditions \textup{(\ref{23-2-2})}, \ref{23-2-3-*}, \ref{23-3-1-**},  be fulfilled with
$\gamma =\epsilon _0 \langle x\rangle $, $\rho =\langle x\rangle ^m$,
$\rho _1=\langle x\rangle ^{m_1}$, $m_1>2m\ge 0$. Consider the Schr\"odinger operator and assume that
\begin{phantomequation}\label{23-5-4}\end{phantomequation}
\begin{gather}
\tau\ge \mu h, \qquad\tau^{2m-m_1}\le \epsilon (\mu h)^{2m}.
\tag*{$\textup{(\ref*{23-5-4})}_{1,2}$}\label{23-5-4-*}\\
\shortintertext{Then}
\cN^-(\tau,\mu,h)\asymp \tau^{(2+m_1)/m_1}h^{-2(1+m_1)/m_1}\mu^{-2/m_1}.
\label{23-5-5}
\end{gather}
\begin{enumerate}[label=(\roman*), wide, labelindent=0pt]
\item\label{example-23-5-2-i}
Further, if  $\tau \gtrsim \mu^2$, then the zone of the strong magnetic field
$\mu _\eff =\mu \langle x\rangle^{m_1+1}\tau^{-1/2}\ge C$ is contained in $\{x\colon |x|\ge c\}$
and here we have non-degeneracy condition fulfilled. Then the remainder estimate is $O(R)$ with
\begin{equation}
R=\tau^{(m_1+2)/2(m_1+1)}\mu^{-1/(m_1+1)}h^{-1},
\label{23-5-6}
\end{equation}
which could be improved under non-periodicity assumption; see Example~\ref{23-3-18}.

\item\label{example-23-5-2-ii}
On the other hand, if $\mu ^2\gg \tau$, then the contribution of the zone $\{x\colon |x|\ge c\}$ to the remainder is $O(\mu^{-1}h^{-1}\tau)$. The contribution of the zone $\{x\colon |x|\le c\}$ to the remainder is $O(\mu^{-1}h^{-1}\tau)$ provided $X=\bR^2$ and non-degeneracy assumption (\ref{23-5-2}) is fulfilled (etc) and $O(\mu h^{-1})$ in the general case.

\item\label{example-23-5-2-iii}
Let us replace $\textup{(\ref{23-5-4})}_{2}$ by the opposite inequality, and assume \ref{23-3-3-*}. Then (\ref{23-5-5}) is replaced by
$\cN^-(\tau,\mu,h)\asymp h^{-2}\tau ^{(m+1)/m}$. Let us discuss $R$.
\begin{enumerate}[label=(\alph*),  labelindent=0pt]
\item\label{example-23-5-2-iiia}
If $\mu \tau^{(m_1+1-m)/(2m)}\lesssim 1$, then $\mu_\eff \lesssim 1$ as
$|x|\lesssim \tau^{1/(2m)}$  and $R= h^{-1}\tau ^{(m+1)/(2m)}$.
\item\label{example-23-5-2-iiib}
If $\mu \tau^{(m_1+1-m)/(2m)}\gtrsim 1$, but $\mu^2\tau \lesssim 1$,  then $R$ is given by (\ref{23-5-6}).
\item\label{example-23-5-2-iiic}
If $\mu ^2\gg \tau$, then we are in the framework of \ref{example-23-5-2-ii}.
\end{enumerate}
\end{enumerate}
\end{example-foot}

\begin{example-foot}\label{example-23-5-3}\footnotetext{\label{foot-23-51} Cf. Example~\ref{example-23-3-19}.}
In the framework of Example~\ref{example-23-5-2} for the Schr\"odinger-Pauli operator under assumption \ref{23-3-3-*} the remainder estimate is the same as in Statement~\ref{example-23-5-2-iii} while
\begin{equation}
\cN^-(\tau,\mu,h)\asymp h^{-2}\tau ^{(m+1)/m} + \mu h^{-1}\tau^{(m_1+2)/(2m)}.
\label{23-5-7}
\end{equation}
\end{example-foot}

We leave to the reader

\begin{problem}\label{problem-23-5-4}
Consider the Schr\"odinger and Schr\"odinger-Pauli operators
\begin{enumerate}[label=(\roman*), wide, labelindent=0pt]
\item\label{problem-23-5-4-i}
In the same framework albeit with condition $m_1>2m$ replaced by
 $2m\ge m_1 \ge 0$. Assume that \ref{23-3-3-*} is fulfilled.

Then magnitude of $\cN^-(\tau,\mu,h)$ is described in  Examples~\ref{example-23-5-2} and~\ref{example-23-5-3}. Under proper non-degeneracy assumption (which we leave to the reader to formulate) derive the remainder estimate.

\item\label{problem-23-5-4-ii}
In the same framework as in \ref{problem-23-5-4-i} albeit  in with  $m_1<0$ (magnetic field is stronger in the center but there is no singularity), in which case the center can become a classically forbidden zone.

\item\label{problem-23-5-4-iii}
With other types of the behaviour at infinity.
\end{enumerate}
\end{problem}

\begin{problem}\label{problem-23-5-5}
For the Dirac operators derive similar results as $\tau\to \pm \infty$.
\end{problem}

\section{Asymptotics of the small eigenvalues}
\label{sect-23-5-2}

In this subsection for the Schr\"odinger and Schr\"odinger-Pauli operators we consider asymptotics of eigenvalues tending to $-0$.

\begin{example-foot}\label{example-23-5-6}\footnotetext{\label{foot-23-52} Cf. Example~\ref{example-23-4-22}.}
Let $X$ be a connected exterior domain with $\sC^K$ boundary.  Let conditions \textup{(\ref{23-2-2})}, \ref{23-2-3-*}, $\textup{(\ref{23-3-1})}^\#_1$  be fulfilled with $\gamma =\epsilon _0 \langle x\rangle $,
$\rho =\langle x\rangle ^m$, $\rho _1=\langle x\rangle ^{m_1}$, $-1< m< 0$,
$m_1 > m-1$.%

Consider the Schr\"odinger operator and assume that
\begin{phantomequation}\label{23-5-8}\end{phantomequation}
\begin{equation}
1\ge \mu h, \qquad|\tau|^{2m-m_1}\le \epsilon (\mu h)^{2m}.
\tag*{$\textup{(\ref*{23-5-8})}_{1,2}$}\label{23-5-8-*}
\end{equation}
Then $\cN^-(\tau,\mu,h)=O( h^{-2}|\tau|^{(m+1)/m})$ as $\tau\to -0$ with ``$\asymp $'' instead of ``$=O$'' if condition \ref{23-4-11-mp} (with the sign ``$-$'')  fulfilled in some non-empty cone.

Further, under non-degeneracy assumption (\ref{23-3-27}) the contribution to the remainder of zone $\{x\colon |x|\ge c\}$ is $O(R)$ with

\begin{enumerate}[label=(\roman*), wide, labelindent=0pt]
\item\label{example-23-5-6-i}
If $\mu |\tau|^{(m_1+1-m)/(2m)}\lesssim 1$ then $R= h^{-1}|\tau|^{(m+1)/(2m)}$.

\item\label{example-23-5-6-ii}
Let $\mu |\tau|^{(m_1+1-m)/(2m)}\gtrsim 1$. Then
\begin{enumerate}[label=(\alph*),  labelindent=0pt]
\item\label{example-23-5-6-iia}
If $m_1< 2m$ then $R=\mu^{-1}h^{-1}|\tau|^{(2m-m_1)/(2m)}$.
\item\label{example-23-5-6-iib}
If $m_1=2m$ then $R=\mu^{-1}h^{-1}|\log \mu|$.
\item\label{example-23-5-6-iic}
If $m_1>2m$  then $R=h^{-1}\mu^{(m+1)/(m_1+1-m)}$ for $\mu \lesssim 1$ and
$R=\mu^{-1}h^{-1}$ for $\mu \gtrsim 1$.
\end{enumerate}
\end{enumerate}
\end{example-foot}

\begin{example-foot}\label{example-23-5-7}\footnotetext{\label{foot-23-53} Cf. Example~\ref{example-23-4-22}.}
In the framework of Example~\ref{example-23-5-6} for the Schr\"odinger-Pauli operator under assumption \ref{23-3-3-*} the contribution to the remainder of the zone $\{x\colon |x|\ge c\}$ the same as in Example~\ref{example-23-5-6}\ref{example-23-5-6-ii} while
\begin{equation}
\cN^-(\tau,\mu,h)=O( h^{-2}|\tau| ^{(m+1)/m} + \mu h^{-1}|\tau|^{(m_1+2)/(2m)})
\label{23-5-9}
\end{equation}
$\asymp $ if condition \ref{23-4-11-mp} (with the sign ``$-$'')  fulfilled in some non-empty cone.
\end{example-foot}

\begin{problem}\label{problem-23-5-8}
Consider the Schr\"odinger and Schr\"odinger-Pauli operators if
\begin{enumerate}[label=(\roman*), wide, labelindent=0pt]
\item\label{problem-23-5-8-i}
If condition $\textup{(\ref{23-5-8})}_1$ is violated (then there could be a forbidden zone in the center).
\item\label{problem-23-5-8-ii}
$m_1\le m-1$.
\item\label{problem-23-5-8-iii}
With other types of the behaviour at infinity.
\end{enumerate}
\end{problem}

\begin{problem}\label{problem-23-5-9}
Consider the Schr\"odinger and Schr\"odinger-Pauli operators in the framework of Subsections~\ref{sect-23-4-1} and~\ref{sect-23-4-2} if
\begin{enumerate}[label=(\roman*), wide, labelindent=0pt]
\item\label{problem-23-5-9-i}
$\mu h=1$; then the essential spectrum  does not change.

\item\label{problem-23-5-9-ii}
$\mu h\to \infty$; then only  point $0$ of the essential spectrum is preserved for the Schr\"odinger-Pauli operators, while others go to $+\infty$. Consider  $\N^\pm(\eta)$ with $\eta\to 0$.

\item\label{problem-23-5-9-iii}
$\mu h\to 0$; then only  point $0$ of the essential spectrum is preserved for the Schr\"odinger-Pauli operators, while others move towards it. Consider $\N^-(\eta)$ with $\eta\to 0$ for both Schr\"odinger and Schr\"odinger-Pauli operators.
\end{enumerate}
\end{problem}

\begin{problem}\label{problem-23-5-10}
\begin{enumerate}[label=(\roman*), wide, labelindent=0pt]
\item\label{problem-23-5-10-i}
Consider the Schr\"odinger-Pauli operators in the framework of Subsection~\ref{sect-23-4-3}.

\item\label{problem-23-5-10-ii}
Consider the Schr\"odinger and Schr\"odinger-Pauli operators in the framework of Subsection~\ref{sect-23-4-4}.
\end{enumerate}
\end{problem}

\begin{problem}\label{problem-23-5-11}
For the Dirac operators derive similar results as $M\ne 0$ and
$\tau\to M-0$ and $-M+0$; or as $M=0$ albeit $m_1\ge 0$.
\end{problem}

\section{Case of $\tau\to+0$}
\label{sect-23-5-3}

In this subsection $\tau\to +0$ for the Schr\"odinger and Schr\"odinger-Pauli ope\-rators and $\tau\to \pm M\pm 0$ for the Dirac operator. Consider  the  Schr\"odinger and Schr\"odinger-Pauli operators first.

\begin{example-foot}\label{example-23-5-12}\footnotetext{\label{foot-23-54} Cf. Example~\ref{example-23-5-2}.}
Let $V>0$ everywhere except $V(0)=0$.  Let conditions \textup{(\ref{23-2-2})}, \ref{23-2-3-*}, \ref{23-3-1-**},  be fulfilled with
$\gamma =\epsilon _0 |x|$, $\rho =|x| ^m$,
$\rho _1=| x| ^{m_1}$, $m_1> 2m \ge 0$. Consider the Schr\"odinger operator and assume that $\tau\to +0$.
\begin{enumerate}[label=(\roman*), wide, labelindent=0pt]
\item\label{example-23-5-12-i}
Let
\begin{phantomequation}\label{23-5-10}\end{phantomequation}
\begin{equation}
\mu \ll \tau^{(m_1+2)/2}h^{-(m_1+1)}, \qquad
\tau^{2m-m_1}\le \epsilon (\mu h)^{2m}.
\tag*{$\textup{(\ref*{23-5-10})}_{1,2}$}\label{23-5-10-*}
\end{equation}
Then (\ref{23-5-5}) holds.

Then the remainder estimate is $O(R)$ with defined by (\ref{23-5-6})
which could be improved under non-periodicity assumption; see Example~\ref{23-3-18}.

\item\label{example-23-5-12-ii}
Let us replace $\textup{(\ref*{23-5-10})}_{2}$ by the opposite inequality, and assume \ref{23-3-3-*}. Then (\ref{23-5-5}) is replaced by
$\cN^-(\tau,\mu,h)\asymp h^{-2}\tau ^{(m+1)/m}$. Let us discuss $R$.
\begin{enumerate}[label=(\alph*),  labelindent=0pt]
\item\label{example-23-5-12-iia}
If $\mu \tau^{(m_1+1-m)/(2m)}\lesssim 1$, then $\mu_\eff \lesssim 1$ as
$|x|\lesssim \tau^{1/(2m)}$  and $R= h^{-1}\tau ^{(m+1)/(2m)}$.
\item\label{example-23-5-12-iib}
If $\mu \tau^{(m_1+1-m)/(2m)}\gtrsim 1$, but $\mu^2\tau \lesssim 1$,  then $R$ is given by (\ref{23-5-6}).
\end{enumerate}
\end{enumerate}
\end{example-foot}

\begin{example-foot}\label{example-23-5-13}\footnotetext{\label{foot-23-55} Cf. Example~\ref{example-23-5-3}.}
In the framework of Example~\ref{example-23-5-12} for the Schr\"odinger-Pauli operator under assumption \ref{23-3-3-*} the remainder estimate is the same as in Statement~\ref{example-23-5-12-ii} while (\ref{23-5-7}) holds.
\end{example-foot}

We leave to the reader

\begin{problem-foot}\label{problem-23-5-14}\footnotetext{\label{foot-23-56} Cf. Problem~\ref{problem-23-5-4}.}
Consider the Schr\"odinger and Schr\"odinger-Pauli operators
\begin{enumerate}[label=(\roman*), wide, labelindent=0pt]
\item\label{problem-23-5-14-i}
In the same framework albeit with condition $m_1>2m$ replaced by
 $2m\ge m_1 \ge 0$. Assume that \ref{23-3-3-*} is fulfilled.

Then magnitude of $\cN^-(\tau,\mu,h)$ is described in  Examples~\ref{example-23-5-12} and~\ref{example-23-5-13}. Under proper non-degeneracy assumption (which we leave to the reader to formulate) derive the remainder estimate.

\item\label{problem-23-5-14-ii}
In the same framework as in \ref{problem-23-5-14-i} albeit  in with  $m_1<0$ (magnetic field is stronger in the center but there is no singularity), in which case the center can become a classically forbidden zone.

\item\label{problem-23-5-14-iii}
With other types of the behaviour at infinity.
\end{enumerate}
\end{problem-foot}

\begin{problem}\label{problem-23-5-15}
For the Dirac operators derive similar results as $\tau\to \pm (M+0)$.
\end{problem}

\section{Case of $\tau\to-\infty$}
\label{sect-23-5-4}

In this subsection for the Schr\"odinger and Schr\"odinger-Pauli operators we consider asymptotics with $\tau\to -\infty$.

In this subsection for the Schr\"odinger and Schr\"odinger-Pauli operators we consider asymptotics of eigenvalues tending to $-0$.

\begin{example-foot}\label{example-23-5-16}\footnotetext{\label{foot-23-57} Cf. Example~\ref{example-23-4-22}.}
Let $X\ni 0$   and let conditions \textup{(\ref{23-2-2})}, \ref{23-2-3-*}, $\textup{(\ref{23-3-1})}^\#_1$  be fulfilled with $\gamma =\epsilon _0 \langle x\rangle $,
$\rho =\langle x\rangle ^m$, $\rho _1=\langle x\rangle ^{m_1}$, $-1< m< 0$,
$m_1 > m-1$.%

Consider the Schr\"odinger operator and assume that
\begin{phantomequation}\label{23-5-11}\end{phantomequation}
\begin{equation}
h\ll |\tau|^{(m+1)/(2m)}, \qquad|\tau|^{m_1-2m}\le \epsilon (\mu h)^{2m}.
\tag*{$\textup{(\ref*{23-5-11})}_{1,2}$}\label{23-5-11-*}
\end{equation}
Then $\cN^-(\tau,\mu,h)=O( h^{-2}|\tau|^{(m+1)/m})$ as $\tau\to -0$ with $\asymp $ if condition \ref{23-4-11-mp} (with the sign ``$-$'')  fulfilled in some non-empty cone.

\begin{enumerate}[label=(\roman*), wide, labelindent=0pt]
\item\label{example-23-5-16-i}
If $\mu |\tau|^{(m_1+1-m)/(2m)}\lesssim 1$ then $R= h^{-1}|\tau|^{(m+1)/(2m)}$.

\item\label{example-23-5-16-ii}
Let $\mu |\tau|^{(m_1+1-m)/(2m)}\gtrsim 1$. Then
\begin{enumerate}[label=(\alph*),  labelindent=0pt]
\item\label{example-23-5-16-iia}
If $m_1< 2m$ then $R=\mu^{-1}h^{-1}|\tau|^{(2m-m_1)/(2m)}$.
\item\label{example-23-5-16-iib}
If $m_1=2m$ then $R=\mu^{-1}h^{-1}|\log \mu|$.
\item\label{example-23-5-16-iic}
If $m_1>2m$  then $R=h^{-1}\mu^{(m+1)/(m_1+1-m)}$ for $\mu \lesssim 1$ and
$R=\mu^{-1}h^{-1}$ for $\mu \gtrsim 1$.
\end{enumerate}
\end{enumerate}
\end{example-foot}

\begin{example-foot}\label{example-23-5-17}\footnotetext{\label{foot-23-58} Cf. Example~\ref{example-23-4-22}.}
In the framework of Example~\ref{example-23-5-16} for the Schr\"odinger-Pauli operator under assumption \ref{23-3-3-*} the contribution to the remainder of the zone $\{x\colon |x|\ge c\}$ the same as in Example~\ref{example-23-5-16}\ref{example-23-5-16-ii} while
\begin{equation}
\cN^-(\tau,\mu,h)=O( h^{-2}|\tau| ^{(m+1)/m} + \mu h^{-1}|\tau|^{(m_1+2)/(2m)})
\label{23-5-12}
\end{equation}
$\asymp $ if condition \ref{23-4-11-mp} (with the sign ``$-$'')  fulfilled in some non-empty cone.
\end{example-foot}

\begin{problem}\label{problem-23-5-18}
Consider the Schr\"odinger and Schr\"odinger-Pauli operators if
\begin{enumerate}[label=(\roman*), wide, labelindent=0pt]
\item\label{problem-23-5-18-ii}
$m_1\le m-1$.
\item\label{problem-23-5-18-iii}
With other types of the behaviour at $0$.
\end{enumerate}
\end{problem}

\begin{subappendices}

\chapter{Appendices}

\section{On the self-adjointness of the Dirac operator}
\label{sect-23-A-3}

The Dirac operators treated in this Chapter are surely self-adjoint in the case of an exterior domain with the singularity at infinity.  However, the same fact should be proven for an interior domain with singular points. We consider a single singular point at $0$.

\begin{theorem}\label{thm-23-A-1}
Let $X \subset \bR^2/(\bar{\gamma}_1\bZ\times \bar{\gamma}_2\bZ)$ ($0<\bar{\gamma}_j\le \infty$) be an open domain. Let conditions \textup{(\ref{23-2-2})}, \ref{23-2-3-*}, \textup{(\ref{23-2-13})} be fulfilled. Further, let
\begin{phantomequation}\label{23-A-1}\end{phantomequation}
\begin{equation}
F\ge \epsilon \rho_1 \qquad |V|\ge \epsilon \rho \text{as\ \ }|x|\le \epsilon.
\tag*{$\textup{(\ref*{23-A-1})}_{1,2}$}\label{23-A-1-*}
\end{equation}
Let us assume that there exists a neighborhood of $\partial X$, denoted by $Y$, such that for every $r>0$ the inequalities
\begin{phantomequation}\label{23-A-2}\end{phantomequation}
\begin{align}
&\rho _1\ge \varepsilon \rho ^2,\quad
(\rho \gamma )^s\ge \varepsilon \rho _1,\quad \rho \ge 1,
\tag*{$\textup{(\ref*{23-A-2})}_{1-3}$}\label{23-A-2-*}\\[2pt]
&\bigl|V+\sqrt {2j\mu hF}\bigr|\ge \varepsilon \sqrt \rho _1-\varepsilon ^{-1}
\qquad \forall j\in \bZ^+\setminus 0,
\label{23-A-3}\\[2pt]
|&V|\ge \varepsilon \rho -\varepsilon ^{-1}
\label{23-A-4}
\end{align}
are fulfilled on $Y\cap X\cap\{|x|\le r\}$ with appropriate
$\varepsilon =\varepsilon (r)>0$.

Then for $\mu >0$, $h>0$ the operator $A$ with domain $\fD(A)=\sC_0^1(X,\bH)$ is essentially self-adjoint in $\sL^2(X,\bH)$.
\end{theorem}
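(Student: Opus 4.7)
The plan is to verify essential self-adjointness via the standard criterion $\ker(A^*\mp i)=0$, where $A^*$ denotes the adjoint of $A$ with original domain $\fD(A)=\sC_0^1(X,\bH)$. Suppose $u\in\sL^2(X,\bH)$ satisfies $(A-\lambda)u=0$ distributionally with $\lambda=\mp i$.

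First I would introduce a family of cutoffs $\chi_{R,\delta}\in\sC^1(X)$ equal to $1$ on $\{|x|\ge 2/R\}\cap\{\dist(x,\partial X)\ge 2\delta\}$, supported in $\{|x|\ge 1/R\}\cap\{\dist(x,\partial X)\ge \delta\}$, with derivatives bounded by $CR$ near $0$ and by $C\gamma^{-1}$ near $\partial X$ (the latter being compatible with the regularity (\ref{23-2-13})). Then $\chi_{R,\delta}u$ lies in the closure of $\sC_0^1$ in the graph norm of $A$, and
\begin{equation*}
(A-\lambda)(\chi_{R,\delta}u)=[A,\chi_{R,\delta}]u=-ih\sum_{l,j}\upsigma_l\omega^{jl}(\partial_j\chi_{R,\delta})\,u,
\end{equation*}
which is a bounded multiplication operator supported in the transition region.

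The key ingredient is the coercive lower bound
\begin{equation*}
\|(A-\lambda)v\|^2\ge \epsilon\int_{Y\cap X}(\rho^2+\mu h\rho_1)|v|^2\,dx - C\|v\|^2\qquad \forall v\in\sC_0^1(Y\cap X,\bH),
\end{equation*}
which I would derive by expressing $A^2$ as a Schr\"odinger--Pauli-type operator whose pointwise Landau spectrum is $\bigl\{\bigl(V\pm\sqrt{M^2+2j\mu hF}\bigr)^2\bigr\}$ with one $j=0$ branch excluded (per the sign convention fixed by (\ref{23-2-74})). Condition (\ref{23-A-3}) gives a lower bound $\epsilon\sqrt{\rho_1}-\epsilon^{-1}$ for the branches $j\ge 1$, while (\ref{23-A-4}) handles the exceptional $j=0$ branch via $|V|\ge\epsilon\rho-\epsilon^{-1}$; combined with $\textup{(\ref*{23-A-2})}_1$ these yield uniform separation from zero of order $\rho^2+\mu h\rho_1$ modulo a bounded term. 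The microlocal reduction of Chapter~\ref{book_new-sect-13} of \cite{futurebook} carries a symbolic error of order $\rho_1\mu_\eff^{-2N}$, which is absorbed by $\textup{(\ref*{23-A-2})}_2$ with $s$ large enough. Applying this to $v=\chi_{R,\delta}u$ yields
\begin{equation*}
\epsilon\int(\rho^2+\mu h\rho_1)|\chi_{R,\delta}u|^2\,dx\le C\|[A,\chi_{R,\delta}]u\|^2+C\|u\|^2.
\end{equation*}
As $R\to\infty$ and $\delta\to 0$ the commutator is supported in a shrinking transition region; using that $\rho\ge 1$ in $Y$ by $\textup{(\ref*{23-A-2})}_3$ (so the weighted integral dominates $\|u\|^2_{L^2}$ locally) and the bound $|[A,\chi_{R,\delta}]|\le Ch(R+\gamma^{-1})$ supported where $|x|\le 2/R$ or $\dist(x,\partial X)\le 2\delta$, dominated convergence forces $u\equiv 0$ on $Y$. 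Outside $Y$ the operator $A$ is elliptic with smooth coefficients on a fixed compact set away from singular points, and standard interior theory combined with unique continuation extends $u\equiv 0$ to all of $X$.

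The main obstacle I anticipate is verifying the coercivity on the full zone $Y$ uniformly in $\mu_\eff$: near $0$ one has $\mu_\eff\to\infty$ while near $\partial X$ the effective semiclassical parameters degenerate differently, and the Landau-level spacing argument must be applied via the canonical-form reduction with errors controlled by $\textup{(\ref*{23-A-2})}_{1,2}$ and $s$ sufficiently large. A secondary technical point is the approximation of $\chi_{R,\delta}u$ by $\sC_0^1$-functions in the graph norm, which requires Friedrichs mollifiers tangent to $\partial X$ — possible thanks to the $\gamma$-admissible boundary regularity (\ref{23-2-13}).
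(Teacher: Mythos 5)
Your overall framework is the right one and matches the paper's: reduce to $\Ker(A^*\mp i)=0$, use a weighted a priori estimate coming from the Landau-level structure (the microlocal canonical form), and finish with a cutoff/commutator argument. But there are two genuine gaps in how you execute it.

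First, the passage to the limit $R\to\infty$, $\delta\to 0$ is unjustified. Your commutator satisfies $|[A,\chi_{R,\delta}]|\sim R$ on $\{|x|\sim 1/R\}$ and $\sim\gamma^{-1}$ on the boundary collar, so $\|[A,\chi_{R,\delta}]u\|^2\sim R^2\int_{|x|\sim 1/R}|u|^2+\int_{\mathrm{collar}}\gamma^{-2}|u|^2$, and for a general $u\in\sL^2$ this need not stay bounded, let alone tend to zero; ``dominated convergence'' has no integrable dominating function here. This is exactly the hole the paper's proof fills with a bootstrap: from the local estimate $\|\rho v\|\le M(\|Av\|+\|v\|+\|\gamma^{-1}v\|)$ on $\gamma$-balls one proves by induction that $(\rho\gamma)^n u\in\sL^2(X_r)$ for every $n$, the point being that the commutator $[A,\gamma(\rho\gamma)^n]$ is bounded by $M(\rho\gamma)^n$ — a weight already controlled at the previous step, so the scheme is self-improving. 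Combined with $(\rho\gamma)^s\ge\varepsilon\rho_1$ and ellipticity this gives $\rho_1u,\ D_ju\in\sL^2_{\loc}$ and hence $\psi u\in\fD(\bar A)$ for every compactly supported $\psi$, which is the statement your cutoff argument actually needs. Your single fixed coercivity estimate on $Y$ has no such self-improving structure.

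Second, even granting the limit, your conclusion does not follow: the inequality $\epsilon\int(\rho^2+\mu h\rho_1)|u|^2\le C\|u\|^2$ is a weighted integrability statement, not $u\equiv 0$, since the weight is merely bounded below (you only have $\rho\ge 1$). The step that kills $u$ must use the skew part $\mp i$: for $w=\psi u\in\fD(\bar A)$ one has $(\bar A\pm i)w=[\bar A,\psi]u$, and taking the real part of the inner product with $iw$ annihilates the symmetric part and leaves $\|w\|^2\le C\max|\nabla\psi|\cdot\|u\|\,\|w\|$; choosing $\psi=\psi^0(x/r)$ and letting $r\to\infty$ forces $u=0$. This also disposes of infinity, which your argument never addresses (the hypotheses on $Y$ hold only on $\{|x|\le r\}$ with $r$-dependent constants). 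The final appeal to unique continuation is both unnecessary and unsupported for a first-order system with coefficients controlled only in the scaled $\sC^K$ sense.
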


\begin{proof}
Let us consider the adjoint operator $A^*$.  This operator is defined by the same formula with $\fD(A^*)=\{u\in \sL^2,Au\in \sL^2\}$ with $Au$ calculated as a distribution.  We should prove that $\Ker (A^*\pm iI)=0$ for both signs.
So, here and below let $u\in \sL^2$ and $(A^*\pm iI)u=0$ for some sign.

The microlocal canonical form of Section~\ref{book_new-sect-17-2} of \cite{futurebook} yields the inequality
\begin{multline}
\| \rho v\|\le M\bigl(\| Av\| +\| v\| +\|\gamma ^{-1}v\|\bigr)\\
\forall v\in \sC_0^1\bigl(B(y,{\frac{1}{2}}\gamma (y)\bigr)\qquad
\forall y\in X_r=X\cap \{|x|\le r\}
\label{23-A-5}
\end{multline}
with a constant $M=M(r)$; all the constants now depend on $\mu$ and $h$.

Let us prove that $(\rho \gamma )^nu\in \sL^2(X_r)$ for every $r>0$ by induction on $n$. This is true for $n=0$ by the assumption $u\in \sL^2$.

Let $(\rho \gamma )^nu\in \sL^2(X_r)$ for some $n$; then
$v=\gamma (\rho \gamma )^nu$ also belongs to $\sL^2(X_r)$ and
\begin{equation*}
Av=[A,\gamma (\rho \gamma )^n]u\mp iv.
\end{equation*}
One can easily see that $[A,\gamma (\rho \gamma )^n]$ is a matrix-valued function the matrix norm of which does not exceed $M(\rho \gamma )^n$.  Therefore, taking a $\gamma $-admissible partition of unity in a neighborhood of $X_r$ we see that $\sum_\nu \| A\psi _\nu v\|^2<\infty $ and therefore (\ref{23-A-5}) yields that $\sum_\nu \|\rho \psi _\nu v\|^2<\infty $.  Therefore
$\rho v=(\rho \gamma )^{n+1}u\in \sL^2(X_r)$ and the induction step is complete.

Thus we have proven that $(\rho \gamma )^nu\in \sL^2(X_r)$ for every $r>0$.
Then $\rho _1u\in \sL^2(X_r)$ by $\textup{(\ref{23-A-2})}_2$.  The ellipticity of $A$ yields that $D_ju\in \sL^2(X_r)$.  Therefore, if $\psi \in \sC_0^1
\bigl(\bR^2/(\bar{\gamma}_1\bZ\times \bar{\gamma}_2\bZ)\bigr)$ then
$\psi u\in \fD (\bar{A})$ where $\bar{A}$ is the closure of $A$.  Moreover,
\begin{equation*}
(\bar{A}\pm iI)\psi u=[\bar{A},\psi ]u.
\end{equation*}
Calculating the real part of the inner product with $i\psi u$ we obtain
the inequality
\begin{equation*}
\|\psi u\|\le \max |\nabla \psi^2 |\cdot \|u\|^2.
\end{equation*}
Let us take $\psi =\psi ^0(x/r)$ where $\psi ^0\in \sC_0^1$ is a fixed
function equal to $1$ in a neighborhood of 0.  Then for $r\to+\infty $
we see that the left-hand expression of this inequality tends to $\|u\|^2$
and the right-hand expression tends to $0$.  Therefore $u=0$.
\end{proof}

\begin{theorem}\label{thm-23-A-2}
Let all of the conditions of Theorem~\ref{thm-23-A-1} excluding
condition \ref{23-A-2-*} be fulfilled with $\gamma =\epsilon _0 |x|$,
$\rho =|x|^m$, $\rho _1=|x|^{m_1}$, $m_1\le 2m$, $m_1<-2$.  Then the
Dirac operator has a self-adjoint extension for $\mu >0$, $h>0$.
\end{theorem}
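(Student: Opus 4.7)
The plan is to produce a self-adjoint extension of $A$ via the von~Neumann conjugation criterion: it suffices to exhibit an antilinear isometry $J\colon\sL^2(X,\bH)\to\sL^2(X,\bH)$ with $J^2=I$, $J\sC_0^1(X,\bH)\subset\sC_0^1(X,\bH)$, and $JA=AJ$ on $\sC_0^1(X,\bH)$, for then $J$ intertwines $\Ker(A^*-iI)$ with $\Ker(A^*+iI)$, their dimensions coincide, and $A$ admits a self-adjoint extension. Symmetry of $A$ on $\sC_0^1(X,\bH)$ itself is immediate: integration by parts produces no boundary terms because test functions vanish near both $0$ and $\partial X$.

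The construction of $J$ uses the specific $2\D$ Clifford structure. Fixing $\varsigma=1$, one chooses a representation of $\upsigma_0,\upsigma_1,\upsigma_2$ in which exactly one matrix is purely imaginary and the other two are real. I would take $J$ of the form $Ju(x)=S\,\overline{u(Rx)}$, where $R$ is an involutive geometric symmetry of $X$ (a reflection, possibly composed with a gauge transformation) arranged so that, after re-gauging, $V_j\circ R=-V_j$, and $S$ is a constant unitary $2\times 2$ matrix chosen so that $S\bar{\upsigma}_\ell S^{-1}=\pm\upsigma_\ell$ with signs matched to compensate the three effects produced by $J$: $\overline{hD_j}=-hD_j$, the sign flip of $V_j$ under $R$, and the passive conjugation of $\upsigma_0 M$ and $V$. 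The identities $JA=AJ$ and $J^2=I$ then reduce to finite algebraic checks using $\upsigma_0\upsigma_1\upsigma_2=i$ and the reality of $\omega^{jl},V,V_j,M$; these fix $S$ uniquely up to a phase.

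The main obstacle is the mismatch inside $P_j=hD_j-\mu V_j$, whose two terms transform oppositely under plain complex conjugation, so that no pointwise $J=S\cdot K$ can commute with $A$ for a generic $V_j$ — one is forced to include the geometric piece $R$, and the delicate point is that $R$ must simultaneously preserve $X$, the metric $\omega^{jk}$, the potential $V$, and the mass $M$, while reversing $V_j$. The hypotheses of the theorem are compatible with this: since $m_1<-2$, the field $F\asymp|x|^{m_1}$ is strongly singular at $0$, and the smoothness of $F_{jk}$ at scale $\gamma=\epsilon_0|x|$ forces a leading homogeneous profile that can be gauged antipodally antisymmetric, supplying $R$ locally near $0$; on the torus $\bR^2/(\bar{\gamma}_1\bZ\times\bar{\gamma}_2\bZ)$ the antipodal map $x\mapsto-x$ is globally well-defined. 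As a fallback, if no global $R$ exists for a particular $X$, I would localize: combine the locally defined $J$ near $0$ with the essential self-adjointness of $A$ on $X\setminus B(0,\varepsilon)$ (which follows from the proof of Theorem~\ref{thm-23-A-1}, since its hypotheses near $\partial X$ are retained) and patch the two extensions through the finite-dimensional deficiency subspace localized at $0$.
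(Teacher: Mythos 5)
Your argument hinges on the existence of an antilinear conjugation $J$ commuting with $A$, and that is precisely what the hypotheses of the theorem do not supply. The assumptions in force are only the bounds \ref{23-2-3-*}, (\ref{23-2-2}), (\ref{23-2-13}), \ref{23-A-1-*}, (\ref{23-A-3})--(\ref{23-A-4}) with $\gamma=\epsilon_0|x|$, $\rho=|x|^m$, $\rho_1=|x|^{m_1}$; nothing forces $X$, $\omega^{jk}$, $V$ or $F$ to admit any involutive symmetry. In particular, the claim that the derivative bounds on $F_{jk}$ ``force a leading homogeneous profile that can be gauged antipodally antisymmetric'' is false: $F=|x|^{m_1}\bigl(2+\sin(\log|x|)\bigr)$, or any $F$ with generic angular dependence, satisfies $\textup{(\ref{23-2-3})}_2$ and $\textup{(\ref{23-A-1})}_1$ while admitting no reflection $R$ with the required properties, and plain complex conjugation reverses the sign of the $2\D$ magnetic field, which cannot be undone gauge-theoretically. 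Your fallback (patching a local conjugation near $0$ with essential self-adjointness away from $0$ ``through the finite-dimensional deficiency subspace'') is not an argument: deficiency subspaces do not localize, you have not shown the deficiency indices are finite, and you have not specified how the two pieces are glued. So the von~Neumann route, as written, does not close.

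The paper proceeds entirely differently, by a perturbation and regular-point argument. One replaces $V$ by $V_t=V+tW$ with $W=\pm|x|^{m'}$, $-1>m'>\frac{1}{2}m_1$, the sign chosen (for $m<0$) to match that of $V$ near $0$; the perturbed operator $A_t$ then satisfies condition \ref{23-A-2-*} as well and is essentially self-adjoint by Theorem~\ref{thm-23-A-1}. The eigenvalue asymptotics of Section~\ref{sect-23-3} show that the number of eigenvalues of $\bar A_t$ in a fixed interval $[\tau_1,\tau_2]$ is bounded uniformly in $t>0$, so along a sequence $t_k\to+0$ there is a common spectral gap $[\tau_1',\tau_2']$; taking $\tau$ in that gap gives $\|(A_{t_k}-\tau)u\|\ge\epsilon\|u\|$ uniformly, hence $\|(A-\tau)u\|\ge\epsilon\|u\|$ for all $u\in\fD(A)$, i.e.\ the real point $\tau$ is of regular type for $A$, which forces equal deficiency indices and yields a self-adjoint extension. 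If you want to salvage your approach you would have to add a symmetry hypothesis; to prove the theorem as stated you need something like the paper's limiting argument.
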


\begin{proof}
For $m<-1$ condition \ref{23-A-2-*} is also fulfilled and therefore the
operator $A$ is essentially self-adjoint.  So, let us treat the case
$m\ge -1$.  Let us consider the Dirac operator $A_\D$ with the potential
$V_t =V+t W$ where $t >0$, $W$ is a potential which is regular away from $\{x=0\}$, and $W=\pm |x|^{m'}$ in a neighborhood of $x=0$ with $-1>m'>\frac{1}{2}m_1$.  This operator is essentially self-adjoint
by Theorem~\ref{thm-23-A-1}.
For $m<0$ let us choose the sign of $W$ coinciding with the sign of $V$ on a neighborhood of $0$ (condition $\textup{(\ref{23-A-1})}_2$ yields that this is possible).

For $m\ge 0$ let us choose an appropriate interval $[\tau_1,\tau_2]$ with $\tau_1<\tau_2$.  Then applying the results of Section~\ref{sect-23-3} we see that the number of eigenvalues of the operator $\bar{A}_t$ lying in the interval $[\tau _1,\tau _2]$ is bounded uniformly with respect to $t >0$.  Then there exists a sequence $t _k\to +0$ such that there exist $\tau '_1<\tau '_2$ which do not depend on $k$ and such that
$[\tau '_1,\tau '_2]\cap\Spec (\bar{A}_t )=\emptyset$ for $t =t _k$.

Let $\tau = \frac{1}{2} (\tau '+\tau '_2)$; then all the operators
$(\bar{A}_t -\tau )^{-1}$ are uniformly bounded and
$\| A_t u\|\ge \epsilon \|u\|\;\forall u\in \fD(A_t )$ for
$t =t _k$.  Then the same is true for $A$.  Therefore $A$ has a self-adjoint extension $\tilde{A}$ satisfying the same estimate.%
\end{proof}
\end{subappendices}

\chapter{$3\D$-case. Introduction}
\label{sect-24-1}

In this chapter we obtain eigenvalue asymptotics for $3\D$-Schr\"{o}dinger, Schr\"{o}\-dinger-Pauli  and Dirac operators in the situations in which the role of the magnetic field is important.  We have seen in Chapters~\ref{book_new-sect-13}  and~\ref{book_new-sect-17} of \cite{futurebook} that these operators are essentially different and they also differ significantly from the corresponding $2\D$-operators which we considered in the Sections~\ref{sect-23-1}--\ref{sect-23-5}.

Now we \emph{usually\/} find ourselves in the situation much closer to Chapter~\ref{book_new-sect-11}  than Sections~\ref{sect-23-1}--\ref{sect-23-5} was. Indeed, our local asymptotics now are the same as without magnetic field, under very week non-degeneracy assumptions. We also allow boundaries and a singular points, finite or infinite, belonging to the boundary.

We start from Section~\ref{sect-24-2} in which we consider the case when the spectral parameter is fixed ($\tau=\const$) and study asymptotics with respect to $\mu,h$ exactly like in Section~\ref{book_new-sect-11-1} of \cite{futurebook} we considered asymptotics with respect to $h$. However, since now we have two parameters, we need to consider an interplay between them: while always $h\to +0$, we cover
$\mu\to +0$, $\mu $ remains disjoint from $0$ and $\infty$ and $\mu\to \infty$, which in turn splits into subcases $\mu h\to 0$, $\mu h$  remains disjoint from $0$ and $\infty$ and $\mu h\to \infty$.

In Section~\ref{sect-24-3} we consider asymptotics with $\mu=h=1$ and with $\tau$ tending to $+\infty$ for the Schr\"odinger and Schr\"odinger-Pauli operators and to $\pm\infty$ for the Dirac operator. We consider bounded domains with the singularity at some point and unbounded domains with the singularity at infinity.

In Section~\ref{sect-24-4} we consider asymptotics with the singularity at infinity and $\mu=h=1$ and with $\tau$ tending to $+0$ for the Schr\"odinger and Schr\"odinger-Pauli operators and to $\pm (M-0)$ for the Dirac operator.

It  includes the most interesting case (see Subsection~\ref{sect-23-4-2} of \cite{futurebook}) when magnetic field is either constant or stabilizes fast at infinity and potential fast decays at infinity in the direction of magnetic field.  In this case we consider a reduced one-dimensional operator which has just one negative eigenvalue $\Lambda (x')$ and it turns out that the asymptotics of the eigenvalues tending to the bottom of the continuous spectrum for $3\D$-operator coinsides with the asymptotics obtained in Subsection~\ref{sect-23-4-1}  for $2\D$-operator with the potential $\Lambda(x')$.
In contrast to the rest of the section we consider multidimensional case as well.

In Section~\ref{sect-24-5} we consider asymptotics with respect to
$\mu, h, \tau$, like in Section~\ref{book_new-sect-11-7} of \cite{futurebook}  and~\ref{sect-23-5}  again with significant differences mentioned above.

Further, in Section~\ref{sect-24-6} we consider Riesz means for the 3-dimensional Schr\"{o}\-dinger operator with the strong magnetic field, which will be useful in the Part~8  of \cite{futurebook}.

Finally, in Appendices~\ref{sect-24-A-1} and~\ref{sect-24-A-2} we investigate $1\D$-Schr\"odinger operators and in Appendix~\ref{sect-24-A-3} we construct examples of vector potentials with different rates of growth of the magnetic field at infinity.

\chapter{$3\D$-case.  Asymptotics with fixed spectral parameter}
\label{sect-24-2}

In this section we consider asymptotics with a fixed spectral parameter for $3$-dimensional magnetic Schr\"odinger, Schr\"odinger-Pauli and Dirac operators and discuss some of the generalizations\footnote{\label{foot-24-1} Mainly to higher dimensions with maximal-rank magnetic field.}.

As in Chapters \ref{book_new-sect-9}--\ref{book_new-sect-12} of \cite{futurebook} and Sections~\ref{sect-23-1}--\ref{sect-23-5} we will introduce a semiclassical zone and a singular zone, where $\rho\gamma \ge h$ and $\rho\gamma\le h$ respectively. In the semiclassical zone we apply asymptotics of Chapters~\ref{book_new-sect-13}, \ref{book_new-sect-18}  and \ref{book_new-sect-20} of \cite{futurebook}--in the multidimensional case. In the singular zone we need to apply estimates for a number of eigenvalues; usually it would be sufficient to use non-magnetic estimate\footnote{\label{foot-24-2} With $V$ modified accordingly; for example, for the Schr\"odinger and Schr\"odinger-Pauli operators $V_-$ is replaced by
$C\bigl((1-\epsilon) V - C_\epsilon \mu^2|\vec{V}|^2\bigl)_-$.}
for number of eigenvalues which trivially follows from standard one but if needed one can use more delicate estimates.

\section{Schr\"odinger operator}
\label{sect-24-2-1}

\subsection{Estimates of the spectrum}
\label{sect-24-2-1-1}

Consider first the Schr\"{o}dinger operator (\ref{book_new-13-1-1} of \cite{futurebook}) where $g^{jk},V_j,V$ satisfy (\ref{book_new-13-1-2})  and (\ref{book_new-13-1-4}) of \cite{futurebook} i.e.
\begin{equation}
\epsilon |\xi |^2\le \sum_{j,k} g^{jk}\xi _j\xi _k\le c|\xi |^2 \qquad
\forall \xi \in\bR^d.
\label{24-2-1}
\end{equation}
Without any loss of the generality we can fix $\tau =0$ and then in the important function $V_\eff F_\eff ^{-1}$ the parameters $\mu $ and $h$ enter as factors. Thus, we treat the operator (\ref{book_new-13-1-1}) of \cite{futurebook} assuming that it is self-adjoint.

We make assumptions the same assumptions \ref{23-2-3} of \cite{futurebook} i.e.
\begin{phantomequation}\label{24-2-2}\end{phantomequation}
\begin{multline}
|D^\alpha g^{jk}|\le c\gamma ^{-|\alpha |},\quad
|D^\alpha F_{jk}|\le c\rho_1 \gamma ^{-|\alpha |},\quad
|D^\alpha V|\le c\rho ^2\gamma ^{-|\alpha |}
\tag*{$\textup{(\ref*{24-2-2})}_{1-3}$}\label{24-2-2-*}
\end{multline}
where scaling function $\gamma(x)$ and weight functions $\rho(x),\rho_1(x)$ satisfy the standard assumptions $\textup{(\ref{book_new-9-1-6})}_{1,2}$ of \cite{futurebook}. Then
\begin{phantomequation}\label{24-2-3}\end{phantomequation}
\begin{equation}
\mu_\eff= \mu\rho_1\gamma\rho^{-1}, \qquad  h_\eff = h\rho^{-1}\gamma^{-1}.
\tag*{$\textup{(\ref*{24-2-3})}_{1,2}$}\label{24-2-3-*}
\end{equation}

Let us introduce a \emph{semiclassical zone\/}
$X'=\{ x\colon  \rho \gamma \ge h\}$ and a \emph{singular zone\/}
$X''=\{ x\colon  \rho \gamma \le 2h\}$ by (\ref{23-2-5}) and (\ref{23-2-6}) of \cite{futurebook}respectively.

Further, let us introduce two other overlapping zones
$X'_1=\{x\in X_\scl\colon \mu \rho _1 \le 2c\rho \gamma ^{-1}\}$ and
$X'_2=\{x\in X'\colon \mu \rho _1\ge c\rho \gamma ^{-1}\}$
where the magnetic field $\mu_\eff=\mu\rho_1\rho^{-1}\gamma$  is \emph{normal\/} ($\mu_\eff \le 2c$) and where it is \emph{strong\/} ($\mu_\eff \ge c$) respectively (see (\ref{23-2-7})  and (\ref{23-2-8}) of \cite{futurebook}. We also assume that
\begin{equation}
|F|\ge \epsilon \rho _1\qquad \text{in \ \ } X'_2
\label{24-2-4}
\end{equation}
where $F_{jk}$, $F^j$ and $F$ are the tensor, vector (as $d=3$) and scalar intensities of the magnetic field respectively.
 Moreover, let us assume that
 \begin{equation}
u|_{\partial X\cap B\bigl(x,\gamma (x)\bigr)}=0\qquad
\forall x\in X'_2\quad \forall u\in \fD(A);
\label{24-2-5}
\end{equation}
we do not need $\textup{(\ref{23-2-10})}_1$ since in $3\D$ the boundary does not lead to the deterioration of the remainder estimate.
We define  $X'_{-}=\{x\in X'_2\colon  V+\mu h F \ge \epsilon \rho^2\}$ and
$X'_{2+}=\{x\in X'\colon  V+\mu h F \le 2\epsilon \rho^2\}$ by (\ref{23-2-10}) and (\ref{23-2-11}) of \cite{futurebook} respectively.

Finally, let the standard boundary regularity condition  be fulfilled:
\begin{claim}\label{24-2-6}
For every $y\in X$,
$\partial X\cap B(y,\gamma (y))=\{x_k=\phi _k(x_{\hat{k}})\}$ with
\begin{equation*}
|D ^\alpha \phi _k|\le c\gamma ^{-|\alpha |}
\end{equation*}
and $k=k(y)$.
\end{claim}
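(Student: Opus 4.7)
The plan is to read this not as a statement with a proof but as a consistency/verification check one must perform on the domain $X$ before invoking the machinery of the previous chapters: the claim packages $\sC^K$-smoothness of $\partial X$ together with the requirement that the boundary's local geometry respect the scaling function $\gamma$. First I would fix $y$ with $\partial X\cap B(y,\gamma(y))\neq\emptyset$ (otherwise there is nothing to check), pick a nearest boundary point $z$, and normalize coordinates at $z$ so the outward normal points in the $-e_k$ direction; the implicit function theorem applied to a $\sC^K$ defining function of $\partial X$ then supplies the graph representation $x_k=\phi_k(x_{\hat k})$ in some neighborhood, with the index $k=k(y)$ chosen so that the tangent plane is most orthogonal to the $x_k$-axis throughout $B(y,\gamma(y))$.

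Second, I would verify that the domain of the graph representation actually contains the projection of $\partial X\cap B(y,\gamma(y))$ onto the $x_{\hat k}$-plane. This is the place where the $\gamma$-admissibility assumption $\textup{(\ref{book_new-9-1-6})}_{1,2}$ of \cite{futurebook} enters: because $|\nabla\gamma|\le 1-\epsilon_0$, moving across $B(y,\gamma(y))$ changes the nearest-point projection to $\partial X$ by a controlled amount, and the angle between tangent planes along the boundary stays uniformly bounded away from $\pi/2$, preventing the graph $x_k=\phi_k(x_{\hat k})$ from folding over within the ball. One has to check this uniformly in $y$, independent of whether $y$ is near or far from any singular locus.

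Third, the derivative bound $|D^\alpha\phi_k|\le c\gamma^{-|\alpha|}$ is really a rescaled version of uniform $\sC^K$-boundedness: after the rescaling $x\mapsto(x-z)/\gamma(y)$, $B(y,\gamma(y))$ becomes a unit ball and the boundary in the rescaled coordinates is described by a $\sC^K$ function whose first $K$ derivatives are bounded by a constant independent of $y$; undoing the rescaling returns the factors $\gamma^{-|\alpha|}$. Once one knows the rescaled boundary has uniform $\sC^K$-bounds, this step is bookkeeping.

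The main obstacle, and what the author is tacitly encoding in the assumption, is guaranteeing the \emph{uniformity} of those $\sC^K$-bounds after rescaling, particularly near singular points where $\gamma\to 0$ or $\gamma\to\infty$. In concrete examples this uniformity is cheap: for bounded smooth $X$ with $\gamma$ bounded it is immediate, and for domains that are asymptotically conical with $\gamma=\epsilon|x|$ and $\partial X$ positively homogeneous of degree $1$ it follows from homogeneity via a direct scaling argument. In general, however, the compatibility of $\partial X$ with the scale $\gamma$ must be imposed rather than derived — which is precisely why the condition appears here as an assumption rather than as a lemma to be established.
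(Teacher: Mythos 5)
You have read the statement correctly: in the paper (\ref{24-2-6}) is introduced with the words ``let the standard boundary regularity condition be fulfilled'' --- it is an imposed hypothesis on the pair $(X,\gamma)$, not a theorem, and the paper supplies no proof of it. Your account of what the condition encodes (uniform $\sC^K$-bounds for $\partial X$ after rescaling to unit scale $\gamma(y)$, verified directly in the bounded-smooth and conical examples) matches how the condition is used in the subsequent examples, so there is nothing to correct.
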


Recall that according to Chapter~\ref{book_new-sect-13} of \cite{futurebook} the contribution of the partition element $\psi\in \sC_0^K(B(y,\frac{1}{2}\gamma(y))$ to the principal part of asymptotics is
\begin{equation}
\cN^-(\mu, h)= \cN^{\MW\,-}(\mu, h)\coloneqq
 h^{-3}\int \cN^\MW (x,\mu h)\psi(x)\,dx
\label{24-2-7}
\end{equation}
with $\cN^\MW (x,\mu h)$ given by \textup{(\ref{book_new-13-1-9}) of \cite{futurebook}} with $d=3$.

On the other hand, its contribution to the remainder  does not exceed
$Ch^{-2} \rho^2\gamma^2$ if $\mu\rho_1 \gamma\le c\rho $ \underline{or}
$\mu\rho_1 \gamma\ge c \rho $ but   $\mu h\rho_1\le \rho^2$, $y\in X'_+$
and non-degeneracy assumption
\begin{phantomequation}\label{24-2-8}\end{phantomequation}
\begin{equation}
\sum_{\alpha:|\alpha|\le k}
|\nabla^\alpha (VF^{-1}+(2n+1)\mu h) |\gamma^{|\alpha|}\ge
\epsilon \rho^2\rho_1^{-1}
\qquad\forall n\in \bZ^+
\tag*{$\textup{(\ref*{24-2-8})}_k$}\label{24-2-8-*}
\end{equation}
is fulfilled\footnote{\label{foot-24-3} It does not exceed the same expression
plus $\mu h^{-1-\delta} \rho_1\rho^\delta\gamma^{2+\delta}$ in the general case; here $\delta>0$ is arbitrarily small but $K$ in \ref{24-2-2-*} depends on it.}, \underline{and}
it does not exceed $ C(\mu ^{-s}\rho_1^{-s}\gamma^{-2s})$ if
$C\mu\rho_1 \gamma\ge \rho $, $\mu h\rho_1\le \rho^2$, $y\in X'_{2-}$.

Then we get estimate of $\N^- $ from below by the magnetic Weyl approximation $\cN^-(\mu, h)$ minus corresponding remainder, and also from above  by magnetic Weyl approximation plus corresponding remainder, provided $X=X'$ (so, there is no singular zone $X''=\emptyset$):
\begin{multline}
h^{-d} \int _{X'} \cN^\MW (x,\mu h)\,dx - CR_1  \le \N^- (0) \le\\
h^{-d} \int _{X'} \cN^\MW (x,\mu h)\,dx + CR_1 +C'R_2
\label{24-2-9}
\end{multline}
with
\begin{gather}
R_1= \mu^{-1}h^{1-d} \int_{X'_+} \rho^{d-1}\gamma^{-1}\,dx , \label{24-2-10}\\
R_2= \mu h^{s-d}\int_{X'_-} \rho_1 \rho^{d-s-1}\gamma^{1-s}\,dx
\label{24-2-11}
\shortintertext{provided}
\mu \rho_1 \gamma \ge \rho
\label{24-2-12}
\end{gather}
where the latter condition could be assumed without any loss of the generality, $C'$ depens also on $s$ and $\epsilon$\,\footnote{\label{foot-24-4} Cf. (\ref{23-2-18})--(\ref{23-2-21}) of \cite{futurebook}.}.

We leave to the reader the following not very challenging set of problems:

\begin{Problem}\label{Problem-24-2-1}
\begin{enumerate}[label=(\roman*), wide, labelindent=0pt]
\item\label{Problem-24-2-1-i}
Consider in the current framework Problems~\ref{Problem-23-2-1} \ref{Problem-23-2-1-i}, \ref{Problem-23-2-1-ii}, \ref{Problem-23-2-1-iv} of \cite{futurebook}.
\item\label{Problem-24-2-1-ii}
Using results of Subsubsection~\emph{\ref{book_new-sect-13-7-2-1}.1 of \cite{futurebook}~\nameref{book_new-sect-13-7-2-1} of \cite{futurebook}\/} replace condition \ref{24-2-8-*} by $\textup{(\ref{book_new-13-7-19})}_{m}$.
\end{enumerate}
\end{Problem}

In what follows $h\to +0$ and the semiclassical zone $X'$ expands to $X$ while $\mu$ is either bounded (then we can assume that the zone of the strong magnetic field $X'_2$ is fixed) or tends to $\infty$ (then $X'_2$ expands to $X$). We assume that all conditions of the previous subsection are fulfilled with $\mu=h=1$ but we will assume them fulfilled in the corresponding zones.

The other important question is whether  $\mu h\to 0$, remains bounded and disjoint from $0$ or tends to $\infty$.

Finally, we should consider the singular zone $X''$.  In order to avoid this task we assume initially that
\begin{equation}
\rho _1\gamma ^2+\rho \gamma \ge \epsilon.
\label{24-2-13}
\end{equation}

\subsection{Power singularities}
\label{sect-24-2-1-2}

\begin{example-foot}\label{example-24-2-2}\footnotetext{\label{foot-24-5} Cf. Example~\ref{example-23-2-3} of \cite{futurebook}.}
\begin{enumerate}[label=(\roman*), wide, labelindent=0pt]
\item\label{example-24-2-2-i}
Let $X$ be a compact domain, $0\in \bar{X}$  and let conditions \ref{24-2-8-*}, (\ref{24-2-1}), \ref{24-2-2-*}, (\ref{24-2-4}) and (\ref{24-2-6}) be fulfilled
 with  $\gamma =\epsilon_0 |x|$, $\rho =|x|^m$, $\rho _1=|x|^{m_1}$,
 $m_1<2m\le -2$\,\footnote{\label{foot-24-6} Such potential $(V_1,V_2,V_3)$ exists, see Appendix~\ref{sect-24-A-3}.}\footnote{\label{foot-24-7} The non-degeneracy condition $\textup{(\ref{24-2-8})}_2$  is fulfilled in the vicinity of $0$ if $V,F$ stabilize as $x\to 0$ to $V^0,F^0$ positively homogeneous of degrees $2m, m_1$ respectively.}.

Then, for the Schr\"odinger operator the following asymptotic holds as $h\to +0$, $\mu h$ bounded:
\begin{gather}
\N^-(\mu,h)=\cN^-(\mu,h)+
\left\{\begin{aligned}
&O(h^{-2}(\mu h)^{2(m+1)/(2m-m_1)}) &&m<-1,\\
&O(h^{-2}(|\log \mu h|+1)) &&m=-1
\end{aligned}\right.
\label{24-2-14}
\shortintertext{with}
\cN ^- (\mu ,h)=
\left\{\begin{aligned}
&O(h^{-3}(\mu h)^{2(m+1)/(2m-m_1)})  &&m<-1,\\
&O(h^{-3}(|\log \mu h|+1)) &&m=-1.
\end{aligned}\right.
\label{24-2-15}
\end{gather}
Furthermore, one can replace in (\ref{24-2-15}) ``$=O$'' with ``$\asymp$" if
\begin{equation}
V\le -\epsilon \rho ^2\qquad
\text{in \ \ } \Gamma \cap\{|x|\le \epsilon \} \subset X
\label{24-2-16}
\end{equation}
where $\Gamma $ is an open non-empty sector (cone) with vertex at $0$, and $\mu h\le t$ with small enough $t>0$.

\item\label{example-24-2-2-ii}
Let $X$ be unbounded domain  and let conditions \ref{24-2-8-*}, (\ref{24-2-1}), \ref{24-2-2-*}, (\ref{24-2-4}) and (\ref{24-2-6})  be fulfilled
 with  $\gamma =\epsilon_0 \langle x\rangle$, $\rho \langle x\rangle^m$,
 $\rho _1=\langle x\rangle^{m_1}$, $m_1> 2m\ge -2$ \,\footref{foot-24-6}\footref{foot-24-6}.

Then for the Schr\"odinger operator asymptotics (\ref{24-2-14}) holds  as
$h\to +0$, $\mu h$ bounded.

Further, (\ref{24-2-15}) holds and one can replace ``$=O$'' by ``$\asymp$" if
\begin{equation}
V\le -\epsilon \rho ^2\qquad \text{in \ \ } \Gamma \cap\{|x|\ge c \} \subset X
\tag*{$\textup{(\ref*{24-2-16})}^\#$}\label{24-2-16-*}
\end{equation}
where $\Gamma $ is an open non-empty sector (cone) with vertex at $0$, and
$\mu h\le t$ with small enough $t>0$.
\end{enumerate}
\end{example-foot}

\begin{example-foot}\label{example-24-2-3}\footnotetext{\label{foot-24-8} Cf. Example~\ref{example-23-2-4} of \cite{futurebook}.}
\begin{enumerate}[label=(\roman*), wide, labelindent=0pt]
\item\label{example-24-2-3-i}
Assume now that  $m>-1$ while all other assumptions of Example~\ref{example-24-2-2}\ref{example-24-2-2-i} are fulfilled.  Then
$\cN^-(\mu,h)=O( h^{-3})$. Let us calculate the remainder estimate and prove that
\begin{equation}
\N^-(\mu,h)=\cN^-(\mu, h)+O(h^{-2}).
\label{24-2-17}
\end{equation}
Obviously, the contribution of the regular zone $X'=\{x\colon |x|\ge r^*=h^{1/(m+1)}\}$ is $O(h^{-2})$ and we need to consider the contribution of the singular zone
$X''=\{x\colon |x|\le r_2\}$.

\begin{enumerate}[label=(\alph*), wide, labelindent=0pt]
\item\label{example-24-2-3-ia}
Assume first that $m_1\ge m-1$. If $\mu r_1^{m_1+1-m}\le c$, then by virtue of LCR we estimate contribution of $X''$
\begin{equation}
Ch^{-3}\int _{X''} \bigl(r^{3m} +\mu ^{3}r^{3(m_1+1)}\bigr)\,dx \asymp
h^{-3}\bigl(r_1^{3m+3} + \mu ^3 r_1^{3m_1+6}\bigr) =O(1).
\label{24-2-18}
\end{equation}
Indeed, we can take $\vec{V}=O(r^{m_1+1})$.

On the other hand, if $\mu r_1^{m_1+1-m}\ge c$, the same estimate would work for
$X'''=\{x\colon |x|\le r_{**}=\mu^{-1/(m_1+1-m)}\}$ while contribution of $X''\setminus X'''$ does not exceed due to Chapter~\ref{book_new-sect-13} of \cite{futurebook}
\begin{equation}
C\int _{X''\setminus X'''}
(\mu r^{m_1+1-m})^{-s}r^{-3}\,dx =O(|\log h|)
\label{24-2-19}
\end{equation}
(actually it is $O(1)$ if $m_1>m-1$).

\item\label{example-24-2-3-ib}
Let now $m_1< m-1$. If $\mu r_1^{m_1+1-m}\ge c$, then we can apply estimate (\ref{24-2-19}) in the whole zone $X''$.

 On the other hand, if $\mu r_1^{m_1+1-m}\le c$, then we can apply estimate (\ref{24-2-19}) in the zone $X'''$ and estimate (\ref{24-2-18}) in $X''\setminus X''$.
\end{enumerate}

\item\label{example-24-2-3-ii}
Assume now that $m<-1$  while all other assumptions of Example~\ref{example-24-2-2}\ref{example-24-2-2-ii} are fulfilled. Again, considering cases
\begin{enumerate}[label=(\alph*), wide, labelindent=0pt]
\item\label{example-24-2-3-iia}
$m_1 \le  m-1$ and
\item\label{example-24-2-3-iib}
$m_1> m-1$
\end{enumerate}
we arrive to the asymptotics (\ref{24-2-17}) and $\cN^-(\mu,h)=O(h^{-3})$.
\end{enumerate}
\end{example-foot}

Consider now fast increasing $\mu $ so that $\mu h\to \infty$. We will get non-trivial results only when domain defined by $\mu_\eff h_\eff \le C_0$ shrinks but remains non-empty which happens only if $m_1>2m$,  $m_1<2m$ in the frameworks of Example~\ref{example-24-2-3}\ref{example-24-2-3-i} and \ref{example-24-2-3-ii} respectively.

\begin{example-foot}\label{example-24-2-4}\footnotetext{\label{foot-24-9} Cf. Example~\ref{example-23-2-5} of \cite{futurebook}.}
\begin{enumerate}[label=(\roman*), wide, labelindent=0pt]
\item\label{example-24-2-4-i}
In the framework of Example~\ref{example-24-2-3}\ref{example-24-2-3-i} with $m_1>2m$  consider  $\mu h\to \infty$. Then the  allowed domain is
\begin{equation}
\{x\colon  |x|\lesssim r_2 = (\mu h)^{-1/(m_1-2m)}\}
\label{24-2-20}
\end{equation}
and we have $r_1\le r_2$ if $\mu \lesssim h^{-(m_1+1-m)/(m+1)}$ while for
$\mu \gtrsim h^{-(m_1+1-m)/(m+1)}$ inequalities go in the opposite direction.

Therefore as $h\to +0$, $ch^{-1}\le \mu \le h^{-(m_1+1-m)/(m+1)}$ asymptotics
\begin{equation}
\N^- (\mu,h) = \cN^-(\mu,h) +O(\mu^{-2(m+1)/(m_1-2m)}h^{-2(m_1+ 1-m)/(m_1-2m)})
\label{24-2-21}
\end{equation}
holds and one can see easily that
$\cN^- (\mu,h)\asymp h^{-2}r_2^{2m+2}$:
\begin{equation}
\cN^- (\mu,h) \asymp \mu ^{-3(m+1)/(m_1-2m)}h^{-3(m_1+ 1-m)/(m_1-2m)}.
\label{24-2-22}
\end{equation}
\item\label{example-24-2-4-ii}
Similarly, in the framework of Example~\ref{example-24-2-3}\ref{example-24-2-3-ii} with $m_1<2m$  asymptotics (\ref{24-2-21}) and (\ref{24-2-22}) hold as $h\to +0$,
$ch^{-1}\le \mu \le h^{-(m_1+1-m)/(m+1)}$.
\end{enumerate}
\end{example-foot}

\subsection{Improved remainder estimates}
\label{sect-24-2-1-3}
Let us improve remainder estimates under certain non-periodicity-type assumptions.

\begin{example-foot}\label{example-24-2-5}\footnotetext{\label{foot-24-10} Cf. Example~\ref{example-23-2-7} of \cite{futurebook}.}
\begin{enumerate}[label=(\roman*), wide, labelindent=0pt]
\item\label{example-24-2-5-i}
In the case of the singularity at $0$ with $m>-1$ the contribution to the remainder of the zone $\{x\colon  |x|\le \varepsilon\}$ does not exceed
$\sigma h^{-1}$ with $\sigma=\sigma(\varepsilon)\to 0$ as $\varepsilon\to +0$. Then the standard arguments imply that under the standard non-periodicity assumption for Hamiltonian billiards\footnote{\label{foot-24-11} On the energy level $0$.} with the Hamiltonian
\begin{gather}
a (x,\xi,\mu_0)  =
\sum _{j,k} g^{jk}(\xi_j-\mu_0V_j)(\xi_k -\mu_0V_k)+V(x)
\label{24-2-23}\\
\intertext{the improved asymptotics}
\N^- (\mu,h) =\cN^- (\mu,h) + \kappa_1 h^{-2} +o(h^{-2})
\label{24-2-24}
\end{gather}
holds as $h\to +0$, $\mu\to \mu_0$ where $\kappa_1h^{-2}$ is the contribution of $\partial X$ calculated as $\mu=\mu_0$.

\item\label{example-24-2-5-ii}
Similarly in the case of the singularity at infinity with $m<-1$  under the standard non-periodicity assumption for Hamiltonian billiards\footref{foot-24-11}  with the Hamiltonian (\ref{24-2-23})  asymptotics (\ref{24-2-24}) holds as $h\to +0$, $\mu\to \mu_0$.

\item\label{example-24-2-5-iii}
In the case of the singularity at $0$ with $m< -1$ (and thus $m_1<2m$) and $h\to 0$, $\mu h\to 0$ the contributions to the remainder of the zone
$\{x\colon  |x|\ge \varepsilon^{-1} r _2\}$ with $r_2= (\mu h)^{-1/(2m-m_1)}$
do not exceed $\sigma h^{-2} r_2^{m+1}$ with $\sigma=\sigma(\varepsilon)\to 0$ as $\varepsilon\to +0$. After scaling $x\mapsto xr_2^{-1}$ etc the magnetic field in the zone  $\{x\colon   |x|\le \varepsilon ^{-1}r _2\}$ becomes strong.

Assume that  $g^{jk},V_j, V$ stabilize to positively homogeneous of degrees
$0, m_1+1, 2m$ functions $g^{jk0}, V_j^0$, $V^0$ as $x\to 0$: namely, assume that $\textup{(\ref{23-2-39})}_{1-3}$ of \cite{futurebook} are fulfilled.
Then the standard arguments imply that under the standard non-periodicity assumption for $1$-dimensional Hamiltonian movement along magnetic lines  (see Subsection~\ref{book_new-sect-13-6-2} of \cite{futurebook}) the improved asymptotics
\begin{equation}
\N^-(\mu ,h)=\cN (\mu ,h)+
o\bigl(h^{-2}(\mu h)^{-2(m+1)/(m_1-2m)}\bigr)
\label{24-2-25}
\end{equation}
holds as $h\to +0$, $\mu h\to 0$. Furthermore, if $0$ is not an inner point and domain stabilizes to the conical $X^0$ near it, we need to consider movement along magnetic lines with reflection at $\partial X^0$ and include into asymptotics the term $\kappa_1 h^{-2}(\mu h)^{-2(m+1)/(m_1-2m)}$ which comes out from the contribution of $\partial X^0$:
\begin{multline}
\N^-(\mu ,h)=\cN (\mu ,h)+ \kappa_1 h^{-2}(\mu h)^{-2(m+1)/(m_1-2m)}+\\
o\bigl(h^{-2}(\mu h)^{-2(m+1)/(m_1-2m)}\bigr)
\label{24-2-26}
\end{multline}

\item\label{example-24-2-5-v}
In the case of the singularity at $0\in X$  with $m_1<2m=-2$ the main contribution to the remainder comes from the zone
$\{x\colon \varepsilon ^{-1} r_2\le |x|\le \varepsilon \}$. After rescaling magnetic field in this zone is strong.

Let stabilization conditions $\textup{(\ref{23-2-39})}_{1,3}$ of \cite{futurebook} be fulfilled.
Then the standard arguments imply that under the same non-periodicity assumption as in \ref{example-24-2-5-iii} the improved asymptotics
\begin{equation}
\N^-(\mu ,h)=\cN (\mu ,h)+ o(h^{-2}|\log (\mu h)|)
\label{24-2-27}
\end{equation}
holds as $h\to +0$, $\mu h\to 0$. Furthermore, if $0$ is not an inner point and domain near it stabilizes to the conical $X^0$ near it,
\begin{equation}
\N^-(\mu ,h)=\cN (\mu ,h)+ \kappa_1 h^{-2} \log (\mu h) +
o(h^{-2}|\log (\mu h)|);
\label{24-2-28}
\end{equation}
again, an extra term  comes out from the contribution of $\partial X^0$.

\end{enumerate}
\end{example-foot}

\begin{remark}\label{rem-24-2-6}
Statements, similar to \ref{example-24-2-5-iii}, \ref{example-24-2-5-v} but with the singularity at infinity seem to have impossible conditions.
\end{remark}

\subsection{Power singularities. II}
\label{sect-24-2-1-4}

Let us modify  our arguments for the case $\rho_3<1$. Namely, in addition to \ref{24-2-2-*} we assume that
\begin{phantomequation}\label{24-2-29}\end{phantomequation}
\begin{align}
 &|D^\alpha g^{jk}|\le c\rho_2\gamma ^{-|\alpha |},\qquad
|D^\alpha F_{jk}|\le c\rho_2\rho _1 \gamma ^{-|\alpha |},
\tag*{$\textup{(\ref*{24-2-29})}_{1,2}$}\label{24-2-29-1}\\
&|D^\alpha {\frac{V}{F}}|\le
c\rho_3\rho ^2\rho _1^{-1}\gamma ^{-|\alpha |}\qquad \qquad
\forall \alpha \colon 1\le |\alpha |\le K
\tag*{$\textup{(\ref*{24-2-29})}_3$}\label{24-2-29-3}
\end{align}
with $\rho_3\le \rho_2\le 1$ in the
corresponding regions where $\rho ,\rho _1,\gamma , \rho_3 $ are scaling functions.

Recall that in Chapter~\ref{book_new-sect-13} of \cite{futurebook} operator was reduced to the canonical form with the term, considered to be negligible, of magnitude $\rho_2\mu_\eff^{-2N}$. In this case impose non-degeneracy assumptions
\begin{phantomequation}\label{24-2-30}\end{phantomequation}
\begin{gather}
\sum_{\alpha:  |\alpha|\le k}
|\nabla^\alpha (v^*+(2n+1)\mu h) |\gamma^{|\alpha|}\ge \epsilon  \rho_3\rho^2
\qquad\forall n\in \bZ^+
\tag*{$\textup{(\ref*{24-2-30})}^*_k$}\label{24-2-30-*}\\
\shortintertext{and}
\rho_3 \ge C_0\rho_2 (\mu \rho _1 \gamma \rho^{-1})^{-N}
\label{24-2-31}
\end{gather}
where $v^*$ is what this reduction transforms  $VF^{-1}$ to and (\ref{24-2-31})  means that ``negligible'' terms do not spoil \ref{24-2-30-*}.

Then according to Chapter~\ref{book_new-sect-13} of \cite{futurebook} the contributions of $B(x,\gamma)$ to the both to the Tauberian remainder and an approximation error do not exceed $C\rho ^2\gamma^{-1}h^{-2}$.

\begin{example-foot}\label{example-24-2-7}\footnotetext{\label{foot-24-12} Cf. Example~\ref{example-23-2-12} of \cite{futurebook}.}
\begin{enumerate}[label=(\roman*), wide, labelindent=0pt]
\item\label{example-24-2-7-i}
Let $0\in \bar{X}$ be a singular point and let assumptions $\textup{(\ref{24-2-29})}_{1-3}$  be fulfilled  with  $\gamma =\epsilon_0 |x|$,
$\rho =|x|^{-m}\bigl(\bigl|\ln |x|\bigr|+1\bigr)^\alpha $,
$\rho_1 =|x|^{-2m}\bigl(\bigl|\ln |x|\bigr|+1\bigr)^\beta $,
$\rho_2=1$ and $\rho_3 =\bigl(\bigl|\ln |x|\bigr|+1\bigr)^{-1}$.

Assume that
\begin{equation}
m=-1, \beta > \max (\alpha,2\alpha).
\label{24-2-32}
\end{equation}

Then (\ref{24-2-31}) is fulfilled with $N=1$ and we can replace \ref{24-2-30-*} by
\begin{equation}
\sum_{\alpha:  |\alpha|\le k}
|\nabla^\alpha (VF^{-1}+(2n+1)\mu h) |\gamma^{|\alpha|}\ge \epsilon \rho_3\rho^2
\qquad\forall n\in \bZ^+.
\tag*{$\textup{(\ref*{24-2-30})}_m$}\label{24-2-30-**}
\end{equation}
 Therefore, we conclude that the remainder is $O(R)$ with
\begin{equation}
R\coloneqq   \left\{\begin{aligned}
&h^{-2} &&  \alpha<-\frac{1}{2},\\
&h^{-2}|\log (\mu h)|  &&\alpha=-\frac{1}{2},\\
&h^{-2}(\mu h)^{-(2\alpha+1)/(\beta-2\alpha)} && 2\alpha <\beta <2\alpha+2.
\end{aligned}\right.
\label{24-2-33}
\end{equation}
One can see easily that under condition (\ref{24-2-16}) for $\mu h\le t$ with small enough $t>0$
\begin{equation}
\cN ^-(\mu ,h)\asymp
\left\{\begin{aligned}
&h^{-3} &&\alpha<-\frac{1}{3},\\
&h^{-3}(|\log (\mu h)| +1) &&\alpha=-\frac{1}{3},\\
&h^{-3}(\mu h)^{-(3\alpha+1)/(\beta-2\alpha)} &&\alpha>-\frac{1}{3}.
\end{aligned}\right.
\label{24-2-34}
\end{equation}

\item\label{example-24-2-7-ii}
Let infinity be a singular point and let  conditions $\textup{(\ref{24-2-29})}_{1-3}$  be fulfilled  with  $\gamma =\epsilon_0 \langle x\rangle $,
$\rho =\langle x\rangle ^m\bigl(\log\langle x\rangle +1\bigr) ^\alpha $,
${\rho _1= \langle x\rangle ^{2m}\bigl(\log\langle x\rangle +1\bigr)^\beta }$,
$\rho_2=1$, $\rho_3 =\bigl(\log\langle x\rangle +1\bigr) ^{-1}$.
Assume that (\ref{24-2-32}) is fulfilled.

Then all the statements of \ref{example-24-2-7-i} remain true with the obvious modification: condition (\ref{24-2-16}) should be replaced by \ref{24-2-16-*}.
\end{enumerate}
\end{example-foot}

\begin{problem}\label{problem-24-2-8}\footref{foot-24-12}
Consider
\begin{enumerate}[label=(\roman*), wide, labelindent=0pt]
\item\label{problem-24-2-8-i}
Case of a singular point at $0$, $m<-1$, $\beta>2\alpha$.
\item\label{problem-24-2-8-ii}
Case of a singular point at infinity, $m<-1$, $\beta>2\alpha$.
\end{enumerate}
In both cases $\log (\cN^-(\mu,h)h^3)\asymp (\mu h)^{-1/(\beta-2\alpha)}$ and
$\log (\cN^-(\mu,h)/R^{3/2})\sim 1$.
\end{problem}

\subsection{Exponential singularities}
\label{sect-24-2-1-6}

Consider now singularities of the exponential type.

Since we assume only that either $0\in \bar{X}$ or $X$ is a unbounded domain \underline{and} $\gamma (x)\ll |x|$, it can happen that
\begin{equation}
\mes (\{x\in X\colon |x|\le r\})\asymp r^n
\label{24-2-35}
\end{equation}
with $n \ne  d$ (more precisely, $n\ge d$ if $0$ is a singular point and
$n\le d$ is infinity is a singular point). See Figure~\ref{book_new-fig-11-Y} of \cite{futurebook} for unbounded domains; for singularity at $0$ it is a spike:

\begin{figure}[h]
\centering
\begin{tikzpicture}
\fill[cyan!20, domain=0:6,scale=1, smooth]  plot function{.05*x**2}
-- plot[domain=6:0] function{-.05*x**2};
\end{tikzpicture}
\caption{\label{fig-24-1} Domains of the type (\ref{24-2-35})}
\end{figure}

\begin{example-foot}\label{example-24-2-9}\footnotetext{\label{foot-24-13} Cf. Example~\ref{example-23-2-17} of \cite{futurebook}.}
\begin{enumerate}[label=(\roman*), wide, labelindent=0pt]
\item\label{example-24-2-9-i}
Let $0\in \bar{X}$ be a singular point and let our standard assumptions be fulfilled  with
$\gamma =\epsilon_0 |x|^{1-\beta}$, $\rho =\exp (a|x|^\beta )$,
$\rho _1=\exp (b|x| ^\beta )$ with  $\beta <0$, $b>2a>0$.

Then  for  $\mu h <t$ with a small enough constant $t>0$
\begin{align}
&\N^-(\mu,h)=\cN^-(\mu,h) +
O(h^{-2}(\mu h)^{-2a/(b-2a)}|\log (\mu h)|^{(n-1)/\beta}),
\label{24-2-36}\\
&\cN^-(\mu,h)= O (h^{-3}(\mu h)^{-3a/(b-2a)}|\log (\mu h)|^{n/\beta-1}).
\label{24-2-37}
\end{align}

\item\label{example-24-2-9-ii}
Let infinity be a singular point and let our standard assumptions be fulfilled  with $\gamma =\epsilon_0 \langle x\rangle ^{1-\beta }$,
$\rho =\exp (a\langle x\rangle ^\beta )$,
$\rho_1 =\exp (b\langle x\rangle ^\beta )$ where  $\beta >0$, $b>2a>0$.

Then  asymptotics (\ref{24-2-36})--(\ref{24-2-37}) holds.
\end{enumerate}
\end{example-foot}

We leave to the reader the following

\begin{problem}\label{problem-24-2-10}
\begin{enumerate}[label=(\roman*), wide, labelindent=0pt]
\item\label{problem-24-2-10-i}
Consider the case of a singular point at $0$ and
$\gamma =\epsilon_0 |x|^{1-\beta }$, $\rho =|x|^m\exp (|x| ^\beta )$,
$\rho_1 =|x|^{m_1}\exp (2|x| ^\beta )$, $\rho _2=1$, $\rho_3=|x|^{-\beta }$, $\beta <0$, $m_1<2m$.

\item\label{problem-24-2-10-ii}
Consider the case of a singular point at infinity and
$\gamma =\epsilon _0 \langle x \rangle ^{1-\beta }$,
$\rho =\langle x\rangle ^m\exp (\langle x\rangle ^\beta )$,
$\rho_1 =\langle x\rangle ^{m_1}\exp (2\langle x\rangle ^\beta) $,
$\rho _2=1$, $\rho_3=\langle x\rangle ^{-\beta }$,
$\beta >0$, $m_1>2m$.
\end{enumerate}
\end{problem}

We leave to the reader:

\begin{Problem}\label{Problem-24-2-11}
Consider  cases of $\mu \to \mu_0>0$ and $\mu \to \mu_0=0$.
\end{Problem}

\section{Schr\"odinger-Pauli operators}
\label{sect-24-2-2}

Consider now {Schr\"odinger-Pauli operators, either genuine (\ref{book_new-0-34})  or generalized (\ref{book_new-13-5-3}) of \cite{futurebook}. The principal difference is that now  $F$ does not ``tame'' singularities of $V$, on the contrary, it needs to be ``tamed'' by itself. As a result there are fewer examples than for the Schr\"odinger. Also we do not have a restriction $\mu_\eff h_\eff =O(1)$ which we had in the most of the previous Subsection~\ref{sect-24-2-1}.

\begin{example-foot}\footnotetext{\label{foot-24-14} Cf. Example~\ref{example-24-2-2}.}\label{example-24-2-12}
\begin{enumerate}[label=(\roman*), wide, labelindent=0pt]
\item\label{example-24-2-12-i}
Let $0$ be a singular point  and let our standard assumptions be fulfilled with $\gamma =\epsilon_0 |x|$, $\rho =|x|^m$,
$\rho _1=|x|^{m_1}$ and let $ m> -1$, $2m\ne m_1>-2$.  Then, in comparison with the theory of the previous Subsection~\ref{sect-24-2-1}, we need to consider also the  the zone $\mu _\eff h_\eff \gtrsim 1$, and also a singular zone where $h_\eff\gtrsim 1$.

The contribution to the remainder of the regular part  (defined by
$\mu _\eff h_\eff \gtrsim 1$, $h_\eff \lesssim 1$) does not exceed
$C\mu h^{-1}\int \rho_1\gamma^{-1}\,dx$ while its contribution to
$\cN^-(\mu, h)$ does not exceed $C\mu h^{-2}\int \rho_1\rho\,dx$. Indeed, contributions of each $\gamma$-element do not exceed
$C\mu_\eff h_\eff^{-1}\asymp C\mu h^{-1}\rho_1\gamma^2$ and
$C\mu_\eff h_\eff^{-2}\asymp C \mu h^{-2}\rho_1\rho\gamma^3$.

Consider now the singular zone $\{x\colon |x|\le h^{1/(m+1)}\}$. We claim that

\begin{claim}\label{24-2-38}
Contribution of the singular zone to the asymptotics does not exceed
$C(\mu h+1)$.
\end{claim}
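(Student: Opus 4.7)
My plan is to bound the contribution of the singular zone $X''=\{|x|\le r_*\}$, $r_*=h^{1/(m+1)}$, to both $\N^-(\mu,h)$ and $\cN^-(\mu,h)$ by variational methods and a Lieb-Thirring-Rozenblum inequality for the $3\D$ Pauli operator. By Dirichlet bracketing across $\partial X''$, one has $\N^-(A;X'')\le\N^-(A^D_{X''})$, where $A^D_{X''}$ is the Pauli operator on $X''$ with Dirichlet boundary conditions; the imposition of this boundary can only increase the count. A convenient device for the matrix structure is to diagonalize $\boldupsigma\cdot\vec F$ pointwise, reducing the spin coupling to $\pm|F|$ on the two eigenspaces, so that the problem is controlled by the pair of scalar magnetic Schr\"odinger operators $H_\pm=-h^2\Delta_A+V\pm\mu h|F|$.

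Applying the Lieb-Solovej-Yngvason bound for $3\D$ Pauli (which one obtains by combining the Cwikel-Lieb-Rozenblum inequality with the diamagnetic inequality and adding the Landau-level term needed in the strong-field regime), one gets
\[
\N^-(A^D_{X''})\le Ch^{-3}\int_{X''}V_-^{3/2}\,dx+C\mu h^{-2}\int_{X''}|F|\,V_-^{1/2}\,dx.
\]
The analogous pointwise bound $\cN^\MW(x,\mu h)\lesssim V_-^{3/2}+\mu h|F|V_-^{1/2}$ for the $3\D$ Pauli Weyl density yields the same majorant for $\cN^-(\mu,h;X'')$, so it suffices to control both integrals by $C(\mu h+1)$.

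Substituting $V_-\le C|x|^{2m}$ and $|F|\le C|x|^{m_1}$ and computing in spherical coordinates (both integrands are integrable at $r=0$ since $m>-1$ and $m_1>-2$), the first integral equals $Ch^{-3}r_*^{3m+3}=C$ via the defining relation $r_*^{m+1}=h$, and the second equals $C\mu h^{-2}r_*^{m_1+m+3}=C\mu h\cdot h^{(m_1-2m)/(m+1)}$. When $m_1\ge 2m$ the extra factor $h^{(m_1-2m)/(m+1)}\le 1$, hence the second term is $\le C\mu h$ and the claim follows in this subcase.

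The main obstacle is the complementary subcase $m_1<2m$, where the magnetic field dominates the potential near the origin and the naive LT majorant is too crude. To handle it I would rescale $y=x/r_*$, mapping $X''$ onto the unit ball; the rescaled operator acquires effective semiclassical parameters $\tilde h=h/r_*=h^{m/(m+1)}$ and a rescaled magnetic coupling, with $\tilde\mu\tilde h$ comparable to $\mu h$ up to a bounded factor and with rescaled coefficients of unit order. On the unit ball with bounded parameters, one applies the bounded-domain magnetic Lieb-Thirring bound and tracks the scaling to recover $O(\tilde\mu\tilde h+1)=O(\mu h+1)$. Pasting the two subcases yields (\ref{24-2-38}).
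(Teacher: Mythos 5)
Your computation via the Lieb--Solovej--Yngvason-type bound is fine as far as it goes, and it correctly isolates the dangerous term: the second integral is $\asymp \mu h\cdot h^{(m_1-2m)/(m+1)}$, which is $O(\mu h)$ only when $m_1\ge 2m$. But the example explicitly allows $m_1<2m$ (the hypotheses are $m>-1$, $-2<m_1\ne 2m$), and that is precisely the regime where $\mu_\eff h_\eff=\mu h|x|^{m_1-2m}\to\infty$ as $x\to 0$, i.e.\ where the field dominates the potential near the singularity. Your rescue for this case is vacuous: Cwikel--Lieb--Rozenblum and Lieb--Thirring bounds are dilation-covariant, so pulling $X''$ back to the unit ball and applying ``the bounded-domain magnetic Lieb--Thirring bound'' returns \emph{exactly} the same integrals you already computed; moreover the rescaled coefficients are \emph{not} of unit order on the unit ball --- $V$ and $F$ rescale to $r_*^{2m}|y|^{2m}$ and $r_*^{m_1}|y|^{m_1}$, which retain the same singularity at $y=0$. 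So the hard half of the claim is not proved. (A secondary caveat: for a non-constant $3\D$ field the CLR-type bound with $\int|F|V_-^{1/2}$ is not a standard black box --- zero modes of the Pauli operator obstruct it --- though this is in the same spirit as the paper's loose use of ``LCR''.)

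The paper's own argument splits on $\mu h$ rather than on $m_1$ vs.\ $2m$. For $\mu h\le 1$ it applies the \emph{non-magnetic} CLR bound with $V_-$ replaced by $\bigl((1-\epsilon)V-C_\epsilon\mu^2|\vec V|^2\bigr)_-$, i.e.\ it absorbs the vector potential rather than the field. For $\mu h\ge 1$, where this returns the unacceptable $\mu^{3/2}h^{1/2}$, the device is dimensional reduction: choose the gauge so that $V_1=0$ (hence $\vec V=\vec V(x')$ up to admissible errors) and apply one-dimensional and two-dimensional CLR-type estimates separately to the factors $h^2D_1^2-|x|^{2m}$ and $h^2|D'|^2-C\mu^2|\vec V|^2-|x|^{2m}$; the two-dimensional magnetic count supplies the factor $\mu h$ and the one-dimensional count the factor $O(1)$. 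Some such separation of variables (or an equivalent exploitation of the near-product structure of the Pauli operator in the strong-field zone) is the missing idea; a purely three-dimensional Lieb--Thirring majorization cannot produce the bound $C(\mu h+1)$ when $m_1<2m$.
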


Indeed, if $\mu h\le 1$ it suffices to apply LCR as in $2\D$-case. If
$\mu h\ge 1$, then LCR  returns $\mu^{3/2}h^{1/2}$ and we need to be more tricky. Without any loss of the generality we can assume that $V_1=0$ and then apply (variant of) LCR for $1\D$-operator $h^2D_1^2 -|x|^{2m}$ and for $2\D$-operator $h^2|D|^2 - \mu ^2|x|^{m_1} -|x|^{2m}$. We leave details to the reader.
\begin{align}
&N^-(\mu ,h) = \cN^-(\mu, h) + O(\mu h^{-1}+ h^{-2})
\label{24-2-39}\\
\shortintertext{with}
&\cN^-(\mu,h)= O(\mu h^{-2}+ h^{-3})
\label{24-2-40}
\end{align}
where under condition (\ref{24-2-16}) fulfilled in non-empty cone ``$=O(\cdot)$''could be replaced by ``$\asymp \cdot$''.

\item\label{example-24-2-12-ii}
Let infinity be a  singular point and let our standard assumptions be fulfilled with $\gamma =\epsilon_0 \langle x \rangle$, $\rho =\langle x \rangle^m$,
$\rho _1=\langle x \rangle^{m_1}$ and let $ m< -1$, $2m\ne m_1<-2$. Then the same arguments yield (\ref{24-2-39})--(\ref{24-2-40}) again.
\end{enumerate}
\end{example-foot}

We leave to the reader the following problems:\enlargethispage{2\baselineskip}

\begin{problem}\label{problem-24-2-13}
\begin{enumerate}[label=(\roman*), wide, labelindent=0pt]
\item\label{problem-24-2-13-i}
As $0$ is a singular point consider both Schr\"odinger and Schr\"odinger-Pauli operators as
\begin{enumerate}[label=(\alph*), wide, labelindent=0pt]
\item\label{problem-24-2-13-ia}
$\gamma=|x|$, $\rho= |x|^{m}$, $m>-1$ and $\rho_1=|x|^{-2}|\log |x||^\beta$\,\footnote{\label{foot-24-15} One should take $\beta<-1$ for the Schr\"odinger-Pauli operator.}.
\item\label{problem-24-2-13-ib}
$\gamma=|x|$, $\rho= |x|^{-1}|\log |x||^\alpha$, $\alpha<-\frac{1}{2}$  and $\rho_1=|x|^{m_1}|$, $m_1>-2$.
\end{enumerate}
\item\label{problem-24-2-13-ii}
As infinity is a point consider both Schr\"odinger and Schr\"odinger Pauli operators as
\begin{enumerate}[label=(\alph*), wide, labelindent=0pt]
\item\label{problem-24-2-13-iia}
$\gamma=|x|$, $\rho= |x|^{m}$, $m<-1$ and $\rho_1=|x|^{-2}|\log |x||^\beta$\,\footref{foot-24-15}.
\item\label{problem-24-2-13-iib}
$\gamma=|x|$, $\rho= |x|^{-1}|\log |x||^\alpha$, $\alpha<-\frac{1}{2}$\,\footref{foot-24-15}  and $\rho_1=|x|^{m_1}$, $m_1>-2$.
\end{enumerate}
\end{enumerate}

For the Schr\"odinger operator in cases (a) non-trivial results could be obtained even as $\mu h\to +\infty$.
\end{problem}

\begin{problem}\label{problem-24-2-14}
Let either $0$ or infinity be a singular point.

Using the same arguments and combining them with the arguments of Subsubsection~\emph{\ref{sect-24-2-1-4}.6. \nameref{sect-24-2-1-4}\/} consider both Schr\"odinger-Pauli and Schr\"odinger operators with $\gamma=|x|$, $\rho=|x|^{-1}|\log |x||^\alpha$, $\rho_1=|x|^{-2}|\log |x||^\beta$, $\alpha <-\frac{1}{2}$, $\beta <-1$.

For the Schr\"odinger operator in case $\beta<2\alpha$  non-trivial results could be obtained even as $\mu h\to +\infty$.
\end{problem}

The following problem seems to be rather challenging:

\begin{Problem}\label{problem-24-2-15}
Investigate $\rho_1=|x|^{-2}$.
\end{Problem}

\section{Dirac operator}
\label{sect-24-2-3}

\subsection{Preliminaries}
\label{sect-24-2-3-1}
Let us now consider the generalized magnetic Dirac operator (\ref{book_new-17-1-1}) of \cite{futurebook}
either
\begin{equation}
A=
\frac{1}{2}\sum_{l,j} \upsigma _l\bigl(\omega ^{jl}P_j+P_j\omega ^{jl}\bigr)
+\upsigma _0M+I\cdot V, \qquad P_j=hD_j-\mu V_j
\label{24-2-41}
\end{equation}
where  $\upsigma_0,\upsigma_1,\upsigma_2,\upsigma_3$ are  $4\times 4$-matrices
and $M>0$ or
\begin{equation}
A=
\frac{1}{2}\sum_{l,j} \upsigma _l\bigl(\omega ^{jl}P_j+P_j\omega ^{jl}\bigr)
+I\cdot V, \qquad P_j=hD_j-\mu V_j
\label{24-2-42}
\end{equation}
where  $\upsigma_1,\upsigma_2,\upsigma_3$ are $2\times 2$-matrices and $M=0$.

We are interested in $\N(\tau_1,\tau_2)$, the number of eigenvalues in $(\tau_1,\tau_2)$\,\footnote{\label{foot-24-16} Assuming that this interval does not contain essential spectrum; otherwise $\N(\tau_1,\tau_2)\coloneqq   \infty$.  It is more convenient for us to exclude both ends of the segment.} with $\tau_1<\tau_2$, fixed in this subsection.

In contrast to $2\D$-case relations similar to (\ref{23-2-73})--(\ref{23-2-74}) of \cite{futurebook} do not matter. Tthe Landau levels (at the point $x$) are
$V\pm \bigl(M^2+2j\mu hF\bigr)^{\frac{1}{2}}$ with $j=0,1,2,3,\ldots $ and magnetic Dirac operator \emph{always\/} behaves like Schr\"odinger-Pauli operator.

Therefore, we treat the operator given by (\ref{24-2-41}) under the following assumptions\begin{phantomequation}\label{24-2-43}\end{phantomequation}
\begin{align}
&|D ^\alpha \omega ^{jk}|\le c\gamma ^{-|\alpha |},\qquad
|D ^\alpha F|\le c\rho_1 \gamma ^{-|\alpha |},
\tag*{$\textup{(\ref*{24-2-43})}_{1-2}$}\label{24-2-43-1}\\
&|D ^\alpha V|\le
c\min\bigl(\rho , \frac{1}{M} \rho ^2\bigr)\gamma ^{-|\alpha |}
\qquad (\alpha\ne 0)\quad \forall \alpha \colon|\alpha |\le K,
\tag*{$\textup{(\ref*{24-2-43})}_{3}$}\label{24-2-43-3}
\end{align}
\vskip-25pt
\begin{align}
&(V-\tau _2-M)_+\le c\min\bigl(\rho ,{\frac{1}{M}}\rho ^2\bigr),
\tag*{$\textup{(\ref*{24-2-43})}_{4+}$}\label{24-2-43-4+}\\
&(V-\tau _1+M)_-\le c\min\bigl(\rho , \frac{1}{M} \rho ^2\bigr)
\tag*{$\textup{(\ref*{24-2-43})}_{4-}$}\label{24-2-43-4-}
\end{align}
and also \ref{24-2-2-*}, (\ref{24-2-1}) for
$g^{jk}=\sum_{l,r}\omega ^{jl}\omega ^{kr}\updelta _{lr}$
and (\ref{24-2-4}) (with $F>0$). In what follows $\textup{(\ref{24-2-43})}_4$  means the pair of conditions $\textup{(\ref{24-2-43})}_{4\pm }$.

Moreover, let condition (\ref{24-2-6}) be fulfilled and
\begin{gather}
\bar{X}''\cap\partial X=\emptyset,
\label{24-2-44}\\
|V_j|\le c\rho ,\quad |D_j\omega ^{kl}|\le c\rho \qquad \text{in \ \ } X''.
\label{24-2-45}
\end{gather}

Finally, we assume that
\begin{claim}\label{24-2-46}
\underline{Either} $\partial X=\emptyset$
\underline{or} $\mu =O(1)$ and $\partial X\cap X'_2=\emptyset$ (in
what follows).
\end{claim}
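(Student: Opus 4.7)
The statement (24-2-46) is not really a theorem with a free-standing proof; it is a standing structural hypothesis that demarcates the regime in which the Dirac analysis of Subsection~\ref{sect-24-2-3} will be carried out. My proposal is therefore to justify the assumption in two parts: (a) demonstrate that it cannot be dropped with the tools available, and (b) verify that it is automatically satisfied in the examples that will follow.

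For (a), the role of the assumption is to exclude a neighbourhood of $\partial X\cap X'_2$ whenever $\mu\to\infty$. If such a neighbourhood were non-empty, the local analysis would demand a canonical-form reduction for the Dirac operator in the simultaneous presence of a strong magnetic field and a boundary condition. The reductions of Chapter~\ref{book_new-sect-17} of \cite{futurebook}, on which every remainder estimate in this section ultimately rests, are all interior reductions; for the boundary Dirac operator with $\mu_\eff\gg 1$ there is no analogue available, in contrast to the $2\D$ Schr\"odinger case. Thus the hypothesis precisely excises the configuration in which no microlocal model exists, and this is what I would record as the "necessity" half of the proof.

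For (b), the verification in concrete cases is essentially mechanical. Since $\mu_\eff(x)=\mu\rho_1(x)\gamma(x)\rho(x)^{-1}$, the condition $\partial X\cap X'_2=\emptyset$ reduces, near any boundary point $y\in\partial X$, to $\mu\rho_1(y)\gamma(y)\le c\rho(y)$. In every power-, power-logarithmic-, and exponential-singularity example considered in this chapter, the singular point (at $0$ or $\infty$) is an \emph{inner} singular point, so $\partial X$ is compact and $\rho$, $\rho_1$, $\gamma$ are pinched to finite positive values on $\partial X$; the inequality then becomes $\mu\le c$, which is exactly the second half of \textup{(\ref*{24-2-46})}. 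One should carry out this check once and for all in the general setting, using the scaling-function axioms $\textup{(\ref{book_new-9-1-6})}_{1,2}$ of \cite{futurebook}, and thereafter invoke it in each example without further comment.

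The main obstacle is the conceptual one flagged in (a): the hypothesis is not removable by a better argument inside the present framework, and to allow $\mu\to\infty$ with $\partial X\cap X'_2\ne\emptyset$ would require developing a boundary canonical form for the strong-field Dirac operator, a genuinely new piece of machinery. I would therefore not try to weaken \textup{(\ref*{24-2-46})} in the course of proving it; I would instead record it as a standing assumption and move on, noting the extension as an open problem.
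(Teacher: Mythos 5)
You have correctly identified that \textup{(\ref{24-2-46})} is not a theorem but a standing hypothesis: in the paper it is introduced by the words ``Finally, we assume that'' and is never proved, exactly as with its $2\D$ counterpart \textup{(\ref{23-2-78})}. Your reading of its role (excluding the boundary from the strong-field zone when $\mu\to\infty$) and your observation that it holds automatically in the subsequent examples are consistent with how the paper uses it, so there is nothing further to check.
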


\subsection{Asymptotics}
\label{sect-24-2-3-2}

\begin{example-foot}\label{example-24-2-16}\footnotetext{\label{foot-24-17} Cf. Example~\ref{example-24-2-12}.}
\begin{enumerate}[label=(\roman*),wide, labelindent=0pt]
\item\label{example-24-2-16-i}
Let $0$ be an inner singular point and let all the above conditions,   be fulfilled with $\gamma =\epsilon _0 |x|$, $\rho =|x|^m$,
$\rho _1=|x|^{m_1}$, $m_1>2m$, $m>-1$.

Further, let assumption
\begin{equation}
|V|\ge \epsilon \min(\rho,\,\frac{\rho^2 }{M}) \qquad \forall x\colon |x|\le \epsilon
\label{24-2-47}
\end{equation}
and non-degeneracy condition \ref{24-2-8-*} be fulfilled. \begin{enumerate}[label=(\alph*)]
\item\label{example-24-2-16-ia}
$m<0$, $\tau_1<\tau_2$ \underline{or}
\item\label{example-24-2-16-ib}
$m>0$, $M>0$, $\tau_1=-M$, $\tau_2\in (-M,M)$.
\end{enumerate}

Then for $h\to+0$, $1\le \mu $ asymptotics
\begin{equation}
\N (\tau_1,\tau_2; \mu, h) =\cN (\tau_1,\tau_2; \mu, h) +O(\mu h^{-1}+h^{-2})
\label{24-2-48}
\end{equation}
holds with $\cN$ defined by $\textup{(\ref{book_new-17-1-12})}_{1}$ with $d=3$, $r=1$. Moreover,
$\cN  (\tau_1,\tau_2, \mu ,h)\asymp \mu h^{-2}+h^{-3}$.

\item\label{example-24-2-16-ii}
Let infinity be an inner singular point  and let all the above conditions  be
fulfilled with $\gamma =\epsilon _0 \langle x\rangle $,
$\rho =\langle x\rangle ^m$, $\rho _1=\langle x\rangle ^{m_1}$,
$m_1<2m$, $m<-1$.

Further, let assumption
\begin{equation}
|V|\ge \epsilon \min(\rho,\,\frac{\rho^2 }{M}) \qquad \forall x\colon |x|\ge c
\tag*{$\textup{(\ref*{24-2-47})}^\#$}\label{24-2-47-*}
\end{equation}
and non-degeneracy condition \ref{24-2-8-*} be fulfilled. Let $M>0$, $\tau_1=-M$, $\tau_2\in (-M,M)$.
Then for $h\to+0$, $1\le \mu $ asymptotics (\ref{24-2-48}) holds with $\cN$ defined by $\textup{(\ref{book_new-17-1-12})}_{1}$ with $d=3$, $r=1$. Moreover, $\cN  (\tau_1,\tau_2, \mu ,h)\asymp \mu h^{-2}+h^{-3}$.
\end{enumerate}
\end{example-foot}

\begin{Problem}\label{Problem-24-2-17}
Generalize results of this section to the odd-dimensional maximal-rank case. In particular, consider power singularities.
\end{Problem}

\chapter{$3\D$-case.  Asymptotics of large eigenvalues}
\label{sect-24-3}

In this section we consider the case when $\mu$ and $h$ are fixed and we consider the asymptotics of the eigenvalues, tending to $+ \infty$ and for Dirac operator also to $-\infty$.

\index{Schrodinger operator@Schr\"{o}dinger operator!magnetic with singularities!asymptotics of large eigenvalues}%
\index{operator!magnetic Schrodinger with singularities@magnetic Schr\"{o}dinger with singularities!asymptotics of large eigenvalues}%
\index{large eigenvalue asymptotics!magnetic Schrodinger operator with singularities@magnetic Schr\"{o}dinger with singularities}%
\index{asymptotics!large eigenvalue!magnetic Schrodinger operator with singularities@magnetic Schr\"{o}dinger with singularities}

Here we consider the case of the spectral parameter tending to $+\infty$ (and for the Dirac operator we consider $\tau\to -\infty$ as well).

\section{Singularities at the point}%
\label{sect-24-3-1}

We consider series of example with singularities at the point.

\subsection{Schr\"odinger operator}%
\label{sect-24-3-1-1}

\begin{example}\label{example-24-3-1}
\begin{enumerate}[label=(\roman*), wide, labelindent=0pt]
\item\label{example-24-3-1-i}
Let $X$ be a compact domain, $0\in \bar{X}$ and let conditions \ref{24-2-2-*} and (\ref{24-2-6}) be fulfilled with  $\gamma= \epsilon |x|$, $\rho=|x|^m$, $\rho_1=|x|^{m_1}$,  $m_1<2m$. Let
\begin{equation}
|F|\ge \epsilon_0 \rho_1 \qquad
\text{for\ \ } |x|\le \epsilon.
\label{24-3-1}
\end{equation}
Then for the Schr\"odinger operator as $\tau\to +\infty$
\begin{equation}
\N^-(\tau)= \cN^-(\tau)+O(\tau^{(d-1)/2})
\label{24-3-2}
\end{equation}
while $\cN^-(\tau)\asymp \tau^{d/2}$.

Indeed, we need to consider only case $m_1\le -2$ (otherwise it is covered by Section~\ref{book_new-sect-11-2} of \cite{futurebook}). Assume for simplicity, that $V=0$ (modification in the general case is trivial). Recall that $\mu_\eff= |x|^{m_1+1}\tau^{-1/2}$ and $h_\eff= \tau^{-1/2}|x|^{-1}$. Then under week non-degeneracy assumption\footnote{\label{foot-24-18} Which we do not, however, assume fulfilled at this example.} contribution to the remainder of any element with $\mu_\eff h_\eff \le c$ does not exceed   $Ch_\eff^{1-d} =C \tau^{(d-1)/2}\gamma^{d-1}$ while contribution to the remainder of the $\gamma$-element $\mu_\eff h_\eff \ge c$  does not exceed
$C\mu_\eff^{-s}h_\eff^{-d-s}$ and the rest is easy. Without non-degeneracy assumption we need to add to the contribution of the $\gamma$-element
$C\mu_\eff h_\eff^{1-d-\delta}$, which after summation results in $O(\tau^{(d-1-\delta')/2})$.

\item\label{example-24-3-1-ii}
Under proper assumptions the same proof is valid in the odd-dimensional maximal-rank  case.
\end{enumerate}
\end{example}

Let us note that the case $m<-1$, $m_1\ge m-1$ is covered by Chapter~\ref{book_new-sect-11} of \cite{futurebook}; we need to assume that
\begin{equation}
V\ge \epsilon_0 \rho^2 \qquad \text{as\ \ } |x|\le \epsilon.
\label{24-3-3}
\end{equation}

\begin{example}\label{example-24-3-2}
\begin{enumerate}[label=(\roman*), wide, labelindent=0pt]
\item\label{example-24-3-2-i}
Let $X$ be a compact domain, $0\in \bar{X}$  and let conditions \ref{24-2-2-*}, (\ref{24-2-6}) and $\textup{(\ref*{24-3-1})}_{1}$  be fulfilled with
$\gamma= \epsilon |x|$, $\rho=|x|^m$, $\rho_1=|x|^{m_1}$ and with $m<-1$, $2m\le m_1 <m-1$. Then we need to assume that
\begin{equation}
V+F\ge \epsilon_0 \rho^2\qquad \text{as\ \ } |x|\le \epsilon
\tag*{$\textup{(\ref*{24-3-3})}'$}\label{24-3-3-'}
\end{equation}
which for $m_1>2m$ is equivalent to (\ref{24-3-3}).

By the same reason as in the previous Example~\ref{example-24-3-1} we do not need any non-degeneracy assumption. Then asymptotics (\ref{24-3-2}) holds while $\cN^-(\tau)\asymp \tau^{d/2}$.

\item\label{example-24-3-2-ii}
Under proper assumptions the same proof is valid in the odd-dimensional maximal-rank  case.
\end{enumerate}
\end{example}

\subsection{Schr\"odinger-Pauli operator}%
\label{sect-24-3-1-2}

Next, consider Schr\"odinger-Pauli operators. We will need to impose (\ref{24-3-3}) and the related non-degeneracy assumption
\begin{phantomequation}\label{24-3-4}\end{phantomequation}
\begin{equation}
\sum_{\alpha:1\le |\alpha|\le k}
|\nabla^\alpha V|\gamma^{|\alpha|}\ge \epsilon \rho^2
\qquad \text{as\ \ } |x|\le \epsilon
\tag*{$\textup{(\ref*{24-3-4})}_k$}\label{24-3-4-*}
\end{equation}

\begin{example}\label{example-24-3-3}
\begin{enumerate}[label=(\roman*), wide, labelindent=0pt]
\item\label{example-24-3-3-i}
Let $X$ be a compact domain, $0\in \bar{X}$ and let conditions \ref{24-2-2-*}, (\ref{24-2-6}), (\ref{24-3-1}), (\ref{24-3-3}) and \ref{24-3-4-*} be fulfilled with $\gamma= \epsilon |x|$, $\rho=|x|^m$, $\rho_1=|x|^{m_1}$, $m<-1$.

Then for the Schr\"odinger-Pauli  operator as  $\tau\to +\infty$ asymptotics \begin{gather}
\N^-(\tau) = \cN^-(\tau)+ O(\tau^{(m_1+2)/(2m)}+\tau)
\label{24-3-5}\\
\shortintertext{holds while}
\cN^-(\tau)\asymp \tau^{(m_1+m+3)/(2m)}+\tau^{3/2}.
\label{24-3-6}
\end{gather}

Indeed, if $m_1 \ge 2m$ then no modification to the arguments of Examples~\ref{example-24-3-1} and~\ref{example-24-3-2} is needed; if $m_1<2m$ we also need to consider the zone where $\mu_\eff h_\eff \gtrsim 1$.

The contribution of the corresponding partition element to the principal part of the asymptotics is $\mu_\eff h_\eff^{-2}$  while its contribution to the remainder is $O(\mu_\eff h_\eff^{-1})$; also, zone
$\{x\colon |x|\le \epsilon \tau^{1/(2m)}\}$ is forbidden.

\item\label{example-24-3-3-ii}
Without condition \ref{24-3-4-*} one needs to add $\mu_\eff h_\eff^{-1-\delta}$ to the contribution of the partition element to the remainder; it does not affect the remainder estimate $O(\tau)$  if $m_1> 2m+2$; if $m_1\le 2m+2$ we arrive to the remainder estimate $O(\tau^{(m_1+2)/(2m) +\delta})$.

\item\label{example-24-3-3-iii}
One can generalize this example to the odd-dimensional maximal-rank case; then \begin{align}
&\cN^-(\tau)\asymp  \tau^{(m_1+m+3)(d-1)/(4m)}+\tau^{d/2}
\label{24-3-7}\\
\intertext{and the remainder is $O(R)$ with}
&R=\tau^{(m_1+2)(d-1)/(4m)}+\tau^{(d-1)/2}.
\label{24-3-8}
\end{align}
\end{enumerate}
\end{example}

\begin{problem}\label{problem-24-3-4}
\begin{enumerate}[label=(\roman*), wide, labelindent=0pt]
\item\label{problem-24-3-4-i}
Investigate the case of $\gamma=\epsilon |x|$, $\rho= |x|^{-1}|\log |x||^\alpha$, $\alpha >0$, $\rho_1=|x|^{m_1}$, $m_1<-2$.

\item\label{problem-24-3-4-ii}
Investigate the case of $\gamma=\epsilon |x|$, $\rho= |x|^{-1}|\log |x||^\alpha$,  $\rho_1=|x|^{-2}|\log |x||^\beta$, $\beta>\alpha >0$.
\end{enumerate}
\end{problem}

The following problem seems to be challenging; we don't know even if $\N^-(\tau)<\infty$ for  $\tau>0$.

\begin{Problem}\label{Problem-24-3-5}
\begin{enumerate}[label=(\roman*), wide, labelindent=0pt]
\item\label{Problem-24-3-5-i}
Investigate the case of $\gamma=\epsilon |x|$,
$\rho= |x|^{-1}|\log |x||^\alpha$, $\alpha \le 0$, $\rho_1=|x|^{m_1}$, $m_1<-2$.

\item\label{Problem-24-3-5-ii}
Investigate the case of $\gamma=\epsilon |x|$, $\rho= |x|^{m}$,
 $\rho_1=|x|^{m_1}|$, $m_1<-2$, $-1<m<0$.
\end{enumerate}
\end{Problem}

\subsection{Miscellaneous singularities}%
\label{sect-24-3-1-3}

Consider now miscellaneous singularities in the point.

\begin{example}\label{example-24-3-6}
Let $0\in \bar{X}$ be a compact domain,  and let conditions \ref{24-2-2-*}, (\ref{24-2-6}) and (\ref{24-3-1}) be fulfilled with $\gamma=\epsilon|x|^{1-\beta}$,
$\rho_1=\exp (b|x|^\beta)$, $\beta<0$, $b>0$ and with $\rho= \exp (a|x|^\beta)$ where $b>2a$. Assume also that
\begin{phantomequation}\label{24-3-9}\end{phantomequation}
\begin{equation}
\sum_{\alpha:1\le |\alpha|\le k}
|\nabla^\alpha F|\gamma^{|\alpha|}\ge \epsilon \rho_1
\qquad \text{as\ \ } |x|\le \epsilon.
\tag*{$\textup{(\ref*{24-3-9})}_k$}\label{24-3-9-*}
\end{equation}

\begin{enumerate}[label=(\roman*), wide, labelindent=0pt]
\item\label{example-24-3-6-i}
Then for the Schr\"odinger operator $\N^-(\tau)=\cN^-(\tau)+O(R)$ with $\cN^-(\tau)\asymp \tau^{3/2}$ and
\begin{equation}
R=\left\{\begin{aligned}
& \tau && \beta\ge -2,\\
&\tau |\log \tau|^{(2+\beta)/\beta} \qquad&& \beta<-2.
\end{aligned}\right.
\label{24-3-10}
\end{equation}
Indeed, this is trivial unless $\beta >-2$. If $\beta \le -2$ we need to take into account that the forbidden zone where $\mu_\eff h_\eff\ge C_0$ is
$\{z\colon |x|\le \epsilon _0 |\log \tau|^{1/\beta}\}$. Then we get the remainder $O(\tau r_*^{2+\beta})$ for $\beta<-2$ and $O(\tau |\log r_*|)$ for $\beta=-2$.\enlargethispage{\baselineskip}

Consider $\beta=-2$. Since
$\mu_\eff \lesssim 1$ in the zone
$\{x\colon |x|\ge r^*\coloneq C|\log \tau|^{1/\beta}\}$, we can take $\gamma\asymp |x|^{1-\beta '}$ with $\beta'>-2$ there and the contribution of this zone to the remainder is $O(\tau)$. Meanwhile, the contribution of the zone
$\{x\colon r_*\le |x|\le r^*\}$ to the remainder is
$O(\tau |\log (r_*/r^*)|)=O(\tau)$.

\item\label{example-24-3-6-ii}
Let   conditions (\ref{24-3-3}) and \ref{24-3-4-*} be fulfilled. Then for the Schr\"odinger-Pauli operator asymptotics $\N^-(\tau)=\cN^-(\tau)+O(R)$ holds with
\begin{align}
&R= \tau ^{b/2a}|\log \tau|^{2/\beta}
\label{24-3-11}\\
\shortintertext{and}
&\cN^-(\tau)\asymp \tau^{b/2a+1/2} |\log \tau|^{(3-\beta)/\beta}.
\label{24-3-12}
\end{align}

\item\label{example-24-3-6-iii}
On the other hand, let  $a<b\le 2a$ and  conditions (\ref{24-3-3}) and \ref{24-3-4-*} Be fulfilled. Then for both Schr\"odinger and Schr\"odinger-Pauli operators $\cN^-(\tau)\asymp \tau^{3/2}$ and $R$ is defined by (\ref{24-3-10}).

Moreover, for $b<2a$ we do not need the non-degeneracy assumption \ref{24-3-9-*} because in the allowed zone $\{x\colon V(x)\le C\tau\}$ we have
$\mu_\eff\le h_\eff^{\delta-1}$.
\end{enumerate}
\end{example}

\begin{problem}\label{problem-24-3-7}
Extend results of Example~\ref{example-24-3-6}  to  the Dirac operator.
\end{problem}

\begin{remark}\label{rem-24-3-8}
\begin{enumerate}[label=(\roman*), wide, labelindent=0pt]
\item\label{rem-24-3-8-i}
Observe that the contribution to the remainder of the zone
$\{x\colon |x|\le \varepsilon\}$ does not exceed $\varepsilon^\sigma\tau^{(d-1)/2}$ with $\sigma>0$ in the frameworks of
some above examples (sometimes under certain additional assumptions).

Therefore, in these cases under the standard non-periodicity condition to the geodesic flow with reflections from $\partial X$ the asymptotics
\begin{equation}
\N (\tau )=\cN (\tau )+
\varkappa _1\tau + o(\tau )
\label{24-3-13}
\end{equation}
holds with the standard coefficient $\varkappa _1$.

\item\label{rem-24-3-8-ii}
The similar statement (with $\tau$ replaced by $\tau^2$) is  true for the Dirac operator.
\end{enumerate}
\end{remark}

\begin{problem}\label{problem-24-3-9}
In the frameworks of the examples above
estimate\\ $|\cN^-(\tau)-\kappa_0 \tau^{d/2}|$.
\end{problem}

Finally, consider the case when the singularity is located on the curve or a surface (or a more general set).

\begin{example}\label{example-24-3-10}
Let $0\in \bar{X}$ be a compact domain, and let conditions \ref{24-2-2-*}, (\ref{24-2-6}) and (\ref{24-3-1})  be fulfilled with $\gamma=\epsilon_0\delta(x)$,  $\rho_1=\delta(x)^m$,
$\rho _1=\delta (x)^{m_1}$ with $m_1<\min (2m,\, -2)$  where
$\delta (x)=\dist (x,L)$, $m< 0$, $L$ is a set of Minkowski codimension $p>1$ or a smooth surface; in the latter case we assume also that \ref{24-3-9-*} is fulfilled.

\begin{enumerate}[label=(\roman*), wide, labelindent=0pt]
\item\label{example-24-3-10-i}
Then for the Schr\"odinger operator asymptotics (\ref{24-3-2}) holds for
$\tau\to +\infty$ and $\cN^-(\tau)\asymp \tau^{3/2}$.

Indeed, using the same arguments as before we can get a remainder estimate $O(\tau)$ if $p>1$ or $O(\tau|\log \tau|)$ if $p=1$ but in the latter case we can get rid off logarithm using standard propagation arguments.

\item\label{example-24-3-10-ii}
Let also conditions (\ref{24-3-3}) and \ref*{24-3-4-*}  be fulfilled. Then for the Schr\"odinger-Pauli operator asymptotics
\begin{align}
&\N^- (\tau)=\cN^-(\tau)+ O(\tau+\tau^{(m_1+2-p)/(2m)})
\label{24-3-14}\\
\shortintertext{holds while}
&\cN^-(\tau)\asymp \tau+\tau^{(m_1+m+3-p)/(2m)}.
\label{24-3-15}
\end{align}
\end{enumerate}
\end{example}

\begin{problem}\label{problem-24-3-11}
Extend results of Example~\ref{example-24-3-10} to different types of the singularities along $L$.
\end{problem}

\section{Singularities at infinity}
\label{sect-24-3-2}

Let us consider \emph{unbounded domains:\/}

\subsection{Power singularities: Schr\"odinger operator}
\label{sect-24-3-2-1}
Let us  start from the power singularities.

\begin{example-foot}\footnotetext{\label{foot-24-19} Cf. Example~\ref{example-24-3-1}.}\label{example-24-3-12}
\begin{enumerate}[label=(\roman*), wide, labelindent=0pt]
\item\label{example-24-3-12-i}
Let $X$ be an unbounded domain. Let  conditions (\ref{24-2-1}),  (\ref{24-2-6}), \ref{24-2-2-*}, (\ref{24-3-1}) and \ref{24-3-9-*}  be fulfilled with
$\gamma =\epsilon _0 \langle x\rangle $, $\rho =\langle x\rangle ^m$,
$\rho _1=\langle x\rangle ^{m_1}$, $m_1>2m$.

Then for the Schr\"odinger operator the following asymptotics holds:
\begin{gather}
\N^-(\tau)=\cN^- (\tau)+ O(\tau^{(m_1+2)/m_1})
\label{24-3-16}
\shortintertext{with}
\cN^- (\tau)\asymp \tau^{3(m_1+2)/(2m_1)}.
\label{24-3-17}
\end{gather}
The proof is standard.

\item\label{example-24-3-12-ii}
Under proper assumptions the similar asymptotics holds in the odd-dimensional maximal-rank case:
\begin{gather}
\N^-(\tau)=\cN^- (\tau)+ O(\tau^{(d-1)(m_1+2)/(2m_1)})
\label{24-3-18}
\shortintertext{with}
\cN^- (\tau)\asymp \tau^{d(m_1+2)/(2m_1)}.
\label{24-3-19}
\end{gather}
\end{enumerate}
\end{example-foot}

\begin{example-foot}\footnotetext{\label{foot-24-20} Cf. Example~\ref{example-24-3-2}.}\label{example-24-3-13}
\begin{enumerate}[label=(\roman*), wide, labelindent=0pt]
\item\label{example-24-3-13-i}
Let $X$ be an unbounded domain. Let  conditions  \textup{(\ref{24-2-1})},  (\ref{24-2-6}), \ref{24-2-2-*} and $\textup{(\ref{24-3-1})}^\#$ be fulfilled with $\gamma =\epsilon _0 \langle x\rangle $, $\rho =\langle x\rangle ^m$,
$\rho _1=\langle x\rangle ^{m_1}$, $m>0$, $m-1< m_1\le 2m$.

Let  us assume that \underline{either} $m_1=2m$ and
\begin{gather}
V+F\ge \epsilon_0 \rho^2\qquad \text{as\ \ } |x|\ge c
\tag*{$\textup{(\ref*{24-3-3})}^{\#\prime}$}\label{24-3-3-*'}\\
\shortintertext{\underline{or} $m_1<2m$ and}
V \ge \epsilon_0 \rho^2\qquad \text{as\ \ } |x|\ge c
\tag*{$\textup{(\ref*{24-3-3})}^{\#}$}\label{24-3-3-*}
\end{gather}
 Assume that if $m_1=2m$ then
 \begin{phantomequation}\label{24-3-20}\end{phantomequation}
\begin{multline}
\tau \ge V+F\implies
\sum_{\alpha:1\le |\alpha|\le k} |\nabla (\tau - V)F^{-1}|\gamma^{|\alpha|} \ge \epsilon_0 \tau \rho_1^{-1}\\
\text{as\ \ } |x|\ge c.
\tag*{$\textup{(\ref*{24-3-20})}_k$}\label{24-3-20-*}
\end{multline}
Then for the Schr\"odinger operator asymptotics
\begin{gather}
\N^-(\tau)=\cN^-(\tau) + O(\tau^{(d-1)(m+1)/(2m)}),
\label{24-3-21}\\
\shortintertext{holds with}
\cN^- (\tau)\asymp \tau^{d(m+1)/(2m)}.
\label{24-3-22}
\end{gather}

\item\label{example-24-3-13-ii}
Under proper assumptions the similar asymptotics holds in the  odd-dimensional maximal-rank case.
\end{enumerate}
\end{example-foot}

\subsection{Power singularities: Schr\"odinger-Pauli operator}
\label{sect-24-3-2-2}

Next, consider Schr\"odinger-Pauli operators. We will need to impose \ref{24-3-3-*} and the related non-degeneracy assumption
\begin{equation}
\sum_{\alpha:1\le |\alpha|\le k}
|\nabla^\alpha V|\gamma^{|\alpha|}\ge \epsilon \rho_1
\qquad \text{as\ \ } |x|\ge \epsilon.
\tag*{$\textup{(\ref*{24-3-4})}^\#_k$}\label{24-3-4-**}
\end{equation}

\begin{example-foot}\footnotetext{\label{foot-24-21} Cf. Example~\ref{example-24-3-3}.}\label{example-24-3-14}
Let \ref{24-3-3-*} and \ref{24-3-4-**} be fulfilled. Then for the Schr\"odinger-Pauli operator

\begin{enumerate}[label=(\roman*), wide, labelindent=0pt]
\item\label{example-24-3-14-i}
In the framework of Example~\ref{example-24-3-12}
\begin{gather}
\N^-(\tau)=\cN^- (\tau)+ O(\tau^{(m_1+2)/(2m)})
\label{24-3-23}
\shortintertext{with}
\cN^- (\tau)\asymp \tau^{(m_1+m+3)/(2m)}.
\label{24-3-24}
\end{gather}

\item\label{example-24-3-14-ii}
In the framework of Example~\ref{example-24-3-13}
\begin{gather}
\N^-(\tau)=\cN^- (\tau)+ O(\tau^{(m+1)/m})
\label{24-3-25}
\shortintertext{with}
\cN^- (\tau)\asymp \tau^{3(m+1)/(2m)}).
\label{24-3-26}
\end{gather}

\item\label{example-24-3-14-iv}
Finally, under proper assumptions one can consider the odd-dimensional maximal-rank  case and prove asymptotics with the remainder estimate $O(R)$, with
\begin{align}
R(\tau)=&\left\{\begin{aligned}
&\tau^{(d-1)(m_1+2)/(4m)} && m_1>2m,\\
&\tau ^{(d-1)(m+1)/(2m)}  && m_1\le 2m,
\end{aligned}\right.
\label{24-3-27}\\
\shortintertext{and}
\cN^-(\tau)\asymp &\left\{\begin{aligned}
&\tau^{(d-1)(m_1+2)/(4m) +(m+1)/(2m)} && m_1>2m,\\
&\tau^{d(m+1)/(2m)}&& m_1\le 2m,
\end{aligned}\right.
\label{24-3-28}
\end{align}
\end{enumerate}
\end{example-foot}

\subsection{Exponential singularities}
\label{sect-24-3-2-4}

Consider now an exponential growth at infinity.

\begin{example}\label{example-24-3-15}
Let $X$ be an unbopunded domain. Let  conditions  \textup{(\ref{24-2-1})},  \ref{24-2-2-*},  (\ref{24-2-4}), (\ref{24-2-6})  and \ref{24-3-9-*}  be fulfilled with
$\gamma =\epsilon _0 \langle x\rangle^{1-\beta }$,
$\rho =\exp (a \langle x\rangle ^\alpha )$,
$\rho _1=\exp (b \langle x\rangle ^\beta) $, $\beta > 0$ and \underline{either} $\beta>\alpha$ \underline{or} $\beta=\alpha$ and $b>2a>0$.

\begin{enumerate}[label=(\roman*), wide, labelindent=0pt]
\item\label{example-24-3-15-i}
Then for the Schr\"odinger operator the following asymptotics holds:
\begin{gather}
\N^- (\tau )=\cN^-(\tau )+
O\bigl(\tau |\log \tau|^{(2+\beta)/\beta} \bigr)
\label{24-3-29}\\
\shortintertext{with}
\cN^- (\tau) \asymp \tau^{3/2}|\log\tau|^{3/\beta}.
\label{24-3-30}
\end{gather}

\item\label{example-24-3-15-ii}
Let $\alpha=\beta$ and conditions  \ref{24-3-3-*} and \ref{24-3-4-**} be fulfilled. Then for the Schr\"odinger-Pauli operator asymptotics holds
\begin{align}
R(\tau)=&\tau^{b/2a}|\log \tau|^{2/\beta}
\label{24-3-31}\\
\shortintertext{and}
\cN^-(\tau)\asymp &\tau^{(b+a)/2a} |\log \tau|^{3/\beta}.
\label{24-3-32}
\end{align}

\item\label{example-24-3-15-iii}
On the other hand, let $\beta=\alpha$, $a<b\le 2a$ and  conditions \ref{24-3-3-*} and \ref{24-3-4-**} Be fulfilled. Then for both Schr\"odinger and Schr\"odinger-Pauli operators $\cN^-(\tau)\asymp \tau^{3/2}$ and $R$ is defined by (\ref{24-3-10}).

Moreover, for $b<2a$ we do not need the non-degeneracy assumption \ref{24-3-9-*} because in the allowed zone $\{x\colon V(x)\le C\tau\}$ we have
$\mu_\eff\le h_\eff^{\delta-1}$.
\end{enumerate}
\end{example}

We leave to the reader

\begin{problem}\label{problem-24-3-16}
\begin{enumerate}[label=(\roman*), wide, labelindent=0pt]

\item\label{problem-24-3-16-ii}
Consider the Schr\"odinger-Pauli  operators in the same settings as in Example~\ref{example-24-3-15}, albeit with $\rho$ of the power growth at infinity.

\item\label{problem-24-3-16-iii}
Consider the Schr\"odinger-Pauli  operators in the same settings as in Example~\ref{24-3-14} albeith with $V$ of the logarithmic growth at infinity (i.e. with $\rho=|\log |x||^{\alpha}$, $\gamma =\epsilon |x|$).
\end{enumerate}
\end{problem}

\chapter{$3\D$-case.  Asymptotics of small eigenvalues}
\label{sect-24-4}

In this section we need to consider first  miscellaneous asymptotics (cf. Subsections~\ref{sect-23-4-3} and~\ref{sect-23-4-4} of \cite{futurebook}) and only after case of $F$ stabilizing at infinity (cf. Subsections~\ref{sect-23-4-1} and~\ref{sect-23-4-2} of \cite{futurebook}).

\section{Miscellaneous asymptotics}
\label{sect-24-4-1}

\subsection{Case \texorpdfstring{$F\gtrsim 1$ as $|x|\to \infty$}{F\textgtrsim 1\ as  |x|\textrightarrow\textinfty}}
\label{sect-24-4-1-1}

In this subsection we consider cases of either $F\to \infty$ or $F\asymp 1$ \underline{and}  $V\to 0$ as $|x|\to \infty$.  If $F\to \infty$ the Schr\"odinger operator either does not have any essential spectrum ``at infinity''\footnote{\label{foot-24-22} More precisely, the lowest Landau level tends to $+\infty$ as $|x|\to \infty$.}; if $F\asymp 1$ we do not assume any stabilization conditions so far. Anyway, the Schr\"odinger operator  is not the subject of our analysis, while for the Schr\"odinger-Pauli and Dirac operators essential spectrum ``at infinity'' equals $[0,\infty)$ and $(-\infty, M]\cup [M,\infty)$ respectively\footnote{\label{foot-24-23} Again, understood in the sense of the Lanfdau levels.}.

Again due to the specifics of the problem we can consider the multidimensional case with minimal  modifications. In this case we assume that
\begin{claim}\label{24-4-1}
$\rank F=2p$ as $|x|\ge c$ and $f_1\asymp f_2 \asymp \ldots \asymp f_p
\asymp \rho_1$ as $|x|\ge c$
\end{claim}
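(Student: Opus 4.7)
The statement \textup{(\ref{24-4-1})} is a standing structural hypothesis on the magnetic field at infinity rather than a theorem proved in the text, so any ``proof'' amounts to exhibiting natural classes of $(g^{jk},V_j)$ for which the two conditions can be verified by direct computation, and to checking that they are stable under the admissible perturbations compatible with \ref{24-2-2-*}. My plan is therefore a model-plus-perturbation argument in the spirit of Appendix~\ref{sect-24-A-3}, reducing \textup{(\ref{24-4-1})} to a routine linear-algebra check on an explicit ansatz plus a continuity transfer.

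First I would take as a model potential the block-skew-symmetric ansatz $V^0_{2j-1}=\tfrac{1}{2}x_{2j}\,\varphi(x)$, $V^0_{2j}=-\tfrac{1}{2}x_{2j-1}\,\varphi(x)$ for $j=1,\dots,p$, with $V^0_k\equiv 0$ for $k>2p$, choosing the scalar $\varphi\asymp \rho_1$ so that $F^0_{jk}=\partial_j V^0_k-\partial_k V^0_j$ is block-diagonal with a single nonzero $2\times 2$ block per index pair. A direct computation gives, on each block, the entry $\varphi + \tfrac{1}{2}(x_\ell\partial_\ell\varphi)$; the scaling-function bound $|\partial\varphi|\le c\varphi\gamma^{-1}$ that is part of the hypothesis set \ref{24-2-2-*} together with $\gamma\lesssim\langle x\rangle$ keeps the correction of the same order as the main term on $\{|x|\ge c\}$, so $\rank F^0=2p$ and the eigenvalues $\pm i f^0_j$ of $g^0 F^0$ all satisfy $f^0_j\asymp \rho_1$ by the ellipticity \textup{(\ref{24-2-1})}. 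This handles \textup{(\ref{24-4-1})} for the model.

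Second, to remove the model restriction I would assume stabilization of $(g^{jk},V_j)$ to $(g^{jk,0},V^0_j)$ at infinity in the sense of $\textup{(\ref{23-3-34})}_{1,2}$, i.e.\ with $D^\sigma(g-g^0)=o(\gamma^{-|\sigma|})$ and $D^\sigma(V_j-V^0_j)=o(\rho_1\gamma^{1-|\sigma|})$. The perturbed $F$ then differs from $F^0$ by a term of size $o(\rho_1)$ uniformly on $\{|x|\ge c\}$, and since the eigenvalues of an antisymmetric matrix depend continuously (indeed analytically, away from crossings) on the entries, both the rank equality $\rank F=2p$ and the comparabilities $f_j\asymp\rho_1$ descend from $F^0$ to $F$ on a neighbourhood of infinity.

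The main obstacle, and the place where the plan is genuinely case-dependent, is the critical regime where $\rho_1$ degenerates to a constant or to $|x|^{-2}$ (the case $m_1=-2$): then the homogeneous block ansatz above is inconsistent with $\partial V_j$ producing the prescribed $\rho_1$-behaviour, and one has to insert a logarithmic factor $\varphi = |x|^{m_1}(\log|x|)^\beta$ or an exponential factor $\varphi=\exp(b|x|^\beta)$ in the block coefficients, exactly as in footnote 25 and as in the 2D treatments of Section~\ref{sect-23-2}. Once the correct ansatz is chosen in each regime, the two verifications above go through unchanged, and \textup{(\ref{24-4-1})} is established.
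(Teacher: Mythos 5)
You have correctly identified the nature of the statement: \textup{(\ref{24-4-1})} is introduced with ``we assume that'' and is a standing hypothesis, so the paper contains no proof of it. The only supporting material the paper offers is Appendix~\ref{sect-24-A-3}, where explicit potentials of exactly your block form $V_j=(\Lambda x)_j\sigma(|x|)$ are exhibited and their intensities computed (Lemma~\ref{lemma-24-A-10}, Examples~\ref{example-24-A-11}--\ref{example-24-A-13}), together with the footnote constructions accompanying Example~\ref{example-23-2-3} and Problem~\ref{Problem-23-2-31}. Your model-plus-perturbation plan follows this same route, so in outline it matches what the paper does to make the assumption meaningful.

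The genuine gap is in your verification step for the model. From $|\partial\varphi|\le c\varphi\gamma^{-1}$ you conclude that the correction $\tfrac12\,x\cdot\nabla\varphi$ is ``of the same order as the main term'' and hence $\rank F^0=2p$ and $f^0_j\asymp\rho_1$; but an upper bound of the same order does not exclude cancellation, and the lower bound is exactly what is at stake. The paper computes the relevant eigenvalue exactly, $f_1=2\sigma+|x|\sigma'$ in Lemma~\ref{lemma-24-A-10}, precisely because it can degenerate: for $\sigma=|x|^{-2}$ (which satisfies your derivative bound) one has $f_1\equiv 0$ in the even-dimensional case, and in the odd-dimensional case $f_1$ vanishes on the hyperplane $\{x_d=0\}$ (cf.\ Example~\ref{example-24-A-11}), so both conclusions of \textup{(\ref{24-4-1})} fail. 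The cure in the paper is not merely a different regime of the ansatz but an explicit nondegeneracy input: either the exclusion $m_1\ne-2$, or, in the $3\D$/odd case, the addition of a component along the kernel direction, $V_d=a|x|^{m+1}$ with $a\ne0$ (Example~\ref{example-24-A-12}); the logarithmic correction you mention is the $2\D$ fix from the footnote to Example~\ref{example-23-2-3}. Relatedly, $F^0$ is block-diagonal only after the rotation trick used in the proof of Lemma~\ref{lemma-24-A-10}, which requires $\varphi$ radial; for a general $\varphi\asymp\rho_1$ the off-block entries are themselves of order $\rho_1$ and the $f_j$ cannot be read off blockwise. Your final continuity/perturbation transfer is fine, but only once the model is shown to satisfy a quantitative lower bound $f^0_j\ge\epsilon\rho_1$ on $\{|x|\ge c\}$, which your second paragraph as stated does not establish.
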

\vspace{-7pt} and\vspace{-5pt}
\begin{claim}\label{24-4-2}
For each $j\ne k$ \underline{either} $f_j=f_k$ \underline{or}
$|f_j-f_k|\ge \epsilon \rho_1$  for all $|x|\ge c$.
\end{claim}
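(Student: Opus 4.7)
The dichotomy at (24-4-2) concerns the real eigenvalues $f_1,\ldots,f_p$ of the skew-symmetric operator $\g\F$ (whose nonzero eigenvalues are $\pm i f_j$), with the normalization $f_j>0$ justified by $\rank F = 2p$ from (24-4-1). The plan is to prove it by passing to the rescaled symbol $\widetilde{\F}\coloneqq \F/\rho_1$, whose eigenvalues are by (24-4-1) bounded and bounded away from zero on $\{|x|\ge c\}$. The statement then becomes the scale-invariant assertion: for each pair $j\ne k$, either $\widetilde{f}_j\equiv\widetilde{f}_k$ or $|\widetilde{f}_j-\widetilde{f}_k|\ge \epsilon$ uniformly on $\{|x|\ge c\}$, where $\widetilde{f}_j \coloneqq f_j/\rho_1$.

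First I would reduce to a genuinely compact problem through a stabilization/angular-scaling argument. Under the natural hypothesis that $\g$ and $\widetilde{\F}$ admit a homogeneous (degree zero) limit $\g_\infty(\theta),\widetilde{\F}_\infty(\theta)$ on the sphere $\bS^{d-1}$ as $|x|\to\infty$, the rescaled eigenvalues $\widetilde{f}_j(x)$ converge to values $\widetilde{f}_{\infty,j}(\theta)$ depending only on the direction $\theta=x/|x|$. Since $\g_\infty(\theta)\widetilde{\F}_\infty(\theta)$ depends continuously (indeed smoothly, provided the limit is smooth) on $\theta$, the Kato selection theorem permits a continuous labelling of the eigenvalues on $\bS^{d-1}$. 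Compactness of $\bS^{d-1}$ then yields, on every connected component of the open set $\{\theta\colon \widetilde{f}_{\infty,j}(\theta)\ne \widetilde{f}_{\infty,k}(\theta)\}$, a uniform positive lower bound for $|\widetilde{f}_{\infty,j}-\widetilde{f}_{\infty,k}|$; the full dichotomy on $\bS^{d-1}$ is obtained after checking that the coincidence set is either empty or all of $\bS^{d-1}$, which is arranged by a structural genericity hypothesis on the limiting symbol (e.g.\ that the multiplicities of $\g_\infty(\theta)\widetilde{\F}_\infty(\theta)$ are constant in $\theta$). Transporting this bound back to the rescaled problem on $\{|x|\ge c\}$ via the decay rates in \ref{24-2-2-*} gives $|\widetilde{f}_j-\widetilde{f}_k|\ge \epsilon$ outside a bounded set, and hence (24-4-2) after possibly enlarging $c$.

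The main obstacle is that purely from (24-4-1), and without any such stabilization or analyticity hypothesis, the dichotomy is not forced: one can construct smooth tensors $\F$ of constant rank $2p$ whose rescaled eigenvalues $\widetilde{f}_j$ approach each other along a sequence of points going to infinity without ever coinciding, so that neither alternative in (24-4-2) holds. Thus the proposal really establishes (24-4-2) as a consequence of a stabilization condition $\g\to\g_\infty$, $\F/\rho_1\to\widetilde{\F}_\infty$ at infinity, plus the spectral-constancy condition on $\g_\infty\widetilde{\F}_\infty$. The hard part is to identify the minimal additional structural hypothesis compatible with the scaling framework of Subsection~\ref{sect-24-2-1-1} — likely Kato-regularity of the eigenvalue splitting in an angular variable, or real-analyticity of $\g,\F$ at infinity — under which the crossing locus of $(\widetilde{f}_j,\widetilde{f}_k)$ is either empty or all of $\{|x|\ge c\}$, so that (24-4-2) can be upgraded from an axiom to a derivable property rather than an imposed assumption.
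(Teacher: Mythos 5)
There is nothing to prove here, and that is the central problem with your proposal: in the paper, \textup{(\ref{24-4-2})} is not a proposition with a proof but a standing \emph{assumption}, stated immediately after \textup{(\ref{24-4-1})} ("In this case we assume that \dots"), and it then appears verbatim among the hypotheses of Theorem~\ref{thm-24-4-1} and Theorem~\ref{thm-24-4-5}; it is the exact analogue of condition \textup{(\ref{23-4-28})}, which likewise enters Theorem~\ref{thm-23-4-16} only as a hypothesis. You treat it as a consequence to be extracted from \textup{(\ref{24-4-1})}, and your own "main obstacle" paragraph correctly shows why that cannot work: \textup{(\ref{24-4-1})} only pins down the common magnitude $\rho_1$ of $f_1,\dots,f_p$ and says nothing about their mutual gaps, so smooth fields exist for which two rescaled eigenvalues approach each other along a sequence tending to infinity without coinciding, violating both alternatives. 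The repair you then propose --- stabilization of $\g$ and $\F/\rho_1$ to a homogeneous limit on the sphere together with constant multiplicities (or analyticity) of the limiting symbol --- imports hypotheses that are not part of the framework of this subsection and would genuinely narrow the class of operators; it proves a different statement under different assumptions, not \textup{(\ref{24-4-2})} as it is used.

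What the paper does instead is justify the assumption by exhibiting fields that satisfy it: Appendix~\ref{sect-24-A-3}, in particular Lemma~\ref{lemma-24-A-10} and Examples~\ref{example-24-A-11}--\ref{example-24-A-13}, constructs vector potentials for which $f_2=\cdots=f_p$ identically while $f_1$ either coincides with them or differs by a quantity comparable to $\rho_1$, so both branches of the dichotomy are realized and the condition is non-vacuous. If you want to contribute something beyond the paper, the useful direction is the one you gesture at in your last sentence: formulate explicitly a stabilization-plus-constant-multiplicity condition under which \textup{(\ref{24-4-2})} becomes a theorem, and present it as a sufficient criterion for verifying the hypothesis in examples --- not as a proof of the claim itself.
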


Let for the Schr\"odinger-Pauli operator $\N^- (\eta)$ be a number of eigenvalues in $(-\epsilon, -\eta)$ and $\N^+ (\eta)$ be a number of eigenvalues in $(\eta, \epsilon)$.

\begin{theorem-foot}\footnotetext{\label{foot-24-24} Cf. Theorem~\ref{thm-23-4-16} of \cite{futurebook}.}\label{thm-24-4-1}
Let $X$ be an unbounded domain. Let  conditions \textup{(\ref{24-2-1})}, \ref{24-2-2-*}, \textup{(\ref{24-4-1})}, \textup{(\ref{24-4-2})} and \ref{24-3-4-**} be fulfilled with scaling functions  $\gamma$, $\rho$ and $\rho_1$, $\rho\to 0$, $\rho_1\gtrsim 1$, $\rho_1\gamma/\rho\to \infty$ and $\rho\gamma\to \infty$ as $|x|\to \infty$. Then for the Schr\"odinger-Pauli operator the following asymptotics holds $\N^- (\eta)=\cN^- (\eta)+ O(R)$
where
\begin{multline}
\cN ^- (\eta) \coloneqq\\
(2\pi)^{-d+p} \varpi_{d-2p} \int _{\{ x\colon  -V (x) \ge \eta \}}
f_1f_2\cdots f_p (-V-\eta)_+^{(d-2p)/2}\sqrt{g}\,dx
\label{24-4-3}
\end{multline}
and
\begin{equation}
R=C\int _{\{ x\colon  -V (x) \ge \eta \}} \rho_1^{p}\rho^{d-2p-1} \gamma^{-1}\,dx
+ C\int \rho_1^{p-s} \gamma^{-2s}\,dx.
\label{24-4-4}
\end{equation}
\end{theorem-foot}

\begin{example}\label{example-24-4-2}
Let $X$ be an unbounded domain. Let  conditions \textup{(\ref{24-2-1})}, \ref{24-2-2-*}, (\ref{24-2-6}), \textup{(\ref{24-4-1})}, \textup{(\ref{24-4-2})}  and \ref{24-3-4-**} be fulfilled with scaling functions $\gamma=\epsilon\langle x\rangle$,
$\rho = \langle x\rangle^m$, $\rho_1 = \langle x\rangle^{m_1}$,
$-1< m<0\le m_1$.

Then for the Schr\"odinger-Pauli operator
\begin{align}
&R= \eta^{(m_1+2)(d-1)/(4m)}
\label{24-4-5}
\shortintertext{and}
&\cN^-(\eta)= O(\eta^{(m_1+2)(d-1)/(4m)+(m+1)/(2m)}).
\label{24-4-6}
\end{align}
 Further, we can replace ``$=O(\cdot)$'' by ``$\asymp\cdot$ if condition
 $V\le -\epsilon \rho^2$ is fulfilled in some non-empty cone.
\end{example}

We leave to the reader

\begin{problem} \label{problem-24-4-3}
\begin{enumerate}[label=(\roman*), wide, labelindent=0pt]
\item\label{problem-24-4-3-i}
Consider the case of $\gamma=\epsilon \langle x\rangle$,
$\rho=\langle x\rangle^{-1}|\log \langle x\rangle|^\alpha$,
$\rho_1= \langle x\rangle^{m_1}$, $m_1\ge 0$, $\alpha>0$.

\item\label{problem-24-4-3-ii}
Consider the case of $\gamma=\epsilon \langle x\rangle$,
$\rho=|\log \langle x\rangle|^\alpha$,
$\rho_1= \langle x\rangle^{m_1}$, $m_1\ge 0$, $\alpha<0$.

\item\label{problem-24-4-3-iii}
Consider the case of $\gamma=\langle x\rangle^{1-\beta}$,
$\rho_1 = \exp( b\langle x\rangle^{\beta})$,  $\beta>0$ while conditions to $V,g^{jk}$ are fulfilled with $\gamma=\epsilon \langle x\rangle$, and \underline{either}
$\rho=\langle x\rangle^{-1}|\log \langle x\rangle|^\alpha$, $\alpha>0$ \underline{or} $\rho=|\log \langle x\rangle|^\alpha$, $\alpha<0$.
\end{enumerate}
\end{problem}

\begin{problem} \label{problem-24-4-4}
Consider the Dirac operator. In this case $\N^-(\eta)$ is a number of eigenvalues in
$(M-\epsilon, M-\eta)$ and $\N^+(\eta)$ is a number of eigenvalues in
$(- M+\eta, - M+\epsilon)$, $0<\eta<\epsilon$, $M>0$.
\end{problem}

\subsection{Case \texorpdfstring{$F\to 0$ as $|x|\to \infty$}{F\textrightarrow0\ as  |x|\textrightarrow\textinfty}}
\label{sect-24-4-1-2}

In this subsection we consider cases of $F\to 0$ and  $V\to 0$ as $|x|\to \infty$. In this case the essential spectra of the Schr\"odinger and Schr\"odinger-Pauli operators are $[0,\infty)$; however, as $V=o(F)$ as
$|x|\to \infty$ the Schr\"odinger operator has only a finite number of the negative eigenvalues and thus is not a subject of our analysis while the Schr\"odinger-Pauli operator is.

Further, the Dirac operator has its essential spectrum $(-\infty,-M]\cup[M,\infty)$ and we need to assume that $M>0$.

\begin{theorem-foot}\footnotetext{\label{foot-24-25} Cf. Theorem~\ref{thm-24-4-1}.}\label{thm-24-4-5}
Let $X$ be an unbounded domain. Let  conditions \textup{(\ref{24-2-1})}, \ref{24-2-2-*}, \textup{(\ref{24-2-6})}, \textup{(\ref{24-4-1})}, \textup{(\ref{24-4-2})} and \ref{24-3-4-**} be fulfilled with scaling functions  $\gamma$, $\rho$ and $\rho_1$, $\rho\to 0$, $\rho_1\to 0$, $\rho_1\rho/\rho^2 \ge C_0$ and $\rho\gamma\to \infty$ as $|x|\to \infty$.

Then for the Schr\"odinger-Pauli operator  $\N^-(\eta)=\cN^-(\eta)+O(R)$ with $\cN^-(\eta)$ and $R$ defined by \textup{(\ref{24-4-3})}-- \textup{(\ref{24-4-4})}.
\end{theorem-foot}

\begin{example}\label{example-24-4-6}
Let conditions of Theorem~\ref{thm-24-4-5} be fulfilled with
$\gamma=\langle x\rangle$, $\rho = \langle x\rangle^m$, $\rho_1 = \langle x\rangle^{m_1}$, $m<0$, $\max(2m,\,-2)<m_1<0$.

Then all statements of the Example~\ref{example-24-4-2} remain true.
\end{example}

\begin{example}\label{example-24-4-7}
Let conditions of Theorem~\ref{thm-24-4-5} be fulfilled with
$\gamma=\epsilon \langle x\rangle$,
$\rho_1=\langle x\rangle^{-2}|\log \langle x\rangle|^\beta$,
$\rho=\langle x\rangle^{-1} |\log \langle x\rangle|^\alpha$ with
$0<2\alpha <\beta$. Then
\begin{align}
&R= |\log \eta|^{(\beta p+\alpha (d-2p-1)+1)/2}
\label{24-4-7}\\
\intertext{and $\cN^-(\eta)= O(S)$ with}
&S= |\log \eta|^{(\beta p+\alpha (d-2p)+1)/2}
\label{24-4-8}
\end{align}
Further,
$\cN^- (\eta)\asymp S$ if condition $V\le -\epsilon \rho^2$ is fulfilled in some non-empty cone.
\end{example}

We also leave to the reader
\begin{problem}\label{problem-24-4-8}
Consider in this framework the Dirac operator. In this case $\N^-(\eta)$ is a number of eigenvalues in $(M-\epsilon, M-\eta)$ and $\N^+(\eta)$ is a number of eigenvalues in $(-M+\eta, -M+\epsilon)$, $0<\eta<\epsilon$. We need to assume that $M>0$ and potential $V\sim \rho^2$ at infinity.
\end{problem}

Consider now the case when condition $\rho^2=o(\rho_1)$ as $|x|\to \infty$ is not fulfilled. Then the results will be similar to those of Section~\ref{sect-24-3}.

\begin{example}\label{example-24-4-9}
\begin{enumerate}[label=(\roman*), wide, labelindent=0pt]
\item\label{example-24-4-9-i}
Let $X$ be an unbounded domain with $\sC^K$ boundary.  Let  conditions \textup{(\ref{24-2-1})}, \ref{24-2-2-*}, \textup{(\ref{24-2-6})}, (\ref{24-4-1}), (2\ref{24-4-2})  and \ref{24-3-4-**} be fulfilled with scaling functions
$\gamma =\epsilon _0 \langle x\rangle $,
$\rho =\langle x\rangle ^m$, $\rho _1=\langle x\rangle ^{m_1}$, $-1<m<0$,
$m-1< m_1< 2m$.

Then for the Schr\"odinger and Schr\"odinger-Pauli operators asymptotics
\begin{align}
&R=\eta ^{(m+1)(d-1)/2m}
\label{24-4-9}\\
\shortintertext{and}
&S=\eta ^{(m+1)(d-1)/2m}
\label{24-4-10}
\end{align}

\item\label{example-24-4-9-ii}
Similar results hold if $m_1=2m$ but as $p=1$ one needs to assume a non-degeneracy assumption (we leave it to the reader) and for the Schr\"odinger operator $\cN^-(\eta)\asymp S$ provided $V+F\le -\epsilon \rho^2$ in some non-empty cone.
\end{enumerate}
\end{example}

We leave to the reader:\enlargethispage{\baselineskip}

\begin{problem}\label{problem-24-4-10}
Consider the case of $\gamma =\epsilon _0 \langle x\rangle $,
$\rho =\langle x\rangle ^{-2}|\log x|^\alpha$,\\
$\rho _1=\langle x\rangle ^{-2}|\log x|^\beta$, $2\alpha \ge \beta>\alpha$.
\end{problem}

\begin{problem}\label{problem-24-4-11}
Consider in this framework the Dirac operator with $M>0$.
\end{problem}

\section{Case \texorpdfstring{$\rank F_\infty=d-1$}{rank F\textunderscore\textinfty= d-1}. Fast decaying potential}
\label{sect-24-4-2}
Consider case of stabilization at infinity (cf. Subsection~\ref{sect-23-4-1} of \cite{futurebook})
\begin{phantomequation}\label{24-4-11}\end{phantomequation}
\begin{equation}
\g \to \g_\infty, \quad \F\to \F_\infty ,\quad V\to 0
\qquad \text{as\ \ } |x|\to \infty.
\tag*{$\textup{(\ref*{24-4-11})}_{1-3}$}\label{24-4-11-*}
\end{equation}
Recall that $\F\coloneqq   (F_{jk})$ with
$F_{jk}=\partial _k V_j- \partial_j V_k$, $\g \coloneqq   (g^{jk})$.

Assume now that
\begin{claim}\label{24-4-12}
$\rank \F_\infty =2p$ while $d=2p+1$
\end{claim}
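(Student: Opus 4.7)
The ``claim'' \textup{(\ref{24-4-12})} is a structural hypothesis on $\F_\infty$ rather than a proposition with an independent proof, so what I would actually establish is that it is the canonical \emph{maximal-rank} condition in odd dimension $d = 2p+1$, and that this setup is well-defined and non-vacuous. My plan is to verify this in three short steps.

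First I would observe that $\F_\infty$, being the pointwise limit of the antisymmetric matrices $(F_{jk}(x))$ permitted by \ref{24-4-11-*}, is itself real antisymmetric. I would then invoke the elementary structure theorem: every real antisymmetric $d\times d$ matrix can be brought by an orthogonal change of basis into block form $\mathrm{diag}(J_1,\ldots,J_q,\,0,\ldots,0)$ with $J_j = \bigl(\begin{smallmatrix}0&\lambda_j\\-\lambda_j&0\end{smallmatrix}\bigr)$, $\lambda_j\ne 0$, whence $\rank \F_\infty = 2q$ is automatically even. In particular, when $d = 2p+1$ is odd one has $\rank\F_\infty \le d-1 = 2p$, so the substantive content of the hypothesis is that this upper bound is attained.

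Next I would identify $\ker \F_\infty$, which under \textup{(\ref{24-4-12})} is exactly one-dimensional; let $\nu_\infty\in\bS^{d-1}$ be a generator. This fixes a single ``longitudinal'' direction along which no magnetic confinement is felt at infinity, while the orthogonal $2p$-plane $\nu_\infty^\perp$ carries the non-degenerate symplectic form obtained by restricting $\F_\infty$. Diagonalizing $g_\infty\F_\infty$ on $\nu_\infty^\perp$ produces the $p$ positive frequencies $f_{\infty,1},\ldots,f_{\infty,p}$ that already appear in Theorem~\ref{thm-23-4-1}\ref{thm-23-4-1-ii}; these are the transversal Landau frequencies, and $\nu_\infty$ plays the role of the free direction.

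Finally, to confirm that the setting is non-vacuous and consistent with the analytic framework of the subsection I would appeal to Appendix~\ref{sect-24-A-3} of \cite{futurebook}, which constructs explicit smooth vector potentials $(V_1,\ldots,V_d)$ realizing $\F_\infty$ of any prescribed even rank $2p$ with $2p< d$, and to Theorem~\ref{book_new-thm-13-1-1} of \cite{futurebook}, which, applied to the toy-model operator $A_\infty$, identifies $\Specess(A_\infty) = [f_{\infty,1}+\cdots+f_{\infty,p},\infty)$. Thus \textup{(\ref{24-4-12})} places us in the \emph{partial-confinement} regime complementary to Section~\ref{sect-24-4-1}: the essential spectrum has a sharp bottom at $f^\ast = \sum_j f_{\infty,j}$ rather than consisting of isolated Landau points, and eigenvalues of $A$ may only accumulate at $f^\ast$ (and along intervals shifted by multiples of the $f_{\infty,j}$), precisely as needed for the fast-decay analysis that the subsection title announces.

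The only real obstacle here is conceptual rather than technical: one must recognize that \textup{(\ref{24-4-12})} is a framing device, and that what is substantively at stake is the structure theorem for real skew-symmetric forms combined with the observation that such a form in odd dimension necessarily has a nontrivial kernel of minimal possible dimension $d - 2p = 1$.
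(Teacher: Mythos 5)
You have read the situation correctly: in the paper (\ref{24-4-12}) is introduced by ``Assume now that'' and functions purely as a standing hypothesis for Subsection~\ref{sect-24-4-2}, so there is no proof to reproduce, and your linear-algebra justification (even rank of a real skew-symmetric matrix, hence maximal rank $d-1=2p$ in odd dimension with a one-dimensional kernel giving the free direction) is exactly the right way to see that the assumption is the natural maximal-rank condition and is non-vacuous. The only slip is bibliographic: the vector-potential constructions you invoke are in Appendix~\ref{sect-24-A-3} of the present chapter, not of \cite{futurebook}, but this does not affect the substance.
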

and potential $V$ decays faster than in the previous subsection--at least in the direction of the magnetic field.

\subsection{Preliminary analysis}
\label{sect-24-4-2-1}

Observe first that
\begin{claim}\label{24-4-13}
If $\F$ and $\g$ are constant than without any loss of the generality we can assume that $g^{jk}=\updelta_{jk}$ and $\Ker \F = \bR^{d-2p}\times \{0\} $.
\end{claim}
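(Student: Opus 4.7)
The plan is to perform two successive linear changes of coordinates, the first normalizing the metric and the second aligning the kernel of $\F$. Since $(g^{jk})$ is constant, symmetric and uniformly positive definite by \textup{(\ref{24-2-1})}, there exists a constant nondegenerate matrix $T$ such that $T^\top (g^{jk})T = I$. Apply the linear change of variables $x = Ty$; under this substitution the Schr\"odinger (or Schr\"odinger--Pauli, or Dirac) operator transforms covariantly into an operator of the same form with new metric $\updelta^{jk}$, new magnetic vector potential $\widetilde V_j = \sum_l V_l T^l{}_j$, new tensor $\widetilde \F = T^\top \F T$ (still constant) and new scalar potential $\widetilde V(y) = V(Ty)$. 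This change is unitary in $\sL^2$ (after the trivial density factor $|\det T|^{1/2}$) and hence preserves the spectrum.

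Next, having normalized the metric to $\updelta^{jk}$, the antisymmetric matrix $\widetilde \F$ has rank $2p$ and therefore $\ker \widetilde \F$ is a linear subspace of dimension $d-2p$. Since now the metric is Euclidean, any orthogonal transformation $O\in \SO(d)$ preserves $g^{jk}=\updelta^{jk}$. Choose $O$ so that it maps $\bR^{d-2p}\times\{0\}$ onto $\ker \widetilde \F$ (and correspondingly maps its orthogonal complement, which is $\widetilde \F$-invariant and on which $\widetilde \F$ is nondegenerate, onto $\{0\}\times \bR^{2p}$). Composing with the change of variables $y=Oz$ brings the kernel into the desired standard position while leaving $\updelta^{jk}$ untouched.

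The only step that warrants care is verifying that neither transformation affects the principal part of the analysis: the metric remains constant and Euclidean, $\F$ remains constant, and the transformation law for $V_j$ is simply the pullback of a $1$-form, so the magnetic field tensor transforms tensorially and its rank and kernel behave as claimed. The scalar potential $V$ and the Pauli/Dirac matrix structure (which may be re-ordered through an orthogonal matrix acting on $\upsigma_j$) are unaffected in any essential way, since the spectral asymptotics depend only on the invariants $g^{jk}$, $\F$, $V$. No serious obstacle arises: the entire argument is the standard linear-algebraic normal form for a pair $(g,\F)$ of a Euclidean inner product and a skew-symmetric form, and the compatibility with the rest of the analysis follows because we have merely performed a global affine unitary equivalence.
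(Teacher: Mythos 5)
Your argument is correct and is exactly the route the paper takes: the paper's entire proof of \textup{(\ref{24-4-13})} is the one-line remark ``we can achieve it by a linear change of the coordinates,'' and your two-step normalization (first reducing the constant positive definite $\g$ to $\updelta^{jk}$ by a linear substitution, then rotating $\ker\F$ into standard position by an orthogonal map that preserves the Euclidean metric) is the standard elaboration of that remark.
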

Indeed we can achieve it by a linear change of the coordinates.
In the general case under assumption (\ref{24-4-12}) we can assume that
\begin{gather}
\Ker \F = \bR \times \{0\} \qquad \text{as\ \ }|x|\ge c
\label{24-4-14}\\
\shortintertext{and}
V_1=0\qquad \text{as\ \ }|x'|\ge c
\label{24-4-15}
\end{gather}
where $x=(x_1;x')$. Indeed, we can achieve (\ref{24-4-14}) by the change of the coordinate system which straightens magnetic lines\footnote{\label{foot-24-26} After this we are allowed only changes $x\mapsto y$ with $y'=y'(x')$ and $y_d=y_d(x)$.} and we can achieve \ref{24-4-15}) by the gauge transformation.

These two assumptions imply $V_j=v_j(x')$ for $j=2,\ldots,d$ and together with stabilization as $x_1\to \infty$ we conclude that $\F$ is constant. Without any loss of the generality we can assume that
\begin{claim}\label{24-4-16}
$\g_\infty=\updelta_{jk}$, $F_{jk}=0$ unless either $j=2l$, $k=2l+1$ when $F_{jk}=f_{\infty,l}$, or $j=2l+1$, $k=2l$ when $F_{jk}=-f_{\infty,l}$.
\end{claim}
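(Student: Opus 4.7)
The plan is to carry out the normalization by a sequence of linear transformations, exploiting the remaining freedom left after the reductions (\ref{24-4-14})--(\ref{24-4-15}). By the argument preceding the claim we already know that $\F$ is constant on $\{|x|\ge c\}$, that $\Ker \F = \bR\times\{0\}$, and that $V_j=v_j(x')$ for $j\ge 2$. The allowable changes of variables are those that preserve these properties, i.e. those of the form $y_1 = x_1 + \ell(x')$, $y'=B x'+\const$ with $B\in GL(d-1,\bR)$, followed by a gauge transformation to re-impose (\ref{24-4-15}). Such changes act as the group $GL(d-1,\bR)$ on the $x'$-block of $\g_\infty$ and by congruence on the antisymmetric $(d-1)\times(d-1)$ matrix $\F_\infty|_{x'}$, and the shift by $\ell$ kills the $(1,j)$ cross-terms of $\g_\infty$.

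First I would apply $B\in GL(d-1,\bR)$ chosen so that in the new coordinates the $x'$-block of $\g_\infty$ becomes the identity; this is possible since $\g_\infty$ is positive definite. Next, I would choose $\ell(x')$ linear in $x'$ so that $y_1=x_1+\ell(x')$ eliminates the cross terms $g_\infty^{1j}$ for $j\ge 2$; after rescaling $x_1$ we may assume $g_\infty^{11}=1$, so that $\g_\infty=\updelta_{jk}$ as required. Note that throughout this step $\Ker \F$ remains $\bR\times\{0\}$ because the first coordinate axis is fixed up to scaling, and the required gauge transformation to restore (\ref{24-4-15}) is simply a subtraction of $\partial_j \phi$ for an appropriate $\phi(x')$, which does not alter $\F$ or $\g$.

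Finally, with $\g_\infty$ equal to the Euclidean metric and $\F_\infty$ an antisymmetric $d\times d$ matrix annihilating $e_1$, the restriction $\F_\infty|_{x'}$ is a real antisymmetric matrix on the Euclidean space $\bR^{d-1}$. By the standard symplectic/orthogonal normal form theorem for real antisymmetric matrices there exists an orthogonal $O\in O(d-1)$ such that $O^t \F_\infty|_{x'} O$ has the canonical block-diagonal form with $2\times 2$ blocks $\begin{pmatrix}0 & f_{\infty,l}\\ -f_{\infty,l} & 0\end{pmatrix}$ for $l=1,\ldots,p$. Applying this orthogonal transformation (extended by the identity on $x_1$) preserves $\g_\infty=\updelta$ and $\Ker\F=\bR\times\{0\}$ and puts $\F_\infty$ into precisely the form stated in (\ref{24-4-16}). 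The only mildly delicate point is bookkeeping the order of operations so that the gauge adjustment needed to preserve (\ref{24-4-15}) does not interfere with the earlier steps, but since gauge transformations act only on $V_j$ and not on $\g$ or $\F$, the three normalizations decouple cleanly.
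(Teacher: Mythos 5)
Your argument is correct and is exactly the linear-algebra normalization the paper leaves implicit: it justifies \ref{24-4-16} only by the remark (made for the analogous claim \ref{24-4-13}) that such a form is achieved "by a linear change of the coordinates", i.e.\ the simultaneous normal form of a positive definite symmetric form and an antisymmetric form. Your write-up simply fills in the details (shear to kill the $g^{1j}$ cross-terms, $GL(d-1)$ then $O(d-1)$ on the transverse block, and the observation that $V_1=0$ and $\Ker\F=\bR\times\{0\}$ are automatically preserved), so it takes essentially the same route as the paper.
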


\begin{claim}\label{24-4-17}
By means of the allowed change of the coordinates\footref{foot-24-26} on each magnetic line\footnote{\label{foot-24-27} But not necessarily on all of them in simultaneously.} $\{x\colon  x'=y'\}$ with $|y'|\ge c$ we can achieve
\begin{equation}
g^{jd}=0\qquad j=0,\ldots,d.
\label{24-4-18}
\end{equation}
\end{claim}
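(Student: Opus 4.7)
The plan is to work on one magnetic line $L=\{x'=y'_0\}$ at a time and reduce the claim to a pointwise linear algebra problem. The key observation is that the allowed coordinate transformations are rich enough: we may freely prescribe $y_d$ as any function of all $x$ (subject to being a diffeomorphism), and we only need the conclusion along $L$, so the values of $y_d$ off $L$ are irrelevant and can be chosen to make the construction smooth.

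First I would simplify by taking $y_j=x_j$ for $j<d$, which is an admissible transverse change, and letting the action be concentrated in $y_d=y_d(x)$. With this choice, writing $J^d_m\coloneqq\partial y_d/\partial x_m$, the transformation rule gives
\begin{equation*}
\tilde g^{jd}(y)=\sum_{l<d,\,m} J^j_l\, J^d_m\, g^{lm}(x)
=\sum_{m} J^d_m\, g^{jm}(x)\qquad (j<d),
\end{equation*}
since $J^j_l=\updelta_{jl}$ for $j,l<d$. Thus the vanishing of $\tilde g^{jd}$ along $L$, for all $j<d$, is equivalent to the linear system
\begin{equation*}
J^d_d\, g^{jd}+\sum_{m<d} J^d_m\, g^{jm}=0\qquad \forall j<d
\end{equation*}
for the values $J^d_1,\ldots,J^d_d$ at each point of $L$.

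Next I would normalize by fixing $J^d_d\equiv 1$ on $L$ (ensuring invertibility of the Jacobian) and solve for $\alpha^m\coloneqq J^d_m$, $m<d$, from
\begin{equation*}
\sum_{m<d}\alpha^m(x_d)\, g^{jm}(y'_0,x_d)=-g^{jd}(y'_0,x_d)\qquad \forall j<d.
\end{equation*}
Because the $(d-1)\times(d-1)$ principal submatrix $\bigl(g^{lm}\bigr)_{l,m<d}$ is a principal submatrix of a positive-definite matrix, it is itself positive definite and therefore invertible; this yields unique smooth functions $\alpha^m(x_d)$ on $L$. This is the substantive content of the claim, and it is immediate once the linear system is isolated — there is no real obstacle here, only the need to set things up with the correct block structure.

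Finally I would realize the prescribed derivatives by an explicit extension, e.g.
\begin{equation*}
y_d(x)\coloneqq x_d+\sum_{m<d}\alpha^m(x_d)\,(x_m-y^0_m),
\end{equation*}
which satisfies $\partial y_d/\partial x_m|_L=\alpha^m(x_d)$ and $\partial y_d/\partial x_d|_L=1$; combined with $y_j=x_j$ for $j<d$ this gives a diffeomorphism on a tubular neighbourhood of $L$ of the admissible form, under which $\tilde g^{jd}=0$ on $L$ for every $j<d$. This establishes \textup{(\ref{24-4-18})} on the single chosen line; the footnote already warns that the different extensions $y_d$ associated with different $y'_0$ need not piece together into one global change of coordinates, so no further compatibility is required.
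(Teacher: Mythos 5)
Your argument is correct and is precisely the routine computation the paper leaves unwritten (the claim is stated without proof): reduce to the pointwise linear system $\sum_m J^d_m g^{jm}=0$, invert the positive-definite principal block $(g^{lm})_{l,m<d}$ to get the coefficients $\alpha^m$ along the line, and extend linearly in the transverse variables. Note only that you have tacitly (and correctly) read the paper's index range ``$j=0,\ldots,d$'' as $j\ne d$, since $g^{dd}=0$ is impossible for a positive-definite metric.
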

\begin{remark}\label{rem-24-4-12}
One can prove easily that in this reduction $V$ is perturbed by $O(\rho^4\gamma^{-2})$ which would not affect the principal part and an error estimate.
\end{remark}

As (\ref{24-4-14}), (\ref{24-4-15}) and (\ref{24-4-18})\,\footnote{\label{foot-24-28} Only as $x'=y'$.} are fulfilled consider $1\D$-operator on $\bR\ni z$
\begin{gather}
\cL(y')\coloneqq   D_z g^{11} (z;y') D_z + V^*(z; y'),
\label{24-4-19}\\
V^*(x)\coloneqq   V(x)+\sum_j (f_j(x)-f_\infty).
\label{24-4-20}
\end{gather}

Let us consider operator for which (\ref{24-4-14}) and (\ref{24-4-15}) are fulfilled allowing instead some anisotropy:
\begin{phantomequation}\label{24-4-21}\end{phantomequation}
\begin{align}
&|\nabla^\alpha (\g-\g_\infty)|=
o(\rho^2 \gamma^{-|\alpha'|}\gamma_1^{-\alpha_1}),
\tag*{$\textup{(\ref*{24-4-21})}_{1}$}\label{24-4-21-1}\\[3pt]
&|\nabla^\alpha V|=
O(\rho^2 \gamma^{-|\alpha'|}\gamma_1^{-\alpha_1}),
\tag*{$\textup{(\ref*{24-4-21})}_{2}$}\label{24-4-21-2}\\[3pt]
&|\nabla^\alpha F_{jk}|= O(\rho^2 \gamma^{-|\alpha'|}\gamma_1^{-\alpha_1})
\qquad\text{as\ \ } |x|\to \infty\quad \forall\alpha
\tag*{$\textup{(\ref*{24-4-21})}_{3}$}\label{24-4-21-3}
\end{align}
with scaling functions
\begin{phantomequation}\label{24-4-22}\end{phantomequation}
\begin{align}
&\gamma_1(x')\ge 1,\qquad
\gamma=\gamma(x')\to \infty\quad \text{as\ \ } |x'|\to\infty
\tag*{$\textup{(\ref*{24-4-22})}_{1-2}$}\label{24-4-22-*}\\[3pt]
&\rho (x)= \varrho(x')\varrho_1(x_1/\gamma_1)
\label{24-4-23}\\
\shortintertext{such that}
&|\nabla \gamma|\le \frac{1}{2},\qquad |\nabla \gamma_1|\le \frac{1}{2}\gamma_1\gamma^{-1},
\label{24-4-24}\\[3pt]
&|\varrho_1|\le 1,\qquad \int_{\bR} |t|\varrho^2_1(t)\,dt <\infty,
\label{24-4-25}\\[3pt]
&|\nabla \varrho|\le \varrho\gamma^{-1},\qquad
|\nabla \varrho_1|\le \varrho_1\gamma^{-1}
\label{24-4-26}\\[3pt]
&\zeta \coloneqq   \varrho ^2\gamma_1 \to 0\qquad \text{as\ \ }|x'|\to \infty.
\label{24-4-27}
\end{align}
In virtue of Proposition~\ref{prop-24-A-1} operator $\cL(x')$ has a finite number of negative eigenvalues for all $x'$ and no more than one negative eigenvalue as $|x'|\ge c$; further, under assumption
\begin{multline}
W(x')\coloneqq
-\frac{1}{2}\int _\bR V^*(x_1;x')\,dx_1 >0\quad\text{and}
\quad W(x')\asymp \zeta \\
\text{as\ \ }|x'|\ge c,
\label{24-4-28}
\end{multline}
there is exactly one negative eigenvalue $\lambda(x')$ and
\begin{align}
&\nabla^{\alpha'}\bigl(\lambda(x')+W(x')^2) = o(\zeta^2\gamma^{-|\alpha'|})
\label{24-4-29}\\
\shortintertext{while}
&\nabla^{\alpha'}W=O(\zeta \gamma^{-|\alpha'|})
\label{24-4-30}\\
\shortintertext{and}
&\1 \nabla ^\alpha v\1 =O(\gamma^{-|\alpha'}\gamma_1^{-\alpha_1}).
\label{24-4-31}
\end{align}
Here $v= v(x_1;x')$ is a corresponding eigenfunction.

\subsection{The main theorem}
\label{sect-24-4-2-2}

The principal result of this subsection is the following theorem:

\begin{theorem}\label{thm-24-4-13}
Let conditions \ref{24-4-11-*},
\textup{(\ref{24-4-14})}, \textup{(\ref{24-4-15})}, $\textup{(\ref{24-4-21})}_{1-3}$, $\textup{(\ref{24-4-22})}_{1-2}$, \textup{(\ref{24-4-24})}--\textup{(\ref{24-4-28})} be fulfilled.
Moreover, let
\begin{equation}
|\nabla W|\ge \epsilon_0 \zeta^2\gamma^{-1} \qquad \text{as\ \ } |x|\ge c.
\label{24-4-32}
\end{equation}
Then
\begin{gather}
|\N ^- (\eta) -\cN^- (\eta)|\le C \int _{ \cZ (\eta)} \gamma^{-2}\,dx',
+ C \int \gamma^{-s}\,dx'
\label{24-4-33}
\shortintertext{where}
\cN ^-(\eta) \coloneqq
(2\pi)^{-r} \int _{\{ x\colon   -\lambda \ge \eta \}}
f_{\infty,1}f_{\infty,2}\cdots f_{\infty,r} \,dx'
\label{24-4-34}
\end{gather}
$\cZ(\eta)$ is $\epsilon \gamma$-vicinity
of $\Sigma(\eta)=\{ x'\colon  -\lambda(x') =\eta\}$; cf. \footref{foot-23-34}.
\end{theorem}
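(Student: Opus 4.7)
The strategy is to reduce $A-\tau^*$ to an effective $(d-1)$-dimensional magnetic Schr\"odinger operator on $\bR^{d-1}_{x'}$ with scalar potential $\lambda(x')$, and then invoke Theorem~\ref{thm-23-4-3} of \cite{futurebook}. Under the normalizations \eqref{24-4-14}, \eqref{24-4-15}, \eqref{24-4-18} the magnetic lines are straightened along the $x_1$-axis, and at infinity the operator $A$ decouples, modulo lower order terms, into the fibre operator $\cL(x')$ on $L^2(\bR_{x_1})$ and a transverse magnetic Schr\"odinger operator whose lowest Landau level equals $\tau^*=\sum_j f_{\infty,j}$. By Proposition~\ref{prop-24-A-1} together with \eqref{24-4-28}, $\cL(x')$ has a unique negative eigenvalue $\lambda(x')$ for $|x'|\ge c$, with $\lambda\sim -W^2$ and smooth eigenfunction $v(\cdot;x')$ enjoying \eqref{24-4-29}--\eqref{24-4-31}.

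The concrete plan is as follows. First, take a $\gamma$-admissible partition of unity $\{\psi_\nu(x')\}$ on $\bR^{d-1}$ and let $P(x')$ be the $L^2(\bR_{x_1})$-projector onto $v(\cdot;x')$, $Q=I-P$. Second, on $\Ran Q$ the operator $A-\tau^*$ is bounded below by $\min_j f_{\infty,j}(1-o(1))>0$ as $|x'|\ge c$, hence produces only $O(1)$ eigenvalues of $A$ in $(-\infty,\tau^*-\epsilon)$, which are absorbed into the second integral of \eqref{24-4-33}. Third, perform a Feshbach reduction on $\Ran P$: after further restriction to the lowest Landau subspace of the constant-coefficient transverse model, carrying weight $(2\pi)^{-p}f_{\infty,1}\cdots f_{\infty,p}$, the diagonal block becomes a scalar magnetic Schr\"odinger operator on $L^2(\bR^{d-1}_{x'})$ with potential $\lambda(x')+R(x')$, where the error $R$ is of size $o(\zeta^2\gamma^{-1})$, generated by \ref{24-4-21-1}--\ref{24-4-21-3} and by commutators of $P(x')$ with transverse derivatives, controlled via \eqref{24-4-31}. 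Fourth, apply Theorem~\ref{thm-23-4-3} of \cite{futurebook} with sign ``$-$'' to this reduced operator, taking $V\leadsto\lambda$ and $\rho\leadsto\zeta$: assumption \eqref{24-4-32} together with $\lambda\sim -W^2$ and \eqref{24-4-30} implies the non-degeneracy $\textup{(\ref{23-4-7})}_-$ of \cite{futurebook}, the principal part \eqref{23-4-9} of \cite{futurebook} reproduces \eqref{24-4-34}, and the remainder \eqref{23-4-8} of \cite{futurebook} reproduces \eqref{24-4-33}.

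The main obstacle is the third step. The spectral gap between $\lambda(x')$ and the rest of $\Spec\cL(x')$ is only of order $\zeta^2$, while the off-diagonal Feshbach blocks $PAQ$, $QAP$ carry transverse derivatives $D_{x'}$ acting on $v(\cdot;x')$ of size $O(\zeta\gamma^{-1})$ by \eqref{24-4-31}. The naive Schur-complement correction to $\lambda(x')$ is therefore $O(\gamma^{-2})$, which is only marginally below the boundary-layer budget $\int_{\cZ(\eta)}\gamma^{-2}\,dx'$ on the right of \eqref{24-4-33} and is \emph{larger} than the target $o(\zeta^2\gamma^{-1})$. One must show that outside $\cZ(\eta)$ this correction is subleading to $|\nabla\lambda|\asymp \zeta^3\gamma^{-1}$ so that it does not spoil the effective non-degeneracy, while inside $\cZ(\eta)$ it is absorbed harmlessly into the first integral of \eqref{24-4-33}. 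This is the technical heart of the argument and relies on combining the $x_1$-decay profile \eqref{24-4-31} of $v$ with the integral representation \eqref{24-4-28} of $W$ to extract cancellations in the quadratic form of the Schur complement.
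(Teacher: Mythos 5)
Your overall strategy -- reduce to an effective transverse Hamiltonian whose potential is the fibre eigenvalue $\lambda(x')$, carry the Landau weight $(2\pi)^{-r}f_{\infty,1}\cdots f_{\infty,r}$, and use (\ref{24-4-32}) as the non-degeneracy -- is the right one and is what the paper does in spirit. But your proof has a genuine gap exactly where you flag it, and you do not close it. The Schur-complement correction to $\lambda(x')$ coming from the off-diagonal Feshbach blocks is, as you compute, $O(\gamma^{-2})$, while $|\nabla\lambda|\asymp\zeta^3\gamma^{-1}$; since $\zeta\to0$, for the relevant weights (e.g.\ $\rho=\langle x\rangle^{l}$, $l<-2$, so $\zeta\asymp\gamma^{2l+1}$) one has $\gamma^{-2}\gg\zeta^3\gamma^{-1}$, so the correction genuinely destroys the effective non-degeneracy hypothesis you need in order to cite Theorem~\ref{thm-23-4-3}. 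Asserting that ``cancellations'' from \textup{(\ref{24-4-31})} and \textup{(\ref{24-4-28})} will rescue this is not a proof; this is the entire difficulty. A second, related problem: Theorem~\ref{thm-23-4-3} is stated for an honest magnetic Schr\"odinger operator satisfying \ref{23-4-6-*}, whereas your reduced object is at best a pseudodifferential operator with operator-valued lower-order corrections, so the citation is not legitimate as it stands. Finally, your step~2 lower bound on $\Ran Q$ is misstated: on (lowest Landau level)$\,\otimes\,\Ran Q$ the operator $A-\tau^*$ is only bounded below by $\approx 0$ (the fibre operator restricted to the orthogonal complement of its ground state is merely nonnegative), not by $\min_j f_{\infty,j}(1-o(1))$.

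The paper circumvents your obstacle by never forming a quadratic-form Schur complement. After the a priori bound that the compact zone contributes $O(1)$ (Proposition~\ref{prop-24-4-14}, which itself requires the operator inequality \textup{(\ref{24-4-38})} and the trace estimates \ref{24-4-43-1}--\ref{24-4-43-5} -- a step you skip), it rescales each $\gamma$-element into a semiclassical problem with operator-valued symbol ($h=\gamma^{-1}$, $h_1=\gamma_1^{-1}$, $\mu=\gamma$), works with the propagator $U$, decomposes it over Landau levels via \textup{(\ref{24-4-49})}--\textup{(\ref{24-4-50})}, eliminates the components $U_{\varsigma\nu}$ with $|\varsigma|+|\nu|\ge1$ by ellipticity at the level of the wave equation, and block-diagonalizes to a scalar $\hslash$-pseudodifferential equation $(\hslash D_t-\Lambda)u=0$ with $\hslash=\gamma^{-2}$. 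In that framework the $O(\gamma^{-2})=O(\hslash)$ corrections you worry about are lower-order terms of the \emph{symbol}: microhyperbolicity \textup{(\ref{24-4-58})} is a condition on the principal symbol only, and the Tauberian argument with $T=\hslash^{1-\delta}$ converts the $O(\hslash)$ indeterminacy into an $O(1)$ contribution per partition element -- which is precisely the first integral in \textup{(\ref{24-4-33})} -- while successive approximations recover the principal part \textup{(\ref{24-4-34})}. Until you either reproduce this microlocal mechanism or actually exhibit the claimed cancellation in the Schur complement, your argument does not establish the theorem.
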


\begin{proof}
\begin{enumerate}[label=(\alph*), wide, labelindent=0pt]
\item\label{pf-24-4-13-a}
We know that
\begin{gather}
\N^- (\eta )= \N ^-(A-f^*_\infty +\eta )=
\N^-(\A_\eta )=\Tr (E_\eta (0)),
\label{24-4-35}\\
\shortintertext{where}
\A_\eta \coloneqq   J^{-\frac{1}{2}}(A-f^*_\infty+\eta )J^{-\frac{1}{2}}
\label{24-4-36}
\end{gather}
is a self-adjoint operator in $\sL^2(\bR^d)$ and $E_\eta (\tau )$ is the spectral projector of this operator and $J\asymp \rho^2$ is such that
\begin{equation}
|\nabla^\alpha J|= O(J\gamma^{-|\alpha'|}\gamma_1^{-|\alpha_1|})\qquad \forall \alpha.
\label{24-4-37}
\end{equation}

We consider only $\eta >0$ and for any fixed $\eta >0$ and $\tau $ this projector is finite-dimensional and its Schwartz kernel belongs to $\sS(\bR^{2d})$ uniformly on $\tau \le \tau _0$. Let us note that
\begin{equation}
((A-f^*_\infty)v,v)\ge
(1-\epsilon)((A_\infty -f^*_\infty)v,v) -C \|\rho v\|^2
\qquad \forall v\in \sC_0^2(\bR^d)
\label{24-4-38}
\end{equation}
where $(A_\infty -f^*_\infty)$ is non-negative.

Indeed, without any loss of the generality one can assume that
\begin{gather}
A_\infty = D_1^2 + \sum_{1\le j\le p} \bigl( D_{2j}^2 + (D_{2j+1}+f_j x_{2j})^2\bigr).
\label{24-4-39}\\
\shortintertext{Then}
A_\infty-f_\infty = D_1^2 +\sum_{1\le j\le p} Z_j^* Z_j
\label{24-4-40}\\
\shortintertext{with}
Z_j= iD_{2j} + (D_{2j+1}+f_j x_{2j}).
\label{24-4-41}
\end{gather}
On the other hand, $A-A_\infty$ is a linear combination of $D_1^2$, $D_1Z_j$,
$D_1Z_j^*$, $Z^*_jZ_k$, $Z_jZ^*_k$, $Z_jZ_k$, $Z^*_jZ^*_k$, $Z_j$, $Z^*_j$ and $1$ with the coefficients $\beta_*$ satisfying
\begin{equation}
|\nabla ^\alpha \beta_*|=O(\rho^2 \gamma^{-|\alpha'|}\gamma_1^{-\alpha_1})\qquad\forall \alpha.
\label{24-4-42}
\end{equation}
Then extra terms in $(Av,v)$ do not exceed
\begin{gather*}
C\Bigl(\|\rho D_1 v\|^2+\sum_j \|\rho Z_jv\|^2 + \sum_j \|\rho Z^*_jv\|^2 +\|\rho v\|^2\Bigr),\\
\shortintertext{where obviously}
\|\rho Z_j^*v\|^2\le C\Bigl(\|\rho Z_jv\|^2+\|\rho v\|^2\Bigr).
\end{gather*}

This inequality (\ref{24-4-38}) immediately yields estimates
\begin{phantomequation}\label{24-4-43}\end{phantomequation}
\begin{align}
&\|D_1 J^{- \frac{1}{2} }E_\eta (\tau )\|\le C,
&&\|Z_jJ^{-{\frac{1}{2}}}E_\eta (\tau )\|\le C\quad j=1,\ldots, p,
\tag*{$\textup{(\ref*{24-4-43})}_{1,2}$}\label{24-4-43-1}\\[2pt]
&\|J^{-\frac{1}{2}} E_\eta (\tau )\|\le C\eta^{-\frac{1}{2}},
&&\|E_\eta(\tau)\|\le 1
\tag*{$\textup{(\ref*{24-4-43})}_{3,4}$}\label{24-4-43-3}\\
\intertext{and therefore}
&\|Z_j^*J^{-\frac{1}{2}} E_\eta (\tau )\|\le C\eta^{-\frac{1}{2}},
&&\|Z_j^*E_\eta(\tau)\|\le C
\tag*{$\textup{(\ref*{24-4-43})}_{5,6}$}\label{24-4-43-5}
\end{align}
for operator norms where here and below $\tau \le \tau _0$.

Then one can prove easily that
\begin{claim}\label{24-4-44}
Let $Q$ be a product of several factors $D_1$, $Z_\bullet$ and $Z_\bullet^*$. Then $\|Q J^{-{\frac{1}{2}}}E_\eta (\tau )\|\le C$ provided there more of factors $D_1$, $Z_\bullet$ than of $Z_\bullet^*$, and
$\|Q J^{-{\frac{1}{2}}}E_\eta (\tau )\|\le C\eta^{-\frac{1}{2}}$,
$\|Q E_\eta (\tau )\|\le C$ provided there as many of factors $D_1$, $Z_\bullet$ as of $Z_\bullet^*$.
\end{claim}
Then this claim remains true for $Q$ replaced by $Q'\coloneqq   D_{x'}^\alpha Q$ for any $\alpha$ and then in virtue of the embedding theorem this is also true for
the operator norm from $\sL^2(\bR^d)\mapsto \sL^2(\bR^d)$ replaced by the
operator norm from $\sL^2(\bR^d)\mapsto \sL^2(\bR)$ taken over any magnetic line $\{x\colon   x'=y'\}$ uniformly with respect to $y'$. In particular,
\begin{gather*}
\1 D_1 J^{-\frac{1}{2}} E_\eta (\tau) v\|_{\sL^2(\bR)}\le C\|v\|\qquad
\text{and}\qquad \1 E_\eta (\tau) v\|_{\sL^2(\bR)}\le C\|v\| \\
\shortintertext{and therefore}
| \bigl(E_\eta (\tau) v\bigr)(x)|\le
\underbracket{CJ^{\frac{1}{2}}(x)
\Bigl( J_0^{-\frac{1}{2}}(x')\gamma_1^{-1} + \langle x_1\rangle^{\frac{1}{2}}\Bigr)} \|v\|
\end{gather*}
with $J_0(x')= \max _{x_1} J(x_1,x')$. So, we estimated the operator norm of $w\to (E_\eta(\tau)w)(x)$ from $\sL^2(\bR^d)$ to $\bC$; therefore
\begin{equation*}
|e_\eta(x,x,\tau)|\le CJ(x)\Bigl( J_0^{-1}(x')\gamma_1^{-1} +
\langle x_1\rangle\gamma_1^2\Bigr)
\end{equation*}
and therefore
$|\int e_\eta(x,x,\tau)\,dx_1|\le C(1 + \varrho^2(x')\gamma_1^2)\le C$ due to properties of $J$, $\varrho$ and $\varrho_1$.
Therefore we have proven:

\begin{proposition}\label{prop-24-4-14}
In the framework of Theorem~\ref{thm-24-4-13} and the definitions of $J$ and $e_\eta(x,y,\tau)$,
\begin{equation*}
\int_{|x'|\le r} e_\eta (x,x,\tau )\,dx=O(1)
\end{equation*}
for all $\eta>0$, $\tau\le\tau_0$ and for any fixed $r$ and $\tau_0$.
\end{proposition}
Recall that $e_\eta (x,y,\tau )$ is the Schwartz kernel of $E_\eta (\tau )$. So, we only need to treat the contribution of the zone $\{x\colon   |x'|\ge r\}$.
\item\label{pf-24-4-13-b}
Let us fix $\y'\in \bR^d$ and consider $\psi(x')$,
$\psi \in \sC_0^K\bigl(B(\y',\frac{1}{2}\gamma)\bigr)$
with $\gamma =\gamma(\y')$ such that
$|D^\alpha\psi |\le c\gamma ^{-|\alpha'|}$
$\forall \alpha': |\alpha'|\le K$. We want to derive asymptotics of
\begin{equation}
\int \psi (x') e_\eta (x,x,0 )\,dx=
\int \psi \Tr_{\bH} \bigl(\mathsf{e}_\eta (x',x',0)\bigr)\, dx'=
\Tr (\psi E_\eta (0)),
\label{24-4-45}
\end{equation}
where $\mathsf{e}_\eta (x',y',\tau)$ is the family of operators in $\bH$ with Schwartz kernel $e_\eta (x,y,\tau)$.
Let us rescale $x'_\new=(x'-\y') \gamma ^{-1}$, $x_{1\new}=x_1\gamma_1 ^{-1}$.
Then we obtain the standard LSSA problem for an operator with an operator-valued symbol, with the semiclassical parameters $h=\gamma ^{-1}$ and
$h_1=\gamma_1 ^{-1}$ and with magnetic field intensity parameter $\mu =\gamma $. Recall that the rescaled operator is
\begin{gather}
h_1^2D_1^2+ \sum_{1\le j\le p }
\Bigl(h^2 D_{2j}^2 + \bigl(h D_{2j+1}-h^{-1}f_j x_{2j}\bigr)^2\Bigr)+
\rho^2 A',
\label{24-4-46}
\end{gather}
where $A'=a'(x,h_1D_1, hD',h)$ is an operators with uniformly smooth symbol $a'$ (we consider it more carefully later).
Let $U(x,y,t)$ be the Schwartz kernel of the operator
$\exp (ih ^{-2}t \A_\eta )$. Later we rescale $t$.
Then
\begin{equation}
(h^2 D_t-\A_\eta )U=0, \qquad U|_{t=0}= \gamma^{2p} \updelta(x-y)I
\label{24-4-47}
\end{equation}
So let $\psi$ be a $\gamma$-admissible partition element.
It follows from (\ref{24-4-44}) that the operator norm (from $\sL^2(\bR^d)$ to $\sL^2(\bR^d)$) of $Q \psi E_\eta (\tau )Q^*$ does not exceed $C$ for the operators $Q$ which are products of several factors $h_1D_1$, $Z_\bullet$ and $Z^*_\bullet$\,\footnote{\label{foot-24-29} Due to (\ref{24-4-41}) now
$Z_j= ihD_{2j} + (hD_{2j+1}+h^{-1}f_j x_{2j})$.}
and there are more factors of $h_1D_1$, $Z_\bullet$ and than of $Z^*_\bullet$.
Then the operator norm of $F_{t\to h^{-2}\tau }\chi _T(t) Q\psi UQ^*$ does not exceed $CT$ for the operators $Z$ listed above where $\chi \in \sC_0^K(\bR)$ is fixed and $T\ge T_0$ with constant $T_0>0$.

Let us apply the transformation $\cT =\cT _0^{-1}\cT _1\cT _0$
where $\cT _0=F_{x '''\to h ^{-2}\xi''' }$, $x'=(x'',x''')$, $x''=(x_2,x_4,\ldots, x_{d-1})$,
$x'''=(x_3,x_5,\ldots, x_d)$ and the same partition for $\xi'=(\xi'',\xi''')$, and
\begin{equation}
\cT _1v(x_2,\xi _3, x_4,\ldots, \xi_d)=v(x_2-f_1^{-1}\xi _3,\xi _3,\ldots, x_{d-1}-f_p^{-1}\xi_d,\xi_d).
\label{24-4-48}
\end{equation}
Then instead of $hD_{2j}$ and $(hD_{2j+1}-h^{-1}f_j x_{2j})$ we obtain $hD_{2j}$ and $-h^{-1}x_{2j+1}$ respectively. Let $\Psi $ be the corresponding linear symplectic transformation. Let $\bar{U}=\cT_x\psi' U^t\cT_y$ where $\psi'$ is supported in $B(0,1-\epsilon)$\,\footnote{\label{foot-24-30} We shifted the coordinate system so that our partition element is supported there.} and equals $1$ in $B(0,1-2\epsilon )$.

Let us decompose $U(x,y,t)$ in terms of the functions
\begin{gather}
\Upsilon _\varsigma(x'')=
h^{- \frac{1}{2} }\upsilon _{\varsigma_1}(x_2h ^{-1})
h^{- \frac{1}{2} }\upsilon _{\varsigma_2}(x_4h ^{-1})\cdots
h^{- \frac{1}{2} }\upsilon _{\varsigma_p}(x_{2p}h ^{-1})
\label{24-4-49}\\
\shortintertext{and $\Upsilon _{\nu} (y'')$:}
\bar{U}(x,y,t)=\sum_{\varsigma,\nu\in \bZ^+}\Upsilon _{\varsigma}(x'')
\Upsilon _\nu (y'')U_{\varsigma\nu}(x_1, x''';y_1,y''';t).
\label{24-4-50}
\end{gather}
We make the same decomposition for $E(x,y,\tau)$.
Then the above estimates yield that
\begin{claim}\label{24-4-51}
The operator norm of $F_{t\to h^{-2}\tau}\chi_T(t)U_{\varsigma\nu}$ does not exceed $CT$.
\end{claim}
Next, the standard ellipticity arguments show that
\begin{claim}\label{24-4-52}
The operator norm of\footnote{\label{foot-24-31} In the obvious situations we do not distinguish operators and their Schwartz kernels.}
$F_{t\to h^{-2}\tau}\chi_T(t)J^{-\frac{1}{2}}(x_1) U_{\varsigma\nu}$ does not exceed $C\varrho^{|\varsigma|-1} T$ for $\varsigma \ne 0$, and also the operator norm of $F_{t\to h^{-2}\tau}\chi_T(t)J^{-\frac{1}{2}}(y_1) U_{\varsigma\nu}$ does not exceed $C\varrho^{|\nu|-1} T$ for $\nu \ne 0$, and, finally, the operator norm of $F_{t\to h^{-2}\tau}\chi_T(t)J^{-\frac{1}{2}}(x_1)J^{-\frac{1}{2}}(y_1) U_{\varsigma\nu}$ does not exceed $C\varrho^{|\nu|+|\varsigma|-2} T$ for
$\nu \ne 0$ and $\varsigma\ne 0$ and the same is true if we apply $D_{x_1}^k$ and $D_{y_1}^l$.
\end{claim}
Moreover, for $\varsigma=\nu=0$ we have
\begin{claim}\label{24-4-53}
The operator norm of
$F_{t\to h^{-2}\tau}\chi_T(t)D_{x_1}^kJ^{-\frac{1}{2}}(x_1) U_{00}$ does not exceed $C T$ for $k\ge 1$, and also the operator norm of
$F_{t\to h^{-2}\tau}\chi_T(t)D_{y_1}^lJ^{-\frac{1}{2}}(y_1) U_{00}$ does not exceed $CT$ for $l\ge 1$, and, finally, the operator norm of
\begin{equation*}
F_{t\to h^{-2}\tau}\chi_T(t)D_{x_1}^kD_{y_1}^l J^{-\frac{1}{2}}(x_1)J^{-\frac{1}{2}}(y_1) U_{00}
\end{equation*} does not exceed $CT$ for
$k\ge 1$ and $l\ge 1$.
\end{claim}

Then\enlargethispage{\baselineskip}
\begin{equation}
\Tr(\psi E) = \sum_{\varsigma,\nu} h^{|\varsigma-\nu|} \Tr(\psi_{\varsigma\nu}E_{\varsigma,\nu}) + O(h^s)
\label{24-4-54}
\end{equation}
where we have the original expression on the left-hand side,
$\psi_{\varsigma\nu}=\psi_{\varsigma\nu}(x''', h^2D''',h^2)$,
$\psi_{\varsigma\varsigma}(x''',\xi''',0)=\psi(x''',\xi''')$. Moreover,
$\supp(\psi_{\varsigma\nu})\subset\supp(\psi )$
and one can replace $\psi_{\varsigma\nu}-\updelta_{\varsigma\nu}\psi$ by a linear combination of the derivatives of $\psi$ of non-zero order.
\item\label{pf-24-4-13-c}
It follows from Proposition~\ref{prop-24-A-1} that operator $(A-f^*_\infty)J^{-1}$ is elliptic outside of $\cZ(\eta)$ and then one can prove easily that the total contribution of $\bR^d\setminus \cZ(\eta)$ to the remainder does not exceed
\begin{equation}
C\int \gamma^{-s}\,dx',
\label{24-4-55}
\end{equation}
while its contribution to the principal part of asymptotics is given by the Tauberian expression.
\item\label{pf-24-4-13-d}
From now on $\psi'$ is a partition element in $\cZ(\eta)$. Recall that
\begin{equation}
(h^2D_t -J^{-\frac{1}{2}}A J^{-\frac{1}{2}})=0,\qquad U|_{t=0}= \updelta (x-y).
\label{24-4-56}
\end{equation}
and the dual equation with respect to $y$. Then using ellipticity arguments we can express $U_{\varsigma\nu}$ with $|\varsigma|+|\nu|\ge 1$ via $U_{00}$ via some $(h_1, h^2)$-pseudodifferential operators (with respect to $(x_1,x''')$) and $h_1^2$ and then
plugging back into equation we get
\begin{equation}
(h^2D_t -J_0^{-\frac{1}{2}}A_0 J_0^{-\frac{1}{2}})=0,\qquad
U_{00}|_{t=0}= M \updelta (x-y).
\label{24-4-57}
\end{equation}
where $A_0$ differs from $h_1^2 D_1 + V^*+\eta$, with $V^*=V+f^*-f^*_\infty$ by $o(\rho^2)$; from the beginning we could assume that $g^{11}=1$. Here $J_0$ and $A_0$ are $(h_1, h^2)$-pseudodifferential operators (with respect to $(x_1,x''')$)-pseudodifferential operators. Let $\hslash\coloneq h^2$.
Let us observe that in virtue of Proposition~\ref{prop-24-A-1} the operator $J_0^{-\frac{1}{2}}A_0J_0^{-\frac{1}{2}}$
has discrete spectrum in $\bH$ and all the eigenvalues of this operator excluding at most one are positive and uniformly disjoint from $0$ and there is one (the lowest) eigenvalue
$\Lambda =\Lambda (x''',\xi''',\eta)$ which is $O(1)$; moreover, due to (\ref{24-4-32}) it satisfies the microhyperbolicity condition
\begin{equation}
|\Lambda|+|\nabla \Lambda|\asymp 1.
\label{24-4-58}
\end{equation}
Then there exists a symbol $q(x''',\xi ''',\hslash):\bH\to \bC\oplus \bH$ such that for the operator $Q=q(x''',\hslash D''',\hslash )$ and for $U'=QU_{00}Q^*$ we obtain separate equations for all four blocks of
$U'=\begin{pmatrix} U'_{00} & U'_{01}\\U''_{10}&U'_{11}\end{pmatrix}$.
Moreover, for the blocks $U'_{10}$ and $U'_{11}$ the equations are elliptic for
$\tau \le \epsilon _1\hslash $ and for $U'_{10}$ and $U'_{11}$ this is
true for the dual equations.
Therefore $U'\equiv \begin{pmatrix} u & 0\\0&0\end{pmatrix}$ with
$u=U'_{00}$ and
\begin{equation}
F_{t\to \hslash ^{-1}\tau }\chi _T(t)\Tr (\psi U)\equiv
F_{t\to \hslash ^{-1}\tau }\chi _T(t)\Tr (\psi'' u)
\label{24-4-59}
\end{equation}
for $\tau \le \epsilon _0h^2$, $T\in (h^{-\delta },\epsilon_0 )$ where
$\psi ''=\psi ''(x''', h''' D''',\hslash)$.
We have an equation for $u$:
\begin{equation}
(\hslash D_t-\Lambda )u=0
\label{24-4-60}
\end{equation}
where $\Lambda $ is an $\hslash$-pseudodifferential operator. More precisely:
due to the microhyperbolicity we conclude that
\begin{claim}\label{24-4-61}
The contribution of the partition element to the final answer is given by Tauberian expression with $T=\hslash^{1-\delta}$ with an error $O(1)$.
\end{claim}

Therefore, the total contribution of $\cZ(\eta)$ to the remainder does not exceed $C\int_{\cZ(\eta)}\gamma^{-2}\,dx$ (in the original coordinates).

\item\label{pf-24-4-13-e}
Employing the method of the successful approximations and picking $\psi=1$, and we conclude that the final answer is given by (\ref{24-4-34}) since since $\Lambda <0 \iff \lambda <-\eta $.
We leave easy details to the reader.
\end{enumerate}
\end{proof}

\begin{remark}\label{rem-24-4-15}
If $V^*(x)\asymp |x|^{-2}$ then (formally) $W(x') \asymp |x'|^{-1}$ and
$\lambda(x')\asymp |x'|^{-2}$ and $\cN(\eta)\asymp \eta^{-r}$ as follows from the results of Subsection~\ref{sect-24-4-1}.
\end{remark}

\subsection{Generalizations}
\label{sect-24-4-2-3}

\begin{remark}\label{rem-24-4-16}
\begin{enumerate}[label=(\roman*), wide, labelindent=0pt]
\item\label{rem-24-4-16-i}
For the Schr\"odinger-Pauli operator Theorem~\ref{thm-24-4-13} obviously holds albeit with $f^*=f^*_\infty=0$.

\item\label{rem-24-4-16-ii}
The same is true for the Dirac operator. The proof is essentially the same. We need to assume that the mass $M\ne 0$, otherwise the spectral gap $(-M,M)$ is empty. Then we consider $\N^-(\eta)= \N (0, M-\eta)$ and
$\N^+(\eta)= \N (-M+\eta,0)$. Instead of $0$ we can take any $\bar{\tau}\in (-M,M)$ which preserves the result modulo $O(1)$.

Let us consider $\N^-(\eta)$. Modulo $O(1)$ it equals to
$\tilde{\N} (\eta ;-\epsilon _2, 0)$, where $\tilde{\N} (\eta ;\tau _1,\tau _2)$ is the number of eigenvalues of the problem
\begin{equation}
(A-M+\eta )v+\tau \J v=0
\label{24-4-62}
\end{equation}
belonging to the interval $(\tau _1,\tau _2)$ and
$\J =\frac{1}{2}(I+\upsigma _0)J$ where $J$ was introduced in the proof of Theorem~\ref{thm-24-4-13}. This problem is equivalent to the problem
\begin{gather}
(\cA _\eta -\tau J)w=0,\qquad
\cA _\eta =\cL ^*(2M-\eta -V)^{-1}\cL +V + \eta ,
\label{24-4-63}\\
\intertext{where we assume that}
\upsigma _0=\begin{pmatrix} I &0\\0&-I\end{pmatrix},\quad
\upsigma _j=\begin{pmatrix} 0 &\upsigma '_j\\ \upsigma'_j &0\end{pmatrix},
\quad \upsigma '_j{}^*=\upsigma '_j \;(j=1,\ldots, d)
\label{24-4-64}\\
\shortintertext{and}
\cL=\sum_{1\le j\le d}\frac{1}{2}(P_k\omega^{jk}+\omega^{jk}P_k)\upsigma'_j.
\label{24-4-65}
\end{gather}
One can easily transform $\cA_\eta$ to the form of the Schr\"odinger-Pauli operator with the metric $\tilde{g}^{jk}=(2M-\eta-V)^{-1}g^{jk}$.
\end{enumerate}
\end{remark}

\begin{example}\label{example-24-4-17}
\begin{enumerate}[label=(\roman*), wide, labelindent=0pt]
\item\label{example-24-4-17-i}
In the standard isotropic case $\gamma_1=\gamma = \langle x'\rangle$ and as
$\rho (x)= \langle x\rangle ^l$ with $l<-2$; then for $W$ defined by (\ref{24-4-28}) satisfies similar conditions with $m=2l+1$ and thus
we are in the framework of Example~\ref{example-23-4-4} of \cite{futurebook}.

\item\label{example-24-4-17-ii}
However, we can also consider $\rho (x)= \langle x\rangle ^l\langle x'\rangle^p$ with $l<-2$; then $m=2l+2p+1$.

\item\label{example-24-4-17-iii}
We can also consider faster and slower decaying potentials, as soon as $W^2$ satisfies conditions imposed on $V$ in Subsection~\ref{sect-24-4-1}. See Example~\ref{example-23-4-5}, Problems~\ref{Problem-23-4-8}  and~\ref{Problem-23-4-10} of \cite{futurebook}.
\end{enumerate}
\end{example}

\begin{example}\label{example-24-4-18}
Let us consider the case when at infinity $f_j$ stabilize not to $f_\infty=\const$ but to $f_\infty(\theta)$, $\theta=x'/|x'|\in \bS^{d-2}$.

Then as long as other assumptions are fulfilled, we arrive to asymptotics described in Theorem~\ref{thm-23-4-13} of \cite{futurebook}; again, instead of $V(x')$ we have $\Lambda(x')$ in the conditions and in the expression for $\cN^-(\eta)$.
\end{example}

\begin{remark}\label{rem-24-4-19}
Let us consider an auxiliary operator with potential $V$ which
is $\asymp |x_1|^{-2}$ as $x_1 \to \infty$. One can easily prove (see
Proposition~\ref{prop-24-A-3} below) that if
\begin{equation}
V^*\ge - \frac{1}{4}|x| ^{-2}\qquad \forall x\colon |x|\ge C,
\label{24-4-66}
\end{equation}
then the number of negative eigenvalues is finite and there is
no more than one negative eigenvalue if this inequality holds for
all $x$. Moreover, under the conditions
\begin{equation}
V^*\ge (\epsilon - {\frac{1}{4}})|x| ^{-2}\qquad \forall x\colon |x|\ge C
\tag*{$\textup{(\ref*{24-4-66})}^*$}\label{24-4-66-*}
\end{equation}
and (\ref{24-A-12}) with arbitrarily small $\epsilon >0$ all the
statements of Proposition~\ref{prop-24-A-1} remain true. Furthermore, under
condition \ref{24-4-66-*}
\begin{equation}
\blangle \boldsymbol{a}v,v\brangle \ge
{\frac{\epsilon }{2}}\1\langle x\rangle ^{-1}v\1 ^2-
C\1\langle x\rangle ^{-s}v\1 ^2\qquad \forall v
\label{24-4-67}
\end{equation}
with arbitrarily large $s$. Therefore we can cover the case
\begin{gather}
\rho (x)=\langle x\rangle ^{-2}\langle x'\rangle ^{p+2}, \qquad p< -1
\label{24-4-68}\\
\shortintertext{provided}
V^*\ge (\epsilon -{\frac{1}{4}})|x| ^{-2}\qquad
\forall x\colon |x_1|\ge c|x'|
\label{24-4-69}
\end{gather}
with arbitrarily small $\epsilon >0$. The remainder estimate is
the same $O(1)$ as above. The details are left to the reader.
\end{remark}

\begin{remark}\label{rem-24-4-20}
Let $\rank \F(x) =2r\le d-2$ (as $|x|\ge c$). Then the auxiliary operator is $(d-2r)$-dimensional and does not have negative eigenvalues at all in the assumptions of this subsection.

Then one can prove easily that $\N^-(\eta)=O(1)$. In particular, if $\gamma_1=\gamma=\langle x'\rangle$ and $\rho = \langle x'\rangle^{m}$, $\N^-(\eta)=O(1)$ for $m<-1$ (and even for $m=-1$ under assumption (\ref{24-4-69}) but there is a non-trivial asymptotics for $m>-1$; see Subsection~\ref{sect-24-4-3} below.\enlargethispage{\baselineskip}
\end{remark}

\subsection{Possible generalizations}
\label{sect-24-4-2-4}

Consider the case when condition (\ref{24-4-28}) is not fulfilled. We believe that while the Part~\ref{Problem-24-4-21-i} is not extremely challenging, the Part~\ref{Problem-24-4-21-ii} is:
\begin{Problem}\label{Problem-24-4-21}
\begin{enumerate}[label=(\roman*), wide, labelindent=0pt]
\item\label{Problem-24-4-21-i}
Prove that the main part of the asymptotics is still given by (\ref{24-4-34}).
\item\label{Problem-24-4-21-ii}
Prove that (\ref{24-4-33}) still holds.
\end{enumerate}
\end{Problem}

\section{Case \texorpdfstring{$\rank F_\infty=d-1$}{rank F\textunderscore\textinfty= d-1}. Slow decaying potential}
\label{sect-24-4-3}

Now we consider the case as in the previous Subsection~\ref{sect-24-4-2} but we assume that the potential $V$ which either decays slower than $x_1^{-2}$ or as $x_1^{-2}$ but fails condition \ref{24-4-66-*}. We only sketch the main arguments.

\subsection{Main theorem (statement)}
\label{sect-24-4-3-1}

We assume that
\begin{phantomequation}\label{24-4-70}\end{phantomequation}
\begin{align}
&|\nabla^\alpha (\g-\g_\infty)|=
o(\rho^2 \langle x'\rangle^{-|\alpha'|}\langle x\rangle^{-\alpha_1}),
\tag*{$\textup{(\ref*{24-4-70})}_{1}$}\label{24-4-70-1}\\[3pt]
&|\nabla^\alpha V|=
O(\rho^2 \langle x'\rangle^{-|\alpha'|}\langle x\rangle^{-\alpha_1}),
\tag*{$\textup{(\ref*{24-4-70})}_{2}$}\label{24-4-70-2}\\[3pt]
&|\nabla^\alpha (F_{jk}-F_{\infty,jk})|=
O(\rho^2 \langle x'\rangle^{-|\alpha'|}\langle x\rangle^{-\alpha_1})
\qquad\text{as\ \ } |x|\to \infty\quad \forall\alpha
\tag*{$\textup{(\ref*{24-4-70})}_{3}$}\label{24-4-70-3}
\end{align}
where\enlargethispage{3\baselineskip}
\begin{equation}
\rho =\langle x\rangle ^{-q}\langle x'\rangle ^{m+q}
\label{24-4-71}
\end{equation}
$q>0$, $m< 0$. Further, as $m+q=0$ we assume in addition that
\begin{phantomequation}\label{24-4-72}\end{phantomequation}
\begin{align}
&|\nabla^\alpha (\g-\g_\infty)|=
O(\rho^2 \langle x'\rangle^{2-|\alpha'|}\langle x\rangle^{-2-\alpha_1}),
\tag*{$\textup{(\ref*{24-4-72})}_{1}$}\label{24-4-72-1}\\[3pt]
&|\nabla^\alpha V|=
O(\rho^2 \langle x'\rangle^{2-|\alpha'|}\langle x\rangle^{-2-\alpha_1}),
\tag*{$\textup{(\ref*{24-4-72})}_{2}$}\label{24-4-72-2}\\[3pt]
&|\nabla^\alpha (F_{jk}-F_{\infty,jk})|=
O(\rho^2 \langle x'\rangle^{2-|\alpha'|}\langle x\rangle^{-2-\alpha_1})
\tag*{$\textup{(\ref*{24-4-72})}_{3}$}\label{24-4-72-3}\\
&\hskip150pt\text{as\ \ } |x|\to \infty\quad \forall\alpha':|\alpha'|\ge 1.
\notag
\end{align}

\begin{theorem}\label{thm-24-4-22}
Let conditions  $\textup{(\ref{24-4-70})}_{1-3}$ be fulfilled. Let one of two assumptions  be fulfilled:
\begin{enumerate}[label=(\roman*), wide, labelindent=0pt]
\item\label{thm-24-4-22-i}
$m+q<0$ and
\begin{equation}
-\langle x' , \nabla' V^*\rangle \ge \epsilon \rho^2\qquad
\forall x\colon |x'|\ge C_0.
\label{24-4-73}
\end{equation}
\item\label{thm-24-4-22-ii}
$m+q=0$, conditions  $\textup{(\ref{24-4-72})}_{1-3}$ be also fulfilled
\begin{equation}
-\langle x' , \nabla' V^*\rangle \ge \epsilon \rho^2\langle x'\rangle^2
\langle x\rangle^{-2}\qquad \forall x\colon |x'|\ge C_0.
\label{24-4-74}
\end{equation}
\end{enumerate}
Then
\begin{align}
&\N^-(\eta)=\cN^-(\eta)+O(R(\eta))
\label{24-4-75}\\
\shortintertext{with}
&\cN^-(\eta)=(2\pi)^{-r} \int\boldsymbol{n}(x',\eta )
f_{\infty,1}f_{\infty,2}\cdots  f_{\infty,r}\,dx'
\label{24-4-76}\\
\shortintertext{and}
&R(\eta)= \int_{\Lambda (\eta)}
\bigl(\boldsymbol{n}(x',\eta )+1\bigr) \langle x'\rangle ^{-2}\,dx' +
\int \gamma^{-s}\,dx'
\label{24-4-77}
\end{align}
where $\boldsymbol{n}(x',\eta )$ is the number of eigenvalues of the operator $\cL(x')$ which are less than $-\eta$ and $\Lambda(\eta)$ is
$\epsilon\gamma$-vicinity of $\Sigma (\eta)=\{x'\colon   \boldsymbol{n}(x',\eta)>0\}$, $\gamma=\langle x'\rangle$ and $\cL(x')$ is  defined by \textup{(\ref{24-4-19})}--\textup{(\ref{24-4-20})}.
\end{theorem}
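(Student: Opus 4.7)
The plan is to follow the framework of the proof of Theorem~\ref{thm-24-4-13}, with two modifications reflecting that the auxiliary $1\D$ operator $\cL(x')$ now typically has many negative eigenvalues rather than at most one, and that the non-degeneracy hypothesis is a radial monotonicity rather than a gradient bound. First I would pass to the rescaled self-adjoint operator $\A_\eta=J^{-\frac{1}{2}}(A-f^*_\infty+\eta)J^{-\frac{1}{2}}$ with $J\asymp\rho^2$, reducing $\N^-(\eta)$ to $\Tr E_\eta(0)$. The a priori estimates $\textup{(\ref{24-4-43})}_{1-6}$ carry over verbatim, since they only use the non-negativity of $A_\infty-f^*_\infty$ and the transverse ellipticity; they imply fast decay of the Schwartz kernel of $E_\eta(0)$ along magnetic lines, so that contributions from $|x'|\le r$ give $O(1)$ exactly as in Proposition~\ref{prop-24-4-14}. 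I then install a $\gamma$-admissible partition of unity in $x'$ with $\gamma=\langle x'\rangle$, apply the canonical transformation $\cT$ of the earlier proof in each partition element, and decompose into the harmonic-oscillator modes of (\ref{24-4-49}); an ellipticity argument analogous to (\ref{24-4-51})--(\ref{24-4-53}) reduces the whole trace to the $\varsigma=\nu=0$ block modulo $O(\gamma^{-s})$ errors.

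Second, I perform a Grushin reduction of this block to an effective operator on $\bR^{d-1}$. Its principal symbol is, after removing the Landau shift, the operator family $\cL(x')$ acting on $\sL^2(\bR_{x_1})$; by Proposition~\ref{prop-24-A-1} (applied with the decay rate $\rho^2$) this family has discrete spectrum near $0$, with exactly $\boldsymbol{n}(x',\eta)$ eigenvalues below $-\eta$. Block-diagonalising the Grushin reduction produces a matrix-valued pseudodifferential operator on $\bR^{d-1}$ whose low-energy block has eigenvalues $\lambda_j(x')+\eta+O(\gamma^{-\infty})$, $j=1,\ldots,\boldsymbol{n}(x',\eta)$, while the remaining part is elliptic and uniformly bounded away from $0$. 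The Feynman--Hellmann identity $\langle x',\nabla'\lambda_j\rangle=\langle v_j,\langle x',\nabla' V^*\rangle v_j\rangle$ combined with (\ref{24-4-73}) (or (\ref{24-4-74}) in the borderline case $m+q=0$) and the normalisation $\|v_j\|=1$ gives $\langle x',\nabla'\lambda_j\rangle\le -\epsilon|\lambda_j|$ uniformly in $j$; this is exactly the microhyperbolicity of each eigenvalue of the effective symbol in the direction $\ell=-x'\partial_{x'}$ at energy $0$.

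Third, I apply the standard Tauberian theorem for microhyperbolic $h$-pseudodifferential operators (with $h=\gamma^{-1}$) on each partition element: the remainder is bounded by $C(\boldsymbol{n}(y',\eta)+1)\langle y'\rangle^{-2}$, accounting for one contribution per negative eigenvalue of $\cL(y')$ together with the ``ground-state'' contribution, all at the basic semiclassical scale $\gamma^{-2}$. Summation over the partition yields the first term of $R(\eta)$; outside $\Lambda(\eta)$ the effective operator is elliptic, giving the second term. The Tauberian principal part, in turn, assembles over the partition into $\cN^-(\eta)$: the factor $(2\pi)^{-r}f_{\infty,1}\cdots f_{\infty,r}$ comes from the Hermite modes in the $2r=d-1$ transverse directions (Landau multiplicity) and $\boldsymbol{n}(x',\eta)$ counts the effective bound states along each magnetic line.

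The main obstacle is the uniformity of the microhyperbolicity in the eigenvalue index $j$, especially for $\lambda_j$ close to $-\eta$, where naive perturbation theory for $\cL(x')$ degenerates. One has to exploit the $1\D$ regularity estimates of Appendix~\ref{sect-24-A-1} to control both $\|\langle x',\nabla' V^*\rangle v_j\|$ and the correction terms arising from $\g-\g_\infty$ and $\F-\F_\infty$ by quantities proportional to $|\lambda_j|$ (rather than by $\rho^2$), so that the radial-monotonicity bound survives uniformly down to $|\lambda_j|=\eta$. In the borderline case $m+q=0$ the weight $\langle x'\rangle^2\langle x\rangle^{-2}$ in (\ref{24-4-74}) degenerates where $|x_1|\gg|x'|$, and that region must be treated separately, using $\textup{(\ref{24-4-72})}_{1-3}$ together with an auxiliary elliptic estimate built from the non-degeneracy of the magnetic field.
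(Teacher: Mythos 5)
Your overall skeleton (reduction to an effective problem along the magnetic lines, microhyperbolicity extracted from the radial condition on $V^*$, Tauberian argument with one contribution per bound state of $\cL(x')$) matches the paper's, and your first and third steps are essentially what the paper does. The divergence --- and the gap --- is in the middle step. The paper never diagonalizes the effective operator into individual eigenvalue branches: it keeps the operator-valued symbol and proves Propositions~\ref{prop-24-4-23} and~\ref{prop-24-4-24}, namely that the propagation (drift) speed in $x'$ is bounded above by $C_0j^{-1}\rho^2\gamma^{-1}$ and, under (\ref{24-4-73}) or (\ref{24-4-74}), bounded below by $\epsilon_1 j^{-1}\rho^2\gamma^{-1}$ in an appropriate direction; the successive approximation method then yields the estimates (\ref{24-4-81})--(\ref{24-4-82}), the key one being $|F_{t\to h^{-1}\tau}\bar{\chi}_T(t)\Gamma(u\psi)|\le C\gamma^{d-1}N$ with $N=\sup\boldsymbol{n}(x',\eta)$, and the Tauberian argument gives (\ref{24-4-77}). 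The microhyperbolicity is thus exploited at the level of the full operator-valued symbol, where it follows directly from the hypothesis on $V^*$, with no need to separate eigenvalues.

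Your Grushin reduction followed by block-diagonalization into the $\boldsymbol{n}(x',\eta)$ branches $\lambda_j(x')$ does not survive scrutiny. First, $\boldsymbol{n}(x',\eta)$ is unbounded as $\eta\to0$ and the $\lambda_j$ may be nearly degenerate or cross as $x'$ varies, so there is no uniform spectral gap justifying the block-diagonalization in the symbol calculus, and the Feynman--Hellmann identity presupposes smooth simple branches. Second, even granting smooth branches, (\ref{24-4-73}) gives $-\langle x',\nabla'\lambda_j\rangle\ge\epsilon\|\rho v_j\|^2$, not the bound $\ge\epsilon|\lambda_j|$ you assert, and upgrading one to the other uniformly down to $\lambda_j=-\eta$ is not automatic; moreover the estimates of Appendix~\ref{sect-24-A-1} that you invoke (Proposition~\ref{prop-24-A-1}) are proved only in the regime where $\cL$ has at most one negative eigenvalue, so they give no control of the higher eigenfunctions $v_j$ or of $\nabla'v_j$. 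The issue you flag as ``the main obstacle'' is therefore exactly where your proof breaks, and the paper's device for circumventing it is to avoid the diagonalization altogether and prove two-sided drift-speed bounds for the operator-valued problem.
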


\subsection{Proof of Theorem~\ref{thm-24-4-22}: Propagation of singularities}
\label{sect-24-4-3-2}

Again let us consider the number of negative eigenvalues of operator $A_\eta$, defined by (\ref{24-4-36}) with
\begin{equation}
J(x)= j(x') \langle x\rangle^{-2p} \langle x'\rangle ^{2p} \qquad
p=\left\{\begin{aligned}&q&&\text{if\ \ } m+q<0,\\
&q+1&&\text{if\ \ } m+q=0
\end{aligned}\right.
\label{24-4-78}
\end{equation}
and $\gamma$-admissible function $j(x')$. Let $\rho=\langle x'\rangle ^{2m}$.
As in the previous Subsection~\ref{sect-24-4-2} we consider $A_\eta$ as operator in $\sL^2 (\bR^{2r},\bH)$ with $\bH=\sL^2 (\bR,\bC)$. As usual after proper scaling $h=\rho^{-1}\gamma^{-1}$ and $\mu=\rho^{-1}\gamma$.

Again let consider the corresponding propagator. Our first goal is to estimate the propagation speed with respect to $x'$ from  above and then to estimate it under the microhyperbolicity condition also from below.

\begin{proposition}\label{prop-24-4-23}
Let assumptions $\textup{(\ref{24-4-70})}_{1-3}$ be fulfilled with $m<0$ and $0<q\le 1$. Further, let for $m+q=0$ assumptions $\textup{(\ref{24-4-72})}_{1-3}$ be fulfilled as well.

Then the propagation speed with respect to $x'$  does not exceed
$C_0 j^{-1}\rho^2 \gamma^{-1}$ (before scaling) and therefore singularity initially supported in $B(\y',\frac{1}{2}\bar{\gamma})$ is confined to $B(\y',\gamma)$ for $T\le T^*=\epsilon j\rho^{-2}\gamma^2$ calculated at $\y'$:
\begin{equation}
\3 F_{t\to h^{-1}\tau} \bar{\chi}_T(t) \psi (x') (1-\psi_0 (y'))u(x,y,t)\3\le
C T\gamma^{-s}
\label{24-4-79}
\end{equation}
provided $\psi \in \sC_0^\infty B(\y',\frac{1}{2}\gamma)$,
$\psi_0\in \sC_0^\infty B(\y',\gamma )$, $\psi_0=1$ in
$B(\y', \gamma)$, $\gamma=\gamma(\y')$, $|\tau|\le \epsilon$,
$\3\cdot\3$ is a standard operator norm from $\sL^2(\bR^d)$ to $\sL^2(\bR^d)$.
\end{proposition}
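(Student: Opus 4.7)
The plan is to prove this finite propagation estimate by the standard commutator (energy) method, exploiting the a priori operator-norm bounds $\textup{(\ref{24-4-43})}_{1-6}$ that were already established in the proof of Theorem~\ref{thm-24-4-13}. The key simplification here is that $J(x)$ and $\varphi(x')$ are both multiplication operators by classical functions of $x$, and therefore commute as operators; this trivializes what would otherwise be an awkward weight rearrangement.

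First I would choose a smooth cutoff $\varphi=\varphi(x')$ with $0\le\varphi\le 1$, $\varphi=1$ on $B(\y',\tfrac{1}{2}\gamma)$, $\supp\varphi\subset B(\y',\gamma)$, and $|\partial^{\alpha'}\varphi|\le C_\alpha\gamma^{-|\alpha'|}$. Because $\varphi$ is independent of $x_1$, $[P_1,\varphi]=0$, so the commutator with the leading $x_1$-block of $A$ drops out; combined with $[J^{-1/2},\varphi]=0$ this produces the exact identity
\begin{equation*}
ih^{-2}[A_\eta,\varphi] = h^{-1}\!\!\sum_{\substack{j\ge 1\\k\ge 2}}\!\!J^{-1/2}P_jg^{jk}(\partial_k\varphi)J^{-1/2} \;+\; h^{-1}\!\!\sum_{\substack{j\ge 2\\k\ge 1}}\!\!J^{-1/2}(\partial_j\varphi)g^{jk}P_kJ^{-1/2}.
\end{equation*}

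Second, each $P_k$ with $k\ge 2$ is a uniformly bounded linear combination of the operators $Z_\ell,Z_\ell^*$ from the proof of Theorem~\ref{thm-24-4-13}, and $J^{-1/2}$ commutes with $g^{jk}(\partial_k\varphi)$ modulo lower-order terms where the commutator $[J^{-1/2},P_k]$ produces a factor $\nabla_{x'}J^{-1/2}/J^{-1/2}=O(\gamma^{-1})$. The key estimates $\|Z_\ell J^{-1/2}E_\eta(\tau)\|\le C$ and $\|D_1J^{-1/2}E_\eta(\tau)\|\le C$, combined with $|\partial_k\varphi|\le C\gamma^{-1}$ and the weight identity $J^{-1}\asymp j^{-1}\rho^{-2}\langle x'\rangle^{2m}$, yield
\begin{equation*}
\bigl\|ih^{-2}[A_\eta,\varphi]E_\eta(\tau)\bigr\|_{\sL^2\to\sL^2}\le C_0\,j^{-1}\rho^2\gamma^{-1}\cdot\gamma^{-1},
\end{equation*}
which is the asserted speed $j^{-1}\rho^2\gamma^{-1}$ times the $\gamma^{-1}$ thickness of the transition region occupied by $\supp\nabla\varphi$.

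Third, I would feed this bound into the Heisenberg identity applied to $U(t)(1-\psi_0(y'))$:
\begin{equation*}
\tfrac{d}{dt}\bigl\langle\varphi\,U(t)(1-\psi_0),\,U(t)(1-\psi_0)\bigr\rangle = \bigl\langle ih^{-2}[A_\eta,\varphi]\,U(t)(1-\psi_0),\,U(t)(1-\psi_0)\bigr\rangle.
\end{equation*}
Since $\supp_{y'}(1-\psi_0)$ and $\supp_{x'}\varphi$ are disjoint at $t=0$, Gronwall integration over $|t|\le T\le T^*=\epsilon j\rho^{-2}\gamma^2$ produces $\|\varphi(x')U(t)(1-\psi_0(y'))\|\le O(\gamma^{-s})$. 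Finally, insertion of $F_{t\to h^{-1}\tau}\bar\chi_T(t)$, which acts only on the $t$-variable and has operator norm $O(T)$, preserves the estimate and delivers~\textup{(\ref{24-4-79})}.

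The main obstacle is the endpoint $m+q=0$: there the lower-order commutator $[J^{-1/2},P_k]$ generates a term whose naive size is governed jointly by $\gamma^{-1}$ and $\gamma_1^{-1}$, and the strengthened derivative bounds $\textup{(\ref{24-4-72})}_{1-3}$ are exactly what is required to absorb the extra factor $\langle x'\rangle^2\langle x\rangle^{-2}$ back into the speed bound. This is where the constraint $0<q\le 1$ enters non-trivially; the regime $q>1$ would require a different scaling of $T^*$ and lies outside the scope of the proposition. A secondary technical point is that the a priori $E_\eta$-bounds are lifted to the propagator via the identity $F_{t\to h^{-1}\tau}\bar\chi_T(t)U(t)=F_{t\to h^{-1}\tau}\bar\chi_T(t)U(t)E_\eta(\tau_0)$ for any $\tau_0$ beyond the support of $\bar\chi_T$, which reduces the estimate on the propagator to the already-established one on $E_\eta$.
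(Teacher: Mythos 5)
There is a genuine gap at the heart of your Step 2. The quantity $C_0j^{-1}\rho^2\gamma^{-1}$ in the proposition is the \emph{drift} speed, and a direct commutator bound on the full operator $A_\eta$ does not produce it. The commutator $[A,\varphi]$ is a combination of $P_jg^{jk}\partial_k\varphi$, i.e.\ of $Z_\ell,Z_\ell^*$ times $O(\gamma^{-1})$, and the a priori bounds $\|Z_\ell J^{-1/2}E_\eta(\tau)\|\le C$, $\|Z_\ell^*E_\eta(\tau)\|\le C$, $\|J^{-1/2}E_\eta(\tau)\|\le C\eta^{-1/2}$ only control this by something of the order of the full kinetic (cyclotron) speed, which is far larger than $j^{-1}\rho^2\gamma^{-1}$; if the naive estimate sufficed, the microhyperbolicity hypotheses \textup{(\ref{24-4-73})}/\textup{(\ref{24-4-74})} and the companion lower bound of Proposition~\ref{prop-24-4-24} would make no sense. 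The slow speed only emerges after the fast cyclotron motion is averaged out: one must first pass to the Landau-level (Hermite) decomposition and the metaplectic transformation $\cT$ as in parts (b)--(d) of the proof of Theorem~\ref{thm-24-4-13}, reduce to the effective scalar operator $\hslash D_t-\Lambda$ whose symbol's $x'$-gradient is what conditions $\textup{(\ref{24-4-70})}_{1-3}$ (and $\textup{(\ref{24-4-72})}_{1-3}$ when $m+q=0$) actually control, and then run the propagation argument there; equivalently one needs a double-commutator argument in the orbit-center variables. This reduction, not the weight bookkeeping, is where the stated hypotheses enter, and your sketch omits it entirely.

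A secondary but still real defect is the final step: a single Heisenberg identity plus Gronwall, with speed bound $v$ and $T\le T^*=\epsilon j\rho^{-2}\gamma^2$, yields $\|\varphi U(t)(1-\psi_0)\|^2\lesssim Tv\gamma^{-1}\lesssim\epsilon$, which is $O(\epsilon)$, not the claimed $O(\gamma^{-s})$ with arbitrarily large $s$. The $\gamma^{-s}$ gain requires the standard iterated form of the propagation estimate (a family of escape functions together with ellipticity outside the propagation cone after taking $F_{t\to h^{-1}\tau}\bar\chi_T$), which is what the paper means by ``standard proofs are left to the reader.''
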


\begin{proposition}\label{prop-24-4-24}
In the framework of Proposition~\ref{prop-24-4-23} assume that the microhyperbolicity assumption \textup{(\ref{24-4-73})} is fulfilled for $m+q<0$ and the microhyperbolicity assumption \textup{(\ref{24-4-74})} is fulfilled for $m+q=0$. Then the propagation speed with respect to $x'$ in an appropriate direction is greater than $\epsilon_1 j^{-1}\rho^2 \gamma^{-1}$ (before scaling) and therefore
\begin{equation}
|F_{t \to h^{-1}\tau} \chi_T (t)\Gamma ' (u \psi )|\le C\gamma^{d-1}(T/T_*)^{-s}\qquad \text{for\ \ } |\tau|\le \epsilon
\label{24-4-80}
\end{equation}
where, as usual, $\chi\in \sC_0^\infty ([-1,-\frac{1}{2}]\cup [\frac{1}{2},1])$, $\psi \in \sC_0^\infty (B(x',\frac{1}{2}\gamma(x'))$ and
$T\in [T_*, T^*]$, $T_*=c j ^{-1}\rho^{-2}$ and $\gamma =\gamma (y')$.
\end{proposition}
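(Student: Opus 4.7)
The plan is to piggy-back on the reduction carried out in the proof of Theorem~\ref{thm-24-4-13}: after the metaplectic transformation $\cT$, the Hermite decomposition, and conjugation by the block-diagonalizing operator $Q$, the propagator on the zeroth block is governed by a scalar $\hslash$-pseudodifferential equation $(\hslash D_t - \Lambda)u = 0$, where $\Lambda = \Lambda(x''',\xi''',\eta)$ is the lowest eigenvalue of the auxiliary operator $J_0^{-1/2}A_0 J_0^{-1/2}$ (the rescaled version of $\cL(x')$). Since all other blocks are elliptic on $|\tau|\le \epsilon$, estimate \textup{(\ref{24-4-80})} reduces to the corresponding propagation estimate for this scalar symbol. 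I will establish that estimate by proving microhyperbolicity of $\Lambda$ in a suitable direction in the rescaled variables and then invoking a standard commutator/energy argument.

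The first substantive step is to translate the assumptions \textup{(\ref{24-4-73})} or \textup{(\ref{24-4-74})} on $V^*$ into a microhyperbolicity statement for $\Lambda$ on the set $\{|\Lambda|\le \epsilon\}$. By the Feynman--Hellmann formula,
\begin{equation*}
\langle x',\nabla_{x'}\Lambda\rangle
= \int_{\bR}\langle x',\nabla_{x'}V^*\rangle\,|v|^2\,dx_1
+ \OO\bigl(\rho^2\bigr),
\end{equation*}
where $v=v(x_1;x')$ is the normalized lowest eigenfunction, whose localization and decay are controlled by \textup{(\ref{24-4-29})}--\textup{(\ref{24-4-31})}. Using the pointwise lower bound on $-\langle x',\nabla'V^*\rangle$ together with the $x_1$-localization of $|v|^2$ near the well, one obtains
\begin{equation*}
-\langle x',\nabla_{x'}\Lambda\rangle\ge \epsilon_0\,\rho^2\gamma^{-1}\cdot\gamma
= \epsilon_0\,\rho^2\qquad\text{on\ }\{|\Lambda|\le\epsilon\},
\end{equation*}
(modulo $\OO(\rho^2\gamma^{-2})$ from the error terms in the reduction and from the $o$-smallness of the stabilization condition). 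After rescaling $x'\mapsto (x'-\y')\gamma^{-1}$, $x_1\mapsto x_1\gamma_1^{-1}$ this becomes the uniform microhyperbolicity condition $|\Lambda|+|\nabla\Lambda|\asymp 1$ (as already used in the proof of Theorem~\ref{thm-24-4-13}), but now directionally with respect to a concrete vector $\ell=\y'/|\y'|$. In the case $m+q=0$ the extra factor $\langle x'\rangle^2\langle x\rangle^{-2}$ in \textup{(\ref{24-4-74})} compensates for the different choice of $J$ in \textup{(\ref{24-4-78})} so that the microhyperbolicity constant is again of size $1$ after rescaling.

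Given this microhyperbolicity, the propagation estimate \textup{(\ref{24-4-80})} is obtained by the standard Heisenberg/commutator method. Pick a smooth weight $\phi(x''')=\ell\cdot x'''$ and form the family of symbols $\phi_t = \phi - c_0 t$; then on the characteristic set $\{\Lambda=0\}$ one has $\{\Lambda,\phi\}\ge\epsilon_0/2$, so the sharp G\aa rding inequality applied to $i\hslash^{-1}[\Lambda,\phi]-\epsilon_0/2$ yields, for any energy cut-off supported in $|\tau|\le\epsilon$, the bound
\begin{equation*}
\frac{d}{dt}\bigl(u(t),\phi_t\,u(t)\bigr)\ge \tfrac{\epsilon_0}{4}\|u(t)\|^2-C\hslash\|u(t)\|^2.
\end{equation*}
Integrating and combining with the bound \textup{(\ref{24-4-79})} from Proposition~\ref{prop-24-4-23} shows that for $T\ge T_*$ the singularity initially supported in $\psi$ has escaped the $\psi$-support in the $\ell$-direction. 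A standard integration by parts in $t$ against $\chi(t/T)$ — iterated $s$ times, losing a factor $(T/T_*)^{-1}$ each time — then gives the power-decay estimate \textup{(\ref{24-4-80})}. Undoing the rescaling replaces the unit speed by $\epsilon_1 j^{-1}\rho^2\gamma^{-1}$ in the original coordinates, which is exactly the claimed lower bound on propagation speed.

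The main obstacle is the Feynman--Hellmann step: one must control the perturbation coming from the fact that $V^*$ differs from $V$ by $\sum_j(f_j-f_\infty)$, that the reduction from the full operator $A$ to the scalar symbol $\Lambda$ introduces $o(\rho^2)$ errors that in principle could swallow the lower bound, and that the eigenfunction $v(\cdot;x')$ is only locally defined when there is a single bound state. In the borderline case $m+q=0$ one must also verify that the weighted microhyperbolicity \textup{(\ref{24-4-74})} survives the rescaling dictated by $J$ in \textup{(\ref{24-4-78})}, which requires the additional stabilization \textup{(\ref{24-4-72})}. Once these quantitative estimates are made uniform in the partition (via the $\gamma,\gamma_1$ scaling functions), the rest of the argument is standard.
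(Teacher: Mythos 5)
There is a genuine gap, and it comes from importing the wrong machinery. Your first step rests on the block-diagonalization from the proof of Theorem~\ref{thm-24-4-13}, where the auxiliary operator $J_0^{-\frac{1}{2}}A_0J_0^{-\frac{1}{2}}$ has \emph{at most one} eigenvalue that is not positive and uniformly disjoint from $0$, so that everything reduces to a single scalar branch $\Lambda$ and "all other blocks are elliptic." Proposition~\ref{prop-24-4-24} lives in Subsection~\ref{sect-24-4-3}, the slowly decaying regime, where by construction $\cL(x')$ has $\boldsymbol{n}(x',\eta)$ eigenvalues below $-\eta$, and $\boldsymbol{n}$ is in general large (see Example~\ref{example-24-4-27}); this is precisely why the remainder (\ref{24-4-77}) and the estimate (\ref{24-4-82}) carry the factor $\boldsymbol{n}+1$. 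So the non-zero blocks are not elliptic on $|\tau|\le\epsilon$, the reduction to one scalar symbol is unavailable, and even a many-branch version would have to cope with eigenvalue crossings. The intended (and in the paper, deliberately omitted as "standard") argument works directly with the operator-valued symbol of $A_\eta=J^{-\frac{1}{2}}(A-f^*_\infty+\eta)J^{-\frac{1}{2}}$: conditions (\ref{24-4-73}) (resp.\ (\ref{24-4-74})), combined with the choice of the weight $J$ in (\ref{24-4-78}), give microhyperbolicity in the radial direction at the level of quadratic forms on $\bH$, with no diagonalization; your commutator/energy argument then applies verbatim to the operator-valued symbol and yields (\ref{24-4-80}) with the speed $\epsilon_1 j^{-1}\rho^2\gamma^{-1}$.

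A secondary problem is the Feynman--Hellmann step itself. Here $\rho=\langle x\rangle^{-q}\langle x'\rangle^{m+q}$ depends on $x_1$, and for slowly decaying potentials the eigenfunctions of $\cL(x')$ with eigenvalue near $-\eta$ are not localized near a fixed well but spread out to the turning points. Hence $\int_\bR\bigl(-\langle x',\nabla'V^*\rangle\bigr)|v|^2\,dx_1$ is controlled by where $|v|^2$ lives, and the clean lower bound $\epsilon_0\rho^2$ (with $\rho$ evaluated where?) does not follow as stated; it is exactly the role of the $x_1$-dependent weight $J$ in (\ref{24-4-78}), and of the extra power $p=q+1$ and the strengthened hypotheses $\textup{(\ref{24-4-72})}_{1-3}$, (\ref{24-4-74}) in the borderline case $m+q=0$, to make the microhyperbolicity constant uniform after rescaling. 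You flag this as "the main obstacle" but do not resolve it; resolving it is the content of the proposition.
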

\enlargethispage{2\baselineskip}

\begin{proof}[Proofs of Propositions~\ref{prop-24-4-23} and \ref{prop-24-4-24}]
Standard proofs are left to the reader. We just observe the following:

If $m+q<0$ then due to $\textup{(\ref{24-4-70})}_{1-3}$ and (\ref{24-4-78}) the drift speed with respect to $x'$  does not exceed $cj ^{-1}\rho^2\gamma^{-1}$ (Proposition~\ref{prop-24-4-23}) and is exactly of this magnitude due to (\ref{24-4-73}) (Proposition~\ref{prop-24-4-23}). This is also true for $m+q=0$ due to $\textup{(\ref{24-4-72})}_{1-3}$ and (\ref{24-4-78}), and (\ref{24-4-74}).
\end{proof}

\subsection{Proof of Theorem~\ref{thm-24-4-22}: Estimates}
\label{sect-24-4-3-3}

Now we can use the standard successive approximations method leading us to the estimate
\begin{align}
&|F_{t \to h^{-1}\tau} \bar{\chi}_T (t)\Gamma ' (u \psi )|\le C\gamma^{d-1}
\label{24-4-81}\\
\intertext{and then to estimate }
&|F_{t \to h^{-1}\tau} \bar{\chi}_T (t)\Gamma  (u \psi )|\le C\gamma^{d-1} N
\label{24-4-82}
\end{align}
as $T_*\le T\le T^*$ and $|\tau|\le \epsilon$, where
$N=\sup_{B(\y', (1+\epsilon)\gamma)}\boldsymbol{n}(x', \eta)$. The latter estimate leads us to the estimate (\ref{24-4-77}) to the Tauberian error and in the conjugation with successive approximation method to the estimate (\ref{24-4-77}) itself.

\subsection{Discussion}
\label{sect-24-4-3-4}

\begin{remark}\label{rem-24-4-25}
If we replace $\boldsymbol{n}(x',\eta)$ by the corresponding $1\D$-Weyl expression, we will get
\begin{align}
&\N^-(\eta)=\cN^{\MW\,-}(\eta)+O(R(\eta)+R_1(\eta))
\label{24-4-83}\\
\shortintertext{with}
&\cN^{\MW\,-}(\eta)=(2\pi)^{-r} \int (-V^*-\eta)_+^{\frac{1}{2}}/\sqrt{g^{11}}
f_{\infty,1}f_{\infty,2}\cdots  f_{\infty,r}\,dx
\label{24-4-84}\\
\shortintertext{and}
&R_1(\eta)= \int_{\Lambda (\eta)} dx'.
\label{24-4-85}
\end{align}
\end{remark}
\enlargethispage{2\baselineskip}

\begin{remark}\label{rem-24-4-26}
\begin{enumerate}[label=(\roman*), wide, labelindent=0pt]
\item\label{rem-24-4-26-i}
Obviously $V(x)= -\langle x\rangle^{-2q} U(x')$ with $U$ positively homogeneous of degree $2(m+q)$ satisfies $\textup{(\ref{24-4-70})}_{1-3}$, and for $q+m=0$ it also satisfies $\textup{(\ref{24-4-72})}_{1-3}$.

\item\label{rem-24-4-26-ii}
Furthermore, if  $U\asymp \langle x'\rangle^{2(m+q)}$ this $V$ satisfies (\ref{24-4-73}) if $q+m<0$ and (\ref{24-4-74}) if $q+m=0$.

\item\label{rem-24-4-26-iii}
On the other hand, it does not satisfy (\ref{24-4-73}) if $q+m>0$.
\end{enumerate}
\end{remark}

\begin{example}\label{example-24-4-27}
Let us evaluate  magnitudes of $\cN^-(\eta)$, $R(\eta)$ and $R_1(\eta)$, defined by (\ref{24-4-76}),   (\ref{24-4-77}) and (\ref{24-4-85}). To do this consider $\boldsymbol{n}(x',\eta)$. We are interested only in the case of $0<q\le 1$ since $q>1$ combined with $m+q\le 0$ would imply $m< -1$ and this is covered by previous Subsection~\ref{sect-24-4-2}. One can see easily that in this case Theorem~\ref{thm-24-4-22} is a special case of Theorem~\ref{thm-24-4-13} as $\boldsymbol{n}(x',\eta)\le 1$ for $|x'|\ge c$ and $\boldsymbol{n}(x',\eta)\le C$ as $|x'|\le c$.

Assume  that $0<q<1$. Then
$\boldsymbol{n} (x',\eta)\asymp \eta^{(q-1)/(2q)}\gamma^{(m+q)/q}$ with
$\gamma =\langle x'\rangle$ as
$\gamma \le  \epsilon \min (\bar{\gamma}_1,\bar{\gamma}_2)$ with
\begin{equation}
\bar{\gamma}_1=\eta^{(1-q)/(2(m+q))},\qquad \bar{\gamma}_2=\eta^{1/(2m)}
\label{24-4-86}
\end{equation}
 and relying upon Proposition~\ref{prop-24-A-4} we conclude that
\begin{equation}
\boldsymbol{n}(x',\eta)\asymp
\eta^{(q-1)/(2q)}\gamma^{(m+q)/q} \qquad
\text{as\ \ }
\gamma\le  \epsilon \min(\bar{\gamma}_1,\bar{\gamma}_2)
\label{24-4-87}
\end{equation}
provided $m+q<0$. On the other hand,  obviously $\boldsymbol{n}(x',\eta)=0$ as $\gamma\ge C\bar{\gamma}_2$. Recall that $m<0$. Observe that
$\bar{\gamma}_1\ge \bar{\gamma}_2$ for $\eta \le 1$ if and only if
$m\in [-1,0)$.

\begin{enumerate}[label=(\alph*), wide, labelindent=0pt]
\item\label{example-24-4-27-a}
Let $m\in [-1,0)$. Then  (\ref{24-4-87}) holds  as
$\gamma \le \epsilon \bar{\gamma}_2$ and we conclude that
\begin{phantomequation}\label{24-4-88}\end{phantomequation}
\begin{multline}
\cN^- (\eta)\asymp \eta^{(d+m)/(2m)},\quad R(\eta) \asymp \eta^{(d+m-2)/(2m)},\\
R_1(\eta)=\eta^{(d-1)/(2m)}.
\tag*{$\textup{(\ref*{24-4-88})}_{1-3}$}\label{24-4-88-*}
\end{multline}

\item\label{example-24-4-27-b}
Let $m<-1$.  Then (\ref{24-4-87}) holds  as $\gamma \le \epsilon \bar{\gamma}_1$
and contributions of the zone
$\cX_0=\{x\colon |x'|\lesssim \bar{\gamma}_1\}$ to $\cN^-(\eta)$ and $R(\eta)$  are respectively of the magnitudes $\eta^{(1-q)(d-1)/2(m+q)}$ and  $\eta^{(1-q)(d-3)/2(m+q)}$ .

We need to consider the zone
$\cX_1\coloneqq \{x\colon \bar{\gamma}_1 \lesssim |x'|\lesssim  \bar{\gamma}_2\}$ separately.

\item\label{example-24-4-27-c}
If $1>q> \frac{1}{2}$ we in virtue of Proposition~\ref{prop-24-A-6}
$\boldsymbol{n}(x',\eta)\asymp 1$ if $\gamma \le \epsilon \bar{\gamma}_3$ and
$\boldsymbol{n}(x',\eta)= 0$ if $\gamma \ge C \bar{\gamma}_3$ with
$\bar{\gamma}_3= \eta^{1/2(2m+1)}$; one can see easily that
$\bar{\gamma}_1\le \bar{\gamma}_3\le \bar{\gamma}_2$. Then contribution of $\cX_1$ to $\cN^-(\eta)$ and $R(\eta)$ are of magnitudes  $\bar{\gamma}_3^{d-1}$ and $\bar{\gamma}_3^{d-3}$ respectively  except the case $d=3$ when the contribution to $R(\eta)$ is of magnitude $|\log (\eta)|$.

Combining with Statement~\ref{example-24-4-27-b} we conclude that $\cX_1$ contributes more to $\cN^-(\eta)$ and $R(\eta)$ than $\cX_0$ and therefore
\begin{phantomequation}\label{24-4-89}\end{phantomequation}
\begin{equation}
\cN^- (\eta)\asymp \eta^{(d-1)/2(2m+1)},\quad
R(\eta) \asymp \eta^{(d-3)/2(2m+1)}
\tag*{$\textup{(\ref*{24-4-89})}_{1,2}$}\label{24-4-89-*}
\end{equation}
except the case $d=3$ when  $R(\eta)$ is of magnitude $|\log (\eta)|$. Meanwhile, $R_1(\eta)$ is of the same magnitude as $\cN^-(\eta)$.

\item\label{example-24-4-27-d}
If $0<q\le \frac{1}{2}$ then in virtue of Proposition~\ref{prop-24-A-8} with $\varepsilon=\gamma^{2(m+q)}$ we conclude that
$\boldsymbol{n}(x',\eta)\asymp 1$ if $\gamma \le \epsilon \bar{\gamma}_1$ and
$\boldsymbol{n}(x',\eta)= 0$ if $\gamma \ge C  \bar{\gamma}_1$.
Combining with Statement~\ref{example-24-4-27-b} we conclude that in this case
\begin{phantomequation}\label{24-4-90}\end{phantomequation}
\begin{equation}
\cN^- (\eta)\asymp \eta^{(1-q)(d-1)/2(m+q)},\quad
R(\eta) \asymp \eta^{-(q-1)(d-3)/2(m+q)}.
\tag*{$\textup{(\ref*{24-4-90})}_{1,2}$}\label{24-4-90-*}
\end{equation}
Meanwhile, $R_1(\eta)$ is of the same magnitude as $\cN^-(\eta)$.
\end{enumerate}
\end{example}

\begin{Problem}\label{Problem-24-4-28}
As $d=3$ derive remainder estimate $O(1)$ in the framework of Example~\ref{example-24-4-27}\ref{example-24-4-27-c}. This analysis should be done in the zone  $\cX_1$ and we need to consider the spectral stripes $\Lambda_k\coloneqq   \{x'\colon   \lambda_k(x')\asymp \eta\}$ and zone
$\cX_1 \setminus (\Lambda_1\cup \Lambda_2\cup\ldots\cup\Lambda_K)$ separately.
\end{Problem}

\begin{example}\label{example-24-4-29}
\begin{enumerate}[label=(\alph*), wide, labelindent=0pt]
\item\label{example-24-4-29-a}
If $q>1$ then $\boldsymbol{n}(x',\eta)\le C\gamma^{m+1}$ with $m+1 <0$ since we need to assume that $m+q\le 0$ in virtue Remark~\ref{rem-24-4-26}\ref{rem-24-4-26-iii} which puts us in the framework of Subsection~\ref{sect-24-4-2}.

\item\label{example-24-4-29-ba}
If $q=1$, $m<-1$ then $\boldsymbol{n}(x',\eta)=0$ for $|x'|\ge c$ which leads only to the estimate $\N^-(\eta) \le C|\log \eta|$ rather than asymptotics.

\item\label{example-24-4-29-c}
Thus, consider $q=1$, $m=-1$.  Then
$\boldsymbol{n}(x',\eta)\lesssim \gamma^{m+1}|\log (\gamma /\bar{\gamma})|$ and
$\boldsymbol{n}(x',\eta)=0$ if $\gamma \ge C\bar{\gamma}$, $\bar{\gamma}=\eta^{-1/2}$.

This leads to
\begin{phantomequation}\label{24-4-91}\end{phantomequation}
\begin{equation}
\cN^-(\eta)\lesssim \eta^{-(d-1)/2},\qquad R(\eta)\asymp \eta^{-(d-3)/2}
\tag*{$\textup{(\ref*{24-4-91})}_{1,2}$}\label{24-4-91-*}
\end{equation}
again except $d=3$, in which case $R(\eta)\asymp |\log \eta|^2$. However, under assumption $V\le -C_0\langle x\rangle ^{-2}$ with sufficiently large $C_0$, we conclude that
$\boldsymbol{n}(x',\eta)\asymp \gamma^{m+1}|\log (\gamma /\bar{\gamma})|$ if  $\gamma \le \epsilon \bar{\gamma}$ and then
$\cN^-(\eta)\asymp \eta^{-(d-1)/2}$.
\end{enumerate}
\end{example}

\begin{Problem}\label{Problem-24-4-30}
Let us consider the case $m+q>0$. In this case we do not have a microhyperbolicity condition and we can apply only more simple and less precise approach of Subsection~\ref{sect-24-4-1}. So, we leave to the reader to derive the remainder estimate in the following cases:

\begin{enumerate}[label=(\alph*), wide, labelindent=0pt]
\item\label{Problem-24-4-30-a}
Let $0<q<1$\,\footnote{\label{foot-24-32} And then $m\in (-1,0)$.}. Then, exactly as in Example~\ref{example-24-4-27}\ref{example-24-4-27-a} (\ref{24-4-87}) holds if
$\gamma\le \epsilon \bar{\gamma}_2=\eta^{1/(2m)}$ and $\boldsymbol{n}(x',\eta)=0$ if $\gamma\ge \bar{\gamma}_2$ and
therefore $\textup{(\ref{24-4-88})}_1$ holds.

\item\label{Problem-24-4-30-b}
Let $q=1$\,\footref{foot-24-32}.  Then we have the same magnitude of $\boldsymbol{n}(x',\eta)$ as in  Example~\ref{example-24-4-29}\ref{example-24-4-29-c} and
$\cN^-(\eta)\asymp \eta ^{(d+m)/(2m)}$.

\item\label{Problem-24-4-30-c}
Let $q>1$. Then for $m<-1$ we are in the framework of Subsection~\ref{sect-24-4-2}, and for $m=-1$ we are either in the framework of that Subsection or close to it.

So, let us assume that $m\in (-1,0)$. In this case $\boldsymbol{n}(x',\eta)\lesssim \gamma^{m+1}$,  $\boldsymbol{n}(x',\eta)\asymp \gamma^{m+1}$ for
$\gamma\le \epsilon\bar{\gamma}_2$ and $\boldsymbol{n}(x',\eta)=0$ for
$\gamma\ge C\bar{\gamma}_2$. Then $\textup{(\ref{24-4-88})}_1$ holds.
\end{enumerate}
\end{Problem}

Finally, we leave to the reader

\begin{Problem}\label{Problem-24-4-31}
Consider the case of $\rank \F =d-2r\le d-2$. To do so we need to modify Theorem~\ref{thm-24-4-22} in rather obvious way, in this case $x'=(x'';x')=
(x_1,\ldots,x_n; x_{n+1},\ldots, x_d)$ with $n=d-2r$ and $\cL(x')$ is $n$-dimensional Schr\"odinger operator.
\end{Problem}

\chapter{$3\D$-case.  Multiparameter asymptotics}
\label{sect-24-5}

In this section we consider asymptotics with respect to three parameters $\mu$, $h$ and $\tau$; here spectral parameter $\tau$ tends \underline{either} to $\pm \infty$ \underline{or} to the border of the essential spectrum \underline{or} to $-\infty$ (for Schr\"odinger and Schr\"odinger-Pauli operators) \underline{or} to the border of the spectrum. In two last cases presence of  $h\to+0$ is crucial. We consider here only $d=2$ and $h\ll 1$.

\section{Asymptotics of large eigenvalues}
\label{sect-24-5-1}

In this subsection $\tau\to +\infty$ for the Schr\"odinger and Schr\"odinger-Pauli operators and $\tau\to \pm \infty$ for the Dirac operator. We consider  the  Schr\"odinger and Schr\"odinger-Pauli operators, leaving the Dirac operator to the reader.

\begin{example}\label{example-24-5-1}
Assume first that $\psi\in \sC_0^\infty$  and there are no singularities on $\supp(\psi)$. We consider
\begin{equation}
\N_\psi^-(\tau)=\int e(x,x,\tau)\psi(x)\,dx.
\label{24-5-1}
\end{equation}
Then for scaling $A\mapsto \tau^{-1}A$ leads to
$h\mapsto h_\eff=h\tau^{-1/2}$ and $\mu\to \mu_\eff=\mu\tau^{-1/2}$.

\begin{enumerate}[label=(\roman*), wide, labelindent=0pt]
\item\label{example-24-5-1-i}
If $\mu \lesssim \tau^{1/2}$ then  we can apply the standard theory with the ``normal" magnetic field; we need to assume that $h\ll\tau^{1/2}$ and we need neither condition $d=3$, nor $F\ge \epsilon_0$, nor $\partial X=\emptyset$; the principal part of the asymptotics has magnitude $h^{-d}\tau^{d/2}$ and the remainder estimate is $O(h^{1-d}\tau^{(d-1)/2})$ which one can even improve to $o(h^{1-d}\tau^{(d-1)/2})$ under proper non-periodicity assumption.

\item\label{example-24-5-1-ii}
Let $\mu \gtrsim \tau^{1/2}$, $\mu h\lesssim \tau$. Then   we can apply the standard theory with the ``strong" magnetic field; we assume that $d=3$ and  $F\ge \epsilon_0$. Then the principal part of the asymptotics has magnitude $h^{-3}\tau^{3/2}$ and under weak non-degeneracy assumption (which is needed, only in the case $\mu_\eff\le h_\eff^{\delta-1}$)
fulfilled on $\supp(\psi)$ the remainder estimate is
$O(h^{-2}\tau )$ and marginally worse without non-degeneracy assumption

\item\label{example-24-5-1-iii}
If $\mu \gtrsim \tau^{1/2}$, $\mu h \ge c\tau$ than $\N^-(\tau)=0$ for the Schr\"odinger operator; for the Schr\"odinger-Pauli operator  the principal part of the asymptotics has magnitude $\mu h^{-1}$ and under  weak non-degeneracy assumptions the remainder estimate is $O(\mu h^{-1})$.
\end{enumerate}
\end{example}
\enlargethispage{2\baselineskip}

\begin{example-foot}\label{example-24-5-2}\footnotetext{\label{foot-24-33} Cf. Example~\ref{example-24-3-12}.}
Let $X$ be an unbounded domain.  Let  conditions (\ref{24-2-1}),  (\ref{24-2-6}), \ref{24-2-2-*}, (\ref{24-3-1}) and \ref{24-3-9-*}  be fulfilled with $\gamma =\epsilon _0 \langle x\rangle $, $\rho =\langle x\rangle ^m$,
$\rho _1=\langle x\rangle ^{m_1}$, $m_1>2m$. Consider the Schr\"odinger operator and assume that
\begin{phantomequation}\label{24-5-2}\end{phantomequation}
\begin{gather}
\tau\ge \mu h, \qquad\tau^{2m-m_1}\le \epsilon (\mu h)^{2m}.
\tag*{$\textup{(\ref*{24-5-2})}_{1,2}$}\label{24-5-2-*}\\
\shortintertext{Then}
\cN^-(\tau,\mu,h)\asymp \tau^{3(2+m_1)/(2m_1)}h^{-3(1+m_1)/m_1}\mu^{-3/m_1}.
\label{24-5-3}
\end{gather}
\begin{enumerate}[label=(\roman*), wide, labelindent=0pt]
\item\label{example-24-5-2-i}
Further, if  $\tau \gtrsim \mu^2$, then the zone of the strong magnetic field
$\mu _\eff =\mu \langle x\rangle^{m_1+1}\tau^{-1/2}\ge C$ is contained in $\{x\colon |x|\ge c\}$
and here we have non-degeneracy condition fulfilled. Then the remainder estimate is $O(R)$ with
\begin{equation}
R=\tau^{(2+m_1)/m_1}h^{-2(1+m_1)/m_1}\mu^{2/m_1}.
\label{24-5-4}
\end{equation}

\item\label{example-24-5-2-ii}
On the other hand, if $\mu ^2\gg \tau$, then the contribution of the zone $\{x\colon |x|\ge c\}$ to the remainder is $O(R)$ with $R$ defined by (\ref{24-5-4}). The contribution of the zone $\{x\colon |x|\le c\}$ to the remainder is $O(h^{-2}\tau)$ under weak non-degeneracy assumption  fulfilled there.

\item\label{example-24-5-2-iii}
Let us replace $\textup{(\ref{24-5-2})}_{2}$ by the opposite inequality, and assume \ref{24-3-3-*}. Then (\ref{24-5-3}) is replaced by
$\cN^-(\tau,\mu, h)\asymp h^{-3}\tau ^{3(m+1)/(2m)}$ while under non-degeneracy assumption
\begin{equation}
R=h^{-2}\tau ^{(m+1)/m}.
\label{24-5-5}
\end{equation}
\end{enumerate}
\end{example-foot}

\begin{example-foot}\label{example-24-5-3}\footnotetext{\label{foot-24-34} Cf. Example~\ref{example-24-3-14}.}
In the framework of Example~\ref{example-24-5-2} for the Schr\"odinger-Pauli operator under assumption $\textup{(\ref{23-3-3})}^{\#\prime}$ of \cite{futurebook}
\begin{align}
&\cN^-(\tau,\mu, h)\asymp h^{-3}\tau ^{3(m+1)/(2m)} + \mu h^{-2}\tau^{(m_1+m+3)/(2m)}.
\label{24-5-6}\\
&R=  h^{-2}\tau ^{(m+1)/m}+\mu h^{-1}\tau^{(m_1+2)/(2m)}
\label{24-5-7}
\end{align}
\end{example-foot}

We leave to the reader

\begin{problem}\label{problem-24-5-4}
Consider the Schr\"odinger and Schr\"odinger-Pauli operators
with other types of the behaviour at infinity.
\end{problem}

\begin{problem}\label{problem-24-5-5}
For the Dirac operators derive similar results as $\tau\to \pm \infty$.
\end{problem}

\section{Asymptotics of the small eigenvalues}
\label{sect-24-5-2}

In this subsection for the Schr\"odinger and Schr\"odinger-Pauli operators we consider asymptotics of eigenvalues tending to $-0$.

\begin{example-foot}\label{example-24-5-6}\footnotetext{\label{foot-24-35} Cf. Example~\ref{example-24-4-2}.}
Let $X$ be an unbounded domain.  Let conditions \textup{(\ref{24-2-1})}, \ref{24-2-3-*}, $\textup{(\ref{24-3-1})}^\#_1$  be fulfilled with
$\gamma =\epsilon _0 \langle x\rangle $,
$\rho =\langle x\rangle ^m$, $\rho _1=\langle x\rangle ^{m_1}$, $-1< m< 0$,
$m_1 > m-1$.

Consider the Schr\"odinger operator and assume that
\begin{phantomequation}\label{24-5-8}\end{phantomequation}
\begin{equation}
1\ge \mu h, \qquad|\tau|^{2m-m_1}\le \epsilon (\mu h)^{2m}.
\tag*{$\textup{(\ref*{24-5-8})}_{1,2}$}\label{24-5-8-*}
\end{equation}
Then $\cN^-(\tau)=O( h^{-3}|\tau|^{3(m+1)/(2m)})$ as $\tau\to -0$ with ``$\asymp$''  instead of ``$=O$'' if condition $V\le -\epsilon \rho^2$ fulfilled in some non-empty cone.

Further, under the  non-degeneracy assumption \ref{24-3-4-**} the contribution to the remainder of the zone $\{x\colon |x|\ge c\}$ is
$O( h^{-2}|\tau|^{(m+1)/m})$).
\end{example-foot}

\begin{example-foot}\label{example-24-5-7}\footnotetext{\label{foot-24-36} Cf. Example~\ref{example-24-4-2}.}
In the framework of Example~\ref{example-24-5-6} for the Schr\"odinger-Pauli operator under assumption \ref{24-3-3-*} the contribution to the remainder of the zone $\{x\colon |x|\ge c\}$ is $O(R)$ with
\begin{align}
&R=h^{-2}|\tau| ^{(m+1)/m} + \mu h^{-1}|\tau|^{(m_1+2)/(2m)},
\label{24-5-9}\\
\shortintertext{while}
&\cN^-(\tau)=O( h^{-3}|\tau| ^{3(m+1)/(2m)} + \mu h^{-2}|\tau|^{(m_1+m+3)/(2m)})
\label{24-5F-9}
\end{align}
 with ``$\asymp$''  instead of ``$=O$'' if condition $V\le -\epsilon \rho^2$  fulfilled in some non-empty cone.
\end{example-foot}

\begin{problem}\label{problem-24-5-8}
Consider the Schr\"odinger and Schr\"odinger-Pauli operators if
\begin{enumerate}[label=(\roman*), wide, labelindent=0pt]
\item\label{problem-24-5-8-i}
If condition $\textup{(\ref{24-5-8})}_1$ is violated (then there could be a forbidden zone in the center).
\item\label{problem-24-5-8-iii}
With other types of the behaviour at infinity.
\end{enumerate}
\end{problem}

\begin{problem}\label{problem-24-5-9}
Consider the Schr\"odinger and Schr\"odinger-Pauli operators in the framework of Subsections~\ref{sect-24-4-2} and~\ref{sect-24-4-3} if
\begin{enumerate}[label=(\roman*), wide, labelindent=0pt]
\item\label{problem-24-5-9-i}
$\mu h=1$; then the essential spectrum  does not change.

\item\label{problem-24-5-9-ii}
$\mu h\to \infty$; then the limit of the essential spectrum is $\emptyset$ (for the Schr]"odinger operator) and $[0,\infty)$ (for the Schr\"odinger-Pauli operator). Consider  $\N^-(\eta)$ with $\eta\to 0$.

\item\label{problem-24-5-9-iii}
$\mu h\to 0$;  then the limit of the essential spectrum is  $[0,\infty)$ (for both the Schr\"odinger and Schr\"odinger-Pauli operators). Consider  $\N^-(\eta)$ with $\eta\to 0$.
\end{enumerate}
\end{problem}

\begin{problem}\label{problem-24-5-10}
For the Dirac operators derive similar results as $M\ne 0$ and
$\tau\to M-0$ and $-M+0$.
\end{problem}

\section{Case of $\tau\to+0$}
\label{sect-24-5-3}

In this subsection $\tau\to +0$ for the Schr\"odinger and Schr\"odinger-Pauli ope\-rators and $\tau\to \pm M\pm 0$ for the Dirac operator. Consider  the  Schr\"odinger and Schr\"odinger-Pauli operators first.

\begin{example-foot}\label{example-24-5-11}\footnotetext{\label{foot-24-37} Cf. Example~\ref{example-24-5-2}.}
Let $V>0$ everywhere except $V(0)=0$.  Let conditions (\ref{24-2-1}), \ref{24-2-2-*} and (\ref{24-3-1}) be fulfilled with
$\gamma =\epsilon _0 |x|$, $\rho =|x| ^m$,
$\rho _1=| x| ^{m_1}$, $m_1> 2m \ge 0$. Consider the Schr\"odinger operator and assume that $\tau\to +0$.
\begin{enumerate}[label=(\roman*), wide, labelindent=0pt]
\item\label{example-24-5-11-i}
Let
\begin{phantomequation}\label{24-5-11}\end{phantomequation}
\begin{equation}
\mu \ll \tau^{(m_1+2)/2}h^{-(m_1+1)}, \qquad
\tau^{2m-m_1}\le \epsilon (\mu h)^{2m}.
\tag*{$\textup{(\ref*{24-5-11})}_{1,2}$}\label{24-5-11-*}
\end{equation}
Then (\ref{24-5-3}) holds while  the remainder estimate is $O(R)$ with defined by (\ref{24-5-4}).

\item\label{example-24-5-11-ii}
Let us replace $\textup{(\ref*{24-5-11})}_{2}$ by the opposite inequality, and assume \ref{24-3-3-*}. Then (\ref{24-5-3}) is replaced by
$\cN^-(\tau)\asymp h^{-3}\tau ^{3(m+1)/(2m)}$ while $R=h^{-2}\tau^{(m+1)/2m}$.
\end{enumerate}
\end{example-foot}

\begin{example-foot}\label{example-24-5-12}\footnotetext{\label{foot-24-38} Cf. Example~\ref{example-24-5-3}.}
In the framework of Example~\ref{example-24-5-11} for the Schr\"odinger-Pauli operator under assumption \ref{24-3-3-*} both  (\ref{24-5-6}) and (\ref{24-5-7}) hold.
\end{example-foot}

We leave to the reader

\begin{problem-foot}\label{problem-24-5-13}\footnotetext{\label{foot-24-39} Cf. Problem~\ref{problem-24-5-4}.}
Consider the Schr\"odinger and Schr\"odinger-Pauli operators
\begin{enumerate}[label=(\roman*), wide, labelindent=0pt]
\item\label{problem-24-5-13-i}
In the same framework albeit with condition $m_1>2m$ replaced by
 $2m\ge m_1 \ge 0$. Assume that \ref{24-3-3-*} is fulfilled.

Then magnitude of $\cN^-(\tau)$ is described in  Examples~\ref{example-24-5-11} and~\ref{example-24-5-12}. Under proper non-degeneracy assumption (which we leave to the reader to formulate) derive the remainder estimate.

\item\label{problem-24-5-13-ii}
In the same framework as in \ref{problem-24-5-13-i} albeit  in with  $m_1<0$ (magnetic field is stronger in the center but there is no singularity), in which case the center can become a classically forbidden zone.

\item\label{problem-24-5-13-iii}
With other types of the behaviour at infinity.
\end{enumerate}
\end{problem-foot}

\begin{problem}\label{problem-24-5-14}
For the Dirac operators derive similar results as $\tau\to \pm (M+0)$.
\end{problem}

\section{Case of $\tau\to-\infty$}
\label{sect-24-5-4}

In this subsection for the Schr\"odinger and Schr\"odinger-Pauli operators we consider asymptotics with $\tau\to -\infty$.

In this subsection for the Schr\"odinger and Schr\"odinger-Pauli operators we consider asymptotics of eigenvalues tending to $-0$.

\begin{example-foot}\label{example-24-5-15}\footnotetext{\label{foot-24-40} Cf. Example~\ref{example-24-4-2}.}
Let $X\ni 0$   and let conditions \textup{(\ref{24-2-1})}, \ref{24-2-2-*}, (\ref{24-3-1}) and a week non-degeneracy assumption be fulfilled with $\gamma =\epsilon _0 \langle x\rangle $,
$\rho =\langle x\rangle ^m$, $\rho _1=\langle x\rangle ^{m_1}$, $-1< m< 0$,
$m_1 > m-1$.

Consider the Schr\"odinger operator and assume that
\begin{phantomequation}\label{24-5-12}\end{phantomequation}
\begin{equation}
h\ll |\tau|^{(m+1)/(2m)}, \qquad|\tau|^{m_1-2m}\le \epsilon (\mu h)^{2m}.
\tag*{$\textup{(\ref*{24-5-12})}_{1,2}$}\label{24-5-12-*}
\end{equation}
Then $\cN^-(\tau)=O( h^{-3}|\tau|^{3(m+1)/(2m)})$ as $\tau\to -0$ with
``$\asymp $'' instead of ``$=O$'' if condition $V\le -\epsilon \rho^2$  as $|x|\le \epsilon$  fulfilled in some non-empty cone and $R=h^{-2}|\tau|^{(m+1)/m}$.
\end{example-foot}

\begin{example-foot}\label{example-24-5-16}\footnotetext{\label{foot-24-41} Cf. Example~\ref{example-24-4-2}.}
In the framework of Example~\ref{example-24-5-15} for the Schr\"odinger-Pauli operator under assumption \ref{24-3-3-*}
\begin{align}
&\cN^-(\tau)=O( h^{-3}|\tau| ^{3(m+1)/(2m)} + \mu h^{-2}|\tau|^{(m_1+m+3)/(2m)})
\label{24-5-13}\\
\shortintertext{and}
&R=h^{-2}|\tau| ^{(m+1)/m}+\mu h^{-1}|\tau|^{(m_1+2)/(2m)})
\end{align}
with ``$\asymp $'' instead of ``$=O$''  if condition $V\le -\epsilon \rho^2$  as $|x|\le \epsilon$  fulfilled in some non-empty cone.
\end{example-foot}

\begin{problem}\label{problem-24-5-17}
Consider the Schr\"odinger and Schr\"odinger-Pauli operators if
with other types of the behaviour at $0$.
\end{problem}

\chapter[$3\D$-case. Strong mgnetic field and singularities]{$3\D$-case.  Asymptotics of
Riesz means for Schr\"{o}dinger operators singular at a point with
strong magnetic field}
\label{sect-24-6}

In this section we consider Riesz means for the 3-dimensional Schr\"{o}\-dinger-Pauli operator.  We leave it to the reader to treat the easier case of the Schr\"odinger operator.

\section{The regular case}
\label{sect-24-6-1}
We would like to recall the results of Chapter~\ref{book_new-sect-13} of \cite{futurebook} for the regular case, under some more restrictive assumptions.  Namely, let us
assume that
\begin{equation}
B(0,1)\subset X,
\label{24-6-1}
\end{equation}
in $B(0,1)$ conditions \ref{24-2-3-*}  are fulfilled and
\begin{equation}
F\ge \epsilon _0.
\label{24-6-2}
\end{equation}

Then in Chapter~\ref{book_new-sect-13} of \cite{futurebook} the following statement was proven:

\begin{theorem}\label{thm-24-6-1}
Let $d=3$, let the Schr\"odinger-Pauli operator  be self-adjoint and let conditions \ref{24-2-3-*}, \textup{(\ref{24-6-1})} and \textup{(\ref{24-6-2})}be fulfilled.  Moreover, let us assume that in $B(0,1)$
\begin{equation}
V\le \epsilon _0\implies
\bigl|\nabla \frac{V}{F} \bigr|\ge \epsilon _0.
\label{24-6-3}
\end{equation}
Finally, let
$\psi \in \sC_0^K\bigl(B(0, \frac{1}{2} )\bigr)$.  Then
\begin{multline}
\bigl|\int \psi (x)\Bigl(e_\vartheta (x,x,\tau )-
h^{-3} S_\vartheta (x,\mu h,\tau )\Bigr)\,dx\bigr|\le
C h^{-2+\vartheta }(1+\mu h)\\
\forall \tau \le  \frac{1}{2} \epsilon _0
\label{24-6-4}
\end{multline}
where
\begin{equation}
S(x,\mu h,\tau )={\frac{1}{4\pi ^2}}\sum_{n\in \bZ^+}
\bigl(\tau -V-2n\mu hF\bigr)_+^{\frac{1}{2}}F\mu h\sqrt{g}
\label{24-6-5}
\end{equation}
and for $\vartheta \in [0,1]$
\begin{equation}
S_\vartheta (x,\mu h,\tau )=
\vartheta \tau_+ ^{\vartheta-1} *S=
\const\sum_{n\in \bZ^+}
\bigl(\tau -V-2n\mu hF\bigr)_+^{ \frac{1}{2} +\vartheta }
F\mu h\sqrt{g}.
\label{24-6-6}
\end{equation}
\end{theorem}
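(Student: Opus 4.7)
The plan is to reduce the estimate to a local one via a partition of unity, apply the magnetic canonical form of Chapter~\ref{book_new-sect-13} of \cite{futurebook} to each element, and then exploit the non-degeneracy condition \textup{(\ref{24-6-3})} together with the Riesz-mean smoothing to upgrade the remainder by a factor $h^\vartheta$.

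First I would introduce a $\gamma$-admissible partition of unity on $\supp\psi$ with $\gamma$ adapted to the two regimes $\mu_\eff=\mu$ and $h_\eff=h$ (here $\rho=\rho_1=\gamma\equiv 1$ thanks to \textup{(\ref{24-6-2})} and the bounded geometry in $B(0,1)$). On each element $\psi_\nu$ supported in a ball $B(y,\frac12)$, the canonical form construction of Chapter~\ref{book_new-sect-13} of \cite{futurebook} reduces the operator, modulo a negligible operator, to a direct sum (in the Landau-level index $n\in\bZ^+$) of $1\D$-Schr\"odinger-type operators acting along the magnetic line through $y$, with effective potential $V_n = V+2n\mu hF$ (up to lower-order corrections). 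In the regime $\mu h\lesssim 1$ one uses the ``weak-field'' reduction, and in $\mu h\gtrsim 1$ the ``strong-field'' one; the extra factor $(1+\mu h)$ in the target estimate is precisely the combinatorial weight coming from summation over those indices $n$ for which $V_n\le\tau$.

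Next I would run the standard propagation--Tauberian argument on each $1\D$-reduced problem. The hypothesis \textup{(\ref{24-6-3})} $|\nabla(V/F)|\ge\epsilon_0$ on the set $\{V\le\epsilon_0\}$ implies microhyperbolicity of each effective Hamiltonian in the direction of $\nabla V$ at the energy level under consideration (for $\tau\le\frac12\epsilon_0$). Therefore, for the propagator $U_n(x,y,t)$ of each reduced operator one obtains, for $T\in[h^{1-\delta},T_0]$ with $T_0$ a small constant, the standard estimate
\begin{equation*}
|F_{t\to h^{-1}\tau}\bar\chi_T(t)\,\Gamma(\psi_\nu U_n)|\le C h^s,
\end{equation*}
while for $T\asymp h^{1-\delta}$ the short-time parametrix gives the Weyl principal part. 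Summing over $n$ and $\nu$ and inserting the result in the Tauberian formula for $e_\vartheta$ yields the estimate
\begin{equation*}
\Bigl|\int\psi(e(x,x,\tau)-h^{-3}S(x,\mu h,\tau))\,dx\Bigr|\le Ch^{-2}(1+\mu h)
\end{equation*}
for the unsmoothed ($\vartheta=0$) spectral function.

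Finally, to gain the factor $h^\vartheta$ for general $\vartheta\in[0,1]$, I would convolve the Tauberian identity with $\vartheta\tau_+^{\vartheta-1}$. Integration by parts in $\tau$ converts the sharp cutoff $\bar\chi_T$ into a smooth weight, effectively replacing the ``short-time'' scale $h$ by $h^{1-\vartheta}$ in the Tauberian error; equivalently, the contribution of the interval $[\tau-h,\tau]$ to the Riesz mean of order $\vartheta$ carries a prefactor $h^\vartheta$. This yields the bound $Ch^{-2+\vartheta}(1+\mu h)$ claimed in \textup{(\ref{24-6-4})}. The main obstacle I anticipate is the careful bookkeeping in the strong-field regime $\mu h\gg 1$: one must verify that the sum over Landau indices $n$ (whose number is $\asymp \mu h$) combines with the $1\D$ microhyperbolic estimate to give exactly the linear factor $\mu h$ and no logarithmic loss, which requires invoking \textup{(\ref{24-6-3})} uniformly in $n$ on the set where $V_n\le\tau$, and handling the $O(1)$ indices $n$ for which $V_n$ is close to $\tau$ (the ``resonant'' Landau levels) by a separate short-time argument.
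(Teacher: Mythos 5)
The paper does not actually prove Theorem~\ref{thm-24-6-1}: Section~\ref{sect-24-6-1} introduces it with ``in Chapter~\ref{book_new-sect-13} of \cite{futurebook} the following statement was proven'', so it is quoted as a known result and there is no in-paper argument to compare yours against. That said, your outline --- $\gamma$-admissible partition, reduction to the magnetic canonical form, a family of $1\D$ operators along the magnetic lines indexed by the Landau level, microhyperbolicity supplied by \textup{(\ref{24-6-3})}, Tauberian estimate, and the $h^\vartheta$ gain from convolving with $\vartheta\tau_+^{\vartheta-1}$ --- is exactly the architecture of the proof in that chapter, and your closing remark that the ``resonant'' levels are precisely the ones controlled by \textup{(\ref{24-6-3})} is the right way to see why no logarithm appears.

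One concrete slip: your accounting for the factor $(1+\mu h)$ is inverted. The number of Landau indices $n$ with $V+2n\mu hF\le\tau$ on the support of $\psi$ is $\asymp 1+(\mu h)^{-1}$, not $1+\mu h$, so the factor cannot be ``the combinatorial weight coming from summation over $n$''. What actually happens is that each contributing Landau level carries a transverse density of states $\asymp\mu h\cdot h^{-2}$, so its contribution to the Tauberian remainder is $O(\mu h^{-1+\vartheta})$; summing over $\asymp(\mu h)^{-1}$ levels when $\mu h\lesssim1$ gives $O(h^{-2+\vartheta})$, while for $\mu h\gtrsim1$ essentially only $n=0$ survives and already gives $O(\mu h^{-1+\vartheta})$. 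The two regimes combine into $O\bigl(h^{-2+\vartheta}(1+\mu h)\bigr)$. With that correction (and noting that for $\mu\lesssim1$ one bypasses the canonical form entirely and uses the non-magnetic Weyl theory), your sketch is a faithful reconstruction of the cited proof.
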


\begin{remark}\label{rem-24-6-2}
\begin{enumerate}[label=(\roman*), wide, labelindent=0pt]
\item\label{rem-24-6-2-i}
 This theorem also remains true without certain conditions for
$\mu\le 1$.  This follows from routine semiclassical
asymptotics.

\item\label{rem-24-6-2-ii}
Condition (\ref{24-6-3}) is too restrictive; a weaker condition
was used in Chapter~\ref{book_new-sect-13} of \cite{futurebook}.  In fact, no condition was necessary for
$\vartheta >0$.  However, that is not very important in this Section and it could lead to more complicated expression for $S_\vartheta (x,\mu, h,\tau )$.

\item\label{rem-24-6-2-iii}
For $\vartheta >1$ it is difficult to provide explicit formula for
 $S_\vartheta (x,\mu, h,\tau )$.
\end{enumerate}
\end{remark}

\section{The singular case. I}
\label{sect-24-6-2}

Now let us consider the Schr\"odinger-Pauli in the case when there
is a singularity at $0$.  Namely, we replace assumptions \ref{24-2-3-*}, (\ref{24-6-2}) and (\ref{24-6-3}) by
\begin{phantomequation}\label{24-6-7}\end{phantomequation}
\begin{align}
&|D ^\sigma g^{jk}|\le c|x|^{-|\sigma |},\qquad
&&|D^\sigma V_j|\le c|x|^{\beta -|\sigma |+1},
\tag*{$\textup{(\ref*{24-6-7})}_{1,2}$}\label{24-6-7-1}\\[3pt]
&|D ^\sigma V|\le c|x|^{2\alpha -|\sigma |}\qquad
&&\forall \sigma : |\sigma |\le K
\tag*{$\textup{(\ref*{24-6-7})}_{3}$}\label{24-6-7-3}
\end{align}
and
\begin{gather}
F\ge \epsilon _0|x|^\beta ,
\label{24-6-8}\\
V\le \epsilon _0|x|^{2\alpha }\implies
\bigl|\nabla {\frac{V}{F}}\bigr|\ge \epsilon _0|x|^{2\alpha -\beta -1}
\label{24-6-9}
\end{gather}
where we assume that $\alpha \in (-1,0]$, $\beta >2\alpha $.

Our standard goal is to derive the asymptotics of
\begin{equation}
\int \psi (x)e_\vartheta (x,x,0)\,dx
\label{24-6-10}
\end{equation}
but instead we derive the asymptotics of
\begin{equation}
\int \psi (x)\Bigl(e_\vartheta (x,x,0)-e^0_\vartheta (x,x,0)\Bigr)\,dx
\label{24-6-11}
\end{equation}
where $A^0$ is the same Schr\"odinger-Pauli operator albeit  with $\mu=0$ in $B(0,1)$.  The corresponding asymptotics for this operator are due to Section~\ref{book_new-sect-12-5} of \cite{futurebook} under appropriate conditions.  First of all we treat
the general situation without propagation of singularities arguments.

Let us consider the operator $A$ in $B(\bar{x},\frac{1}{2}r)$ with $r=|\bar{x}|$. Then the routine rescaling procedure results in the operator with $\hbar =hr^{-\alpha-1}$ and $\mu'=\mu r^{\beta+1-\alpha}$ and one should compare these quantities with $1$.  For $\hbar \le 1$ one can apply the semiclassical approach to $\int \bar{\psi} e_\vartheta(x,x,0)\,dx$ which provides the
following remainder estimate
\begin{multline}
R_1=
r^{2\alpha \vartheta }\bigl(\hbar^{-1}+\mu '\bigr)\hbar^{-1+\vartheta}=\\
 h^{-2+\vartheta }r^{\alpha \vartheta -\vartheta +2\alpha +2}+
\mu h ^{-1+\vartheta }r^{\alpha \vartheta -\vartheta +\beta +2}
\label{24-6-12}
\end{multline}
where $\bar{\psi}$ is an element of the partition of unity.

On the other hand, let us apply semiclassical asymptotics to
\begin{equation}
\int \bar{\psi }Be^\eta _{\vartheta -1}(x,x,0)\,dx
\label{24-6-13}
\end{equation}
where $B=A-A^0$ and $A^\eta =\eta A^0+(1-\eta)A$.  Then the remainder estimate
is $\mu' \hbar^{-1}R_1$ (provided $\mu'\le 1$).  This is justified
for $\vartheta\ge 1$; for $0<\vartheta<1$ one should apply
the arguments of Subsection~\ref{book_new-sect-12-5-4} of \cite{futurebook} to get the remainder estimate
$(\mu' \hbar^{-1})^\vartheta R_1$ (see the rigorous analysis  below).

Both of these remainder estimates are better than $R_1$ if $\mu '<h'$.  Therefore one should introduce $r_*$ with $\mu '=\hbar$ and apply the first and second approaches for $r>r_*$ and for
$r<r_*$ respectively.  Thus, $r_*=( h/\mu\bigr)^{1/(\beta  +2)}$.  However, one should remember the condition $\hbar\le  1$ which yields two different situations:
\begin{enumerate}[label=(\Alph*), wide, labelindent=0pt]
\item\label{sect-24-6-2-A}
The magnetic field is not very strong:
\begin{equation}
\mu ^{\alpha +1} h^{\beta +1-\alpha }\le 1.
\label{24-6-14}
\end{equation}
\item\label{sect-24-6-2-B}
 The magnetic field is rather strong:
\begin{equation}
\mu ^{\alpha +1} h^{\beta +1-\alpha }\ge 1.
\label{24-6-15}
\end{equation}
\end{enumerate}
In the former case \ref{sect-24-6-2-A} surely $\hbar\le 1$ for
$r\ge r_*$.

Now we want a better implementation of the above idea.  Let us introduce
$r_2=\mu ^{-1/(\beta +1-\alpha ) }$ from the condition $\mu '=1$.  Due to (\ref{24-6-14}) $r^*\ge r_*$.  Let us apply straightforward semiclassical approximation \emph{only} in the zone $\{r^*\le |x|\le \varepsilon \}$.  Then the contribution of this zone to the remainder estimate is $O(R^*_1)$ with
\begin{multline}
R^*_1=\left\{\begin{aligned}
&h^{-2+\vartheta }+\mu h ^{-1+\vartheta } \qquad
&&\text{for\ \ } \vartheta <\bar{\vartheta }_0,\\
&h^{-2+\vartheta }\log \mu +\mu h ^{-1+\vartheta } \qquad
&&\text{for\ \ } \vartheta =\bar{\vartheta }_0,\\
&h^{-2+\vartheta }
\mu^{(1-\alpha)(\vartheta- \bar{\vartheta }_0)/(\beta +1-\alpha )}+
\mu h ^{-1+\vartheta }
\quad
&&\text{for\ \ } \vartheta >\bar{\vartheta }_0
\end{aligned}\right.
\label{24-6-16}
\end{multline}
with the same $\bar{\vartheta }_0=(2\alpha +2)/(1-\alpha ) $
as in Section~\ref{book_new-sect-12-5} of \cite{futurebook}.

Let us now treat the contribution of $B(0,r^*)$.  Making the rescaling
$x_\new =r ^{*\,-1}x$ we are in exactly the framework of Section~\ref{book_new-sect-12-5} of \cite{futurebook}
with $\hbar=h \mu ^{(\alpha+1)/(\beta+1-\alpha) }\le 1$.  Let us apply the results of this section.  We should consider the general situation and the
situation of escape condition (see Definition~\ref{book_new-def-12-5-14} of \cite{futurebook}).

\emph{Let us first consider the general situation:}
\begin{enumerate}[label=(\roman*), wide, labelindent=0pt]
\item\label{sect-24-6-2-i}
First of all, one can easily see that the contribution of $B(0,r^*)$
to the remainder  is $O(h^{-2+\vartheta})$ for $\vartheta< \bar{\vartheta}_0$.

\item\label{sect-24-6-2-ii}
On the other hand, for
$\bar{\vartheta }\ge \vartheta \ge  \bar{\vartheta }_0$ (with
$\bar{\vartheta }= -(d-1)(\alpha +1)/(\alpha +\beta +1) $ for
$\alpha +\beta +1<0$ and $\bar{\vartheta }=\infty $ for
$\alpha +\beta +1\ge 0$) the contribution of
$B(0,r^*)$ to the remainder estimate is given by the first or
second line of (\ref{book_new-12-5-74}) of \cite{futurebook}\,\footnote{\label{foot-24-42} With $hr^{*\,-\alpha-1}$ instead of $h$ and with the additional factor
$r^{*\,2\alpha\vartheta}$.}
and is
$O\bigl(r_*^{2\alpha\vartheta} (hr_3^{-\alpha-1})^{-2+\vartheta}\bigr)$;
this expression should include also the factor $\log h$ if
$\vartheta -2(\alpha +1)/(\beta +2)\in \bZ^+$.

\item\label{sect-24-6-2-iii}
Finally, for $\vartheta >\bar{\vartheta }$ the contribution of
$B(0,r^*)$ to the remainder is given by the third line in
(\ref{book_new-12-5-74}) of \cite{futurebook}\,\footref{foot-24-42} and is
$O\bigl(h^{(\alpha+\beta+1)\vartheta/(\alpha+1)}\mu^\vartheta\bigr)$.
\end{enumerate}

On the other hand, \emph{under escape condition\/} (see Definition~\ref{book_new-def-12-5-14} of \cite{futurebook}))  there is no adjustment in the cases \ref{sect-24-6-2-i} and \ref{sect-24-6-2-iii} but in the case \ref{sect-24-6-2-ii} the contribution of $B(0,r^*)$ is
$O\bigl(r^{*\,2\alpha\vartheta} (hr^{*\,-\alpha-1})^{-2+\vartheta}\bigr)$.

However, to be able to enjoy these remainder estimates in
cases \ref{sect-24-6-2-ii} and \ref{sect-24-6-2-iii} one should assume that \underline{either} $\vartheta \le \bar{\vartheta}_1$ \underline{or} the homogeneity condition is fulfilled.  For the sake of simplicity in the second case we assume that

\begin{claim}\label{24-6-17}
$g^{jk}=g^{0jk}+g^{1jk}$, $V=V^0+V^1$ and $g^{0jk}$, $g^{1jk}$, $V^0$, $V^1$, $V_j$ are $\sC^K$ functions (outside of $0$), positively homogeneous of degrees
$0, \beta+1-\alpha, 2\alpha, \alpha+\beta+1, \beta+1$ respectively.
\end{claim}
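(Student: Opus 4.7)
The statement (24-6-17) is framed in the paper as a simplifying hypothesis (``we assume that\dots''), so strictly speaking there is nothing to prove; rather, the task is to \emph{construct} the decomposition from natural stabilization hypotheses on the coefficients at the singular point $0$, and to identify canonical candidates for $g^{0jk}$, $g^{1jk}$, $V^0$, $V^1$, $V_j$ with the claimed homogeneity degrees. My plan is to present (24-6-17) as the output of a two-step asymptotic expansion procedure at $0$, compatible with the scaling built into $\textup{(\ref{24-6-7})}_{1-3}$, (\ref{24-6-8}), (\ref{24-6-9}).

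First, I would define the leading homogeneous parts by radial limits: set
\begin{equation*}
g^{0jk}(x) \coloneqq \lim_{\varepsilon \to 0+} g^{jk}(\varepsilon x),\qquad
V^0(x) \coloneqq \lim_{\varepsilon \to 0+} \varepsilon^{-2\alpha}\, V(\varepsilon x),
\end{equation*}
and similarly $V_j(x) \coloneqq \lim \varepsilon^{-\beta-1}V_j(\varepsilon x)$ for the magnetic potential, which I redefine gauge-equivalently to coincide with its own limit. The bounds $\textup{(\ref{24-6-7})}_{1-3}$ say precisely that these rescaled families and all their derivatives up to order $K$ are uniformly bounded on the unit sphere; by Arzel\`a--Ascoli, limits exist along subsequences, and the ``stabilization at $0$'' hypothesis I will impose is that the limits exist along the full net $\varepsilon\to 0^+$. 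The resulting functions automatically lie in $\sC^K(\bR^3\setminus 0)$ and are positively homogeneous of degrees $0$, $2\alpha$, $\beta+1$, respectively; the magnetic curvature of $V_j$ then lies in the homogeneity class of degree $\beta$, consistent with (\ref{24-6-8}).

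Next, the subtler step is to produce the second-order pieces $g^{1jk}$ and $V^1$. Setting $\tilde g^{jk}\coloneqq g^{jk}-g^{0jk}$ and $\tilde V \coloneqq V-V^0$, the differences automatically satisfy $|D^\sigma \tilde g^{jk}| = o(|x|^{-|\sigma|})$ and $|D^\sigma \tilde V| = o(|x|^{2\alpha - |\sigma|})$. The homogeneity degrees $\beta+1-\alpha$ for $g^{1jk}$ and $\alpha+\beta+1$ for $V^1$ are dictated by the effective scaling $r_*=(h/\mu)^{1/(\beta+2)}$ used in the microlocal rescaling: they are exactly the scales at which the correction enters the reduced symbol at the same weight as the principal symbol. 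I would therefore impose the second stabilization condition, that the rescalings $\varepsilon^{-(\beta+1-\alpha)}\tilde g^{jk}(\varepsilon x)$ and $\varepsilon^{-(\alpha+\beta+1)}\tilde V(\varepsilon x)$ converge in $\sC^K_{\loc}(\bR^3 \setminus 0)$, and define $g^{1jk}$ and $V^1$ as these limits; they are then positively homogeneous of the required degrees by construction. Compatibility with (\ref{24-6-9}) follows because $|\nabla (V^0/F^0)| \asymp |x|^{2\alpha-\beta-1}$ is the leading homogeneous part of $|\nabla(V/F)|$, and the corrections vanish to strictly higher order.

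The main obstacle, and the reason (24-6-17) has to be \emph{assumed} rather than derived from (\ref{24-6-7})--(\ref{24-6-9}), is that those bounds permit logarithmic or oscillatory refinements incompatible with any two-term homogeneous splitting: one may construct coefficients satisfying (\ref{24-6-7}) whose rescalings $g^{jk}(\varepsilon x)$ fail to converge as $\varepsilon\to 0^+$. Thus the real content of the ``proof'' lies in isolating the correct stabilization hypothesis and verifying it in the concrete families of model operators (such as those built in Appendix~\ref{sect-24-A-3} of \cite{futurebook}) to which the subsequent asymptotics in cases \ref{sect-24-6-2-ii}--\ref{sect-24-6-2-iii} are to be applied; once the iterated radial limits exist, the decomposition itself is immediate.
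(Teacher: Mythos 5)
You are right that the paper offers no proof of (\ref{24-6-17}): it is imposed verbatim as a simplifying hypothesis (``for the sake of simplicity in the second case we assume that\dots'') needed to use the remainder estimates of cases \ref{sect-24-6-2-ii}--\ref{sect-24-6-2-iii}, exactly as you say. Your constructive motivation via iterated radial limits and the observation that (\ref{24-6-7})--(\ref{24-6-9}) alone cannot force such a two-term homogeneous splitting are consistent with how the assumption is used (and with the examples of Appendix~\ref{sect-24-A-3}), but they are additional commentary rather than a reconstruction of any argument in the paper, which contains none for this statement.
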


In the first case we only assume that

\begin{claim}\label{24-6-18}
$g^{jk}=g^{0jk}+g^{1jk}$, $V=V^0+V^1$ and
$g^{0jk}$, $V^0$, are $\sC^K$ functions (outside of $0$), positively
homogeneous of degrees $0, 2\alpha$ respectively and
\begin{equation*}
\bigl|D ^\sigma g ^{1jk}\bigr|
\le c|x| ^{\beta +1-\alpha -|\sigma |},\qquad
\bigl|D ^\sigma V ^1\bigr|\le c|x| ^{\beta +1+\alpha -|\sigma |}
\qquad
 \forall \sigma :|\sigma |\le K
\end{equation*}
\end{claim}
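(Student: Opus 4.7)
The statement (24-6-18) is a structural hypothesis: it asserts a splitting of the metric and potential into a principal positively homogeneous part plus a remainder with better decay at the origin. So the task is less to ``prove a theorem'' in the logical sense than to explain how one would verify this decomposition from more primitive stabilization data, and to check that it is consistent with the regularity conditions \ref{24-6-7-1}--\ref{24-6-7-3} already in force. My plan is to construct the homogeneous parts $g^{0jk}$ and $V^0$ intrinsically as radial limits, then estimate the remainders.

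First, I would define $g^{0jk}$ and $V^0$ by
\begin{equation*}
g^{0jk}(x) \;=\; \lim_{t\to +0} g^{jk}(tx/|x|),\qquad
V^0(x)\;=\;|x|^{2\alpha}\,\lim_{t\to +0} t^{-2\alpha} V(tx/|x|),
\end{equation*}
whenever those limits exist; these extend to be $\sC^K$ away from $0$ and positively homogeneous of degrees $0$ and $2\alpha$ respectively. The existence of such limits is exactly the stabilization assumption one expects to place on $g^{jk}$, $V$ at $0$ (analogous to conditions $\textup{(\ref{23-2-39})}_{1,3}$ of \cite{futurebook} used earlier in this chapter), so I would simply adjoin a stabilization hypothesis of that form as an input. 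Then $g^{1jk}\coloneqq g^{jk}-g^{0jk}$ and $V^1\coloneqq V-V^0$ are tautologically defined, and I only need to check the claimed derivative bounds.

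The second step is to verify $|D^\sigma g^{1jk}|\le c|x|^{\beta+1-\alpha-|\sigma|}$ and $|D^\sigma V^1|\le c|x|^{\beta+1+\alpha-|\sigma|}$. For $|\sigma|=0$, the bound on $g^{1jk}$ must come from the stabilization rate of $g^{jk}$ to $g^{0jk}$ at the relevant order $|x|^{\beta+1-\alpha}$ (so one assumes $g^{jk}(x)=g^{0jk}(x)+O(|x|^{\beta+1-\alpha})$ as part of the data); under our hypotheses $\beta+1-\alpha>0$, so this is genuine vanishing at $0$. For higher $|\sigma|$, I would differentiate the identity $g^{1jk}=g^{jk}-g^{0jk}$ and observe that each derivative of $g^{0jk}$ is homogeneous of degree $-|\sigma|$, whereas each derivative of $g^{jk}$ is bounded by $c|x|^{-|\sigma|}$ by \ref{24-6-7-1}; the nontrivial point is to upgrade the $|\sigma|=0$ estimate $O(|x|^{\beta+1-\alpha})$ to $O(|x|^{\beta+1-\alpha-|\sigma|})$ for derivatives, which would follow from interpolation between the $|\sigma|=0$ bound and the trivial bound $|D^\sigma g^{1jk}|\le c|x|^{-|\sigma|}$, using the $\sC^K$ regularity away from $0$ on dyadic annuli $\{|x|\sim 2^{-k}\}$. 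The treatment of $V^1$ is entirely parallel, with $|x|^{2\alpha}$ replaced by $|x|^{\beta+1+\alpha}$ (and note $\beta+1+\alpha>2\alpha$ is compatible with the $V^0$ part dominating).

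The main obstacle, as I see it, is not technical but conceptual: one has to decide what decay rate of the remainder is needed, and the answer $|x|^{\beta+1-\alpha}$ (resp.\ $|x|^{\beta+1+\alpha}$) is dictated precisely by the threshold $\bar\vartheta_1$ beyond which one would otherwise need the stronger homogeneity condition (24-6-17). In other words, the strength of the decomposition in (24-6-18) is calibrated so that after the rescaling $x_{\new}=r^{*\,-1}x$ performed above, the $g^{1jk}$ and $V^1$ contributions get an extra factor of $r^{*\,\beta+1-\alpha}$, which exactly matches one power of the small parameter $\mu'=\mu r^{\beta+1-\alpha}$ governing the perturbation; so $g^{1jk}$, $V^1$ can be treated perturbatively when applying the asymptotics of Section~\ref{book_new-sect-12-5} of \cite{futurebook} to the rescaled operator, while $g^{0jk}$, $V^0$ supply the required homogeneous model. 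Once this matching is checked, the decomposition (24-6-18) is seen to be the minimal stabilization hypothesis under which the remainder estimates of cases~\ref{sect-24-6-2-ii} and~\ref{sect-24-6-2-iii} above survive up to $\vartheta=\bar\vartheta_1$.
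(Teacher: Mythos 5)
You have correctly sensed that \textup{(\ref{24-6-18})} is not a theorem at all: in the paper it is introduced by the words ``In the first case we only assume that'' and plays exactly the same role as the homogeneity condition \textup{(\ref{24-6-17})} in the other case --- it is a hypothesis imposed on the operator (in addition to \textup{(\ref{24-6-14})}), under which the remainder estimates of Theorem~\ref{thm-24-6-3} are then derived. Consequently the paper contains no proof of it and none is intended; there is nothing to reconstruct, only a condition to state. Your surrounding discussion of why the decay rate $|x|^{\beta+1-\alpha}$ (resp.\ $|x|^{\beta+1+\alpha}$) is the natural calibration --- one factor of the perturbation parameter $\mu'=\mu r^{\beta+1-\alpha}$ after the rescaling $x_{\new}=r^{*\,-1}x$ --- is sound and matches the intended use of the condition.

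However, the part of your proposal that purports to \emph{derive} \textup{(\ref{24-6-18})} from ``more primitive stabilization data'' has two genuine defects. First, it is circular: the data you adjoin (existence of the radial limits $g^{0jk}$, $V^0$ together with stabilization at the rates $O(|x|^{\beta+1-\alpha})$, $O(|x|^{\beta+1+\alpha})$) is just a restatement of the conclusion, so nothing is gained. Second, and more seriously, the interpolation step is false as stated: from the zeroth-order bound $|g^{1jk}|\le c|x|^{\beta+1-\alpha}$ and the crude bounds $|D^\sigma g^{1jk}|\le c|x|^{-|\sigma|}$ coming from $\textup{(\ref{24-6-7})}_{1}$ and homogeneity of $g^{0jk}$, Landau--Kolmogorov-type interpolation on a dyadic annulus $\{|x|\asymp r\}$ yields only
$|D^\sigma g^{1jk}|\le c\,r^{(\beta+1-\alpha)(1-|\sigma|/K)-|\sigma|}$,
which is strictly weaker than the claimed $c\,r^{\beta+1-\alpha-|\sigma|}$ for every $0<|\sigma|\le K$. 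The derivative bounds in \textup{(\ref{24-6-18})} therefore cannot be upgraded from the sup-norm stabilization rate; they must simply be assumed, which is precisely what the paper does.
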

in addition to (\ref{24-6-14}).

Then the terms generated by the singularity are
\begin{equation}
\sum_{j,k} \omega _{jk}
h ^{(2\alpha +(j+k)(\beta -\alpha +1))(\alpha +1) ^{-1}}\mu ^j
\label{24-6-19}
\end{equation}
while the other terms are semiclassical (but not necessary Weyl).
To avoid computational difficulties we will only consider the case
$\vartheta\le 1$ in the final statement.  The calculations
for $\vartheta>1$ are left to the reader.
\enlargethispage{2\baselineskip}

\begin{theorem}\label{thm-24-6-3}
\begin{enumerate}[label=(\roman*), wide, labelindent=0pt]
\item\label{thm-24-6-3-i}
Let the Schr\"odinger-Pauli operator be self-adjoint and let conditions \textup{(\ref{24-2-2})}, $\textup{(\ref{24-6-7})}_{1-3}$, \textup{(\ref{24-6-8})},
 \textup{(\ref{24-6-9})}  and \textup{(\ref{24-6-18})}
be fulfilled.  Let $1\ge \vartheta\ge \bar{\vartheta}_0$.  Then for
$\mu^{\alpha+1} h^{\beta+1-\alpha}\le 1$ the following estimate holds:
\begin{multline}
R_\vartheta \coloneqq
\bigl|\int \psi (x)\Bigl(e_\vartheta (x,x,0)-
e_\vartheta ^0(x,x,0 )- \\
\shoveright{h^{-3}S_\vartheta (x,\mu h,0)+
h^{-3}\varkappa _{1,0}(x)\Bigr)\,dx\bigr|\le} \\
Ch ^{-2+\vartheta }
\bigl(\frac{h}{\mu }\bigr)^{(\alpha\vartheta -\vartheta +2\alpha +2)/(\beta +2)} + C\mu h ^{-1+\vartheta }.
\label{24-6-20}
\end{multline}

\item\label{thm-24-6-3-ii}
Moreover, under escape condition (see Definition~\ref{book_new-def-12-5-14} of \cite{futurebook}) the following estimate holds
\begin{equation}
R_\vartheta \le Ch ^{-2+\vartheta }
\mu ^{-(\alpha \vartheta -\vartheta +2\alpha +2)/(\beta +1-\alpha )}+
C\mu h ^{-1+\vartheta }
\label{24-6-21}
\end{equation}
\end{enumerate}
\end{theorem}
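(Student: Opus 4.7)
My plan is the two-zone reduction sketched in the paragraphs preceding the theorem. I take a $\gamma$-admissible partition of unity on $\supp\psi$ with scale $\gamma(x) = \frac{1}{2}|x|$ and split it at the threshold $r^* = \mu^{-1/(\beta+1-\alpha)}$, which is the level where the rescaled magnetic parameter $\mu' = \mu|x|^{\beta+1-\alpha}$ equals $1$. Hypothesis (\ref{24-6-14}) guarantees that the rescaled semiclassical parameter $\hbar = h|x|^{-\alpha-1}$ stays $\le 1$ throughout the outer zone $\{r^* \le |x| \le \tfrac12\}$, so Theorem~\ref{thm-24-6-1} applies to $e_\vartheta$ after rescaling $x \mapsto x/|x|$ on each partition element, and the non-magnetic semiclassical results of Chapter~\ref{book_new-sect-13} of \cite{futurebook} apply to $e_\vartheta^0$. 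Summing the local remainder (\ref{24-6-12}) over dyadic shells from $|x| \asymp \tfrac12$ down to $|x| \asymp r^*$ under the standing hypothesis $\vartheta \ge \bar\vartheta_0$ yields $R_1^*$ of (\ref{24-6-16}), i.e.\ a contribution $h^{-2+\vartheta}\mu^{(1-\alpha)(\vartheta - \bar\vartheta_0)/(\beta+1-\alpha)} + \mu h^{-1+\vartheta}$, which is absorbed into the right-hand side of (\ref{24-6-20}) (and is already the right-hand side of (\ref{24-6-21})).

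For the inner zone $B(0,r^*)$ I rescale $x_\new = x/r^*$; the rescaled operator has $\hbar = h\mu^{(\alpha+1)/(\beta+1-\alpha)} \le 1$ and $\mu' = 1$, placing it squarely in the singular-potential framework of Section~\ref{book_new-sect-12-5} of \cite{futurebook}. Rather than estimating $e_\vartheta$ and $e_\vartheta^0$ separately I work with the identity
\begin{equation*}
\int \psi(e_\vartheta - e_\vartheta^0)\,dx = -\int_0^1\!\!\int \psi\, B\, e^\eta_{\vartheta - 1}(x,x,0)\,dx\,d\eta,\qquad B = A - A^0,\ A^\eta = \eta A^0 + (1-\eta)A,
\end{equation*}
the point being that $B$ carries explicit $\mu$-factors from the magnetic potential, which after rescaling trade one power of $\hbar^{-1}$ for a power of $\mu'/\hbar$. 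Plugging the generic-case remainder of Section~\ref{book_new-sect-12-5} of \cite{futurebook} (first and second lines of (\ref{book_new-12-5-74}) of \cite{futurebook}) into the rescaled $e^\eta_{\vartheta-1}$, multiplying by this $\mu'$-gain and un-rescaling produces the first term on the right of (\ref{24-6-20}). For part (ii) I substitute instead the sharper Section~\ref{book_new-sect-12-5} bound available under the escape condition (Definition~\ref{book_new-def-12-5-14} of \cite{futurebook}), namely $O\bigl((r^*)^{2\alpha\vartheta}\hbar^{-2+\vartheta}\bigr)$ in the rescaled problem; un-rescaling gives exactly $h^{-2+\vartheta}\mu^{(1-\alpha)(\vartheta-\bar\vartheta_0)/(\beta+1-\alpha)}$ and hence (\ref{24-6-21}). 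The homogeneity hypothesis (\ref{24-6-18}) is precisely what is needed to line up the singular expansion terms (\ref{24-6-19}) produced by the Section~\ref{book_new-sect-12-5} analysis so that, after cancellation against the subtracted $-h^{-3}S_\vartheta + h^{-3}\varkappa_{1,0}$ in the definition of $R_\vartheta$, only the generic remainder survives.

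The principal obstacle will be the inner-zone argument in the range $\bar\vartheta_0 \le \vartheta < 1$: the naive Tauberian treatment of $\int \psi\, B\, e^\eta_{\vartheta-1}$ costs an entire factor $\mu'/\hbar$, which by itself is too crude, and one must instead invoke the variational / successive-approximation scheme of Subsubsection~\emph{\ref{book_new-sect-12-5-4} of \cite{futurebook}\/} to sharpen the loss to the fractional power $(\mu'/\hbar)^\vartheta$. Concurrently one has to track the singular expansion (\ref{24-6-19}) carefully enough across both the outer-zone magnetic Weyl asymptotics and the inner-zone singular asymptotics to see that, thanks to (\ref{24-6-18}), the two sets of non-remainder contributions collected into $h^{-3}S_\vartheta(x,\mu h,0)$ and $h^{-3}\varkappa_{1,0}(x)$ cancel exactly, so that what is bounded in (\ref{24-6-20})--(\ref{24-6-21}) is indeed only the combined remainder.
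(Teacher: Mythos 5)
Your two-zone architecture is the paper's: the same threshold $r^*=\mu^{-1/(\beta+1-\alpha)}$, the straightforward magnetic semiclassical approximation in $\{r^*\le|x|\le\varepsilon\}$ summing to $R^*_1$ of (\ref{24-6-16}), the rescaling of $B(0,r^*)$ into the framework of Section~\ref{book_new-sect-12-5} of \cite{futurebook} with $\hbar=h\mu^{(\alpha+1)/(\beta+1-\alpha)}\le1$, and the general/escape dichotomy read off from (\ref{book_new-12-5-74}) of \cite{futurebook}; your un-rescaling of the escape-condition bound to obtain (\ref{24-6-21}), and your remark that (\ref{24-6-18}) is what aligns the singular terms (\ref{24-6-19}) with $-h^{-3}S_\vartheta+h^{-3}\varkappa_{1,0}$, both match the paper.

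The gap is in your inner-zone treatment of part \ref{thm-24-6-3-i}. You route $e_\vartheta-e^0_\vartheta$ through $-\int_0^1\!\int\psi\,B\,e^\eta_{\vartheta-1}\,dx\,d\eta$ and assert that the generic bound of (\ref{book_new-12-5-74}) applied to $e^\eta_{\vartheta-1}$, times the ``$\mu'$-gain'', un-rescales to the first term of (\ref{24-6-20}). But that term is $h^{-2+\vartheta}r_*^{\alpha\vartheta-\vartheta+2\alpha+2}$ with $r_*=(h/\mu)^{1/(\beta+2)}$ determined by $\mu'=\hbar$ --- a scale your argument never produces. The perturbative factor is a gain only where $\mu'\le\hbar$, i.e.\ $|x|\le r_*$; on the shell $r_*\le|x|\le r^*$ one has $\hbar\le\mu'\le1$, so $(\mu'\hbar^{-1})^\vartheta\ge1$ is a loss (at $|x|\asymp r^*$ it inflates the straightforward bound by $\hbar^{-\vartheta}$), and the estimate does not close there. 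Moreover, on the theorem's whole range $\vartheta\le1$ the order $\vartheta-1$ is nonpositive, so the identity itself needs the regularization of Subsection~\ref{book_new-sect-12-5-4} of \cite{futurebook}, which is precisely where the exponent degrades from $1$ to $\vartheta$; this is the paper's \emph{first} implementation, valid only up to $r_*$. The paper's actual route for $B(0,r^*)$ is different: since $\mu'\le1$ throughout, the rescaled operator $A$ itself --- not only $A-A^0$ --- falls within the hypotheses of Section~\ref{book_new-sect-12-5}, so one applies (\ref{book_new-12-5-74}) to $e_\vartheta$ and to $e^0_\vartheta$ separately and subtracts; the first/second lines of that estimate are what carry the scale $r_*$ into (\ref{24-6-20}) in the general case, while the escape condition upgrades the evaluation point to $r^*$ and yields (\ref{24-6-21}).
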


\begin{remark}\label{rem-24-6-4}
Let us compare the remainder estimates with the Scott correction $\omega h^l$ with $l=2\alpha\vartheta/(\alpha+1)$. One can see easily that for
$\mu^{\alpha+1} h^{\beta+1-\alpha}\le 1$ the first terms in these remainder estimates are surely less than the Scott term. Therefore the remainder estimate is less than the Scott term if and only if $\mu \le h^l$ which means exactly that $\mu\le h^{(\alpha\vartheta+\alpha+1-\vartheta)/(\alpha+1)}$
and the idea to treat $(e_\vartheta-e^0_\vartheta)$
instead of $e_\vartheta$ is reasonable only in this case.
\end{remark}

\section{The singular case. II}
\label{sect-24-6-3}

Let us consider the case (\ref{24-6-15}).  In this case we should treat
\begin{equation}
R^*_\vartheta =
\bigl|\int \psi (x)\Bigl(e_\vartheta (x,x,0)-
h^{-3}S_\vartheta (x,\mu h,0)\Bigr)\,dx\bigr|.
\label{24-6-22}
\end{equation}
Let us pick $r_0=h^{1/(\alpha +1)}$.  In the zone $\{хэцолон |x|\ge r_0\}$ the routine semiclassical technique is applicable and the contribution of this zone to the remainder estimate does not exceed $CR$ with
\begin{equation}
R=\left\{\begin{aligned}
&\mu h^{-1+\vartheta }\qquad
&&\text{for\ \ } (3\alpha +1)\vartheta+\beta -2\alpha >0\\
&\mu h^{-1+\vartheta }\log\mu \qquad
&&\text{for\ \ } (3\alpha +1)\vartheta+\beta -2\alpha =0\\
&\mu h^{2\alpha \vartheta /(\alpha +1)}\qquad
&&\text{for\ \ } (3\alpha +1)\vartheta+\beta -2\alpha <0
\end{aligned}\right.
\label{24-6-23}
\end{equation}
So we need to estimate the contribution of $B(0,r_0)$.  The routine
estimate due to Chapter~8 and standard variational estimates gives
a very poor result (which the reader can derive if desired).  So
we want to improve the result under certain conditions.  Namely,
let us assume that conditions \ref{24-6-7-1} are fulfilled.

The rest of this subsection is devoted to the proof of

\begin{proposition}\label{prop-24-6-5}
Let  the Sch\"odinger-Pauli $A$  be self-adjoint and let conditions \textup{(\ref{24-2-2})}, \textup{(\ref{24-6-1})}
$\textup{(\ref{24-6-7})}_{1-3}$ and \textup{(\ref{24-6-8})} be fulfilled with $\beta=0$.  Let $h=1$, $\mu \ge 1$.  Then the operator $A$ in $B(0,1)$ (with the Dirichlet boundary conditions) is semibounded from below uniformly with respect to $\mu$ and the estimate
\begin{equation}
\N (\tau)\le C\mu\tau^{\frac{1}{2}}+C\tau^{\frac{3}{2}}
\label{24-6-24}
\end{equation}
holds for $\tau \ge 1$.
\end{proposition}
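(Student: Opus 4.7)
The plan is to combine the Pauli factorization $A = (\upsigma\cdot P)^2 + V$ with a magnetic Hardy-type inequality to establish uniform semiboundedness, and then to derive the eigenvalue bound via a dyadic annular decomposition on which standard magnetic LCR-type estimates apply.

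First, for uniform semiboundedness, I would decompose $V = V_1 + V_2$ with $V_1 = \chi_{B(0,\delta)}V$ for a small $\delta>0$. Since $2\alpha>-2$, the singular part satisfies $|V_1|\le c\delta^{2\alpha+2}|x|^{-2}$ on $\supp(V_1)$. The diamagnetic inequality $\|Pu\|^2\ge h^2\|\nabla|u|\|^2$ coupled with the $3\D$ Hardy inequality $\||x|^{-1}u\|^2\le 4\|\nabla|u|\|^2$ yields $\|Pu\|^2\ge \tfrac{h^2}{4}\||x|^{-1}u\|^2$, so that $\int|V_1||u|^2 \le 16c\delta^{2\alpha+2}h^{-2}\|Pu\|^2$. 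Choosing $\delta$ small absorbs this into a small fraction of $\|Pu\|^2$. The Pauli factorization $(\upsigma\cdot P)^2 = |P|^2 - \mu h\,\upsigma\cdot\vec F\ge 0$ (as a spinor-valued operator) then lets one transfer the control from $\|Pu\|^2$ to $\|(\upsigma\cdot P)u\|^2$, while the bounded part $|V_2|$ contributes a $\mu$-independent constant. This gives $A\ge -C_0$ uniformly in $\mu$.

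Second, for the eigenvalue count, I would partition $B(0,1)$ into dyadic shells $\Omega_k=\{2^{-k-1}\le|x|\le 2^{-k}\}$ for $k=0,\ldots,K$, plus a central core $\Omega_\infty = B(0,r_*)$ with $r_*=2^{-K}$ tuned to the scales of $\mu,\tau$. The rescaling $x = 2^{-k}y$ on each $\Omega_k$ sends $A$ to a magnetic Schr\"odinger-Pauli operator on a fixed unit annulus with uniformly smooth coefficients (by $\textup{(\ref{24-6-7})}_{1-3}$ and $F\asymp 1$), rescaled parameters $h_k = h\cdot 2^k$, $\mu_k = \mu\cdot 2^{-k}$ (since $\beta=0$), and threshold $\tau_k = \tau\cdot 2^{-2\alpha k}$. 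On each such annulus I would apply the standard $3\D$ magnetic LCR (Rozenblum-type) inequality
\begin{equation*}
\N_{\Omega_k}(\tau) \le C\int_{\Omega_k}\!\Bigl(\mu F(\tau-V)_+^{1/2}+(\tau-V)_+^{3/2}\Bigr)\,dx.
\end{equation*}
On the core $\Omega_\infty$, the uniform semiboundedness from Step 1, combined with the nonmagnetic Rozenblum estimate for $-\Delta + V$ (absorbing $V_-\lesssim|x|^{2\alpha}$ via Hardy), contributes $O(\tau^{3/2})$.

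Third, summation over shells produces the global bound. Using $F\asymp 1$, $|V|\le c|x|^{2\alpha}$, $\tau\ge 1$ and $\alpha\in(-1,0]$, direct computation yields
\begin{equation*}
\int_{B(0,1)}\!\mu F(\tau-V)_+^{1/2}\,dx\lesssim \mu\tau^{1/2},\qquad \int_{B(0,1)}\!(\tau-V)_+^{3/2}\,dx\lesssim \tau^{3/2},
\end{equation*}
where the first estimate uses the convergence of $\int_0^1 r^{\alpha+2}\,dr$ (due to $2\alpha>-2$) to handle the singular region $r^{2\alpha}\ge\tau$, and the regime $V\ge -\tau$ covers essentially all of $B(0,1)$ since $\tau\ge 1$. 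Combined with the core contribution, this gives (\ref{24-6-24}).

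The main obstacle is Step 1: the naive bound $|\mu h\,\upsigma\cdot\vec F|\le C\mu$ yields only $A\ge -C\mu$, so uniform-in-$\mu$ semiboundedness genuinely requires the full nonnegativity of $(\upsigma\cdot P)^2$ combined with the diamagnetic-Hardy bound rather than the pointwise expansion $|P|^2-\mu h\,\upsigma\cdot\vec F$. The subtle point is that $(\upsigma\cdot P)^2$ may have small-energy modes concentrating near $0$ for large $\mu$, so the Hardy-type control has to be implemented as an operator inequality adapted to the Pauli form itself. A secondary technical point is ensuring the $3\D$ magnetic LCR of Step 2 is uniform across dyadic scales and correctly yields both the lowest-Landau-level count $\mu\tau^{1/2}$ and the free Weyl count $\tau^{3/2}$ in the final summation.
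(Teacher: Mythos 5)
Your Step 1 contains the decisive gap, and you have in fact put your finger on it yourself without closing it. The chain ``absorb $\int|V_1||u|^2$ into $\epsilon\|Pu\|^2$, then transfer to $\|(\upsigma\cdot P)u\|^2$'' cannot be completed: the only available transfer is $\|Pu\|^2=\|(\upsigma\cdot P)u\|^2+\mu h\langle\upsigma\cdot\vec F u,u\rangle\le\|(\upsigma\cdot P)u\|^2+C\mu\|u\|^2$, which reinstates the $-C\epsilon\mu$ term you are trying to avoid; and there is no Hardy-type inequality for the Pauli form itself, because on the lowest Landau band (degeneracy $\asymp\mu$, states concentrated in tubes of radius $\mu^{-1/2}$ through the singularity) $\|(\upsigma\cdot P)u\|^2$ collapses to the one-dimensional energy $\|D_3u\|^2$, which does not control $\||x|^{-1}u\|^2$. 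The same degeneracy undermines Step 2: the ``standard $3\D$ magnetic LCR'' with the term $\mu F(\tau-V)_+^{1/2}$ is not an off-the-shelf counting bound for Pauli operators (for weak $V_-$ it fails outright, since the lowest band always produces bound states), so invoking it essentially assumes the conclusion; and once one has such a bound, the dyadic shells are superfluous, since $F\asymp 1$ here and the integrals $\int_{B(0,1)}(\tau+c|x|^{2\alpha})_+^{1/2}\,dx$ and $\int_{B(0,1)}(\tau+c|x|^{2\alpha})_+^{3/2}\,dx$ converge directly because $\alpha>-1$.

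The paper closes exactly this gap by exploiting anisotropy rather than the isotropic diamagnetic--Hardy route. After a gauge and coordinate reduction the field points along $x_3$ with $V_1=V_1(x')$; then $V\ge-c_0|x_3|^{2\alpha}$ (since $|x|\ge|x_3|$ and $\alpha\le0$), a function of $x_3$ alone. Writing $A\ge\epsilon_0D_3^2+A''-c_0|x_3|^{2\alpha}-C_0$, where $A''=P_1^2+P_2^2-\mu F''$ is a two-dimensional Pauli operator in $x'$ not involving $x_3$, one passes to a cylinder and separates variables: the singular potential is absorbed entirely by the one-dimensional operator $\epsilon_0D_3^2-c_0|x_3|^{2\alpha}$ on $[-1,1]$, which is semibounded because $\alpha>-1$ and contributes a factor $C\tau^{1/2}$ to the count, while $A''$ on the disc contributes $C(\mu+\tau)$; the product of the two counts gives $C\mu\tau^{1/2}+C\tau^{3/2}$. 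In short, the one-dimensional kinetic energy that survives on the lowest Landau band must be pitted against a one-dimensional lower bound for $V$ along the field direction, not against the three-dimensional Hardy weight; your outline correctly identifies the $\mu\times\tau^{1/2}$ product structure but does not supply the mechanism that produces it.
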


Then the standard arguments of Chapter~\ref{book_new-sect-9} of \cite{futurebook} yield the following theorem
(details are left to the reader):

\begin{theorem}\label{thm-24-6-6}
Let the Schr\"odinger-Pauli operator $A$  be self-adjoint and let conditions \textup{(\ref{24-2-2})}, \textup{(\ref{24-6-1})}
$\textup{(\ref{24-6-7})}_{1-3}$ and \textup{(\ref{24-6-8})} be fulfilled
with $\beta=0$.  Then for $\mu ^{\alpha +1}h^{1-\alpha }\ge 1$
the following estimate holds:
\begin{equation}
R_\vartheta \le C\mu h^{-1+\vartheta }.
\label{24-6-25}
\end{equation}
\end{theorem}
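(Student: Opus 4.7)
\medskip

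The plan is to combine the regular zone estimate already recorded in \textup{(\ref{24-6-23})} with a scaling reduction of the singular zone to Proposition~\ref{prop-24-6-5}. Fix $r_0=h^{1/(\alpha+1)}$ and split $R^*_\vartheta$ according to a partition of unity subordinate to $X'=\{|x|\ge r_0\}$ and $X''=B(0,r_0)$. In $X'$ the effective semiclassical parameter $h_\eff=h|x|^{-\alpha-1}\le 1$ and the effective field strength $\mu_\eff=\mu|x|^{1-\alpha}$ behave as in the regular theory, so Theorem~\ref{thm-24-6-1} (after rescaling each annulus $|x|\sim r$ to the unit ball) applies element by element. Summing the local contributions over the dyadic scales $r\in[r_0,\varepsilon]$ yields the estimate of type \textup{(\ref{24-6-23})}; under the hypothesis $\mu^{\alpha+1}h^{1-\alpha}\ge 1$ one checks directly, distinguishing the three cases in \textup{(\ref{24-6-23})}, that each is dominated by $C\mu h^{-1+\vartheta}$. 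This is the easy part.

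For the singular zone $X''$ I would estimate the two pieces of \textup{(\ref{24-6-22})} separately. The first piece $\int_{X''}\psi\, e_\vartheta(x,x,0)\,dx$ is handled by rescaling $x=r_0 y$: since $\rho=|x|^\alpha$, $\rho_1=|x|^\beta=1$, $\gamma=|x|$, the operator takes the form $A=h^{2\alpha/(\alpha+1)}\tilde A$, where $\tilde A$ is a Schr\"odinger--Pauli operator on $B(0,1)$ satisfying \textup{(\ref{24-6-7})}, \textup{(\ref{24-6-8})} with $\beta=0$ and with new semiclassical parameters $h_{\new}=1$ and $\mu_{\new}=\mu h^{(1-\alpha)/(\alpha+1)}$. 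The standing hypothesis $\mu^{\alpha+1}h^{1-\alpha}\ge 1$ is precisely $\mu_{\new}\ge 1$, so Proposition~\ref{prop-24-6-5} applies to $\tilde A$; it gives uniform semiboundedness $\tilde A\ge -C_0$ and the count $\N(\tilde A,1)\le C\mu_{\new}$. Hence
\begin{equation*}
\Tr((-\tilde A)_+^\vartheta)\le C_0^\vartheta\,\N(\tilde A,0)\le C\mu_{\new},
\end{equation*}
and translating back via $A=h^{2\alpha/(\alpha+1)}\tilde A$ we obtain
\begin{equation*}
\Bigl|\int_{X''}\psi\, e_\vartheta(x,x,0)\,dx\Bigr|
\le C\,h^{2\alpha\vartheta/(\alpha+1)}\,\mu_{\new}
= C\mu h^{(2\alpha\vartheta+1-\alpha)/(\alpha+1)}.
\end{equation*}
A short exponent check ($2\alpha\vartheta+1-\alpha\ge(\vartheta-1)(\alpha+1)$, i.e.\ $\vartheta(1-\alpha)\le 2$, which holds in the range where the theorem is used) shows that this is $\le C\mu h^{-1+\vartheta}$.

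The second piece $\int_{X''}\psi\, h^{-3} S_\vartheta(x,\mu h,0)\,dx$ is purely a direct computation from the explicit formula \textup{(\ref{24-6-6})}. Using $V\asymp|x|^{2\alpha}$ and $F\asymp 1$, the $n$-th summand is supported where $|x|^{2\alpha}\ge 2n\mu h$, and integration in spherical coordinates for $n=0$ gives $C\mu h^{-2}\,r_0^{\alpha+2\alpha\vartheta+3}$, which after substituting $r_0=h^{1/(\alpha+1)}$ reduces to the same exponent $(2\alpha\vartheta+1-\alpha)/(\alpha+1)$ and therefore to $O(\mu h^{-1+\vartheta})$; the terms $n\ge 1$ are dominated by the $n=0$ term thanks to the shrinking support and the subtraction $2n\mu hF$ inside $(\cdot)_+^{1/2+\vartheta}$.

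The principal obstacle — and the whole point of the subsection — is the variational input Proposition~\ref{prop-24-6-5}: without it, naive Lieb--Cwikel--Rozenblum estimates on $B(0,r_0)$ would only give something of order $\mu^{3/2}h^{-2}r_0^{3}$, which is far worse than $\mu h^{-1+\vartheta}$. Everything else in the proof is bookkeeping: the partition of unity, the standard Chapter~\ref{book_new-sect-9} rescaling/summation procedure, and the elementary exponent arithmetic matching the two zones. A secondary, purely technical, point is verifying that in the regular zone the three subcases of \textup{(\ref{24-6-23})} (including the logarithmic borderline) are all absorbed into $\mu h^{-1+\vartheta}$ for $\beta=0$ under the strong-field assumption $\mu^{\alpha+1}h^{1-\alpha}\ge 1$; this is a one-line check in each subcase.
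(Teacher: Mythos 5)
Your proposal follows the paper's own route exactly: split at $r_0=h^{1/(\alpha+1)}$, bound the zone $\{|x|\ge r_0\}$ by the regular-case estimate \textup{(\ref{24-6-23})}, and control $B(0,r_0)$ by rescaling to $h_{\new}=1$, $\mu_{\new}=\mu h^{(1-\alpha)/(\alpha+1)}\ge 1$ so that Proposition~\ref{prop-24-6-5} applies — which is precisely the reduction the paper sets up before declaring that ``the standard arguments of Chapter~\ref{book_new-sect-9} yield the theorem, details left to the reader.'' You have simply supplied those details (the trace bound via uniform semiboundedness, the exponent arithmetic, and the direct evaluation of the $S_\vartheta$ integral on the singular zone), and they check out.
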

\begin{proof}[Proof of Proposition \ref{prop-24-6-5}]
Without any loss of the generality one can assume that $F^1=F^2=0$ and $V_2=V_3=0$, $g^{13}=g^{23}=0$.  Then $V_1=V_1(x')$, $x'=(x_1,x_2)$.  Surely in
this case the standard variational estimates are much better than in the general case but we also need to improve them.

Let $\bH=\sL^2\bigl(B(0,1)\bigr)$ and on $\bK\subset \sC_0^2\bigl(B(0,1)\bigr)$ let the operator $A-\tau I$ be negative definite.  We should prove that
\begin{claim}\label{24-6-26}
$\dim\bK =0$ for $\tau \le -C_0$ and
$\dim\bK\le C\mu \tau^{\frac{1}{2}}+C\tau ^{\frac{3}{2}}$ for
$\tau \ge 1$.
\end{claim}
Let us consider
\begin{equation}
A-\tau I= g^{33}D_3^2+
\sum_{1\le j,k\le 2}P_jg^{jk}P_k-\mu F+V -\tau I.
\label{24-6-27}
\end{equation}
Observe that the operator
\begin{equation*}
A'=\sum_{1\le j,k\le 2}P_jg^{jk}P_k-\mu F
\end{equation*}
is semibounded from below on $\bH$ and, moreover, that
$A'+C_0I\ge \epsilon _0A''$ where
\begin{equation*}
A''=P_1^2+P_2^2-\mu F''
\end{equation*}
and the scalar intensity $F''$ is calculated for the Euclidean metrics;
then the operator $A''$ does not contain $x_3$.  On the other hand, let us
note that $V\ge -c_0|x_3|^{2m}$ and the quadratic form
$\epsilon \|D_3 ^2u\|^2 -c\|Wu\|^2$ is semibounded from below on
$\sL^2([-1,1])$ for every $\epsilon >0$, $W=|x_3|^\alpha$ since $\alpha>-1$.

Therefore for appropriate $\varepsilon _0>0$ and $C_0$ the operator
$\epsilon _0D_3^2+A'' -\tau-C_0$
should be negative definite on $\bK$.  One can replace the ball by
the cylinder $\Omega\times[-1,1]$ where
$\Omega\subset\bR^2$ is a unit circle
and separate variables $x'$ and $x_3$.  So,
our original problem is reduced to a problem for the operator $A''$
in the circle (and for the operator $\epsilon_0 D_3^2$ on $[-1,1]$) and
estimate (\ref{24-6-25}) follows from the well known estimate
\begin{equation*}
\N (\tau )\le C\mu +C\tau
\label{24-6-28}
\end{equation*}
for this operator for $\tau >0$ (and the routine estimate for the
one-dimensional operator).
\end{proof}

\section{The case $d=2$}
\label{sect-24-6-4}

Let us discuss briefly the case $d=2$.  Not going into details of the regular case, recall that in the framework of Subsection~\ref{sect-24-6-1} the
remainder estimate is $O\bigl(\mu ^{-1-\vartheta }h ^{-1+\vartheta }\bigr)$.  Let us attack the singular case \emph{assuming that conditions of
Subsection~\ref{sect-24-6-2} are fulfilled (including condition\/} (\ref{24-6-14})).  Then $r^*\ge r_*$, where $r_0,r_*,r^*$ are introduced in Subsection~\ref{sect-24-6-2}.
There are again two possibilities:
\begin{enumerate}[label=(\roman*), wide, labelindent=0pt]
\item\label{sect-24-6-4-i}
For $0\le \vartheta \le \vartheta ^*=(\beta +2)(\beta +2-2\alpha ) ^{-1}$
the contribution of the zone $\{r^*\le |x|\le r_1\}$ to the remainder
is $O\bigl(\mu ^{-1-\vartheta }h ^{-1+\vartheta }\bigr)$ with the
factor $\log \mu $ for $\vartheta =\vartheta ^*$.  However, since
$\vartheta ^*< \bar{\vartheta }_0$ we can apply the standard semiclassical asymptotics and no Scott correction term appears. The final remainder estimate is $O\bigl(\mu ^{-1-\vartheta }h ^{-1+\vartheta }\bigr)$ with the
factor $\log \mu $ for $\vartheta =\vartheta ^*$.

\item\label{sect-24-6-4-ii}
Further, for $\vartheta ^* <\vartheta \le \bar{\vartheta }_0$ the best
possible remainder estimate is $O\bigl(h^{-1-\vartheta} \mu^{(\alpha\vartheta+\beta+1-\vartheta)(\beta+1-\alpha)^{-1}}\bigr)$
which is also appears as a contribution of $B(0,r_3)$ to the remainder and there is still no Scott correction term.

\item\label{sect-24-6-4-iii}
For $\vartheta >\bar{\vartheta }_0$ it is not important which
regular remainder estimate (namely,
$O\bigl(\mu ^{-1-\vartheta }h ^{-1+\vartheta }\bigr)$ or
$O\bigl(h ^{-1+\vartheta }\bigr)$) was applied in the zone
$\{r^*\le |x|\le r_1\}$.  So, the arguments of Subsection~\ref{sect-24-6-2}
remain valid and for $\vartheta \ge \bar{\vartheta }_0$ we can
obtain the same type of remainder estimate as before with the Scott
correction term(s).
\end{enumerate}

We have not succeeded in getting a variational estimate in the case
$h=1$, $\mu \ge 1$ and thus we have no extension of the results of
Subsection~\ref{sect-24-6-3}.

\begin{subappendices}
\chapter{Appendices}

\label{sect-24-A}

\section{$1\D$ Schr\"odinger operator}
\label{sect-24-A-1}

Operators of the type we consider here studied by many authors. Related statements could be found in many books, including Chapter XIII, Part~2 of N.~Danford and J.~T.~Schwarz \cite{danford:schwartz}, M.~S. Birman and M.~Z. Solomyak \cite{birman:function} and V.~Maz'ya and I.~Verbitsky~\cite{mazya:verb}.

\begin{proposition}\label{prop-24-A-1}
Let us consider the operator
\begin{equation}
\boldsymbol{a}_\varepsilon =
D_tg_\varepsilon (t )D_t+\varepsilon^{-1} V_\varepsilon (t)
\label{24-A-1}
\end{equation}
in $\bH=\sL^2(\bR)$ with $D=D_t$,
$V_\varepsilon (x)=V( \frac{x}{\varepsilon } )$, etc., $t\in\bR$,
\begin{equation}
\epsilon _0\le g\le c, \quad |V|\le \rho ^2,\quad 0\le \rho \le c,
\quad \|\rho \|_{\sL^1}+\|\rho ^2t\|_{\sL^1}\le c.
\label{24-A-2}
\end{equation}
Then
\begin{enumerate}[label=(\roman*), wide, labelindent=0pt]
\item\label{prop-24-A-1-i}
The number of negative eigenvalues of the operator
$\boldsymbol{a}$ does not exceed $C_0$ for $|\varepsilon |\le 1$.

\item\label{prop-24-A-1-ii}
The number of negative eigenvalues of the operator
$\boldsymbol{a}$ does not exceed $\,1$ for $|\varepsilon |\le \epsilon $
with a small enough constant $\epsilon >0$.

\item\label{prop-24-A-1-iii}
Further, let us assume that
\begin{equation}
W=-\frac{1}{2}\int_{-\infty } ^{+\infty }V(t)\,dt\ge \epsilon _0.
\label{24-A-3}
\end{equation}
Then for $\varepsilon\in(0,\epsilon]$ there is exactly one
negative eigenvalue $\lambda(\varepsilon)$ and
\begin{equation}
-\epsilon _2\ge \lambda \ge -c_1.
\label{24-A-4}
\end{equation}

\item\label{prop-24-A-1-iv}
Furthermore, let us assume that \textup{(\ref{24-A-3})} holds and
\begin{gather}
|g-1|\le c\rho.
\label{24-A-5}\\
\shortintertext{Then}
|\lambda + W^2|\le C_0\varepsilon.
\label{24-A-6}
\end{gather}

\item\label{prop-24-A-1-v}
Moreover, let us assume that \textup{(\ref{24-A-3})} holds and that $g$ and
$V$ depend on the parameter $z\in \Omega $ and
\begin{gather}
|D _z^\alpha g|\le c,\quad |D _z^\alpha V|\le c\rho ^2
\qquad \forall \alpha :|\alpha |\le K.
\label{24-A-7}\\
\shortintertext{Then}
|D_z ^\alpha \lambda |\le C_0;
\label{24-A-8}
\end{gather}
moreover, under the condition
\begin{gather}
|D_z ^\alpha g|\le c\rho \qquad \forall \alpha :|\alpha |\le K
\label{24-A-9}\\
\shortintertext{we obtain that}
|D_z^\alpha (\lambda+W^2)|\le C_0\varepsilon.
\label{24-A-10}
\end{gather}
\item\label{prop-24-A-1-vi}
Finally, let $v\in \bH$, $\1v\1=1$ be an appropriate
eigenfunction of $\boldsymbol{a}$ with eigenvalue $\lambda$. Then
\begin{phantomequation}\label{24-A-11}\end{phantomequation}
\begin{equation}
\1 D_z ^\alpha v\1 \le C_0, \qquad \1 D_z ^\alpha D_tv\1 \le C_0,
\qquad \1D_z ^\alpha v\1_\infty \le C_0,
\tag*{$\textup{(\ref*{24-A-11})}_{1-3}$}\label{24-A-11-*}
\end{equation}
where $\1.\1_p$ means the $\sL^p$-norm and we skip $p=2$ in this notation.
\end{enumerate}
\end{proposition}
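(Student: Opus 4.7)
The plan is to reduce everything to a unitarily equivalent operator at scale $1$ via the substitution $\tilde v(s)=\sqrt{\varepsilon}\,v(\varepsilon s)$, under which $\boldsymbol{a}_\varepsilon$ is conjugate to $\varepsilon^{-2}\boldsymbol{b}_\varepsilon$ with
\begin{equation*}
\boldsymbol{b}_\varepsilon\coloneqq D_s g(s)D_s+\varepsilon V(s)
\end{equation*}
on $\sL^2(\bR)$. The number and sign of eigenvalues of $\boldsymbol{a}_\varepsilon$ therefore match those of $\boldsymbol{b}_\varepsilon$, while the eigenvalues are related by $\lambda(\varepsilon)=\varepsilon^{-2}\mu(\varepsilon)$. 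Hypothesis (\ref{24-A-2}) becomes: $g$ uniformly elliptic and the coupling $\varepsilon$ (times an $\sL^1$-like potential with bounded first moment) is small; hypothesis (\ref{24-A-5}) becomes $|g-1|\le c\rho$, a near-flat-metric condition. Throughout I will use the Liouville substitution that normalizes $g\equiv 1$ modulo a bounded change of variable preserving the class of admissible $V$.

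For \textit{(i)} I will apply a Bargmann-type bound to $\boldsymbol{b}_\varepsilon$: the negative spectrum of $-\partial_s^2+\varepsilon V$ is controlled by $1+\int|sV(s)|\,ds\le 1+\|t\rho^2\|_{\sL^1}\le 1+c$, which is uniform in $\varepsilon\in(0,1]$. For \textit{(ii)} the Birman--Schwinger operator associated with $-\partial_s^2$ at energy $-\kappa^2\to 0^-$ has the explicit kernel $\tfrac{1}{2\kappa}e^{-\kappa|s-s'|}$, so for $\varepsilon$ small enough only the rank-one ``zero-mode'' contribution of the free resolvent can survive and produce at most one negative eigenvalue.

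The analytic heart of the proposition is \textit{(iii)}--\textit{(iv)}, for which I will run the classical Simon weak-coupling argument. Writing the eigenvalue equation $\boldsymbol{b}_\varepsilon v=-\kappa^2 v$ as $v=-\varepsilon G_\kappa Vv$ with $G_\kappa=\tfrac{1}{2\kappa}e^{-\kappa|s-s'|}$, and decomposing $v=c\cdot\mathbf{1}+w$ where $w$ is orthogonal to the constant mode in the natural weighted pairing, the condition that $-\kappa^2$ is an eigenvalue collapses to the scalar implicit equation $\kappa=\varepsilon W+O(\varepsilon^2)$ with $W=-\tfrac{1}{2}\int V$. Squaring gives $\mu(\varepsilon)=-\varepsilon^2W^2+O(\varepsilon^3)$, hence $\lambda(\varepsilon)=-W^2+O(\varepsilon)$, establishing (\ref{24-A-6}); the two-sided bound (\ref{24-A-4}) follows from $W\ge\epsilon_0$ (lower end) and a Gaussian trial function (upper end $-c_1$). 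The main obstacle will be to show that the implicit-function analysis is truly uniform under only the $\sL^1$-type hypotheses on $\rho$ and $\rho^2 t$, without any pointwise decay of $V$; this is managed by noting that the relevant integrals $\int V$, $\int |s-s'|V(s)V(s')\,ds\,ds'$ are finite precisely because of $\|t\rho^2\|_1\le c$.

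For \textit{(v)} and \textit{(vi)}, since $\lambda(\varepsilon)$ is an isolated simple eigenvalue separated from the rest of the spectrum uniformly in $z$ (gap from $0$ and from $-c_1$), standard analytic perturbation theory via the Riesz projector $P_\varepsilon=-\tfrac{1}{2\pi i}\oint_\Gamma(\boldsymbol{b}_\varepsilon-\zeta)^{-1}\,d\zeta$ around a small contour $\Gamma$ enclosing $\lambda(\varepsilon)$ gives the derivative estimates (\ref{24-A-8}) from (\ref{24-A-7}), and the expansion argument of \textit{(iv)} can be differentiated in $z$ under the extra regularity (\ref{24-A-9}) to yield (\ref{24-A-10}). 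For the eigenfunction bounds \ref{24-A-11-*}, the $\sL^2$ norm is normalized, the $H^1$ bound $\1D_tv\1\le C_0$ follows from $(\boldsymbol{a}_\varepsilon v,v)=\lambda$ together with ellipticity of $g$ and the uniform lower bound $\lambda\ge-c_1$, and the $\sL^\infty$ bound is the one-dimensional Sobolev embedding $H^1\hookrightarrow\sL^\infty$; derivatives in $z$ are handled by the same resolvent contour, differentiating $P_\varepsilon$ and using that $P_\varepsilon$ is rank one.
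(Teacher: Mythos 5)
Your proposal is correct in substance but follows a genuinely different route from the paper. You rescale $t=\varepsilon s$ to the weak-coupling normal form $\boldsymbol{b}_\varepsilon=D_sgD_s+\varepsilon V$ and then run the Birman--Schwinger/Simon machinery: Bargmann's bound for \ref{prop-24-A-1-i} (using $\|t\rho^2\|_{\sL^1}\le c$), the rank-one singularity of the free resolvent kernel $\frac{1}{2\kappa}e^{-\kappa|s-s'|}$ as $\kappa\to0$ for \ref{prop-24-A-1-ii}, the scalar implicit equation $\kappa=\varepsilon W+O(\varepsilon^2)$ for \ref{prop-24-A-1-iii}--\ref{prop-24-A-1-iv}, and Riesz projectors for \ref{prop-24-A-1-v}--\ref{prop-24-A-1-vi}. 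The paper instead stays entirely at the level of quadratic forms in the unscaled variables: \ref{prop-24-A-1-ii} comes from positivity of $Q$ on the codimension-one subspace $\{\int_{-\varepsilon}^{\varepsilon}u\,dt=0\}$ via $|u(t)|\le 3(|t|+\varepsilon)^{1/2}\1D_tu\1$; the asymptotics \textup{(\ref{24-A-6})} comes from comparing $Q$ with the delta-potential form $\bar{Q}(u)=\1D_tu\1^2-W|u(0)|^2$, whose ground state is explicit; and \ref{prop-24-A-1-v}--\ref{prop-24-A-1-vi} are proved by Friedrichs's induction, differentiating the eigenvalue equation in $z$ and pairing with $v$ and with $v^{(\alpha)}$. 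Your route imports more standard machinery and, for \ref{prop-24-A-1-i}, actually delivers the uniform bound $C_0$ more explicitly than the paper's compactness remark; the paper's route is more elementary and exploits the a priori pointwise bounds $|v|,|D_tv|\le C_0$, which is exactly what makes the hypothesis $|g-1|\le c\rho$ usable.

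That last point is where your write-up needs repair. The ``Liouville substitution that normalizes $g\equiv1$'' is not available here: transforming $D_sgD_s$ into $-\partial_\sigma^2$ requires derivative bounds on $g$ (or produces a weighted eigenvalue problem), and \textup{(\ref{24-A-2})}, \textup{(\ref{24-A-5})} give only pointwise control of $g$. You must instead keep $D_s(g-1)D_s$ as a form perturbation: for the counting statements the two-sided comparison $\epsilon_0D_s^2\le D_sgD_s\le cD_s^2$ suffices, but for \textup{(\ref{24-A-6})} you need $\langle(g-1)v',v'\rangle=o(\varepsilon^2)$ on the near-ground-state, and since $\1v'\1^2\asymp\kappa^2\asymp\varepsilon^2$ already, boundedness of $g-1$ alone is not enough --- you must use $|g-1|\le c\rho$ together with $\sup|v'|=O(\kappa^{3/2})$ and $\|\rho\|_{\sL^1}\le c$ to get the required extra power of $\varepsilon$. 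This is the analogue of the paper's step $|Q(v)-\bar Q(v)|\le C_1\varepsilon$; once inserted, your argument closes.
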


\begin{proof}
Statement \ref{prop-24-A-1-i} follows from the fact that the operator
$\rho ^s(D_3^2 +1)^{-s}$ is compact in $\bH$ for any $s>0$.

In order to prove Statement \ref{prop-24-A-1-ii} let us consider the
quadratic form $Q(u)=\blangle \boldsymbol{a}u,u\brangle $ on the
subspace $\bH_1=\{u\in\bH,\int_{-\varepsilon}^\varepsilon u\, dt=0\}$ of
codimension $1$\,\footnote{\label{foot-24-43} One can consider the subspace
$\{u\in\fD(\boldsymbol{a}),u(0)=0\}$ as well.}
Obviously $|u(t)|\le 3(|t|+\varepsilon)^{\frac{1}{2}}\1D_tu\1$
for $u\in \bH_1$ and therefore
\begin{equation*}
|\blangle V_\varepsilon u,u\brangle|\le C_0\varepsilon^2 \1D_t^2u\1^2
\end{equation*}
in virtue of (\ref{24-A-2}). Then (\ref{24-A-2}) yields that the quadratic
form $Q(u)$ is positive definite on $\bH_1$ for
$|\varepsilon|\le \sigma_2$ and therefore $\boldsymbol{a}$ has no more
than one negative eigenvalue $\lambda$. Moreover, for arbitrary
$u\in\bH$ the inequality
$|u(t)|\le \sigma\1D_tu\1+C_\sigma\1u\1$ with arbitrarily
small $\epsilon>0$ yields that
\begin{equation*}
|\blangle V_\varepsilon u,u\brangle |\le
c_0\sigma \varepsilon \1D_tu\1^2+
C_\sigma \varepsilon \1u\1^2
\end{equation*}
and hence $Q(u)$ is uniformly semibounded from below and therefore
\begin{equation}
\lambda \ge -C_0.
\label{24-A-12}
\end{equation}

Let $v$ be the corresponding eigenfunction with $\1v\1=1$ (if there
exists a negative eigenvalue). Then obviously $|v(t)|\le C_0$ and
then $|D_tv(t)|\le C_0$ and hence $|v(t)-v(0)|\le C_0|t|$. Then
(\ref{24-A-2}), (\ref{24-A-5}) yield that
\begin{equation*}
|Q(v)- \bar{Q}(v)|\le C_1\varepsilon
\end{equation*}
for the quadratic form $\bar{Q}(u)=\1D_tu\1^2-W|u(0)|^2$. Therefore
$\lambda \ge \bar{\lambda }-C_1\varepsilon $ where
$\bar{\lambda }$ is the lower bound of $\bar{Q}(u)\1u\1^{-2}$ at
$\bH$.

One can apply the same arguments to the eigenvalues and eigenfunctions of $\bar{Q}$; as a result we obtain that
$\bar{\lambda }\ge \lambda -C_1\varepsilon $. On the other hand, one can see easily that $\bar{\lambda }=-{\frac{1}{4}}W^2$ if $W>0$ (otherwise $\bar{Q}$ is non-negative definite) and $\bar{v}(t)= \frac{W}{2} \exp(- \frac{1}{2} W|x|)$ and hence we obtain that for $W>0$ there is a negative eigenvalue and (\ref{24-A-6}) holds.

Moreover, if (\ref{24-A-5}) is violated then one can treat the quadratic
form $C_0\1D_t^2\1+\varepsilon ^{-1}(V_\varepsilon u,u)$
instead of the original one and since (\ref{24-A-4}) holds for this form it
remains true for the original one.

\emph{So all the statements excluding those associated with derivatives on $z$
are proven\/}\footnote{\label{foot-24-44} One can easily prove that for $W<0$ and small enough $\varepsilon $ the operator $\boldsymbol{a}$ is non-negative
definite. We think that it would be nice to treat the case $W=0$.
However we are not an expert here.}.

The proof of (\ref{24-A-8}), (\ref{24-A-11}) is standard, due to K.~O.~Friedrichs~\cite{friedrichs:hilbert}. Let these estimates be proven for $|\alpha|\le n$; then applying the operator $\partial_z^\alpha$ with $|\alpha|=n$ to the equation
\begin{gather}
(\boldsymbol{a}-\lambda )v=0
\label{24-A-13}\\
\intertext{we obtain}
\sum_{\beta \le \alpha } \frac{\alpha!}{\beta!(\alpha-\beta)!}
(\boldsymbol{a}-\lambda)^{(\alpha-\beta)} v^{(\beta)}=0
\label{24-A-14}
\end{gather}
with $u^{(\alpha )}=\partial _z^\alpha u$.

Let us multiply this equation by $v$. Then terms with $v^{(\alpha )}$ disappear and we obtain terms with $|\beta |<n$
\begin{equation}
\blangle g_\varepsilon ^{(\alpha -\beta )}D_tv^{(\beta )}, D_t v\brangle ,\quad
\varepsilon^{-1}
\blangle V_\varepsilon ^{(\alpha -\beta }v^{(\beta )}, v\brangle ,
\quad \lambda ^{(\alpha -\beta )}\blangle v^{(\beta )},v\brangle.
\label{24-A-15}
\end{equation}
Terms of the first and second types are bounded in virtue of
$\textup{(\ref{24-A-11})}_2$, $\textup{(\ref{24-A-11})}_3$ respectively for $|\beta |<n$. Terms of the third type are bounded for $\beta \ne 0$ by
(\ref{24-A-8}), $\textup{(\ref{24-A-11})}_1$. Therefore the remaining term
\begin{equation*}
-\lambda ^{(\alpha )}\1v\1^2
\end{equation*}
should also be bounded and (\ref{24-A-8}) holds for $|\alpha |=n$.
Let us consider equation (\ref{24-A-14}); we now multiply it by
$w=v^{(\alpha )}$. We obtain terms with $|\beta |\le n$
\begin{equation*}
\blangle g_\varepsilon ^{(\alpha -\beta )}v^{(\beta )}, w\brangle ,
\quad \varepsilon^{-1}
\blangle V_\varepsilon ^{(\alpha -\beta }v^{(\beta )},w\brangle ,
\quad \lambda^{(\alpha-\beta)}\blangle v^{(\beta)},w\brangle.
\end{equation*}
For $|\beta|<n$ terms of the first and second type do not
exceed $C\1D_tw\1$ and $C\1w\1_\infty$ due to
$\textup{(\ref{24-A-11})}_2$, $\textup{(\ref{24-A-11})}_3$. Finally, terms of the third type for $|\beta|<n$ do not exceed $C\|w\|$ due to
(\ref{24-A-8}) and $\textup{(\ref{24-A-11})}_1$. Thus
\begin{equation*}
|\blangle (\boldsymbol{a}-\lambda )w,w\brangle |\le
C\1D_tw\1+C\1w\1
\end{equation*}
because $\1w\1_\infty\le C\1D_tw\1+C\1w\1$. Taking into account that
$\lambda\le -\epsilon_0$ we obtain from this inequality that
\begin{equation}
\1D_tw\1^2+\1w\1^2\le
C\varepsilon ^{-1}|\blangle V_\varepsilon w,w\brangle |+C.
\label{24-A-16}
\end{equation}
Let us assume that $v(0)=1$. Surely, we should reject the condition
$\1v\1=1$ but our above arguments yield that $\1v\1\asymp |v(0)|$. Then $w(0)=0$ and $|w(t)|\le |t|^{\frac{1}{2}}\1D_tw\1$ and therefore
$|\blangle V_\varepsilon w,w\brangle |\le C\varepsilon ^2\1D_tw\1$
and therefore (\ref{24-A-16}) yields
$\textup{(\ref{24-A-11})}_{1,2}$ for $|\alpha |=n$; $\textup{(\ref{24-A-11})}_3$ follows from these estimates.

In order to prove (\ref{24-A-10}) let us note that equation (\ref{24-A-14}) and
\ref{24-A-11-*} yield that
\begin{equation}
\1D_z ^\alpha D_tv\1_\infty \le C_0
\tag*{$\textup{(\ref{24-A-11})}_4$}\label{24-A-11-4}
\end{equation}
and therefore under condition (\ref{24-A-9}) terms of the first type in
(\ref{24-A-15}) do not exceed $C\varepsilon $. Moreover,\ref{24-A-11-4}
yields that
\begin{equation*}
|v(t)-v(0)|\le C_0|t|,\qquad
|D_z ^\alpha v|\le C_0|t|\quad \forall \alpha \ne 0
\end{equation*}
(because of condition $v(0)=1$) and therefore terms of the second type in (\ref{24-A-15}) do not exceed $C\varepsilon$ for $\beta\ne 0$. Moreover, the error does not exceed $C\varepsilon$ if we replace $v(t)$ by $v(0)$ in
this term with $\beta =0$; we then obtain
$W^{(\alpha )}|v(0)|^2$ and under additional the restriction $W=\const$
this term vanishes. Then induction on $n$ yields that
$|\lambda ^{(\alpha )}|\le C_0\varepsilon $ under this restriction.
So under this restriction (\ref{24-A-10}) holds. However one can reduce
the general case to the case $W=1$ by introducing $t'=tW ^{-1}$ and
multiplying $\boldsymbol{a}$ by $W^2$.
\end{proof}

\begin{remark}\label{rem-24-A-2}
Applying the above results one can find $v$ in the form
\begin{equation}
v=\exp \bigl(\int_0^t\phi _\varepsilon (t')dt'\bigr)\cdot
\bigl(1+\varepsilon ^2\psi _\varepsilon +\ldots \bigr)
\label{24-A-17}
\end{equation}
where the number of terms depends on $m$ and
$\lambda = -W^2 + \mu\varepsilon + \cdots$ with $\partial_t\phi=V$ and one can obtain $\mu\ne 0$ in the generic case; so estimate (\ref{24-A-6}) is the best
possible estimates without this correction term. Therefore (\ref{24-4-33}) remains true with $\lambda(x')$ replaced by $-W(x')^2$ provided $m\le -2$ and $\rho(x)=\langle x\rangle^m$, $\gamma(x)=\gamma_1(x)= \langle x\rangle$.

For $m>-2$ this is correct with the remainder estimate $O\bigl(\eta^{(m+2)(2m+1)^{-1}}\bigr)$ coinciding
with the principal part for $m=-1$ (in the framework of Remark~\ref{rem-24-4-19}).
\end{remark}

\begin{proposition}\label{prop-24-A-3}
\begin{enumerate}[label=(\roman*), wide, labelindent=0pt]
\item\label{prop-24-A-3-i}
Under condition \textup{(\ref{24-4-66})}
the operator $\cL $ has a finite number of negative eigenvalues.

\item\label{prop-24-A-3-ii}
Moreover, if this condition is fulfilled for all $x$ then there is
at most one negative eigenvalue.

\item\label{prop-24-A-3-iii}
On the other hand, if
\begin{equation}
W\le - (\frac{1}{4}+\epsilon) |x|^{-2}\qquad \forall x\colon x\ge C
\label{24-A-18}
\end{equation}
then there is an infinite number of negative eigenvalues.
\end{enumerate}
\end{proposition}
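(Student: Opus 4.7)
My plan is to treat the three parts separately, with the $1$-dimensional Hardy inequality as the main ingredient. Throughout I assume $g\asymp 1$ so that the operator $\cL$ is equivalent (modulo bounded form-perturbations) to $D_t^2+V^*$; the metric factor $g$ causes no essential difficulty since the critical constant $\tfrac{1}{4}$ is invariant under rescaling.

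For part~\ref{prop-24-A-3-i}, I would split $V^*=V_1+V_2$, where $V_1$ is smooth, bounded and compactly supported in $\{|t|\le C+1\}$, and $V_2$ satisfies $V_2\ge -\tfrac{1}{4}t^{-2}$ on all of $\bR\setminus 0$ (possible by condition (\ref{24-4-66}) and a suitable bump on $[-C-1,C+1]$). On each half-line, the classical half-line Hardy inequality
\begin{equation*}
\int_0^\infty |u'|^2\,dt\ge \int_0^\infty \frac{|u|^2}{4t^2}\,dt,\qquad u\in \sC_0^\infty(0,\infty)
\end{equation*}
implies that the quadratic form of $D_t gD_t+V_2$, restricted to functions vanishing in a neighborhood of $0$, is non-negative. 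Hence on the subspace of codimension at most $2$ (or less) determined by matching/vanishing conditions at $t=0$, $\cL-V_1\ge 0$. Since $V_1$ is a bounded compactly supported form-perturbation, the Birman--Schwinger principle (or equivalently a direct min-max argument using the compactness of multiplication by $V_1^{1/2}(\cL_0+1)^{-1/2}$) shows that $\cL$ has only finitely many eigenvalues below $0$.

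For part~\ref{prop-24-A-3-ii}, when (\ref{24-4-66}) holds for \emph{all} $t\ne 0$, I apply the half-line Hardy inequality on $(0,\infty)$ and $(-\infty,0)$ separately. This gives $\langle \cL u,u\rangle\ge 0$ for every $u$ in the form-domain that vanishes at $t=0$; this is a subspace of codimension $1$. By the Courant--Fischer min-max principle, $\cL$ has at most one eigenvalue strictly below $0$.

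For part~\ref{prop-24-A-3-iii}, I would exhibit an orthogonal infinite family of trial functions on which the quadratic form is negative. Fix $R>1$ large and define $u_n$ supported in $I_n=[R^{2n},R^{2n+2}]$ by $u_n(t)=t^{-1/2}\varphi(\log_R t-2n)$ for a fixed $\varphi\in \sC_0^\infty((0,2))$. A direct calculation (using $\int |u_n'|^2\,dt=\tfrac14\int \frac{|u_n|^2}{t^2}\,dt+\mathcal{O}(|\log R|^{-1})\int\frac{|u_n|^2}{t^2}\,dt$) combined with (\ref{24-A-18}) gives
\begin{equation*}
\langle \cL u_n,u_n\rangle \le \Bigl(-\epsilon + \mathcal{O}(|\log R|^{-1})\Bigr)\int \frac{|u_n|^2}{t^2}\,dt<0
\end{equation*}
for $R$ large. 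Since the $u_n$ have disjoint supports they span an infinite-dimensional subspace on which the form is negative, hence by min-max $\cL$ has infinitely many negative eigenvalues.

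The main obstacle is the bookkeeping at $t=0$ in parts~\ref{prop-24-A-3-i} and~\ref{prop-24-A-3-ii}: the Hardy inequality holds on each half-line separately for functions vanishing at $0$, but the natural form domain on $\bR$ does not impose such vanishing, so one must identify correctly the codimension of the subspace on which non-negativity holds and verify that it is at most $1$ in case~\ref{prop-24-A-3-ii} (which reflects the single $s$-wave bound state one expects in this critical regime). A secondary technicality is absorbing the factor $g$ and the first-order terms coming from $D_t g D_t$; this is handled by a standard Liouville-type substitution $u\mapsto g^{-1/4}u$ or by noting that $g\asymp 1$ so the Hardy constants shift by a bounded multiplicative factor but the critical value $\tfrac14$ is preserved up to $g$.
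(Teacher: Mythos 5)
Your proof is correct and follows essentially the same route as the paper: parts \ref{prop-24-A-3-i} and \ref{prop-24-A-3-ii} rest on the half-line Hardy inequality $\|u'\|^2\ge\frac14\||x|^{-1}u\|^2$ for $u(0)=0$ (which the paper derives via the factorization $u'=x^{1/2}(ux^{-1/2})'+\frac12 x^{-1}u$), and part \ref{prop-24-A-3-iii} uses the same logarithmic trial functions $t^{-1/2}\varphi(\log t)$ on geometrically spaced disjoint intervals, where the paper's substitution $u=x^{1/2}v$, $x=e^t$ is just the change of variables behind your $O(|\log R|^{-1})$ estimate. The only differences are in bookkeeping (you make the Birman--Schwinger step in \ref{prop-24-A-3-i} explicit where the paper leaves it implicit), not in substance.
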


\begin{proof}
To prove Statements \ref{prop-24-A-3-i} and \ref{prop-24-A-3-ii} one needs to prove the estimate
\begin{equation}
\1u'\1^2\ge \frac{1}{4}\1|x|^{-1}u\1^2\qquad \forall u:u(0)=0
\label{24-A-19}
\end{equation}
where $\1u\1$ is the $L^2(\bR^+)$-norm. However, the left side is
equal to
\begin{equation*}
\1x ^{\frac{1}{2}}\bigl(ux ^{-{\frac{1}{2}}}\bigr)'+
{\frac{1}{2}}x ^{-1}u\1 ^2=
\1x ^{\frac{1}{2}}\bigl(ux ^{-{\frac{1}{2}}}\bigr)'\1 ^2+
{\frac{1}{4}}\1|x| ^{-1}u\1 ^2
\end{equation*}
provided $u=o(x ^{\frac{1}{2}})$ as $x\to 0$.

To prove Statement~\ref{prop-24-A-3-iii} it is sufficient to prove that the inequality $\1u'\1^2\le (\frac{1}{4}+\epsilon)\1|x| ^{-1}u\1 ^2$ is fulfilled on
some subspace of $\sL^2([1,\infty))$ of infinite dimension. It is sufficient to prove that for any $n$ this inequality is fulfilled on some function supported in $[L^n,L^{n+1}]$ with sufficiently large $L$.

Further, due to homogeneity it is sufficient to consider only $n=0$.
Substituting $u=x^{\frac{1}{2}}v$, $x=e ^t$ we obtain that it is
sufficient to fulfill the inequality $\1v'\1 ^2\le \epsilon \1v\1 ^2$
with some $v$ such that $v(0)=v(\log L)=0$. But this is obvious
provided $L$ is large enough.
\end{proof}

\section{$1\D$ Schr\"odinger operator. II}
\label{sect-24-A-2}

We consider operator
\begin{gather}
\boldsymbol{b}_\varepsilon = D^2 +\varepsilon V(x)
\label{24-A-20}\\
\shortintertext{with}
|V|\le \rho^2=\langle x\rangle^{-2q}, \qquad V\le - \epsilon_0\rho^2 \qquad \text{for\ \ }|x|\ge c,
\label{24-A-21}
\end{gather}
$0<q\le 1$, and $\varepsilon>0$.

We are interested in $\boldsymbol{n}_\varepsilon (\eta)$, the number of eigenvalues of $\boldsymbol{b}_\varepsilon$  which are less than $-\eta$. Consider first  the corresponding Weyl's expression
\begin{equation}
\n^\W_\varepsilon(\eta)\coloneqq
(2\pi )^{-1}\int (\varepsilon V(x)-\eta)_+^{\frac{1}{2}}\,dx.
\label{24-A-22}
\end{equation}

\begin{proposition}\label{prop-24-A-4}
\begin{enumerate}[label=(\roman*), wide, labelindent=0pt]
\item\label{prop-24-A-4-i}
If $\n^W_\varepsilon (\eta)\ge C_0$ then
$\boldsymbol{n}_\varepsilon(\eta)\asymp \n^\W(\varepsilon,\eta)$.
\item\label{prop-24-A-4-ii}
If $\n^W(\varepsilon,\eta)\le C_0$ then $\boldsymbol{n}\varepsilon(\eta)\le C_1$.
\end{enumerate}
\end{proposition}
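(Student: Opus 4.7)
The plan is to combine a one-dimensional Bargmann/Lieb--Thirring-type upper bound with an explicit variational lower bound constructed on a disjoint family of intervals in the classically allowed region. Both the upper bound in \ref{prop-24-A-4-i} and the bound in \ref{prop-24-A-4-ii} will follow from the same a priori estimate, while the lower bound in \ref{prop-24-A-4-i} is a separate construction.

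First I would set $U(x)\coloneqq \varepsilon V(x)+\eta$, so that $\boldsymbol{n}_\varepsilon(\eta)$ is the number of negative eigenvalues of $D^2+U$. The classical turning point is $R=R(\varepsilon,\eta)$ defined by $\varepsilon\langle R\rangle^{-2q}\asymp\eta$, i.e.\ $R\asymp (\varepsilon/\eta)^{1/(2q)}$, and the classically allowed zone $\{x\colon U(x)<0\}$ is contained in $\{|x|\le CR\}$ and contains $\{|x|\le R/C\}\setminus [-c,c]$, so that
\begin{equation*}
\int U_-^{1/2}\,dx \asymp \n^W_\varepsilon(\eta).
\end{equation*}

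For the upper bound I would invoke the standard one-dimensional Bargmann-type inequality $N^-(D^2+U)\le 1+C\int U_-^{1/2}\,dx$ (which holds universally for Schr\"odinger operators on $\bR$; alternatively, one can obtain it by Dirichlet--Neumann bracketing against $\gamma$-subintervals of the allowed region combined with a Birman--Schwinger argument). This immediately yields $\boldsymbol{n}_\varepsilon(\eta)\le 1+C\n^W_\varepsilon(\eta)$, which is the required $\asymp$ upper bound in \ref{prop-24-A-4-i} and proves \ref{prop-24-A-4-ii} at once.

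For the lower bound in \ref{prop-24-A-4-i} I would construct an $\asymp \n^W_\varepsilon(\eta)$-dimensional subspace on which the quadratic form $Q(u)\coloneqq\|u'\|^2+\int U|u|^2$ is negative. Partition $\{x\colon U(x)\le -\tfrac{1}{2}\varepsilon\rho^2\}\cap\{|x|\ge c\}$ into disjoint intervals $I_k$ of length $\ell_k\asymp (-U(x_k))^{-1/2}$ centered at points $x_k$; this is the natural semiclassical length scale and the total number of such intervals is $\asymp\int (-U)_+^{1/2}\,dx\asymp \n^W_\varepsilon(\eta)$ (this uses the assumption $q\le 1$ and $\n^W\ge C_0$ to rule out boundary losses). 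On each $I_k$ take $u_k$ to be a bump function supported in the middle half of $I_k$, with $\|u_k'\|^2\asymp \ell_k^{-1}$ and $\int U|u_k|^2\le -c\ell_k^{-1}$; then $Q(u_k)\le -c\ell_k^{-1}<0$, and mutual disjointness of supports makes the functions orthogonal. Applying the min--max principle yields $\boldsymbol{n}_\varepsilon(\eta)\ge c\n^W_\varepsilon(\eta)$.

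The main obstacle is ensuring that the lower-bound construction produces enough intervals when $\n^W_\varepsilon(\eta)$ is only moderately large: the boundary contributions (the transition zone near $|x|=R$ and the region $|x|\le c$, where the sign of $V$ is not controlled by the hypothesis $V\le -\epsilon_0\rho^2$) must be shown to lose at most an $O(1)$ number of intervals. This is controlled by choosing the cutoff threshold $C_0$ in \ref{prop-24-A-4-i} large enough so that the bulk count dominates. I expect this bookkeeping, rather than any deep estimate, to be the only delicate point.
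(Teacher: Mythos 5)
Your lower bound in part \ref{prop-24-A-4-i} is fine: disjoint bump functions on intervals of the local semiclassical length $\ell_k\asymp(-U(x_k))^{-1/2}$, counted against $\int U_-^{1/2}\,dx$, is a correct and in fact more self-contained substitute for the paper's one-line appeal to ``our semiclassical theory''. The gap is in the upper bound, on which both the upper half of \ref{prop-24-A-4-i} and all of \ref{prop-24-A-4-ii} rest. The inequality $\N^-(D^2+U)\le 1+C\int U_-^{1/2}\,dx$ does \emph{not} hold universally for Schr\"odinger operators on $\bR$: this is exactly the failure of the Cwikel--Lieb--Rozenblum bound in dimension one (the paper itself recalls, in the $2\D$ chapter, that LCR fails in low dimension and must be replaced by Rozenblum-type estimates). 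A counterexample is $U=-\sum_{j=1}^n\lambda\chi_{[a_j,a_j+\delta]}$ with $\lambda^{1/2}\delta=n^{-2}$ and the wells separated by distances much larger than $(\lambda\delta)^{-1}$: each well contributes one bound state (test against its cut-off weak-coupling ground state), so $\N^-\ge n$, while $\int U_-^{1/2}\,dx=n^{-1}\to 0$. The genuine Bargmann bound controls $\N^-$ by $1+\int|x|\,U_-\,dx$, which is a different and here divergent quantity. So the sentence ``which holds universally'' is false, and the main route you propose for the upper bound collapses.

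The repair is your parenthetical alternative, but it must exploit the specific envelope $|V|\le\langle x\rangle^{-2q}$ rather than a universal inequality, and this is precisely what the paper does for \ref{prop-24-A-4-ii}: cover $\bR$ by the finitely many intervals generated by $L_0=1$, $L_{k+1}=\epsilon_0 L_k^{q}\varepsilon^{-1/2}$, stopping once $L_n\ge c_0(\varepsilon/\eta)^{1/(2q)}$, plus the two outer rays where $\varepsilon V+\eta\ge0$. On each inner Neumann piece one has $\ell_k\,(\varepsilon\rho(L_k)^2)^{1/2}\le\epsilon_0$, so it carries $O(1)$ negative eigenvalues, and for $0<q\le1$ the number of pieces is $O\bigl(1+\n^\W_\varepsilon(\eta)\bigr)$; summing gives $\boldsymbol{n}_\varepsilon(\eta)\le C_1\bigl(1+\n^\W_\varepsilon(\eta)\bigr)$, which yields both the missing upper bound in \ref{prop-24-A-4-i} and statement \ref{prop-24-A-4-ii}. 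With that substitution your argument is sound; without it, the monotone-envelope structure of $V$ is never used in the upper bound, and it must be.
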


\begin{remark}\label{rem-24-A-5}
Obviously
$\n^W_\varepsilon(\eta)\asymp \varepsilon^{(2q-1)/2q}\eta^{-(1-q)/2q}$ and
$\n^W_\varepsilon(\eta)\le C_0$ if and only if
$\eta \ge c_0\varepsilon^{(2q-1)/(1-q)}$.
\end{remark}

\begin{proof}[Proof of Proposition~\ref{prop-24-A-3}]
One can easily prove Statement~\ref{prop-24-A-4-i} using our semiclassical theory.

On the other hand, one can easily prove Statement~\ref{prop-24-A-4-ii} using variational methods, and covering $\bR$ by a finite number of intervals $[L_k,L_{k+1}]$ and $[-L_{k+1},-L_k,]$ with $k=1,\ldots, n-1$, $[L_n,\infty]$ and $[-\infty,-L_n]$ and $[-L_0,L_0]$ such that
$L_{k+1} =\epsilon_0 L_k^q \varepsilon^{-1/2}$, $L_0=1$,
$L_n\ge c_0\eta^{-1/2q}\varepsilon^{1/2q} $.

We leave the easy details to the reader.
\end{proof}

Now we need to figure out when $\boldsymbol{n}_\varepsilon(\eta)\ge 1$. To do so we need to evaluate the lowest eigenvalue $\lambda (\varepsilon)<0$ of operator (\ref{24-A-20}).

\begin{proposition}\label{prop-24-A-6}
Let $V\in \sL^1(\bR)$ and $W >0$. Then
\begin{equation}
\lambda (\varepsilon)=-\varepsilon^2 (W^2 +o(1))\qquad \
text{as\ \ }
\varepsilon \to 0
\label{24-A-23}
\end{equation}
with $W$ defined by \textup{(\ref{24-A-3})}.
\end{proposition}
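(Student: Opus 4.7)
The proof proceeds by first rescaling $\boldsymbol{b}_\varepsilon$ into the form studied in Proposition~\ref{prop-24-A-1}, and then establishing matching asymptotic upper and lower bounds by variational and approximation arguments.

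\textit{Step 1 (Rescaling).} Substituting $t = \varepsilon x$ in $\boldsymbol{b}_\varepsilon = D_x^2 + \varepsilon V(x)$ shows that $\boldsymbol{b}_\varepsilon$ is unitarily equivalent to $\varepsilon^2 \boldsymbol{a}_\varepsilon$, where $\boldsymbol{a}_\varepsilon = D_t^2 + \varepsilon^{-1}V(t/\varepsilon)$ is exactly the operator of \textup{(\ref{24-A-1})} with $g \equiv 1$. Writing $\mu(\varepsilon)$ for the lowest negative eigenvalue of $\boldsymbol{a}_\varepsilon$ (which by Proposition~\ref{prop-24-A-1}\ref{prop-24-A-1-ii} is the only one for small $\varepsilon$), we have $\lambda(\varepsilon) = \varepsilon^2 \mu(\varepsilon)$, so it suffices to show $\mu(\varepsilon) \to -W^2$.

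\textit{Step 2 (Variational upper bound).} Take as trial function $v_0(t) = \sqrt{W}\,e^{-W|t|}$, the normalized ground state of the formal limit $D_t^2 - 2W\delta_0$. A direct computation gives $\|v_0'\|_{\sL^2}^2 = W^2$, while the substitution $s = t/\varepsilon$ yields
\begin{equation*}
\varepsilon^{-1}\int V(t/\varepsilon)\,|v_0(t)|^2\,dt = \int V(s)\,|v_0(\varepsilon s)|^2\,ds.
\end{equation*}
Since $|v_0(\varepsilon s)|^2 \le W$ and $|v_0(\varepsilon s)|^2 \to W$ pointwise as $\varepsilon\to 0$, dominated convergence (using $V\in \sL^1$) shows that this integral tends to $W\int V\,ds = -2W^2$. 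Consequently $\mu(\varepsilon) \le -W^2 + o(1)$.

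\textit{Step 3 (Lower bound via approximation).} For $R > 0$ set $V_R(t) = V(t)\chi_{\{|t|\le R\}}$; then $V_R$ is compactly supported, so $\rho_R \coloneqq |V_R|^{1/2}$ satisfies all the integrability assumptions of Proposition~\ref{prop-24-A-1}, and part~\ref{prop-24-A-1-iv} gives that the lowest eigenvalue $\mu_R(\varepsilon)$ of the truncated operator $\boldsymbol{a}_{R,\varepsilon} = D_t^2 + \varepsilon^{-1}V_R(t/\varepsilon)$ satisfies $\mu_R(\varepsilon) = -W_R^2 + O_R(\varepsilon)$, with $W_R = -\tfrac{1}{2}\int V_R \to W$ as $R\to\infty$. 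Since $\boldsymbol{a}_\varepsilon - \boldsymbol{a}_{R,\varepsilon} = \varepsilon^{-1}(V-V_R)(t/\varepsilon)$, a min-max/trial-function comparison, combined with the a priori exponential decay of the eigenfunction $v_\varepsilon$ at a rate bounded below by $\tfrac{1}{2}\sqrt{-\mu(\varepsilon)}$ (obtained from the ODE $v_\varepsilon'' = (\varepsilon^{-1}V_\varepsilon - \mu(\varepsilon))v_\varepsilon$ outside a large interval, where Proposition~\ref{prop-24-A-1}\ref{prop-24-A-1-ii}--\ref{prop-24-A-1-iii} bounds $\mu$ away from $0$), yields $|\mu(\varepsilon) - \mu_R(\varepsilon)| \le \eta_R$ with $\eta_R\to 0$ as $R\to\infty$, uniformly for small $\varepsilon$. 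Taking $\varepsilon\to 0$ first and $R\to\infty$ second gives $\mu(\varepsilon) \ge -W^2 + o(1)$, completing the proof.

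\textit{Main obstacle.} The delicate step is the uniform tail control in Step~3. Proposition~\ref{prop-24-A-1}\ref{prop-24-A-1-iv} requires $|V|^{1/2}\in \sL^1$ and $|V|\,t\in \sL^1$, neither of which follows from $V\in \sL^1$ alone---explaining why only the weaker $o(1)$ remainder, rather than the $O(\varepsilon)$ of~\textup{(\ref{24-A-6})}, can be asserted here. The control is obtained by combining the exponential decay of $v_\varepsilon$ (whose rate is $\sqrt{-\mu(\varepsilon)}$, hence bounded below by some positive constant by the upper bound of Step~2) with the $\sL^1$-integrability of $V$, so that the contribution of $|t|>R$ in the quadratic-form difference can be made uniformly small in $\varepsilon$ by choosing $R$ large.
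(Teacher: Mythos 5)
Your proof is correct, and it reaches the conclusion by a genuinely different route from the paper's. Both arguments reduce the ``core'' of the potential to Proposition~\ref{prop-24-A-1} after the rescaling $x\mapsto x/\varepsilon$, but they diverge in how the two bounds and the tail are handled. The paper splits the kinetic energy, writing $D^2=(1-\delta)D^2+\delta D^2$, feeds $(1-\delta)D^2+\varepsilon V_1$ (with $V_1$ the truncation to $|x|\le t$) into Proposition~\ref{prop-24-A-1}, and controls the tail operator $\delta D^2+\varepsilon V_2$ by the dyadic decomposition of its part (a), which leans on the power envelope $|V|\le\langle x\rangle^{-2q}$ from the standing assumption (\ref{24-A-21}); the upper bound comes from the symmetric operator inequality with coefficient $1+\delta$. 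You instead get the upper bound from the explicit trial function $\sqrt{W}e^{-W|t|}$ plus dominated convergence, and the lower bound by truncating $V$ and comparing quadratic forms on the ground state. Your route is more elementary and genuinely needs only $V\in\sL^1$ for the tail, whereas the paper's part (a) is tied to the power-law envelope (but is then reusable, e.g.\ in Proposition~\ref{prop-24-A-8}, and gives a quantitative $-C\delta\varepsilon^2$ tail bound). One remark on your Step 3: the exponential decay of $v_\varepsilon$ is not needed and is the shakiest ingredient you cite; the elementary uniform bound $\|v_\varepsilon\|_\infty^2\le 2\|v_\varepsilon\|\,\|v_\varepsilon'\|\le C$ (which follows from $\|v_\varepsilon'\|^2\le\varepsilon^{-1}|\langle V_\varepsilon v_\varepsilon,v_\varepsilon\rangle|\le\|V\|_{\sL^1}\|v_\varepsilon\|_\infty^2$) already gives
\begin{equation*}
\varepsilon^{-1}\Bigl|\int_{|t|>\varepsilon R}(V-V_R)(t/\varepsilon)|v_\varepsilon(t)|^2\,dt\Bigr|
=\Bigl|\int_{|s|>R}V(s)|v_\varepsilon(\varepsilon s)|^2\,ds\Bigr|
\le C\int_{|s|>R}|V(s)|\,ds,
\end{equation*}
which is the uniform smallness you need; stating it this way removes the only potentially delicate step.
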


\begin{remark}\label{rem-24-A-7}
\begin{enumerate}[label=(\roman*), wide, labelindent=0pt]
\item\label{rem-24-A-7-i}
Since after scaling $x\mapsto x/\varepsilon$ and multiplication by $\varepsilon^{-1}$ with $\varepsilon$ operator (\ref{24-A-20}) becomes (\ref{24-A-1}), this is consistent with (\ref{24-A-6}).

\item\label{rem-24-A-7-ii}
For $V= -\langle x\rangle ^{-2q}$ we have $W<\infty$ if and only if $q>\frac{1}{2}$.
\end{enumerate}
\end{remark}

\begin{proof}[Proof of Proposition~\ref{prop-24-A-6}]
\begin{enumerate}[label=(\alph*), wide, labelindent=0pt]
\item\label{pf-24-A-6-a}
We can apply Proposition~\ref{prop-24-A-1} for $V\ge 0$, $V=0$ as
$|t|\ge \varepsilon^{-1}$. Therefore
$\boldsymbol{b}'_\varepsilon = D^2 +V' (x)\ge -C\varepsilon^2$ where
$V'=V(x)$ as $|x|\le t$, $V(x)=0$ as $|x|\ge t$. Indeed it is true for
$V$ replaced by $-C\rho(x)^2$.

Consider $t_n=2^n$ and consider $V_0(x)= V(x)$ as $|x|\le t_0$,
$V_n(x)=V(x)$ as $t_{n-1}\le |x|\le t_n$ and vanishing on all other segments. Consider
\begin{gather}
\sigma_n>0, \qquad \sum_n \sigma_n \le 1
\label{24-A-24}\\
\shortintertext{Then}
\boldsymbol{b}_\varepsilon \ge \sum_n \boldsymbol{b}_{n,\varepsilon},
\qquad
\boldsymbol{b}_{n,\varepsilon}=\sigma_n D^2  + \varepsilon V_n(x).\notag
\end{gather}
Scaling $x\mapsto x/t_n$ we have
$\boldsymbol{b}_{n,\varepsilon}\mapsto  \sigma_n t_n^{-2}
\bigl[D^2+ \sigma_n^{-1}\varepsilon t_n^2 \rho(t_n)^2 U_n(x)\bigr]$, with $U_n(x)=\rho(t_n)^{-2} V_n(x/t_n)$ and if
$\sigma_n^{-1}\varepsilon t_n^2 \rho(t_n)^2\le 1$ we can apply the above estimate to the operator  in the brackets. On the other hand, it is greater than
$-C\varepsilon \sigma_n^{-1} \rho(t_n)^2$ and we can apply this estimate even without this condition; so we arrive to
$\boldsymbol{b}_{n,\varepsilon} \ge \sigma_n^{-1}\varepsilon^2 t_n^2 \rho(t_n)^4$ and therefore $]\boldsymbol{b}_\varepsilon \ge -C\varepsilon^2$ provided
\begin{equation}
\sum_n \sigma_n^{-1}\varepsilon^2 t_n^2 \rho(t_n)^4\le C_0.
\label{24-A-25}
\end{equation}
Picking up $\sigma_n =\epsilon_0 t_n \rho(t_n)^2$ we satisfy both (\ref{24-A-24}) and (\ref{24-A-25}).

\item\label{pf-24-A-6-b}
Consider now $t$ such that $t\rho(t)^2 \le \delta^2$. Then
$|W_1 - W|\le C\delta^2$ with $W_1=-\frac{1}{2}\int_{-t}^t V(x)\,dx$. Therefore
$\boldsymbol{b}_\varepsilon =
\boldsymbol{b}_{1,\varepsilon}+  \boldsymbol{b}_{2,\varepsilon}$ with
$\boldsymbol{b}_{1,\varepsilon}=(1-\delta)D^2 +\varepsilon V_1(x)$,
$\boldsymbol{b}_{2,\varepsilon}=\delta D^2 +\varepsilon V_2(x)$. Applying Proposition~\ref{prop-24-A-1} to  $\boldsymbol{b}_{1,\varepsilon}$ and the results of Part~\ref{pf-24-A-6-a} to $\boldsymbol{b}_{2,\varepsilon}$ we conclude that
$\boldsymbol{b}_\varepsilon \ge -\varepsilon^2 (W^2 +C\delta)\implies \lambda(\varepsilon)\ge -\varepsilon^2 (W^2 +C\delta)$.

Similarly, $\boldsymbol{b}_{3,\varepsilon}= (1+\delta)D^2 - V_1(x)\ge \boldsymbol{b}_{\varepsilon}+\boldsymbol{b}_{4,\varepsilon}$ and applying Proposition~\ref{prop-24-A-1} to  $\boldsymbol{b}_{3,\varepsilon}$ and the results of Part~\ref{pf-24-A-6-a} to $\boldsymbol{b}_{4,\varepsilon}$ we conclude that
$\lambda(\varepsilon)\le -\varepsilon^2 (W^2 -C\delta)$.

Since we can take $\delta>0$ arbitrarily small we arrive to (\ref{24-A-23}).
\end{enumerate}
\end{proof}

Let $0<q \le\frac{1}{2}$. Then the integral defining $W$ in (\ref{24-A-23}), diverges (logarithmically, as $q=\frac{1}{2}$).

\begin{proposition}\label{prop-24-A-8}
Let $0<q<\frac{1}{2}$. Then
\begin{enumerate}[label=(\roman*), wide, labelindent=0pt]
\item\label{prop-24-A-8-i}
$\lambda \ge -\varepsilon ^{1/(1-q)}$.
\item\label{prop-24-A-8-ii}
Assume that $V(x)\sim V^0(x)$ as $|x|\to \infty$ where $V^0(x)=V_\pm |x|^{-2q}$ as $\pm x>0$. Let either $V_+<0$ or $V_-<0$ and let $\mu<0$ be the lowest eigenvalue of the operator $\boldsymbol{a}^{0}=D^2+ V^0(x)$. Then
\begin{equation}
\lambda=  \varepsilon^{1/(1-q)}(\mu +o(1)) \qquad \text{as\ \ }
\label{24-A-26}
\varepsilon\to 0.
\end{equation}
\end{enumerate}
\end{proposition}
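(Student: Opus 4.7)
The entire argument rests on a single rescaling. Set $L = \varepsilon^{-1/(2(1-q))}$, so that $L^{-2} = \varepsilon^{1/(1-q)}$ and $\varepsilon L^{2-2q} = 1$. Under the unitary change of variable $u(x) = L^{-1/2}v(x/L)$, the operator $\boldsymbol{b}_\varepsilon$ is unitarily equivalent to $L^{-2}\widetilde{\boldsymbol{b}}_\varepsilon$, where
\begin{equation*}
\widetilde{\boldsymbol{b}}_\varepsilon = D_y^2 + W_\varepsilon(y),\qquad W_\varepsilon(y) \coloneqq \varepsilon L^2 V(Ly).
\end{equation*}
The bound (\ref{24-A-21}) yields the uniform pointwise estimate $|W_\varepsilon(y)| \le C\min(L^{2q},|y|^{-2q})$, and under the stabilization assumption in (ii) one has $W_\varepsilon(y)\to V^0(y)$ pointwise on $\bR\setminus\{0\}$ as $\varepsilon\to 0$. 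Both claims therefore reduce to the statement that the lowest eigenvalue $\widetilde{\lambda}(\varepsilon)$ of $\widetilde{\boldsymbol{b}}_\varepsilon$ is uniformly bounded below, and converges to $\mu$ in the framework of (ii).

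For (i), I would prove $\widetilde{\boldsymbol{b}}_\varepsilon \ge -C_0$ uniformly in small $\varepsilon$ by a standard form-boundedness argument. Using the one-dimensional Sobolev embedding in the localized form $|v(y_0)|^2 \le \sigma\|v'\|^2_{\sL^2(I)} + C_\sigma\|v\|^2_{\sL^2(I)}$ on every unit interval $I\ni y_0$, together with a dyadic decomposition of $\bR$, one reduces $\langle |W_\varepsilon|v,v\rangle$ to $\|W_\varepsilon\|_{\sL^1(I)}\|v\|^2_{\sL^\infty(I)}$ on each piece. The total $\sL^1$-mass on the singular region $|y| \le 1$ is
\begin{equation*}
\int_{|y|\le 1}\min(L^{2q},|y|^{-2q})\,dy = O(L^{2q-1}) + O(1) = O(1),
\end{equation*}
uniformly in $\varepsilon$ because $q < 1/2$, while on $|y|\ge 1$ the bound $|W_\varepsilon(y)|\le |y|^{-2q}$ with $q > 0$ gives form-boundedness with arbitrarily small relative constant. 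Thus $\widetilde{\boldsymbol{b}}_\varepsilon$ is uniformly semibounded, and rescaling back yields $\lambda(\varepsilon) \ge -C_0\varepsilon^{1/(1-q)}$.

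For (ii), I would upgrade this to eigenvalue convergence via $\sL^1$-control of $W_\varepsilon - V^0$. Splitting at $|y| = \delta$, the interior integral is bounded by
\begin{equation*}
\int_{|y|\le\delta}\bigl(|W_\varepsilon| + |V^0|\bigr)\,dy
= O(L^{2q-1}) + O(\delta^{1-2q}),
\end{equation*}
which tends to zero by first letting $\varepsilon\to 0$ (killing the first term, since $q < 1/2$) and then $\delta\to 0$. On the exterior $|y|\ge\delta$, writing $W_\varepsilon(y) - V^0(y) = L^{2q}\bigl(V(Ly) - V^0(Ly)\bigr)$ and invoking $V(x) - V^0(x) = o(|x|^{-2q})$ at infinity gives uniform smallness against the integrable weight $|y|^{-2q}$ beyond a large radius. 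Hence $\|W_\varepsilon - V^0\|_{\sL^1(\bR)} \to 0$. In one dimension the free resolvent $(-D^2+1)^{-1}$ has bounded diagonal kernel $\tfrac12$, so
\begin{equation*}
\bigl\||W_\varepsilon - V^0|^{1/2}(-D^2+1)^{-1/2}\bigr\|_{\HS}^2
= \tfrac{1}{2}\|W_\varepsilon - V^0\|_{\sL^1} \to 0,
\end{equation*}
which makes $W_\varepsilon - V^0$ an infinitesimally form-small perturbation of $\boldsymbol{a}^{0}$. Since $\mu$ is an isolated eigenvalue below the essential spectrum $[0,\infty)$ of $\boldsymbol{a}^{0}$, standard KLMN/norm-resolvent convergence theory gives $\widetilde{\lambda}(\varepsilon)\to\mu$, and hence $\lambda(\varepsilon) = \varepsilon^{1/(1-q)}(\mu + o(1))$.

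The principal obstacle is the tension at the origin between the blow-up $W_\varepsilon(0) = O(L^{2q})\to\infty$ and the fact that $|y|^{-2q}$ is only borderline integrable at zero for $q$ close to $1/2$. Both steps — the uniform form estimate in (i) and the $\sL^1$-smallness in (ii) — crucially exploit that the very singular region has length $L^{-1}$, giving a net contribution of order $L^{2q-1}\to 0$ only because of the strict inequality $q < 1/2$; this is precisely the threshold beyond which $W$ in (\ref{24-A-3}) diverges and Proposition~\ref{prop-24-A-6} no longer applies.
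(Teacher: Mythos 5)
Your rescaling and your treatment of Statement \ref{prop-24-A-8-i} are sound: the bound $|W_\varepsilon(y)|\le \min(L^{2q},|y|^{-2q})$ together with the uniform bound on $\|W_\varepsilon\|_{\sL^1(I)}$ over unit intervals $I$ (finite near the origin precisely because $q<\frac{1}{2}$) gives uniform semiboundedness of $D_y^2+W_\varepsilon$ and hence $\lambda\ge -C\varepsilon^{1/(1-q)}$. The argument for Statement \ref{prop-24-A-8-ii}, however, fails at the exterior estimate. For $0<q<\frac{1}{2}$ the weight $|y|^{-2q}$ is integrable at the origin but \emph{not} at infinity: $\int_R^\infty y^{-2q}\,dy=\infty$ for every $R$. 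Thus $V^0\notin\sL^1(\bR)$, and the stabilization hypothesis $V-V^0=o(|x|^{-2q})$ yields only the pointwise bound $|W_\varepsilon(y)-V^0(y)|\le \epsilon(L)\,|y|^{-2q}$ on $\{|y|\ge\delta\}$ with $\epsilon(L)\to0$, whose $\sL^1$-norm over that set is infinite for every $\varepsilon$. The claim $\|W_\varepsilon-V^0\|_{\sL^1(\bR)}\to0$ is therefore false, and the Hilbert--Schmidt identity $\||W|^{1/2}(-D^2+1)^{-1/2}\|_{\HS}^2=\tfrac12\|W\|_{\sL^1}$ cannot be invoked (already $|V^0|^{1/2}(-D^2+1)^{-1/2}$ is bounded but not Hilbert--Schmidt). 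You located the danger at the origin, but the real obstruction is at infinity --- the same non-integrability that makes $W$ in (\ref{24-A-3}) diverge and necessitates this proposition in the first place.

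The gap is repairable without changing your strategy: measure the exterior difference in the form sense rather than in $\sL^1$. The localized Sobolev estimate you used in \ref{prop-24-A-8-i} shows that $|y|^{-2q}$ is infinitesimally form-bounded with respect to $D_y^2$, so $|\langle(W_\varepsilon-V^0)v,v\rangle_{\{|y|\ge\delta\}}|\le\epsilon(L)\bigl(\sigma\|v'\|^2+C_\sigma\|v\|^2\bigr)$, while your (correct) computation $O(L^{2q-1})+O(\delta^{1-2q})$ controls the part on $\{|y|\le\delta\}$; form-smallness with vanishing relative bound and vanishing constant then gives $\widetilde\lambda(\varepsilon)\to\mu$ by min--max, without any trace-class input. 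For comparison, the paper's proof avoids perturbation of resolvents entirely: it splits $V$ into a compactly supported piece, whose contribution to the ground state is $O(\varepsilon^2)=o(\varepsilon^{1/(1-q)})$ by Proposition~\ref{prop-24-A-1}, and a piece sandwiched between $(1\mp C\delta)$ times the homogeneous model at infinity, which is handled by operator inequalities and the scaling $x\mapsto c\varepsilon^{-1/2(1-q)}x$.
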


\begin{proof}
Observe first that for $0<q<\frac{1}{2}$ operator $\boldsymbol{a}^0$ is properly defined and semibounded from below and in the framework of Statement~\ref{prop-24-A-8-ii} it has an infinite number of negative eigenvalues.

\begin{enumerate}[label=(\roman*), wide, labelindent=0pt]
\item\label{pf-24-A-8-i}
Replacing $V$ by $-C|x|^{-2q}$ and using scaling
$x\mapsto c\varepsilon ^{-1/2(1-q)}x$  we arrive to operator
$\varepsilon^{1/(1-q)}\boldsymbol{a}^0\ge -C\varepsilon^{1/(1-q)}$. Thus we arrive to Statement~\ref{prop-24-A-8-i}.

\item\label{pf-24-A-8-ii}
Observe that in the framework of Statement~\ref{prop-24-A-8-ii}
\begin{gather*}
b_\varepsilon \ge b_{1,\varepsilon}+b_{2,\varepsilon}\\
\shortintertext{with}
b_{1,\varepsilon}= (1-\delta) D^2 - \varepsilon (V^0(x) + \delta |x|^{-2q}),
\qquad
b_{1,\varepsilon}=\sigma D^2 - \varepsilon U(x)
\end{gather*}
with arbitrarily small $\sigma>0$ and $U$ supported in $[-t,t]$ with $t=t(\delta)$. Then
$b_{1,\varepsilon}\ge  (\mu_1 -C\delta )\varepsilon^{1/(1-q)}$,
$b_{2,\varepsilon}\ge C(t,\delta)\varepsilon^2$ and therefore
$\lambda(\varepsilon)\ge (\mu_1- 2C\delta)\varepsilon^2$.

Similarly, one can prove that
$\lambda(\varepsilon)\le (\mu_1+ 2C\delta)\varepsilon^2$.
Since we can take $\delta>0$ arbitrarily small we arrive to (\ref{24-A-26}).
\end{enumerate}
\end{proof}

\begin{Problem}\label{Problem-24-A-9}
\begin{enumerate}[label=(\roman*), wide, labelindent=0pt]
\item\label{Problem-24-A-9-i}
Using arguments of the proof of Part~\ref{pf-24-A-6-a} of Proposition~\ref{prop-24-A-6} prove that if $\int_\bR \rho^2(x)\,dx=\infty$ then
$\lambda (\varepsilon)\ge -C\eta$ where $\eta=\eta(\varepsilon)$ is defined from
\begin{equation}
\eta^{\frac{1}{2}}= \varepsilon  \int _{x\colon \varepsilon \rho(x)^2 \ge \eta} \rho^2(x)\,dx
\label{24-A-27}
\end{equation}
which is consistent with $\varepsilon^{1/(1-q)}$ in the framework of Proposition~\ref{prop-24-A-8} but also works for
$\rho(x)= |x|^{-q}|\log |x||^{p}$ with \underline{either} $0<q<\frac{1}{2}$
\underline{or} $q=\frac{1}{2}$, $p\ge -\frac{1}{2}$.

\item\label{Problem-24-A-9-ii}
Derive asymptotics of $\lambda(\varepsilon)$ in the framework of Statement~\ref{Problem-24-A-9-i}; the most interesting and difficult case seems to be $q=\frac{1}{2}$.

\item\label{Problem-24-A-9-iii}
Provide a better  error  estimate in (\ref{24-A-23}) and (\ref{24-A-26}).
\end{enumerate}
\end{Problem}

\section{Examples of vector potential}
\label{sect-24-A-3}

In this Appendix we prove that the results of this Chapter are meaningful.  The only questionable part of their conditions is the existence of vector potentials with the given properties of the scalar intensities $F$ and the doubts are only in the three-dimensional case.  The construction of a conformal asymptotic Euclidean metric tensor\footnote{\label{foot-24-45} I.e., a tensor
$g^{jk}=\updelta ^{jk}(1+\varphi )$ with $|1+\varphi |\ge \epsilon >0$
such that $D ^\alpha \varphi =o(\gamma ^{-|\alpha |})$ as
$|x|\to \infty $ or $|x|\to0$ for all $\alpha $.}%
\index{asymptotic Euclidean metric}%
\index{Euclidean metric!asymptotic}%
\index{metric!asymptotic Euclidean}
$g^{jk}$ and the construction of a scalar potential $V$ are obvious in all cases
and are left to the reader.  Analysis of the two-dimensional case is also obvious.  In what follows $g^{jk}=\updelta ^{jk}$.

In what follows $\Lambda_{(2n)}$ be a block-diagonal $2n \times 2n$-matrix with $n$ diagonal $2\times2$-blocks $\uplambda=\left(\begin{smallmatrix} 0 &1\\-1 &0\end{smallmatrix}\right)$ (and non-diagonal blocks $0$). Further, let $\Lambda_{(2n+1)}$ be a block-diagonal $(2n +1)\times (2n+1)$-matrix with $n$ diagonal $2\times2$-blocks $\uplambda$ and $1\times 1$-block and all non-diagonal blocks equal $0$.

\begin{lemma}\label{lemma-24-A-10}
\begin{enumerate}[label=(\roman*), wide, labelindent=0pt]
\item\label{lemma-24-A-10-i}
Let $d=2n$ and $V_j = (\Lambda x)_j \sigma (|x|)$. Then eigenvalues of $(F_{jk})$ are $\pm i f_1(x),\ldots, \pm if_n(x)$ with
\begin{phantomequation}\label{24-A-28}\end{phantomequation}
\begin{equation}
f_1(x)= 2 \sigma (|x|) + |x| \sigma'(|x|),\qquad
f_2 (x)=\ldots =f_n(x)= 2\sigma (|x|).
\tag*{$\textup{(\ref*{24-A-28})}_{1,2}$}\label{24-A-28-*}
\end{equation}
\item\label{lemma-24-A-10-ii}
Let $d=2n+1$ and $V_j = (\Lambda x)_j \sigma (|x|)$. Then eigenvalues of $(F_{jk})$ are $\pm i f_1(x),\ldots, \pm if_n(x),0$ with $f_2,\ldots,f_n$ defined by \ref{24-A-28-*} and
\begin{equation}
f_1(x)^2= (2\sigma(|x|) +|x'| ^2 |x|^{-1}\sigma '(|x|))^2 +
 x_d^2|x'|^2 |x|^{-4}\sigma(|x|)^{\prime\,2},
\label{24-A-29}
\end{equation}
$x'=(x_1,\ldots, x_{d-1})$.
\end{enumerate}
\end{lemma}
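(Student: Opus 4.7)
\textbf{Proof proposal for Lemma~\ref{lemma-24-A-10}.}

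The plan is to compute $F=(F_{jk})$ explicitly using $F_{jk}=\partial_k V_j-\partial_j V_k$ and then block-diagonalize it against the geometrically natural decomposition of $\bR^d$. Since $\Lambda^T=-\Lambda$, a direct differentiation gives
\[
F_{jk}=2\sigma(r)\,\Lambda_{jk}+\frac{\sigma'(r)}{r}\bigl((\Lambda x)_j x_k-x_j(\Lambda x)_k\bigr),\qquad r=|x|,
\]
i.e.\ in matrix form $F=2\sigma\Lambda+\tfrac{\sigma'}{r}\bigl((\Lambda x)x^T-x(\Lambda x)^T\bigr)$. All subsequent work is pure linear algebra on this formula.

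For part~\ref{lemma-24-A-10-i} ($d=2n$), $\Lambda$ is orthogonal with $\Lambda^2=-I$, so the vectors $u=x$ and $v=\Lambda x$ are orthogonal of equal length $r$, and $\mathrm{span}(u,v)$ is $F$-invariant. First I would compute
\[
Fu=(2\sigma+r\sigma')\,v,\qquad Fv=-(2\sigma+r\sigma')\,u,
\]
so the restriction of $F$ to $\mathrm{span}(u,v)$ has eigenvalues $\pm i(2\sigma+r\sigma')=\pm if_1$. On the orthogonal complement $W$ the rank-two correction $(\Lambda x)x^T-x(\Lambda x)^T$ vanishes, and since $\Lambda$ preserves $\mathrm{span}(u,v)$ it also preserves $W$; hence $F|_W=2\sigma\,\Lambda|_W$, whose eigenvalues are $\pm 2i\sigma$, each with multiplicity $n-1$. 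This yields \ref{24-A-28-*}.

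For part~\ref{lemma-24-A-10-ii} ($d=2n+1$), $\Lambda$ has a $1$-dimensional kernel spanned by $e_d$, and $\Lambda x=(\Lambda'x',0)$ has length $|x'|$, not $r$. The key observation is that the three vectors $u=x$, $v=\Lambda x$, and $w=e_d-\tfrac{x_d}{r^2}u$ are mutually orthogonal (with $|w|=|x'|/r$), and I claim that $U:=\mathrm{span}(u,v,w)$ is still $F$-invariant. After normalizing to $\hat u,\hat v,\hat w$ and using $\Lambda^2 x=(-x',0)=-u+x_d e_d$ together with $\Lambda e_d=0$, the action of $F$ on $U$ reduces to the matrix
\[
\begin{pmatrix}0&-a&0\\a&0&-b\\0&b&0\end{pmatrix},\qquad
a=\frac{|x'|(2\sigma+r\sigma')}{r},\quad b=\frac{2\sigma\,x_d}{r},
\]
whose characteristic polynomial is $-\lambda(\lambda^2+a^2+b^2)$ and therefore gives the eigenvalues $0,\pm i\sqrt{a^2+b^2}$; expanding $a^2+b^2$ produces the closed form for $f_1(x)^2$ claimed in \eqref{24-A-29}. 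On the $2(n-1)$-dimensional orthogonal complement $W=U^\perp$ both $u,v$ project to $0$ in $(\Lambda x)x^T-x(\Lambda x)^T$, so again $F|_W=2\sigma\,\Lambda|_W$, contributing the eigenvalues $\pm 2i\sigma$ with multiplicity $n-1$, plus the zero eigenvalue on $\Ker\Lambda\cap U$.

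The only substantive step is the bookkeeping in part~\ref{lemma-24-A-10-ii}: verifying that $U$ is invariant (this uses the identity $v^Tu=0$ together with $\Lambda^2x+u\in\Ker\Lambda$) and keeping track of the decomposition $e_d=w+\tfrac{x_d}{r^2}u$ when computing $Fv$. Everything else is a routine characteristic-polynomial calculation, and the even-dimensional case is a clean geometric argument using $\Lambda^2=-I$.
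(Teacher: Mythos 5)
Your argument is correct, and it reaches the same two structural facts the paper uses: the off-diagonal perturbation of $2\sigma\Lambda$ is rank two and supported on $\mathrm{span}(x,\Lambda x)$, so the spectrum splits into a small distinguished block plus $n-1$ copies of $\pm 2i\sigma$. The implementation differs: the paper normalizes first, using a rotation commuting with $\Lambda$ to put $x$ into the form $(x_1,x_2,0,\dots,0)$ (resp.\ $(x_1,x_2,0,\dots,0,x_d)$) and then reads off the matrix entries, computing $f_1^2=F_{12}^2+F_{1d}^2+F_{2d}^2$ for the odd case; you instead work invariantly with $F=2\sigma\Lambda+\tfrac{\sigma'}{r}\bigl((\Lambda x)x^T-x(\Lambda x)^T\bigr)$ and exhibit the invariant subspaces directly. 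Your route avoids having to justify the transitivity of the commuting rotation group (the $U(n)$ action), at the cost of the bookkeeping for $\mathrm{span}(x,\Lambda x,e_d)$, which you carry out correctly. One cosmetic slip: the null direction of $F$ in the odd case is $b\hat u+a\hat w$, not $\Ker\Lambda\cap U$; the eigenvalue count is unaffected.

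One substantive remark: your $a^2+b^2$ does \emph{not} literally reproduce the displayed \textup{(\ref{24-A-29})}. Expanding, you get
\begin{equation*}
f_1^2=\bigl(2\sigma+|x'|^2|x|^{-1}\sigma'\bigr)^2+x_d^2|x'|^2|x|^{-2}\sigma'^2 ,
\end{equation*}
with $|x|^{-2}$ in the last term where the statement prints $|x|^{-4}$. This is a typo in the statement, not an error on your part: the paper's own proof has $F_{1d}=x_2x_d|x|^{-1}\sigma'$ and $F_{2d}=-x_1x_d|x|^{-1}\sigma'$, whose squares sum to $x_d^2|x'|^2|x|^{-2}\sigma'^2$, and Example~\ref{example-24-A-11}\ref{example-24-A-11-ii} (formula \textup{(\ref{24-A-30})}) is consistent only with the exponent $-2$. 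So your value is the correct one; you should just not claim that it matches \textup{(\ref{24-A-29})} as printed, but rather point out the discrepancy.
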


\begin{proof}
\begin{enumerate}[label=(\roman*), wide, labelindent=0pt]
\item\label{pf-24-A-10-i}
Without any loss  of the generality one can assume that $x_j=0$ for $j=3,\ldots,d$ since we can always reach it by a rotation, commuting with $\Lambda$. Then
$F_{jk}= 2\sigma\Lambda_{jk} $ if either $j\ge 3$ or $k\ge 3$ \underline{and} $F_{jk}=(2\sigma + |x|\sigma')\Lambda_{jk}m$ for $j,k\le 2$ which implies Statement~\ref{example-24-A-11-i}.

\item\label{pf-24-A-10-ii}
Without any loss  of the generality one can assume that $x_j=0$ for $j=3,\ldots,d-1$.  Then again $F_{jk}(x)=2\sigma \Lambda_{jk}$ if either $j=3,\ldots, d-1$ or $k=3,\ldots, d-1$ and therefore  again $f_2=\ldots=f_n$ are defined by $\textup{(\ref*{24-A-28})}_{2}$.

Meanwhile $F_{12}= (2\sigma + |x'|^2|x|^{-1}\sigma')$ ,
$F_{13}=x_2x_d|x|^{-1} \sigma'$, $F_{12}=-x_1x_d|x|^{-1} \sigma'$, and
$f_1^2= F_{12}^2+F_{13}^2+F_{23}^2$ which implies (\ref{24-A-29}).
\end{enumerate}
\end{proof}

We start from power singularities:

\begin{example}\label{example-24-A-11}
\begin{enumerate}[label=(\roman*), wide, labelindent=0pt]
\item\label{example-24-A-11-i}
In the framework of Lemma~\ref{lemma-24-A-10}\ref{lemma-24-A-10-i} with
$\sigma =|x|^m$
$f_1= (2+m)|x|^m$. In particular, $|f_1|\asymp |x|^m$ if  $m\ne -2$.

\item\label{example-24-A-11-ii}
In the framework of Lemma~\ref{lemma-24-A-10}\ref{lemma-24-A-10-ii} with
$\sigma =|x|^m$
\begin{equation}
f_1^2= \bigl((2+m |x'|^2|x|^{-2} )^2 + m^2 |x'|^2 x_d^2|x|^{-4}\bigr)|x|^{2m}.
\label{24-A-30}
\end{equation}
In particular, $|f_1|\asymp |x|^m$ if  $m\ne -2$.
\end{enumerate}
\end{example}

\begin{example}\label{example-24-A-12}
In the framework of Lemma~\ref{lemma-24-A-10}\ref{lemma-24-A-10-ii} with
$\sigma =|x|^m$ let us define $V_d = a |x|^{m+1}$. Again, without any loss of the generality we can assume that $x_j=0$, $j=3,\ldots, d-1$. In this case the only $F_{jk}$ to change are $F_{1d},F_{2d}$ (and $F_{d1},F_{d2}$) and therefore $f_2,\ldots, f_n$ are still defined by $\textup{(\ref*{24-A-28})}_{2}$. One can prove easily that
\begin{multline}
f_1^2=\\
\bigl((2+m|x'|^2|x|^{-2})^2 + m^2 x_d^2|x'|^2|x|^{-4}+(m+1)^2a^2 |x'|^2  |x|^{-2}\bigr)|x|^{2m}.
\label{24-A-31}
\end{multline}
In particular, $f_1\asymp |x|^m$ if $a\ne 0$.
\end{example}

\begin{example}\label{example-24-A-13}
\begin{enumerate}[label=(\roman*), wide, labelindent=0pt]
\item\label{example-24-A-13-i}
In the framework of Lemma~\ref{lemma-24-A-10}\ref{lemma-24-A-10-i} with
$\sigma =\langle x\rangle ^m$
$f_1= (2+m|x|^\langle x \rangle^{-2})\langle x \rangle^m$. In particular, $|f_1|\asymp |x|^m$ if  $m> -2$.

\item\label{example-24-A-13-ii}
In the framework of Lemma~\ref{lemma-24-A-10}\ref{lemma-24-A-10-ii} with
$\sigma =\langle x \rangle^m$
\begin{equation}
f_1^2= \bigl((2+m |x'|^2\langle x \rangle^{-2} )^2 +
m^2 |x'|^2 x_d^2\langle x \rangle^{-4}\bigr)\langle x \rangle^{2m}.
\label{24-A-32}
\end{equation}
In particular, $f_1\asymp \langle x \rangle^m$ if  $m> -2$.

\item\label{example-24-A-13-iii}
In the framework of Lemma~\ref{lemma-24-A-10}\ref{lemma-24-A-10-ii} with
$\sigma =\langle x \rangle^m$ and $V_d= a \langle x \rangle^{m+1}$
\begin{multline}
f_1^2=\\
\bigl((2+m|x'|^2\langle x \rangle^{-2})^2 +
m^2 x_d^2|x'|^2\langle x \rangle^{-4}+
(m+1)^2a^2 |x'|^2  \langle x \rangle^{-2}\bigr)|x|^{2m}.
\label{24-A-33}
\end{multline}
In particular, $f_1\asymp \langle x \rangle^m$ if $a\ne 0$.
\end{enumerate}
\end{example}

Consider now power-log singularities.

\begin{example}\label{example-24-A-14}
\begin{enumerate}[label=(\roman*), wide, labelindent=0pt]
\item\label{example-24-A-14-i}
In the framework of Lemma~\ref{lemma-24-A-10}\ref{lemma-24-A-10-i} with
$\sigma =|x| ^m\ell (x)^\beta$, $\ell(x)=|\log |x||+C_0$ (with sufficiently large $C_0$) $f_1 \asymp \sigma $ if  $m\ne  -2$ and
$f_1=\beta |x|^{-2}\ell^{\beta -1}$ if $m=-2$.

\item\label{example-24-A-14-ii}
In the framework of Lemma~\ref{lemma-24-A-10}\ref{lemma-24-A-10-ii} with
$\sigma =|x| ^m\ell (x)^\beta$, $\ell(x)=|\log |x||+C_0$ (with sufficiently large $C_0$) $f_1 \asymp \sigma $ if  $m\ne  -2$.

\item\label{example-24-A-14-iii}
In the framework of Lemma~\ref{lemma-24-A-10}\ref{lemma-24-A-10-ii} with
$\sigma =\langle x \rangle^m$ and $V_d= a | x |^{m+1}\ell (x)^\beta $
 $f_1\asymp \sigma$ if $a\ne 0$.
\end{enumerate}
\end{example}

\begin{example}\label{example-24-A-15}
\begin{enumerate}[label=(\roman*), wide, labelindent=0pt]
\item\label{example-24-A-15-i}
In the framework of Lemma~\ref{lemma-24-A-10}\ref{lemma-24-A-10-i} with
$\sigma =\langle x \rangle^m \ell (x)^\beta$,
$\ell(x)=\log \langle x \rangle +C_0$ (with sufficiently large $C_0$)
$f_1 \asymp \sigma $ if  $m>  -2$.

\item\label{example-24-A-15-ii}
In the framework of Lemma~\ref{lemma-24-A-10}\ref{lemma-24-A-10-ii} with
$\sigma =\langle x \rangle ^m\ell (x)^\beta$,
$\ell(x)=\log \langle x \rangle+C_0$ (with sufficiently large $C_0$) $f_1 \asymp \sigma $ if  $m> -2$.

\item\label{example-24-A-15-iii}
In the framework of Lemma~\ref{lemma-24-A-10}\ref{lemma-24-A-10-ii} with
$\sigma =\langle x \rangle^m \ell (x)^\beta$ and
$V_d= a\langle x \rangle ^{m+1}\ell (x)^\beta $
 $f_1\asymp \sigma$ if $a\ne 0$.
\end{enumerate}
\end{example}

Consider now exponential potentials.

\begin{example}\label{example-24-A-16}
Let $d=2$. Then
\begin{enumerate}[label=(\roman*), wide, labelindent=0pt]
\item\label{example-24-A-16-i}
$V_1=-x_2|x|^m \exp(|x|^\beta)$, $V_2=x_1|x|^m \exp(|x|^\beta)$ with $\beta>0$ provide $f\asymp |x|^{m+\beta-1} \exp(|x|^\beta)$ as $|x|\ge c$.

\item\label{example-24-A-16-ii}
The same example with $\beta<0$ provide $f\asymp |x|^{m+\beta-1} \exp(|x|^\beta)$ as $|x|\le \epsilon$.
\end{enumerate}
\end{example}

For $d=3$ we need to be more crafty.

\begin{example}\label{example-24-A-17}
Let $d=3$.
\begin{enumerate}[label=(\roman*), wide, labelindent=0pt]
\item\label{example-24-A-17-i}
Consider
\begin{phantomequation}\label{24-A-34}\end{phantomequation}
\begin{align}
&V_1=\exp (\nu (x))\cos \bigl(\psi (x)\bigr)|x|^m,
\tag*{$\textup{(\ref*{24-A-34})}_{1}$}\label{24-A-34-1}\\
&V_2=\exp (\nu (x))\sin \bigl(\psi (x)\bigr)|x|^m,
&&V_3=0
\tag*{$\textup{(\ref*{24-A-34})}_{2,3}$}\label{24-A-34-2}
\end{align}
with $\nu(x)=|x|^\beta$, $\beta>0$. Then
\begin{equation}
|\nabla ^\alpha V_j|\le
c_\alpha \exp (a|x| ^\beta )|x|^{(\beta -1)|\alpha |+m}\qquad \forall \alpha
\label{24-A-35}
\end{equation}
as $|x|\gtrsim 1$ provided $|\nabla \psi |\lesssim |x|^{\beta -1}$. Moreover, one can see easily that
\begin{equation}
F\ge (\epsilon_0 b -C) |x|^{\beta -1+m} \exp (|x| ^\beta )
\label{24-A-36}
\end{equation}
provided $|\partial \psi |\ge b |x|^{\beta -1}$ as $|x|\ge c$. One can take
\begin{equation*}
\psi (x',x_3)=\int_0^{x_3} (|x'|^2+y^2)^{(\beta -1)/2}\,dy, \qquad x'=(x_1,x_2)
\end{equation*}
satisfying these restrictions.

\item\label{example-24-A-17-ii}
Similarly, for $\beta<0$ this constructions work for $|x|\le \epsilon$.
\end{enumerate}
\end{example}

Consider now quasihomogeneous case. In what follows $L=(l_1,l_2)$ for $d=2$, $L=(l_1,l_2,l_3)$ for $d=3$
\begin{equation}
[x]_L= \bigl(\sum_j x_j^{2n/l_j}\bigr)^{1/2n}
\label{24-A-37}
\end{equation}
is $L$-quasihomogeneous length, and $n$ is large so functions are smmoth in $\bR^d\setminus 0$.

\begin{example}\label{example-24-A-18}
\begin{enumerate}[label=(\roman*), wide, labelindent=0pt]
\item\label{example-24-A-18-i}
Let $d=2$ and $L=(l_1,l_2)$ with $1=l_1<l_2$. Let
\begin{equation}
V_1 = -a x_2[x]_L^{m},\qquad V_2=x_1[x]_L^m.
\label{24-A-38}
\end{equation}
Then $F\asymp [x]_L^m$ for $[x]_L\ge c$ provided $m\ne -(1+l_2)$ and $a$ is properly chosen.

\item\label{example-24-A-18-ii}
Let $d=2$ and $L=(l_1,l_2)$ with $1=l_1>l_2>0$. Let $V_1,V_2$ are defined by (\ref{24-A-38}). Then
$F\asymp [x]_L^m$ for $[x]_L\le \epsilon $ provided $m\ne-(1+l_2)$ and $a$ is properly chosen.
\end{enumerate}
\end{example}

\begin{example}\label{example-24-A-19}
\begin{enumerate}[label=(\roman*), wide, labelindent=0pt]
\item\label{example-24-A-19-i}
Let $d=3$ and $L=(l_1,l_2,l_3)$ with $1=l_1\le l_2\le l_3$. Let
\begin{align}
&V_1 = 0, &&V_2=x_1[x]_L^m,&&V_3= x_1[x]_L^{m+1},
\label{24-A-39}\\
&V_1 = -x_2[x]_L^m, &&V_2=x_1[x]_L^m,&&V_3= 0
\label{24-A-40}
\end{align}
if $m\ne -1$, $m=-1$ respectively. Then $F\asymp [x]_L^m$ for $[x]_L\ge c$.

\item\label{example-24-A-19-ii}
Let $d=2$ and $L=(l_1,l_2)$ with $1=l_1\ge l_2\ge l_3>0$. Let $V_1,V_2,V_3$ are defined by (\ref{24-A-39}) or (\ref{24-A-39}) if $m\ne -1$, $m=-1$ respectively. Then $F\asymp [x]_L^m$ for $[x]_L\le \epsilon $.
\end{enumerate}
\end{example}

\begin{example}\label{example-24-A-20}
Let $d=3$, $X=(\bR^2\setminus 0)\times \bR/\bZ \ni (x',x_3)=(x_1,x_2,x_3)$, $m\ne -2$ and
\begin{equation}
V_1=x_2|x'|^m,\quad V_2=-x_1|x'|^m,\quad V_3=a|x'|^{m+2}
\label{24-A-41}
\end{equation}
positively homogeneous on $x'$ of degrees $m+1$, $m+1$, $m+2$ respectively\footnote{\label{foot-24-46} Then $F^1$, $F^2$, $F^3$ are positively homogeneous of degrees $m+1$, $m+1$, $m$ respectively.}.
Then  $F^3\ne 0$, $|x'|=\const$ along   integral curves of the vector field
$\frac{1}{F^3} (F^1,F^2,F^3)$ and for irrational $a/\pi$ these curves are not closed.
\end{example}
\end{subappendices}

\chapter*{Comments}

In addition to papers, mentioned in Remark~\ref{rem-23-4-9} I would like also mention S.~Solnyshkin~\cite{solnyshkin:bound}, A.~Sobolev~\cite{sobolev:av:hommag1, sobolev:av:hommag2, sobolev:av:discrete, sobolev:av:asymp}, Y.~Colin de Verdiere~\cite{colindev:boitelles, colindev:specbot}, A.~Morame ~\cite{mohamed:aysmp, mohamed:schromag},
A.~Morame \&J.~Nourrigat~\cite{mohamed:count} and H.~Tamura~\cite{tamura:strong, tamura:bottommag, tamura:mag, tamura:hommag}, M.~Birman \& G.~Raikov~\cite{birman:magnetic},  G.~Raikov~\cite{raikov:hommag1, raikov:hommag2, raikov:strong, raikov:weak, raikov:border, raikov-7, raikov-8, raikov-9}.


\end{document}